\theoremstyle{plain}
\declaretheorem[name=Theorem,numberwithin=section]{theorem}
\newtheorem{lemma}[theorem]{Lemma}
\newtheorem{proposition}[theorem]{Proposition}
\newtheorem{corollary}[theorem]{Corollary}
\theoremstyle{definition}
\newtheorem{definition}[theorem]{Definition}
\theoremstyle{remark}
\newtheorem{remark}[theorem]{Remark}
\newcommand{\paren}[1]{\left(#1 \right )}
\newcommand{\Brac}[1]{\left[#1\right]}
\newcommand{\ceil}[1]{\lceil #1 \rceil}
\newcommand{\norm}[1]{\left\lVert#1\right\rVert}
\newcommand{\defeq}{\coloneqq}
\newcommand{\normInline}[1]{\lVert#1\rVert}
\newcommand{\R}{\mathbb R}
\newcommand{\cB}{\mathcal B}
\newcommand{\cS}{\mathcal S}
\newcommand{\cX}{\mathcal X}
\newcommand{\cY}{\mathcal Y}
\newcommand{\cZ}{\mathcal Z}
\DeclareMathOperator*{\argmin}{argmin} 
\DeclareMathOperator*{\argmax}{argmax}
\definecolor{violet}{RGB}{148, 0, 211}
\newcommand{\otilde}{\tilde{O}}
\newcommand{\Otilde}{\otilde}
\newcommand{\code}[1]{\textnormal{\texttt{#1}}}
\newcommand{\ellOneEllOne}{\ell_1\text{-}\ell_1}
\newcommand{\ellTwoEllOne}{\ell_2\text{-}\ell_1}
\newcommand{\ellTwoEllTwo}{\ell_2\text{-}\ell_2}
\newcommand{\gap}{\mathrm{gap}}
\newcommand{\tr}{\mathrm{tr}}
\newcommand{\prox}{\mathrm{prox}}
\newcommand{\proxStep}{\mathsf{SUGMStep}}
\newcommand{\modelUpdateStep}{\code{flag}}
\newcommand{\true}{\code{True}}
\newcommand{\false}{\code{False}}
\newcommand{\guilty}{\code{guilty}}
\newcommand{\smooth}{\code{smooth}}
\newcommand{\flag}{\code{flag}}
\newcommand{\none}{\code{None}}
\newcommand{\B}{\mathbb{B}}
\newcommand{\KL}{\mathrm{KL}}
\newcommand{\xset}{\cX}
\newcommand{\yset}{\cY}
\newcommand{\zset}{\cZ}
\newcommand{\simplex}{\Delta}
\DeclarePairedDelimiterXPP{\inbraces}[1]{}{\{}{\}}{}{#1}
\DeclarePairedDelimiterXPP{\inparen}[1]{}{(}{)}{}{#1} %
\DeclarePairedDelimiterXPP{\insquare}[1]{}{[}{]}{}{#1}
\DeclarePairedDelimiter{\innorm}{\|}{\|}
\DeclarePairedDelimiter{\inabs}{\lvert}{\rvert}
\NewDocumentCommand\breg{s m m }{ %
  \IfBooleanTF{#1} %
    { V^{r}_{#2} \left( #3 \right) }
    { V^{r}_{#2} ( #3 ) }
}
\NewDocumentCommand\bregwr{s O{} m m }{ %
  \IfBooleanTF{#1} %
    { V^{#2}_{#3} \left( #4 \right) }
    { V^{#2}_{#3} ( #4 ) }
}
\newcommand{\rx}{r_\mathsf{x}}
\newcommand{\ry}{r_\mathsf{y}}
\NewDocumentCommand\xbreg{s m m }{
  \IfBooleanTF{#1} %
    { \bregwr*[\rx]{#2}{#3} }
    { \bregwr[\rx]{#2}{#3} }
}
\NewDocumentCommand\ybreg{s m m }{
  \IfBooleanTF{#1} %
    { \bregwr*[\ry]{#2}{#3} }
    { \bregwr[\ry]{#2}{#3} }
}
\newcommand{\gm}{\nabla_{\pm}}
\newcommand{\x}{_\mathsf{x}}
\newcommand{\y}{_\mathsf{y}}
\newcommand{\zopt}{z^\star}
\newcommand{\grad}{\nabla}
\DeclarePairedDelimiterXPP{\dualnorm}[1]{}{\|}{\|}{_{*}}{#1}
\newcommand{\DAPO}{\textsc{DAPO}}
\newcommand{\CWF}{\textsc{CWF}}
\newcommand{\GWF}{\textsc{GWF}}
\newcommand{\ztilde}{\tilde{z}}
\newcommand{\wbar}{\bar{w}}
\newcommand{\overle}[1]{\overset{#1}{\le}}
\newcommand{\overge}[1]{\overset{#1}{\ge}}
\newcommand{\Holder}{H\"{o}lder}
\newcommand{\proxStepFullZ}[6]{\prox^{#3, #4}_{#1 , #2}(#5; #6)}
\newcommand{\proxStepSimple}[3]{\prox^{#2}_{#1}(#3)}
\newcommand{\proxStepSimpleZ}[4]{\prox^{#2}_{#1}(#3; #4)}
\newcommand{\diag}{\mathrm{diag}}
\newcommand{\judge}{\mathsf{Judge}}
\newcommand{\normalize}{\mathrm{unit}}
\newcommand{\xtrunc}{\xset_\nu}
\newcommand{\ytrunc}{\yset_\nu}
\newcommand{\ztrunc}{\zset_\nu}
\newcommand{\gaptrunc}{\gap_{\nu}}
\newcommand{\localize}[2]{(#1)_{#2}}
\newcommand{\ground}[2]{{(#1)}_{#2}}
\newcommand{\unground}[2]{{(#1)}_{#2, *}}
\newcommand{\SUGSMStep}{\mathsf{SUG}\text{-}\mathsf{SM}\text{-}\mathsf{Step}} %
\newcommand{\SUGStronglyMonotoneMirrorProx}{\mathsf{SUG}\text{-}\mathsf{SM}\text{-}\mathsf{MP}}
\newcommand{\trunc}{_{\nu}}
\newcommand{\htrunc}{h_{\nu}}
\newcommand{\gtrunc}{g_{\nu}}
\newcommand{\ball}{\B}
\DeclarePairedDelimiter{\inmaxnorm}{\|}{\|_{\mathrm{max}}}
\newcommand{\regret}{\mathrm{regret}}
\newcommand{\walpha}{w_\alpha}
\newcommand{\wbeta}{w_\beta}
\newcommand{\halpha}{h_\alpha}
\newcommand{\hbeta}{h_\beta}
\newcommand{\hess}{\nabla^2}
\newcommand{\showdiag}{\mathrm{diag}}
\newcommand{\inKL}[2]{\KL(#2||#1)} %
\newcommand{\bilinear}[1]{f_{#1}}
\newcommand{\xtilde}{\tilde{x}}
\newcommand{\ytilde}{\tilde{y}}
\newcommand{\zcenter}{z_c}
\newcommand{\zcenterx}{z_{c \mathsf{x}}}
\newcommand{\zcentery}{z_{c \mathsf{y}}}
\newcommand{\zground}{z_\mathsf{n}}
\newcommand{\xground}{x_\mathsf{n}}
\newcommand{\alphaopt}{\tilde{\alpha}}
\newcommand{\epsprim}{\epsilon'}
\newcommand{\sball}{\cB}
\newcommand{\gammapw}{\gamma_{\mathrm{pw}}}
\newcommand{\gammab}{\gamma_{\mathrm{b}}}
\newcommand{\gammagb}{\gamma_{\mathrm{gb}}}
\newcommand{\gammav}{\gamma_{\mathrm{v}}}
\newcommand{\passModel}{\code{passModel}}
\newcommand{\checkdiv}{\mathsf{CheckDiv}}
\newcommand{\bsearch}{\mathsf{B}\text{-}\mathsf{Search}}
\newcommand{\checkcoords}{\mathsf{CheckCoords}}
\newcommand{\coordsInRange}{\code{coordsInRange}}
\newcommand{\tooBig}{\code{tooBig}}
\newcommand{\tooSmall}{\code{tooSmall}}
\newcommand{\justRight}{\code{justRight}}
\newcommand{\betaDivFlag}{\code{betaDivFlag}}
\newcommand{\alphaDivFlag}{\code{alphaDivFlag}}
\newcommand{\cZint}{\cZ_{\mathrm{int}}}
\newcommand{\zoptbeta}{\zopt_\beta}
\newcommand{\zoptalpha}{\zopt_\alpha}
\newcommand{\zoptlocal}{\zopt_\ell}
\newcommand{\zoptlocaly}{\zopt_{\ell \mathsf{y}}}
\newcommand{\zoptalphaopt}{\zopt_{\alphaopt}}
\newcommand{\Uq}[2]{\innorm{#1}_{(#2)}} %
\newcommand{\zbar}{\bar{z}}
\newcommand*\circled[1]{\tikz[baseline=(char.base)]{
    \node[shape=circle,draw,inner sep=1pt] (char) {#1};}}
\newcommand{\oneC}{\circled{1}}
\newcommand{\twoC}{\circled{2}}
\newcommand{\threeC}{\circled{3}}
\newcommand{\Range}{\Gamma}
\newcommand{\entropy}{e}
\newcommand{\project}{\mathrm{proj}}
\newcommand{\smax}{\mathrm{smax}}
\newcommand{\spaceeq}{\qquad} \usepackage{microtype}
  \newcommand{\cSTOC}[1]{\nth{\intcalcSub{#1}{1968}}\ Annual\ ACM\ Symposium\ on\ Theory\ of\ Computing\ (STOC)}
  \newcommand{\cFOCS}[1]{\nth{\intcalcSub{#1}{1959}}\ Annual\ IEEE\ Symposium\ on\ Foundations\ of\ Computer\ Science\ (FOCS)}
  \newcommand{\cCOLT}[1]{\nth{\intcalcSub{#1}{1987}}\ Annual\ Conference\ on\ Computational\ Learning\ Theory\ (COLT)}
  \newcommand{\cSODA}[1]{\nth{\intcalcSub{#1}{1989}}\ Annual\ ACM-SIAM\ Symposium\ on\ Discrete\ Algorithms\ (SODA)}
  \newcommand{\cNIPS}[1]{Advances\ in\ Neural\ Information\ Processing\ Systems\ \intcalcSub{#1}{1987} (NeurIPS)}
  \newcommand{\cICML}[1]{\nth{\intcalcSub{#1}{1983}}\ International\ Conference\ on\ Machine\ Learning\ (ICML)}
  \newcommand{\cITCS}[1]{\nth{\intcalcSub{#1}{2009}}\ Conference\ on\ Innovations\ in\ Theoretical\ Computer\ Science\ (ITCS)}
  \newcommand{\cICLR}[1]{\nth{\intcalcSub{#1}{2012}}\ International\ Conference\ on\ Learning\ Representations\ (ICLR)}
  \newcommand{\cEC}[1]{\nth{\intcalcSub{#1}{1999}}\ ACM\ Conference\ on\ Economics\ and\ Computation\ (EC)}
  \newcommand{\cAAAI}[1]{AAAI\ Conference\ on\ Artificial (AAAI)}
\title{Solving {Zero-Sum} Games with Fewer Matrix-Vector Products}
\author{%
    Ishani Karmarkar\thanks{Stanford University, \texttt{\string{ishanik,ocarroll,sidford\string}@stanford.edu}} 
    \and
    Liam O'Carroll\footnotemark[1] 
    \and
    Aaron Sidford\footnotemark[1] 
}
\begin{document}

\pagenumbering{gobble}

\maketitle

\begin{abstract}
In this paper we consider the problem of computing an $\epsilon$-approximate Nash Equilibrium of a zero-sum game in a payoff matrix $A \in \R^{m \times n}$ with $O(1)$-bounded entries given access to a matrix-vector product oracle for $A$ and its transpose $A^\top$. We provide a deterministic algorithm that solves the problem using 
$\tilde{O}(\epsilon^{-8/9})$-oracle queries, where $\tilde{O}(\cdot)$ hides factors polylogarithmic in $m$, $n$, and $\epsilon^{-1}$. Our result improves upon the state-of-the-art query complexity of $\tilde{O}(\epsilon^{-1})$ established by [Nemirovski, 2004] and [Nesterov, 2005]. We obtain this result through a general framework that yields improved deterministic query complexities for solving a broader class of minimax optimization problems which includes computing a linear classifier (hard-margin support vector machine) as well as linear regression. 
\end{abstract}

\setcounter{tocdepth}{2}  %
\tableofcontents

\newpage

\pagenumbering{arabic}

\section{Introduction}\label{sec:intro}

In this paper, we consider the foundational problem of approximately solving zero-sum games:
\begin{align}\label{eq:zero-sum-game}
    \min_{x \in \simplex^n} \max_{y \in \simplex^m} y^\top A x,
\end{align}
where we use $\Delta^k := \{u \in \R^k_{\geq 0} : \sum_{i \in [k]} [u]_i = 1\}$ to denote the $k$-dimensional probability simplex and each entry of $A\in \R^{m \times n}$ is in $[-1, 1]$. Zero-sum games are foundational in theoretical computer science, machine learning, algorithmic game theory, and economics \citep{goodfellow2014generative, mkadry2017towards, nemirovskij1983problem, mcculloch1943logical, rosenblatt1958perceptron} as they provide a basis for understanding strategic decision-making in adversarial settings where one agent's gain is  another agent's loss. 

In the standard form \eqref{eq:zero-sum-game}, there are two players, a minimization player with $n$ actions available and a maximization player with $m$ actions available. The minimization player selects a (mixed) \emph{strategy} $x \in \Delta^n$, where $[x]_i$ denotes the minimization player's probability of playing action $i \in [n]$, and the maximization player picks a strategy $y \in \Delta^m$ where $[y]_j$ is the maximization player's probability of playing $j \in [m]$. The expected payoff of the game is given by $y^\top A x$. The minimization player's goal is to pick a strategy to minimize this expected payoff, while the maximization player's goal is to maximize the expected payoff.

A common approximate solution concept for zero-sum games is that of an $\epsilon$-Nash equilibrium. Intuitively, at an $\epsilon$-Nash equilibrium, the minimization and maximization players pick strategies $x \in \Delta^n, y \in \Delta^m$ such that neither player can unilaterally modify their strategy to improve their objective by more than $\epsilon$. 

In this paper, we consider the problem of computing an $\epsilon$-Nash equilibrium of a zero-sum game in a standard, simple setting where the players have restricted access to the payoff matrix $A$ through a \emph{matrix-vector oracle} that, when queried at $x \in \Delta^n$ and $y \in \Delta^m$, outputs $(Ax, A^\top y)$ (Definition~\ref{def:mat-vec-oracle}). We call these queries \emph{matrix-vector queries}; implementing a query is equivalent to fixing a pair of strategies and computing the expected payoff of each action when the other player's strategy is fixed.

This setting is extensively studied. Since a matrix-vector oracle for $A$ yields access to the gradients of $\bilinear{A}(x,y) \defeq y^\top A x$ with respect to $x$ and $y$, a variety of gradient-based optimization methods apply in this setting. The well-known multiplicative weight update (MWU) method computes an $\epsilon$-Nash equilibrium with $\tilde{O}(\epsilon^{-2})$ queries \citep{mwis, freund1999adaptive, littlestone1994weighted}.\footnote{Throughout, we use $\tilde{O}(\cdot)$ and $\tilde{\Omega}(\cdot)$ to hide polylogarithmic factors in $\epsilon^{-1}$, $m$, and $n$.} In 2004, \citet{nem04} introduced the mirror prox method, which (when applied with the negative entropy function as the regularizer) solves the problem in $\tilde{O}(\epsilon^{-1})$ matrix-vector queries. Later, \citet{omdrakhlin2013optimization} showed optimistic mirror descent also solves the problem with $\tilde{O}(\epsilon^{-1})$ queries. Moreover, as Nesterov observed in 2003 (although the journal version \cite{nesterov2005smooth} was published two years later), this problem is a prototypical example of the power of acceleration: to obtain an $\epsilon$-approximately optimal strategy for the $x$-player,\footnote{Namely, $x \in \simplex^n$ such that $\max_{y \in \simplex^m} y^\top A x - \max_{y \in \simplex^m} y^\top A x' \le \epsilon$ for all $x' \in \simplex^n$.} one can replace the $\max$ in \eqref{eq:zero-sum-game} with an $\epsilon'$-\emph{softmax}, denoted $\smax_{\epsilon'}$ (see \eqref{eq:smax} for further discussion), for $\epsilon'^{-1} = \tilde{O}(\epsilon^{-1})$ and minimize the resulting objective to $\Omega(\epsilon)$-accuracy over $\simplex^n$. An $\epsilon$-approximately optimal strategy for the $y$-player can be obtained analogously, thereby yielding a $2 \epsilon$-Nash equilibrium. As $\smax_{\epsilon'}$ is $\tilde{O}(\epsilon^{-1})$-smooth, accelerated gradient descent also obtains an $\epsilon$-Nash equilibrium in $\tilde{O}(\epsilon^{-1})$ queries \cite{nesterov2005smooth}. This $\tilde{O}(\epsilon^{-1})$-matrix-vector query complexity seen in mirror prox, optimistic mirror descent, as well as accelerated gradient descent, raises a fundamental question: 
\begin{center}
    \emph{Is \emph{$\tilde{O}(\epsilon^{-1})$} queries for computing an $\epsilon$-Nash equilibrium asymptotically optimal?}
\end{center}

Given the well-studied nature of zero-sum games across optimization theory and algorithmic game theory and the lack of improvements in the past twenty years, query complexity improvements would perhaps be surprising. Indeed, related results indicate potential challenges toward improving. First, \citet{ daskalakis2011near} essentially showed that any \emph{no regret} online learning algorithm for zero-sum games must have regret that scales as $\Omega(1/T)$ in the number of rounds (see Section~\ref{sec:related-work} for further discussion). Hence, if one approaches the problem of computing an $\epsilon$-Nash equilibrium of a zero-sum game by designing an online algorithm for minimizing the regret of a zero-sum game, it is not possible to achieve a query complexity better than $O(\epsilon^{-1})$ with this approach. Second, if one takes the aforementioned approach of reducing the problem to minimizing the $\tilde{O}(\epsilon^{-1})$-smooth (in the $\ell_1$-norm) $\smax_{\epsilon'}$ function to $O (\epsilon)$-accuracy, then it is known that $\tilde{\Omega}(\epsilon^{-1})$-gradient queries are required to minimize a $\tilde{O}(\epsilon^{-1})$-smooth function in the $\ell_1$-norm to $O(\epsilon)$-accuracy over the simplex in general \cite{guzman2015lower}.\footnote{\citet{guzman2015lower} prove this lower bound 
for optimization over the $\ell_1$-ball as opposed to the probability simplex, but a standard reduction can be used to translate their lower bound to the simplex.} Finally, it is known that $\Omega(\epsilon^{-1})$ queries are necessary in the worst case to approximately solve Lipschitz-continuous variational inequalities, a problem which generalizes \eqref{eq:zero-sum-game} \cite{ouyang2021lowercomplexitybilinear,zhang2022loweriterationcomplexityconvexconcave,lin2025perseus,adil2022optimalmethodshigherordersmooth}. All of these potential obstacles suggest improvements may be challenging.

Naturally, given the fundamental nature of the problem and the lack of improved methods or tight lower bounds, information-theoretic lower bounds for zero-sum games were recently studied in \cite{kornowski2024oracle}. However, \citet{kornowski2024oracle} proves a lower bound of only $\tilde{\Omega}(\epsilon^{-2/5})$-matrix-vector queries against deterministic algorithms. Thus, despite the barriers in existing algorithms for the problem, the gap between upper and lower bounds points to the exciting possibility that algorithmic improvements may be possible.

The central result of this paper is that indeed, improved query complexities are obtainable! We present a \emph{deterministic}, $\tilde{O}(\epsilon^{-8/9})$-matrix-vector query method that computes an $\epsilon$-Nash equilibrium. This gives the first query complexity improvement to the problem since \cite{nem04,nesterov2005smooth} (even among randomized algorithms). We obtain this result through a broader framework which yields algorithmic insights and improvements for a wider range of problems, known as \emph{matrix-vector games}, as we discuss in the next section. 

\subsection{Our results}\label{sec:matrix-vector-games}

We obtain our result through a more general framework that has broader implications. Generalizing the problem of computing an $\epsilon$-Nash equilibrium of zero-sum games, we consider the problem of \emph{computing an $\epsilon$-solution of a matrix-vector game}, which is a minimax optimization problem of the form $\min_{x \in \cX} \max_{y \in \cY} \bilinear{A}(x,y)$, where $\cX \subset \R^n, \cY \subset \R^m$ are either the probability simplex or the unit Euclidean ball, $A \in \R^{m \times n}$ is an (appropriately normalized) matrix, and $\bilinear{A}: \cX \times \cY \to \R$ denotes the bilinear form in $A$ where $\bilinear{A}(x,y) \defeq y^\top A x$. We define an \emph{$\epsilon$-solution} of this problem as follows:

\begin{definition}[$\epsilon$-solution and gap function]\label{def:epsilon-solution} Let $\epsilon \geq 0$ and $f: \cX \times \cY \to \R$ be convex over $\cX \subset \R^n$ and concave over $\cY \subset \R^m$. We say $z = (x, y) \in \cX \times \cY$ is an \emph{$\epsilon$-solution} of $\min_{x \in \cX} \max_{y \in \cY} f(x,y)$ if it is an \emph{$\epsilon$-saddle point}, i.e.,
    \begin{align*}
        \gap(z) \defeq \max_{y' \in \cY} f(x, y') - \min_{x' \in \cX} f(x', y) \leq \epsilon.
    \end{align*}
We say $z$ is an \emph{exact solution} if it is a $0$-solution. 
\end{definition}

We assume throughout that we have restricted access to $A$ via a matrix-vector oracle, defined as follows. 

\begin{definition}[Matrix-vector oracle]\label{def:mat-vec-oracle} A \emph{matrix-vector oracle for $A \in \R^{m \times n}$}, when queried at $(x, y) \in \R^n \times \R^m$, returns $(Ax, A^\top y) \in \R^{m} \times \R^n$. 
\end{definition}

Next, we go over the specific matrix-vector games setups we consider and our results for each setup.

\paragraph{$\ell_1$-$\ell_1$ games (zero-sum games).} In the problem of computing an $\epsilon$-solution of an $\ell_1$-$\ell_1$ game, we are given a matrix-vector oracle for $A \in \R^{m\times n}$ with $\normInline{A}_{\max} \defeq \max_{i,j} |A_{ij}| \leq 1$ and $\epsilon > 0$, and must compute an $\epsilon$-solution of $\min_{x \in \Delta^n} \max_{y \in \Delta^m} \bilinear{A}(x,y)$. This is precisely the problem of computing an $\epsilon$-Nash equilibrium of a zero-sum game. We prove the following main result for this problem. 

\begin{restatable}{theorem}{zerosummain}\label{intro:l1l1} There is a \emph{deterministic} algorithm that computes an $\epsilon$-solution of an $\ell_1$-$\ell_1$ game (i.e., a zero-sum game) using $\tilde{O}(\epsilon^{-8/9})$-matrix-vector queries. 
\end{restatable}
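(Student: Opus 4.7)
The plan is to start from the classical softmax-smoothing reduction of Nesterov and then exploit higher-order structure of the smoothed primal objective that is not captured by its first-order smoothness alone. Setting $\epsilon' = \Theta(\epsilon/\log(mn))$ and $F(x) \defeq \smax_{\epsilon'}(Ax)$ gives a convex function on $\simplex^n$ that is $O(1/\epsilon')$-smooth in $\ell_1$, with the property that any $O(\epsilon)$-minimizer of $F$ over $\simplex^n$, paired with the induced softmax $p(x) \in \simplex^m$, is an $O(\epsilon)$-Nash equilibrium of the original game. Since generic accelerated first-order optimization of $F$ already costs $\tilde O(1/\epsilon)$ queries and is tight under a pure first-order smoothness hypothesis (cf.~\cite{guzman2015lower}), beating $\tilde O(1/\epsilon)$ forces us to use structure beyond $\ell_1$-smoothness of $F$.

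The key observation I would leverage is that the matrix-vector oracle for $A$, together with the explicit form of the softmax, gives essentially free higher-order access to $F$: a directional $p$-th derivative $\nabla^p F(x)[v,\dots,v]$ can be evaluated with only $O(p)$ matrix-vector queries, since one needs only $Ax$ and $Av$, an application of an explicit $m$-dimensional tensor derived from $\smax_{\epsilon'}$, and a single $A^\top$ query. Moreover the $p$-th derivative of $\smax_{\epsilon'}$ is bounded by $O(1/\epsilon'^{\,p-1})$ in the appropriate tensor-operator norm, so $F$ inherits $p$-th order Lipschitz constants $L_p = O(1/\epsilon^{\,p-1})$ on $\simplex^n$ for every $p \ge 1$. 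Thus the matrix-vector oracle, which seemed to expose only first-order information, actually exposes a whole hierarchy of higher-order information at essentially the same per-query cost.

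Given such an oracle, I would feed $F$ into an acceleration scheme that exploits second (or higher) order information, most naturally a Monteiro--Svaiter style ball-oracle acceleration, or equivalently an inexact extragradient/mirror-prox method that calls an inner solver on local subproblems. The outer loop of such a scheme converges in roughly $\tilde O\bigl((L_p R^{p+1}/\epsilon)^{2/(3p+1)}\bigr)$ iterations, where $R = O(1)$ is the $\ell_1$-diameter of $\simplex^n$. Each outer iteration requires solving a local Taylor (or ball-constrained bilinear) subproblem, which in our setting must itself be solved through further matrix-vector queries to $A$. The heart of the proof is to show that a \emph{localized} instance on a ball of radius $r \ll 1$ is substantially easier than the global one in the matrix-vector model---because the local effective smoothness scales with $r$ and because the softmax is approximately low-dimensional once one restricts to a small ball---so that the inner cost is $\tilde O(\epsilon^{-c})$ for some $c < 1$. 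Choosing the order $p$, the ball radius $r$, and the subproblem accuracy to balance the outer iteration count against the per-iteration query cost is what yields the final $\tilde O(\epsilon^{-8/9})$ bound; the dual iterate $y$ is recovered at the end via the standard gap-sandwich argument for softmax smoothing, so primal approximation translates into $\epsilon$-Nash equilibrium.

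The main obstacle is designing and analyzing the inner subproblem solver with a tight query bound. Unlike in the classical Euclidean higher-order optimization literature, the Taylor/ball-constrained subproblem here lives on a subset of the simplex and must itself be solved through further matrix-vector queries; the standard convention that the local subproblem is ``solved for free'' therefore does not apply. The technical core of the argument will be (i) isolating a local smoothness and effective-dimension property of $F$ that improves as the ball radius shrinks, (ii) designing an inner solver---likely a carefully regularized mirror prox or a recursive invocation of the same higher-order scheme on a smaller instance---that quantitatively extracts that improvement, and (iii) tuning the ball radius, the required inner accuracy, and the order parameter $p$ so that the outer-inner trade-off collapses cleanly to the single exponent $8/9$. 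Handling the non-Euclidean $\ell_1$ geometry throughout this trade-off (rather than working in $\ell_2$, where the higher-order acceleration theory is standard) is the most delicate part.
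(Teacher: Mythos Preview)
Your plan follows the softmax-plus-ball-acceleration route that the paper explicitly mentions as a plausible alternative (see the ``Alternative approaches'' paragraph at the end of Section~1.4) but deliberately does \emph{not} pursue. The paper's actual proof is primal-dual rather than primal: it runs a proximal point outer loop on the bilinear saddle-point problem directly, implementing at each step a \emph{dynamic approximate proximal oracle} (Definition~\ref{def:DAPO}) via binary search over the regularization level $\alpha$. The inner subproblems are regularized bilinear games on a multiplicative ball where KL $\approx$ a reweighted Euclidean norm, and these are solved by a ``smooth-until-proven-guilty'' mirror prox (Sections~\ref{sec:l2l2} and~\ref{sec:innerloops}): whenever a mirror prox step with an aggressive step size fails the relative-Lipschitz inequality, the failing triple exposes a pair of unit vectors with $v^\top (A)_{\zground} u \ge \tau$, which is projected off and absorbed into an explicit low-rank model. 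Counting progress steps versus model-update steps and tuning $\beta=\epsilon^{1/3}$, $\tau$ gives the $\tilde O(\epsilon^{-8/9})$ bound (Theorem~\ref{thm:general-norm-result} with $p=2$).

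The gap in your proposal is the inner solver. You correctly note that higher-order acceleration drives the outer iteration count toward $\tilde O(\epsilon^{-2/3})$, and to land at $8/9$ each inner call must cost $\tilde O(\epsilon^{-2/9})$---but you never specify a mechanism achieving that. ``The softmax is approximately low-dimensional on a small ball'' is not by itself an algorithm: you still need a \emph{deterministic} way to discover the relevant low-dimensional subspace using only matrix-vector products, and that is precisely what the paper's smooth-until-guilty step supplies and your outline lacks. The paper also flags concrete obstacles on your path: the trace-Hessian methods of \cite{liu2023accelerated} are randomized and not constrained, and extending ball acceleration to $\ell_1$-$\ell_1$ would require the machinery of \cite{carmon2024whole}. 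None of this is in your sketch; without it, the ``balance inner against outer to get $8/9$'' step is an assertion rather than an argument.
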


\paragraph{$\ell_2$-$\ell_1$ games.} Letting $\B^k$ denote the $k$-dimensional Euclidean unit ball, in the problem of computing $\epsilon$-solution of an $\ell_2$-$\ell_1$ game, we are given a matrix-vector oracle for $A \in \R^{m\times n}$ with $\normInline{A}_{2\rightarrow \infty} \defeq \smash{\max_{i \in [m]} (\sum_{j \in [n]} A_{ij}^2)^{1/2} \leq 1}$ and $\epsilon > 0$, and must compute an $\epsilon$-solution of $\min_{x \in \B^n} \max_{y \in \Delta^m} \bilinear{A}(x,y)$. $\ell_2$-$\ell_1$ games arise, for instance, in linear separability problems and support vector machines \cite{soheili2012smoothperceptron,yu2014saddlepointsacceleratedperceptron,kornowski2024oracle}. We prove the following result.

\begin{restatable}{theorem}{svm}\label{intro:l2l1} There is a \emph{deterministic} algorithm that computes an $\epsilon$-solution of an $\ell_2$-$\ell_1$ game using $\tilde{O}(\epsilon^{-7/9})$-matrix-vector queries.  
\end{restatable}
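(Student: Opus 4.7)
The plan is to reduce the problem to smooth convex minimization over the Euclidean unit ball and then apply the paper's general framework (the same one used to obtain Theorem~\ref{intro:l1l1}) specialized to an $\ell_2$-regularized $\mathsf{x}$-side. First I would apply the standard softmax smoothing described in the introduction, choosing $\epsilon' = \Theta(\epsilon/\log m)$ and working with $F(x) \defeq \smax_{\epsilon'}(Ax) = \epsilon' \log \sum_{i=1}^m \exp([Ax]_i / \epsilon')$. Then $|F(x) - \max_{y \in \simplex^m} \bilinear{A}(x,y)| \le \epsilon/2$, so a $\Theta(\epsilon)$-accurate minimizer $\hat x$ of $F$ over $\ball^n$ yields an $\epsilon$-solution of the game by taking $\hat y = \nabla \smax_{\epsilon'}(A \hat x) \in \simplex^m$. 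Because $\norm{A}_{2 \to \infty} \le 1$, the resulting $F$ is $L$-smooth in the Euclidean norm with $L = \tilde{O}(1/\epsilon)$, and inherits higher-order smoothness bounds from the softmax ($\|\nabla^p F\| = \tilde{O}(\epsilon^{-(p-1)})$), both of which the framework will exploit.

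Next I would invoke the paper's framework as a black box on $F$. It is essentially a Monteiro--Svaiter-style outer acceleration whose iterates are produced by approximately solving proximal ball subproblems of the form $\min_{x' \in \ball^n, \norm{x'-x} \le r} F(x') + \tfrac{\alpha}{2}\norm{x'-x}^2$ for an appropriately chosen $r$ and $\alpha$. On a ball of radius $r$, the softmax can be well-approximated either by its second-order Taylor expansion or by a simpler surrogate supported on the small set of ``dominant'' coordinates of $Ax$ (those with large $\sigma_i(Ax/\epsilon')$); using the higher-order smoothness bounds above, one shows the approximation error is $\tilde{O}(r^{p+1}/\epsilon^p)$. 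Because both the inner subproblem and the outer proximal regularizer use the $\ell_2$ geometry on the $\mathsf{x}$-side, each ball subproblem reduces to a low-dimensional strongly convex quadratic that can be solved by a Nesterov/CG-style inner loop in $\tilde{O}(1)$ matrix-vector queries---this is precisely where Euclidean geometry beats the entropic setup of Theorem~\ref{intro:l1l1}, and is the source of the improvement from $\epsilon^{-8/9}$ to $\epsilon^{-7/9}$.

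Finally I would tune the free parameters. Accelerated ball optimization performs $\tilde{O}((D/r)^{2/3}) = \tilde{O}(r^{-2/3})$ outer iterations (since $D = O(1)$ on $\ball^n$), and choosing $r$ so that the aggregate Taylor-remainder error across all outer iterations remains $O(\epsilon)$ pins down $r = \Theta(\epsilon^{\alpha})$ for a specific $\alpha$; combined with the $\tilde{O}(1)$ cost per ball subproblem in the Euclidean regime, the total query count balances out to $\tilde{O}(\epsilon^{-7/9})$. The main obstacle, and the step most requiring care, is establishing that the ball subproblems can indeed be solved in $\tilde{O}(1)$ queries when $F$ depends on $Ax$ through the nonlinear softmax transform: one has to control simultaneously (i) the Taylor-remainder error between $F$ and its quadratic model on the ball, (ii) the condition number of the resulting regularized quadratic (to bound the CG iteration count), and (iii) the effect of switching the ``active set'' of softmax coordinates as the inner iterate moves. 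Handling these issues uniformly across all outer steps---so that the ball-oracle accuracy required by the Monteiro--Svaiter scheme is actually delivered---is the principal technical burden.
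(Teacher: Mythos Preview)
Your proposal follows a different route from the paper, and it has a genuine gap. The paper does \emph{not} reduce to softmax minimization and ball acceleration; it uses a primal-dual proximal point outer loop (Section~\ref{sec:prox-point-regret}) whose subproblems are solved by the smooth-until-proven-guilty strongly monotone mirror prox of Section~\ref{sec:innerloops}. In fact, the paper explicitly discusses your approach in Section~\ref{sec:overview-of-approach} under ``Alternative approaches'' and remarks that, combined with the trace-bounded-Hessian techniques of \cite{liu2023accelerated,carmon2020acceleration}, one might obtain $\tilde{O}(\epsilon^{-8/9})$ this way---not $\tilde{O}(\epsilon^{-7/9})$.

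The specific gap is your claim that each ball subproblem ``reduces to a low-dimensional strongly convex quadratic that can be solved \dots\ in $\tilde{O}(1)$ matrix-vector queries.'' The Hessian of $F$ at $x$ is $\frac{1}{\epsilon'} A^\top(\diag(p)-pp^\top)A$ with $p$ the softmax weights; this is not low-rank in general (only its trace is $\tilde{O}(1/\epsilon)$), and a restricted active set in $y$-space does not reduce the dimension in $x$-space. Solving the regularized quadratic with CG costs $\tilde{O}(\sqrt{L/\alpha})$ products, which is not $\tilde{O}(1)$ for the regimes of $r,\alpha$ you need. Your parameter ``balancing'' paragraph never actually carries out the calculation, and if you do, you recover at best the $\epsilon^{-8/9}$ rate the paper already obtains for $\ell_2$-$\ell_1$ games in Theorem~\ref{thm:l2l1-1/9-result}. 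Note also that the paper's $\epsilon^{-8/9}$ result already uses Euclidean geometry on the $\mathsf{x}$-side, so ``Euclidean vs.\ entropic'' is not the source of the improvement.

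What actually buys the extra $1/9$ in the paper is an idea you are missing entirely: \emph{model passing} with an amortized analysis across subproblems (Section~\ref{sec:four-fifths}). With $\passModel=\true$ and the one-sided projection judge $\judge_{\project}$ (Algorithm~\ref{alg:2-proj-judge}), the low-rank information learned about $A$ in one subproblem is reused in all subsequent ones. The crucial technical step is Lemma~\ref{lem:model-update-steps-bound}, which bounds the \emph{total} number of model-update steps over the entire run by a potential argument on $\|\ground{A-M_j}{\zground^j}\|_F^2$; the local-norm point $\zground^j$ changes between calls, and controlling the resulting drift requires the movement bound of Lemma~\ref{lem:movement-bound-local-norm-points} together with the invariant $A-M_j=AP^{(j)}$ for products of orthogonal projections (Lemma~\ref{lemma:main-inner-loop-convergence-four-fifths}) so that $\|A-M_j\|_{2\to\infty}\le 1$ is preserved. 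Optimizing the resulting bound \eqref{eq:final-general-iteration-bound-before-7/9} at $\beta=\epsilon^{1/3}$, $\tau=\epsilon^{2/9}$ gives $\tilde{O}(\epsilon^{-7/9})$. Your proposal has no mechanism for sharing information across subproblems, so it cannot reach this rate.
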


For both $\ell_2$-$\ell_1$ and $\ell_1$-$\ell_1$ games, no previous algorithm (neither deterministic nor randomized) was known to achieve a rate better than the $\tilde{O}(\epsilon^{-1})$-matrix-vector query rate of mirror prox \cite{nem04}.

\paragraph{$\ell_2$-$\ell_2$ composite games:} Finally, we consider the setting where $\cX$ and $\cY$ denote the $n$- and $m$-dimensional Euclidean unit balls \cite{carmon2019variance, carmon2020coordinate, grigoriadis1995sublinear}. However, note that an exact solution to $ \min_{x \in \B^n} \max_{y \in \B^m} y^\top A x $ is trivially $(0, 0) \in \R^n \times \R^m$. Thus, the problem has been previously studied in the \emph{composite} setting, where the minimax objective is a bilinear function plus a (possibly nonlinear) \emph{composite function} $\phi: \B^n \times \B^m \to \R$ \cite{carmon2019variance, he2015mirror, carmon2020coordinate, woodworth2016tight, duchi2010composite}, i.e.,
\begin{align*}
    \min_{x \in \B^n}\max_{y \in \B^m} \bilinear{A}(x,y) + \phi(x,y).
\end{align*}
The composite function $\phi$ can be any differentiable \emph{convex-concave function}, where we say that a function $h: \cX \times \cY \to \R$ is convex-concave if $\cX \subset \R^n, \cY \subset \R^m$ and the restrictions of $h$ to the first $n$ and last $m$ inputs are convex and concave respectively. In applications of $\ell_2$-$\ell_2$ composite games, such as regression problems, the composite function is usually an appropriately selected regularizer and is explicitly known.

A composite variant of the mirror prox algorithm \cite{nem04} obtains a matrix-vector query complexity of $O(\normInline{A}_2 \cdot \epsilon^{-1})$ for $\ell_2$-$\ell_2$ composite games where $\normInline{A}_2$ denotes the spectral norm, which is known to be the near-optimal dependence on $\normInline{A}_2$ and $\epsilon$ (even for randomized algorithms) \cite{braverman2020gradient}.\footnote{The lower bounds in \citet{liu2023accelerated} and \cite{braverman2020gradient} are stated for approximate $\ell_2$-regression; however, there is a known reduction (see e.g., \cite{carmon2019variance}) to reduce approximate $\ell_2$-regression to computing an $\epsilon$-solution of an $\ell_2$-$\ell_2$ composite game.} However, it is perhaps natural to ask (see, e.g., \cite{liu2023accelerated}) whether better trade-offs can be obtained when $A$ satisfies better spectral decay properties, i.e., is approximately low rank. Indeed, we prove the following general result, which yields rates parameterized by the Schatten-$p$ norm of $A$ for any $p \in [1, \infty)$.

\begin{restatable}{theorem}{mainresultthree}\label{intro:l2l2} For any $p \in [1, \infty)$, there is a \emph{deterministic} algorithm that computes an $\epsilon$-solution of an $\ell_2$-$\ell_2$ composite game using $O( \normInline{A}_{\cS_p}^{p/(1+p)} \epsilon^{-p/(1 + p)})$-matrix-vector queries.  
\end{restatable}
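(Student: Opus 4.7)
The plan is to exploit a \emph{head--tail decomposition} of $A$ that shifts the iteration complexity from scaling with $\|A\|_2$ down to scaling with the spectral norm of the tail. Specifically, I would write $A = A_k + E$, where $A_k$ is a deterministically computed rank-$k$ approximation with $\|A - A_k\|_2 = O(\sigma_{k+1}(A))$, and absorb $A_k$ into the composite (resolvent) side of a mirror-prox--type iteration while keeping $E$ on the smooth/gradient side. The Schatten norm controls the tail: since the singular values are nonincreasing,
\[
k \, \sigma_{k+1}(A)^p \;\le\; \sum_{i=1}^{k+1} \sigma_i(A)^p \;\le\; \|A\|_{\cS_p}^p ,
\]
so $\sigma_{k+1}(A) \le \|A\|_{\cS_p}/k^{1/p}$ and therefore $\|E\|_2 = O(\|A\|_{\cS_p}/k^{1/p})$. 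Balancing the cost of computing $A_k$ against the cost of iterating against $E$ will produce the claimed rate, and the endpoint $p\to\infty$ should recover mirror prox's $O(\|A\|_2/\epsilon)$ as a sanity check.

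In the first phase I would compute $U_k \in \R^{m \times k}, V_k \in \R^{n \times k}$ with orthonormal columns and a diagonal $\Sigma_k \in \R^{k \times k}$, defining $A_k := U_k \Sigma_k V_k^\top$, using a deterministic block Krylov / block Lanczos scheme on $A^\top A$ (or on the symmetrized block $\bigl[\begin{smallmatrix} 0 & A \\ A^\top & 0 \end{smallmatrix}\bigr]$) started from a fixed block of basis vectors. A polynomial-filter argument gives $\|A - A_k\|_2 \le C \sigma_{k+1}(A)$ for an absolute $C$ in $O(k)$ matrix-vector queries to $A$; for our purposes we in fact only need $\|E\|_2 \le O(\|A\|_{\cS_p}/k^{1/p})$, which is strictly weaker. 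Once $U_k, \Sigma_k, V_k$ are stored, multiplications by $A_k$ or $A_k^\top$ are carried out in the low-rank factors and do not consume further matrix-vector queries.

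In the second phase I would apply composite mirror prox (equivalently, the Euclidean extragradient method whose inner step solves a resolvent for a convex--concave composite) to
\[
    \min_{x \in \B^n} \max_{y \in \B^m} \;\underbrace{\bilinear{E}(x,y)}_{\text{smooth, }\|E\|_2\text{-Lipschitz}} \;+\; \underbrace{\bilinear{A_k}(x,y) + \phi(x,y)}_{\text{composite }\Phi(x,y)} .
\]
The gradient of the smooth part at $(x,y)$ is $(E^\top y, -Ex)$, which I would evaluate via one matrix-vector query to $A$ using $E x = A x - U_k \Sigma_k (V_k^\top x)$ and $E^\top y = A^\top y - V_k \Sigma_k (U_k^\top y)$. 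By the standard Nemirovski analysis, $O(\|E\|_2/\epsilon)$ extragradient outer iterations then suffice to drive the gap below $\epsilon$. The per-iteration resolvent is a convex--concave saddle in $(x,y)$ with rank-$k$ bilinear coupling $\bilinear{A_k}(x,y)$, quadratic proximity terms, and the regularizer $\phi$; since $A_k x$ depends on $x$ only through $V_k^\top x \in \R^k$ and $A_k^\top y$ only through $U_k^\top y \in \R^k$, I would reduce the resolvent to a $k$-dimensional saddle in these coordinates combined with calls to the prox of $\phi$, solving it to accuracy $O(\epsilon/T)$, which a standard inexact-mirror-prox argument absorbs into the final error without consuming any queries to $A$.

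Summing the two phases gives a total matrix-vector query budget of
\[
    O(k) \;+\; O\!\left( \frac{\|A\|_{\cS_p}}{k^{1/p}\, \epsilon} \right) ,
\]
and choosing $k \asymp (\|A\|_{\cS_p}/\epsilon)^{p/(p+1)}$ equalizes the two terms and yields the claimed $O(\|A\|_{\cS_p}^{p/(p+1)} \epsilon^{-p/(p+1)})$ bound. The main obstacle I anticipate is the composite resolvent for the rank-$k$ bilinear coupling: $\Phi$ is saddle-shaped rather than a separable $g(x)-h(y)$, so one must justify both its well-definedness and its approximate solvability via reduction to the $2k$-dimensional subspace where $x$ and $y$ actually interact, together with an inexact-extragradient analysis showing that resolvents of accuracy $O(\epsilon/T)$ do not spoil the $O(\|E\|_2/\epsilon)$ convergence. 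A secondary concern is obtaining the deterministic block-Krylov approximation with a constant-factor spectral guarantee in $O(k)$ queries rather than $\tilde O(k)$; a randomized Musco--Musco-type scheme works trivially, but making the procedure fully deterministic requires a fixed starting block (e.g., canonical vectors or a Hadamard block) together with a Chebyshev-polynomial filter argument that bounds the spectral error by $C \cdot \sigma_{k+1}(A)$ without gap-dependent penalties.
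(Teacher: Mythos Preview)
Your strategy—precompute a rank-$k$ approximation $A_k$, absorb it into the composite, run mirror prox against the residual—would work with randomization, but does not yield the \emph{deterministic} guarantee the theorem claims. The obstacle you flag as ``secondary'' is in fact fatal: no deterministic algorithm can achieve $\|A - A_k\|_2 = O(\|A\|_{\cS_p}/k^{1/p})$ in $o(\min(m,n))$ matrix-vector queries. The adversary simulates your deterministic procedure with all-zero responses, records your $q$ query directions, then sets $A = \sigma u v^\top$ with $u$ and $v$ each orthogonal to every direction you queried; all responses are consistently zero, you learn nothing, and whatever $A_k$ you output satisfies $\|A - A_k\|_2 = \sigma$. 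Chebyshev filtering accelerates convergence once the starting block has nonzero overlap with the top singular space, but it cannot manufacture that overlap deterministically.

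The paper circumvents this by building the low-rank model \emph{lazily} rather than up front. It runs mirror prox with an aggressive step size $1/\tau$, and whenever a step violates the $\tau$-relative-Lipschitz condition, the iterates $(z^t, w^t, z^{t+1})$ of that very step certify unit vectors $u,v$ with $v^\top B u > \tau$ (Lemma~3.1), which are then peeled off into the model via a rank-$O(1)$ update that drops $\|B\|_{\cS_p}^p$ by at least $\tau^p$ (the ``judge'', Definition~3.2 and Lemma~3.4). Large directions are thus discovered only when they actually obstruct optimization progress, so an adversary cannot hide them without also making them irrelevant to the gap. Balancing the at-most-$\|A\|_{\cS_p}^p/\tau^p$ model-update steps against the $O(\tau/\epsilon)$ progress steps and setting $\tau = \|A\|_{\cS_p}^{p/(1+p)}\epsilon^{1/(1+p)}$ gives the clean $O(\cdot)$ bound with no logs and no randomness. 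Your other concern—solving resolvents with a rank-$k$ bilinear piece inside—is shared by both approaches and handled identically: $\phi$ and the stored low-rank factors are held explicitly, so those prox steps cost no queries to $A$.
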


When $p = 2$, Theorem~\ref{intro:l2l2} implies a \emph{deterministic} ${O}(\normInline{A}_F^{2/3} \epsilon^{-2/3})$-matrix-vector query algorithm. This rate is the near-optimal dependence on $\normInline{A}_F$ and $\epsilon$ in light of lower bounds of \cite{liu2023accelerated}. Previously, this rate was achieved using a \emph{randomized} method in \cite{jin2025reusingsamplesvariancereduction}.\footnote{We also note that this rate may be well-known for special cases of $\ell_2$-$\ell_2$ composite games such as regression (see, e.g., \cite{liu2023accelerated, spielman2009note}), however, our results hold for $\ell_2$-$\ell_2$ composite games more broadly.} For $p \neq 2$, Theorem~\ref{intro:l2l2} indicates rates that depend on the decay properties of the singular values of $A$. We also obtain related instance-dependent rates for $\ell_2$-$\ell_1$ and $\ell_1$-$\ell_1$ games; however, we defer a discussion of this to Section~\ref{sec:elltwoelloneandelloneellone} (Theorem~\ref{thm:general-norm-result}). 

\subsection{Additional related work}\label{sec:related-work} 

\paragraph{Lower bounds in the matrix-vector oracle model.}{ As mentioned, for $\ell_1$-$\ell_1$ games (zero-sum games) \citet{kornowski2024oracle} recently showed a lower bound of $\tilde{\Omega}(\epsilon^{-2/5})$ queries. In addition, for $\ell_2$-$\ell_1$ games, they showed a lower bound of $\tilde{\Omega}(\epsilon^{-2/3})$ queries. Their lower bounds hold for all deterministic algorithms. For $\ellOneEllOne$-games in particular, their work improved upon that of \cite{hadiji2024towards}, which showed a $\Omega(\log (1 / (n \epsilon)))$ lower bound for sufficiently small $\epsilon = \text{poly}(1/n)$ and $m = n$. For $\ell_2$-$\ell_2$ composite games, \citet{liu2023accelerated} showed a lower bound of $\tilde{\Omega} (\epsilon^{-2/3})$ against both deterministic and randomized algorithms when the payoff matrix $A$ satisfies $\norm{A}_F \leq 1$, and \citet{braverman2020gradient} showed a lower bound of $\tilde{\Omega} (\epsilon^{-1})$ against both deterministic and randomized algorithms when the payoff matrix $A$ satisfies $\norm{A}_2 \leq 1$.
}

\paragraph{Other oracle models.}{ Aside from the matrix-vector oracle model, other more restrictive models have also been studied in the optimization and algorithmic game theory literature. Some prior works \citep{carmon2019variance, grigoriadis1995sublinear, clarkson2012sublinear, palaniappan2016stochastic} consider a more restrictive setting where payoff matrix access is restricted to row/column queries. \citet{carmon2020coordinate} consider the case where payoff matrix access is restricted to entry queries. \citet{omdrakhlin2013optimization} and \citet{daskalakis2011near} consider a ``strongly-uncoupled'' online setting in which two players play a sequence of strategies in rounds, $\{(x^1, y^1), (x^2, y^2), \ldots\}$, and can only query a matrix-vector oracle for the strategies that were played in the history of the game; the goal is to minimize the regret of the played strategies. Hence, the strongly-uncoupled setting is more restrictive than the setting which we study in this paper. In this strongly-uncoupled setting, \citet{daskalakis2011near} showed a regret lower bound of $\Omega(1/T)$, where $T$ is the number of rounds of the game, which matches the rate achieved by optimistic mirror descent \cite{omdrakhlin2013optimization} up to polylogarithmic factors.
}

\subsection{Directions for future work}\label{sec:futurework}

The obvious problem left open by our work is that of closing the gap between our upper bounds (Theorems~\ref{intro:l1l1} and~\ref{intro:l2l1}) and the lower bounds of \citet{kornowski2024oracle}. In this paper, we prove that improvements over the $\tilde{O}(\epsilon^{-1})$-matrix-vector queries achieved by mirror prox is possible. The next question is to study the optimal information-theoretic complexity. We hope our work enables further research on this problem.

We note that our work focuses on studying the information-theoretic complexity of $\ell_1$-$\ell_1$ and $\ell_2$-$\ell_1$ games. We do not study the problem of obtaining algorithms with better runtime or improved parallelizability, and we pose these as interesting problems for future work.

Lastly,  our improved bounds on the number of matrix vector queries for solving matrix-vector games as well as our further improvements based on different bounds on the intput matrix $A$, indicate the potential to develop algorithms that leverage alternative structure to obtain better $\epsilon$-dependence for broader classes minimax optimization problems. This may be another avenue for future work. 

\subsection{Overview of approach}\label{sec:overview-of-approach}

In this section, we outline our algorithmic and analytical techniques. The starting point for our algorithms is the classic mirror prox algorithm \citep{nem04}, which we describe below.

\paragraph{The mirror prox algorithm.} Let $f, \phi$ be any differentiable, convex-concave functions over $\cX \times \cY$ where $\cX \subset \R^n$ and $\cY \subset \R^m$ are convex and compact. Mirror prox is a natural extension of mirror descent \cite{nemirovskij1983problem} for computing an $\epsilon$-solution of the minimax problem $\min_{x \in \cX} \max_{y \in \cY} f(x,y) + \phi(x,y)$ 
 given \emph{gradient oracle} access to $f$ (which maps $(x, y) \mapsto \nabla f(x,y)$) and explicit access to $\phi$. 

To describe the algorithm, we briefly introduce some notation. In the remainder of the paper, we use $\cZ$ as a shorthand for $\cX \times \cY$ and let $d \defeq m + n$. Further, for any $z \in \cZ$, we use $z\x \in \cX$ and $z\y \in \cY$ to denote the first $n$ and last $m$ coordinates of $z$ respectively. We denote the \emph{gradient mapping} of a differentiable function $h : \cZ \to \R$ by $\nabla_\pm h$. That is, $\nabla_\pm h: \cZ \to \R^d$ is given by $\nabla_\pm h(z) \defeq (\nabla\x h(z), - \nabla\y h(z))$ where $\nabla\x h$ and $\nabla\y h$ denote the gradient of $h$ with respect to the first $n$ and last $m$ coordinates, respectively. 

Now, mirror prox can be described for general geometries $\cZ$ via \emph{distance-generating function (dgf) setups}, defined as follows. The following definition is inspired by \cite{carmon2019variance}. 

\begin{definition}[dgf setups]\label{def:dgf-setup}
We say $(\zset, \normInline{\cdot}, r)$ is a \emph{dgf setup} if: (i) $\zset \subset \R^d$ is compact and convex; and (ii) $r : \zset \to \R$, referred to as the \emph{distance-generating function (dgf)}, is differentiable and 1-strongly convex over $\zset$ with respect to the norm $\normInline{\cdot} : \R^d \to \R$. For any $z, z' \in \zset$, $\breg{z}{z'} \defeq r(z') - r(z) - \grad r(z)^\top (z' - z)$ denotes the \emph{Bregman divergence} induced by the dgf $r$, and $\Range \defeq \max_{z, z' \in \cZ} r(z) - r(z')$ is the range of $r$. 
\end{definition}

Given a dgf setup $(\cZ, \normInline{\cdot}, r)$, mirror prox first initializes $z^0 \gets \argmin_{z \in \cZ} r(z)$. Then, at each iteration $t$, the algorithm performs a \emph{mirror prox step}, which consists of two \emph{proximal steps} from $z^t$ with a parameter $\lambda > 0$, as described below (see Definition~\ref{def:proximal-mappings} for the $\prox$ operator definition).
\begin{align}\label{eq:update}
    \emph{\text{Mirror prox step:~}} w^t \gets \prox_{z^t}^{\lambda}(\nabla_\pm f(z^t) + \nabla_\pm \phi), ~~~\text{followed by}~~~ z^{t+1} \gets \prox_{z^t}^{\lambda}(\nabla_\pm (f+\phi)(w^t)). 
\end{align} 
One can think of $1/\lambda$ as the step size of the proximal steps, analogously to step sizes in mirror descent or gradient descent. Note that each mirror prox step can be implemented with $O(1)$-gradient-oracle queries to $f$ (assuming that $\phi$, and hence $\nabla \phi$, are explicitly known).

Now, suppose the gradient mapping $\gm f$ is $L$-Lipschitz with respect to $\normInline{\cdot}$, i.e., that $\normInline{ \nabla_\pm f(z) - \nabla_\pm f(z')}_* \leq L \normInline{z - z'}$ for all $z, z' \in \zset$, where $\normInline{\cdot}_* : 
z \mapsto \sup_{z': \normInline{z'} \leq 1} z^\top z'$ denotes the \emph{dual norm} of $\normInline{\cdot}$. Then, one can show that with step size $1 / \lambda = 1 / L$, the average of the iterates $w^t$ is an $\epsilon$-solution of $\min_{x \in \cX} \max_{y \in \cY} f(x,y) 
+ \phi(x,y)$ after $O(\Range L/\epsilon)$-mirror prox steps, implying a total of $O(\Range L/\epsilon)$ queries to a gradient oracle for $f$.

In the special case of matrix-games, the objective function of interest is $f = \bilinear{A}$, and a matrix-vector oracle yields a gradient oracle for the objective function $\bilinear{A}$. Indeed, when $A$ is normalized appropriately ($\normInline{A}_{\max} \leq 1$ for $\ell_1$-$\ell_1$ games, 
$\normInline{A}_{2\to\infty} \leq 1$ for $\ell_2$-$\ell_1$ games, and $\normInline{A}_{2} \leq 1$ for $\ell_2$-$\ell_2$ composite games), it is known how to configure the dgf setup $(\cZ, r, \normInline{\cdot})$ so that $L = 1$ and $\Range = \tilde{O}(1)$ (e.g., \cite{nem04, carmon2019variance}). Thus, mirror prox yields a deterministic $\tilde{O}(\epsilon^{-1})$-matrix-vector query algorithm for each of the matrix-vector games described in Section~\ref{sec:matrix-vector-games}. 

In fact, as we will describe in more detail below, mirror prox is known to attain the ${O}(\Range L/\epsilon)$-matrix-vector query complexity even if $\gm f$ is not $L$-Lipschitz so long as the mirror prox triples $(z^t, w^t, z^{t+1})$ in each iteration $t$ of \eqref{eq:update} satisfy a weaker condition called \emph{$L$-local relative Lipschitzness} with respect to $\gm f_A$ and $(\cZ, \normInline{\cdot}, r)$ \cite[Corollary 1]{cohen2020relative}. The first key idea in our approach is to carefully analyze---and capitalize on---conditions where the triple $(z^t, w^t, z^{t+1})$ does not satisfy $L$-relative Lipschitzness with respect to $\gm f_A$ and $(\cZ, \normInline{\cdot}, r)$ but still reveals useful information about the payoff matrix $A$. 

\paragraph{Smooth-until-proven-guilty mirror prox.} To illustrate this first idea, we restrict, for now, to the case of $\ell_2$-$\ell_2$ composite games, and sketch our approach for proving Theorem~\ref{intro:l2l2}. To this end, let $\cX = \B^n, \cY = \B^m$, and consider the problem of computing an $\epsilon$-solution of an $\ell_2$-$\ell_2$ composite game for $A \in \R^{m \times n}$ and a differentiable convex-concave function $\phi: \cZ \to \R$. For simplicitly, here we assume $\normInline{A}_F \leq 1$ and describe the techniques for proving Theorem~\ref{intro:l2l2} for $p = 2$. (The idea for other Schatten-$p$ norms is similar.)

Consider the dgf setup (Definition~\ref{def:dgf-setup}) $(\cZ, r, \normInline{\cdot}_2)$, where $r(z) \defeq \frac{1}{2} \norm{z}_2^2$. Since $\normInline{A}_F \leq 1$, $\gm f_A$ is $1$-Lipschitz with respect to $\normInline{\cdot}_2$ (the Lipschitz constant of $\gm f_A$ under $\norm{\cdot}_2$ is $\innorm{A}_2$, and of course $\innorm{A}_2 \le \innorm{A}_F$), and hence mirror prox computes an $\epsilon$-solution of the $\ell_2$-$\ell_2$ composite game in $O(\epsilon^{-1})$-matrix-vector queries by setting $\lambda = L = 1$ and performing $O(\epsilon^{-1})$ mirror prox steps \eqref{eq:update}.

To improve this rate, one hope might be to use the fact that $\normInline{A}_F \leq 1$ to show that one can simply set a larger fixed step size than $1 / L$ in the mirror prox steps \eqref{eq:update} and still converge to an $\epsilon$-solution in fewer iterations, and hence, fewer matrix-vector queries. However, unfortunately in the worst case $\normInline{A}_2 = \normInline{A}_F$, and hence the Lipschitz constant of $\gm f_A$ could be $\Omega (1)$. As with most gradient-based methods, it is difficult to prove faster convergence with step size larger than the inverse of the Lipschitz constant of the gradient.

Nonetheless, we prove that a slight variant of the standard mirror prox algorithm converges in fewer iterations and oracle queries by using prox steps with a fixed step size larger than $1/L$. Intuitively, our algorithm leverages the fact that if $\normInline{A}_F$ is on the order of $\normInline{A}_2$, then $A$ must be essentially low rank. 

To illustrate the idea, let $L' < L$ and consider running mirror prox with the more aggressive step size $1/\lambda = 1/L' > 1/L$ in each mirror prox step  \eqref{eq:update}. \citet{cohen2020relative} showed that at $t$-th iteration, mirror prox makes progress in solving the $\ell_2$-$\ell_2$ composite game so long as the following $L'$-relative-Lipschitzness condition is satisfied:
\begin{align}\label{eq:okay-condition}
    (\nabla_\pm \bilinear{A}(w^t)-\nabla_\pm \bilinear{A}(z^t))^\top(w^t-z^{t+1}) \leq L'(\breg{w^t}{z^{t+1}} + \breg{z^t}{w^t}). 
\end{align}
Steps where \eqref{eq:okay-condition} holds make progress in converging to an $\epsilon$-solution of the $\ell_2$-$\ell_2$ composite game, and we call such iterations \emph{progress iterations.} On the other hand, suppose \eqref{eq:okay-condition} does not hold at some iteration $t$, i.e., 
\begin{align}\label{eq:violate}
    (\nabla_\pm \bilinear{A}(w^t)-\nabla_\pm \bilinear{A}(z^t))^\top(w^t-z^{t+1}) > L'(\breg{w^t}{z^{t+1}} + \breg{z^t}{w^t}). 
\end{align}
Now, note that $\nabla_\pm \bilinear{A}$ is given by $(x,y) \mapsto (A^\top y, -Ax)$. Substituting this observation into \eqref{eq:violate}, we find
\begin{align}\label{eq:sug}
   & ({w^{t}\y} - {z^{t}\y})^\top A ({w^{t}\x} - {z^{t+1}\x}) + ({w^{t}}\y - {z^{t+1}\y})^\top A ({z^{t}\x} - {w^{t}\x})  > L' [V^r_{w^{t}}(z^{t+1}) - V^r_{z^t}({w^{t}})] \notag \\
   =& (L' / 2) \paren{\normInline{w^t\x - z^{t+1}\x}_2^2 + \normInline{w^t\y - z^{t+1}\y}_2^2 + \normInline{w^t\x - z^{t}\x}_2^2 + \normInline{w^t\y - z^{t}\y}_2^2} \\
   \geq& L' \paren{ \normInline{w^t\y- z^{t}\y}_2  \normInline{w^t\x - z^{t+1}\x}_2 + \normInline{w^t\y- z^{t+1}\y}_2  \normInline{w^t\x - z^{t}\x}_2  }. \notag
\end{align}
For the equality on the second line, we use the fact that the Bregman divergence induced by the specified dgf $r$ is simply $V^r_{a}(b) = \frac{1}{2}\norm{a-b}_2^2$ for all $a, b \in \cZ$. For the third inequality, we use that for $a, b \in \R$, we have $a^2 + b^2 \geq 2ab$. Rearranging the above, we see that we must have that either
\begin{align}\label{eq:model-update-condition}
\frac{({w^{t}\y} - {z^{t}\y})^\top A ({w^{t}\x} - {z^{t+1}\x})}{ \normInline{w^t\y- z^{t}\y}_2  \normInline{w^t\x - z^{t+1}\x}_2 } \geq L' \text{~~~or~~~} \frac{({w^{t}\y} - {z^{t+1}\y})^\top A ({z^{t}\x} - {w^{t}\x})}{\normInline{w^t\y- z^{t+1}\y}_2  \normInline{w^t\x - z^{t}\x}_2} \geq L'. 
\end{align}
Note that \eqref{eq:model-update-condition} ensures that the triple $(z^t, w^t, z^{t+1})$ is guaranteed to reveal a large component of the matrix $A$! Although this is not directly valuable for making progress in computing a solution to $f_A + \phi$, it \emph{is} valuable in learning the payoff matrix $A$!

To capitalize on the information revealed in these steps where \eqref{eq:violate} does not hold, we modify such iterations to implement what we call \emph{model-update steps}: we use $O(1)$ matrix-vector queries to detect when \eqref{eq:model-update-condition} holds and when it does, use an additional $O(1)$ matrix-vector queries to project off and store a large component of the matrix $A$. (We do this by subtracting from the matrix $A$ in the objective and then adding to an extra composite term---see Algorithm~\ref{alg:mirror-prox-sug-l2l2}.)

This projection reduces the Frobenius norm of $A$ by an amount depending on $L'$, and therefore we can bound the total number of model-update steps. Said differently, in the remainder of the algorithm, we no longer need to query the matrix-vector oracle over the subspace spanned by the stored directions, and thus, the total number of model-update steps can be bounded. By carefully selecting $L'=\epsilon^{1/3}$ and setting the step size of our prox steps to $1/\lambda = 1/L'$, we optimally trade-off the number of progress steps with the number of model-update steps, thereby obtaining a query complexity of $\tilde{O}(\epsilon^{-2/3})$.

To summarize: in \emph{progress iterations}, our algorithm makes large progress by treating the function $\bilinear{A}$ as smoother than it might actually be, but performs a \emph{model-update iteration} every time it discovers a large component certifying otherwise.  Inspired by related work in nonconvex optimization \cite{carmon2017convex}, we term this principle ``smooth-until-proven-guilty'' and call our method ``smooth-until-proven-guilty mirror prox.'' We note that alternatively, one could instead try to find model-update directions using techniques from randomized numerical linear algebra (see e.g., \cite{musco2015randomized, bakshi2022low} and references therein); however, the key advantage of our approach is that it directly enables deterministic query complexities.

\paragraph{Extending to $\ell_1$-$\ell_1$ and $\ell_2$-$\ell_1$ games.} Unfortunately, the technique for $\ell_2$-$\ell_2$  composite games is tailored to the $\ell_2$-$\ell_2$ setting, and in particular the choice of regularizer $r(z) = \frac{1}{2} \normInline{z}_2^2$ is crucial to the simplification in Line~\ref{eq:sug}. Hence, extending the above approach to $\ell_1$-$\ell_1$ and $\ell_2$-$\ell_1$ games requires additional techniques. 

Said differently, one way of interpreting our result for $\ell_2$-$\ell_2$ composite games is the following: The smoothness of $\ell_2$-$\ell_2$ composite games depends on the operator norm $\normInline{A}_{2}$. If we further know that the Frobenius norm is bounded $\normInline{A}_F \leq 1$, then even though $\normInline{A}_{2}$ could be as large as 1 in the worst-case, we can still bound the number of low-rank projections necessary in order to reduce the operator norm. Once the operator norm has been sufficiently reduced, the problem is smoother, and mirror prox is guaranteed to converge in fewer iterations. Concretely, our method for $\ell_2$-$\ell_2$ composite games performs $O(\normInline{A}_F^2/L'^2)$-model-update iterations and at most $O(L'/\epsilon)$-progress iterations to achieve an optimal trade-off for $L' = \normInline{A}_F^{2/3} \epsilon^{1/3}$. This is consistent with trade-offs obtained in  numerical linear algebra settings, where one can compute low-rank projections to improve the smoothness of a matrix (see, e.g., \cite{spielman2009note, musco2015randomized}).

However, in non-Euclidean geometries, such as those arising in $\ell_1$-$\ell_1$ and $\ell_2$-$\ell_1$ games, the smoothness of the problem depends on $\normInline{A}_{\max}$ (the max entry) and $\normInline{A}_{2\to\infty}$ (the max row $\ell_2$-norm) respectively, and it is less clear how to leverage low-rank projections to identify low-rank updates which would significantly reduce these norms and enable mirror prox to converge in fewer iterations. Hence, we cannot directly extend our smooth-until-proven-guilty idea from above to non-Euclidean geometries, because it is not clear that mirror prox iterates which violate the relative Lipschitzness condition will directly allow us to lower the smoothness of problems in these non-Euclidean geometries.

Therefore, we take an alternative approach where we reduce to solving a series of subproblems which more closely resemble regularized $\ellTwoEllTwo$ games. Specifically, letting $\yset = \simplex^m$, as well as $\cX = \B^n$ for $\ell_2$-$\ell_1$ games and $\cX = \Delta^n$ for $\ell_1$-$\ell_1$ games, a standard optimization framework known as the \emph{proximal point method} solves the original matrix game $\min_{x \in \xset} \max_{y \in \yset} f_A(x, y)$ by iteratively solving subproblems of the form 
\begin{align}\label{eq:reduction-subproblem}
    \min_{x \in \xset} \max_{y \in \yset} \bilinear{A}(x,y) + \alpha \xbreg{{z_c}\x}{x} - \alpha \ybreg{{z_c}\y}{y} 
\end{align}
for $\alpha > 0$ and center point $\zcenter \in \zset$ (both of which may vary from subproblem to subproblem), and where $\xbreg{{z_c}\x}{\cdot}$ and $\ybreg{{z_c}\y}{\cdot}$ are appropriate Bregman divergences on $\xset$ and $\yset$ respectively (see Definition~\ref{def:matrix-vector-games-setups} for further details). Letting $\zopt_\alpha$ denote the exact solution to \eqref{eq:reduction-subproblem}, our key starting observation is that if $\breg{\zcenter}{\zopt_\alpha} = O(\alpha^2)$, then the subproblem \eqref{eq:reduction-subproblem} is \emph{stable} in the following sense: It can be shown (see Lemma~\ref{lem:stability-wrt-best-response}) that a linearized proximal step from $\zcenter$, formally $\ztilde = \prox^\alpha_{\zcenter}(\gm f_A(\zcenter))$ (see Definition~\ref{def:proximal-mappings}) or equivalently $\ztilde = (\xtilde, \ytilde)$ where
\begin{align}\label{eq:ztilde}
    \tilde{x} \gets \min_{x \in \cX} {{z_c}\y}^\top A x + \alpha \xbreg{{z_c}\x}{x} \text{~~and~~} \tilde{y} \gets \max_{y \in \cY} y^\top A {{z_c}\x} - \alpha \ybreg{{z_c}\y}{y},
\end{align}
is such that $\ztilde$ and $\zopt_\alpha$ are \emph{multiplicatively close} in all of their coordinates which lie in a probability simplex. Therefore, to solve the subproblem \eqref{eq:reduction-subproblem}, it suffices to restrict to a \emph{multiplicative ball} (see Definition~\ref{def:stable-ball} for a formal treatment) around $\ztilde$. Furthermore, within this multiplicative ball the Hessian of the KL divergence is stable up to a constant multiplicative factor, in the sense that the KL divergence is approximated by a rescaled Euclidean norm (see Lemma~\ref{corr:stable-balls}) which, as is standard in the literature for referring to this phenomenon (e.g., \cite{carmon2019variance,clarkson2012sublinear,shwartz2012onlinelearning,anuran2015studyoflocalapproximationsininfotheory}), we refer to as a \emph{local norm}. Upon replacing the KL divergence terms in \eqref{eq:reduction-subproblem} with these local norms, \eqref{eq:reduction-subproblem} becomes a regularized $\ellTwoEllTwo$ game in an appropriate change of basis/rescaling of the coordinates. 

We show in Section~\ref{sec:innerloops} that our smooth-until-proven-guilty approach can be extended to these regularized and rescaled $\ellTwoEllTwo$ games to obtain a $\Otilde(\alpha^{-2/3})$ matrix-vector query bound for solving the subproblem \eqref{eq:reduction-subproblem} to high accuracy.\footnote{This $\Otilde(\alpha^{-2/3})$ matrix-vector query bound comes from optimizing over $\tau$ in the $\Otilde(\tau \alpha^{-1} + \zeta^p \tau^{-p})$ matrix-vector query bound in Corollary~\ref{cor:SM-mirror-prox-complexity-when-passmodel-is-false} with $p \gets 2$, noting $\zeta \gets 1$ is a valid choice in this case due to Remark~\ref{remark:general-conversion}.} This notion of muliplicative stability has been used in other context such as \emph{Hessian stability} and \emph{quasi-self-concordance} from the ball acceleration and trust-region methods literature \cite{carmon2020acceleration,karimireddy2018globallinearconvergencenewtons}.

To summarize so far, we have an ``inner loop'' method given in Section~\ref{sec:innerloops} which can solve subproblems of the form \eqref{eq:reduction-subproblem} to high accuracy with $\Otilde(\alpha^{-2/3})$ matrix-vector queries as long as $\breg{\zcenter}{\zopt_\alpha} = O(\alpha^2)$, but have yet to describe our ``outer loop'' algorithm based on the proximal point method for reducing to a series of such subproblems. The standard proximal point method is given a sequence of positive numbers $\alpha_1, \alpha_2, \dots$ and iterates \eqref{eq:reduction-subproblem} with $\alpha \gets \alpha_k$ at iteration $k$ (i.e., the solution to \eqref{eq:reduction-subproblem} in iteration $k$ becomes the center point $\zcenter$ in iteration $k + 1$). Omitting dependence on the domain size for brevity (which is $\Otilde(1)$ in our matrix games applications), it is straightforward to show that this method achieves error $O (1 / \sum_{k \in [K]} {\alpha_k^{-1}})$ after $K$ iterations. Then, to obtain fast convergence, we should dynamically set $\alpha_k$ as small as possible at each iteration $k$ while still ensuring the condition $\breg{\zcenter}{\zopt_{\alpha_k}} = O(\alpha_k^2)$ is met. Since $\breg{\zcenter}{\zopt_{\alpha_k}}$ monotonically increases as $\alpha_k$ decreases (less regularization leads to more movement; see Lemma~\ref{lem:monotonicity}), we aim for the sweet spot where $\breg{\zcenter}{\zopt_{\alpha_k}} = \Theta(\alpha_k^2)$.

We formalize this intuition of dynamically searching for an $\alpha_k$ which satisfies a movement bound by defining a \emph{dynamic approximate proximal oracle} or $\DAPO$ in Definition~\ref{def:DAPO} in Section~\ref{sec:prox-point-regret}, and then prove an iteration bound for the proximal point method with a $\DAPO$ in Lemma~\ref{lem:prox-point-iteration-bound}. As we apply it to $\ellOneEllOne$ and $\ellTwoEllOne$ matrix games, the $\DAPO$ oracle requires $\alpha_k$ to either satisfy $\breg{\zcenter}{\zopt_{\alpha_k}} \ge \alpha_k^2$ or else $\alpha_k = \beta$ for a tunable parameter $\beta > 0$,\footnote{More generally, Definition~\ref{def:DAPO} requires either $\breg{\zcenter}{\zopt_{\alpha_k}} \ge \alpha_k^c$ or else $\alpha = \beta$ for some $c, \beta > 0$, but in our applications to $\ellOneEllOne$ and $\ellTwoEllOne$ matrix games we always instantiate $c \gets 2$.} and we prove a $\Otilde(\beta / \epsilon + \epsilon^{-2/3})$ iteration bound for the resulting proximal point method to obtain an $\epsilon$-solution to the original matrix game (see Lemma~\ref{lem:prox-point-iteration-bound}).

In the context of $\ellOneEllOne$ and $\ellTwoEllOne$ matrix games, to instantiate the requirements of the $\DAPO$ oracle, namely $\breg{\zcenter}{\zopt_{\alpha_k}} \ge \alpha_k^2$ or else $\alpha_k = \beta$, as well as the $\breg{\zcenter}{\zopt_{\alpha_k}} = O(\alpha_k^2)$ requirement of our subproblem solver for \eqref{eq:reduction-subproblem}, we give a binary search procedure in Section~\ref{subsec:binary-search} which first checks whether $\alpha_k = \beta$ satisfies the $\breg{\zcenter}{\zopt_{\alpha_k}} = O(\alpha_k^2)$ requirement, and otherwise searches for $\alpha_k > \beta$ such that $\breg{\zcenter}{\zopt_{\alpha_k}} = \Theta(\alpha_k^2)$. (In particular, the $\alpha_k = \beta$ condition can be viewed as a ``minimum regularization level'' which we are always willing to use as long as the $\breg{\zcenter}{\zopt_{\alpha_k}} = O(\alpha_k^2)$ requirement is satisfied.) Combining the $\Otilde(\beta / \epsilon + \epsilon^{-2/3})$ iteration bound of the proximal point with the $\Otilde(\alpha_k^{-2/3}) \le \Otilde(\beta^{-2/3})$ complexity of solving the subproblem \eqref{eq:reduction-subproblem} at the $k$-th iteration to high accuracy (since we ensure $\alpha_k \ge \beta$), we obtain an overall matrix-vector query complexity of $\Otilde(\beta^{1/3} \epsilon^{-1} + \beta^{-2/3} \epsilon^{-2/3})$. Optimizing over $\beta$ (namely $\beta \gets \epsilon^{1/3}$) yields $\Otilde(\epsilon^{-8/9})$.

We remark that our proximal point outer loop, including the $\DAPO$ oracle and accompanying binary search procedure, can be viewed as a primal-dual or variational-inequality analog of existing acceleration frameworks including Monteiro-Svaiter acceleration \cite{renato2013monteirosvaiteroriginalpaper,bubeck2019highlysmooth,bubeck2019highlyparallel,carmon2022optimalandadaptivemonteirosvaiter} and ball acceleration \cite{carmon2020acceleration,jambulapati2024closingcomputationalquerygap,carmon2021thinking,hilal2021stochasticbiasreduced,carmon2022distributionallyrobustoptimizationball,carmon2023resqueing,carmon2024whole}. These methods also reduce to subproblems involving optimization in a ball or an approximate proximal step with a careful choice of center point and/or regularization, and we hope our primal-dual analog of these frameworks will enable future work.

Finally, for $\ell_2$-$\ell_1$ games, we obtain a further improvement by demonstrating that information about the matrix $A$ discovered in one subproblem \eqref{eq:reduction-subproblem} can be effectively leveraged in order to solve subsequent subproblems in our proximal point procedure. Performing an  amortized analysis of this method, we obtain an overall query complexity of $\tilde{O}(\epsilon^{-7/9})$-matrix-vector queries for $\ell_2$-$\ell_1$ games (see Section~\ref{sec:four-fifths} for details).

\paragraph{Alternative approaches.} We conclude this introduction by briefly mentioning alternative approaches to obtain query complexities that might improve over the prior state-of-the-art $\tilde{O}(\epsilon^{-1})$-matrix-vector query complexity for $\ell_2$-$\ell_1$ and $\ell_1$-$\ell_1$ games. 

As proof of concept that such improvements could be possible, an illustrative example is the setting of $\ell_2$-$\ell_1$ games. Computing an $\epsilon$-solution to an $\ell_2$-$\ell_1$ game can be reduced \cite{nesterov2005smooth,carmon2021thinking} to computing an $O(\epsilon)$-approximate minimizer of the following softmax objective for $\epsilon' = \tilde{O}(\epsilon)$:
\begin{align}\label{eq:smax}
    \min_{x \in \B^n} \smax_{\epsilon'}(x), \text{~~where~~} \smax_{\epsilon'} \defeq \epsilon' \log\Big(\sum_{i \in [n]} \exp\Big(\frac{[Ax]_i}{\epsilon'}\Big)\Big).
\end{align}
The Hessian of $\smax_{\epsilon'}$ has trace $\tilde{O}(1/\epsilon)$ and recent work of \cite{liu2023accelerated} showed techniques to accelerate optimization of functions whose Hessians satisfy trace-bounds using cubic-regularized Newton. However, the algorithms of \citet{liu2023accelerated} do not directly handle constrained problems such as \eqref{eq:smax} and are fundamentally randomized. Nonetheless, by replacing the ball-constraint in \eqref{eq:smax} with $\ell_2$-regularization, we believe one might be able to apply the techniques of \citet{liu2023accelerated} to obtain \emph{randomized} algorithms for $\ell_2$-$\ell_1$ games with $\tilde{O}(\epsilon^{-1+c})$-query complexity, for some $c > 0$. 

We note that there is another approach one could take by leveraging a recent technique of \emph{ball-accelerated Newton} \cite{carmon2020acceleration}, which improves on cubic-regularized Newton in certain cases where the Hessian satisfies additional structure. In the case of $\ell_2$-$\ell_1$ games, the Hessian of the softmax objective in \eqref{eq:smax} is trace-bounded and locally stable \cite{carmon2020acceleration, carmon2024whole, carmon2023resqueing}. Hence, one could hope that quadratic optimization techniques \cite{liu2023accelerated, spielman2009note} might apply to develop a more efficient ball acceleration oracle for \eqref{eq:smax}. If so, by combining this with the ball-acceleration methods of \cite{carmon2020acceleration}, one might obtain an alternative algorithm for $\ell_2$-$\ell_1$ games which requires $\tilde{O}(\epsilon^{-8/9})$-matrix-vector queries. Then, it might be possible to further extend this alternative algorithm to obtain an $\tilde{O}(\epsilon^{-8/9})$-matrix-vector query algorithm for $\ell_1$-$\ell_1$ games by combining ball acceleration with the techniques of \citet{carmon2024whole}.

In this paper, we do not directly present this ball-acceleration approach but instead present a primal-dual proximal point version as described previously. We take this approach because we believe it provides a straightforward and unified framework to simultaneously obtain deterministic guarantees for $\ell_2$-$\ell_1$ games, $\ell_1$-$\ell_1$ games, and $\ell_2$-$\ell_2$ composite games. An additional advantage of our proximal-point approach is that it allows us to cleanly apply an amortized analysis to obtain a further improvement for $\ell_2$-$\ell_1$ games, leading to our \emph{deterministic} $\tilde{O}(\epsilon^{-7/9})$-matrix-vector query algorithm for $\ell_2$-$\ell_1$ games.

\paragraph{Paper organization.} In Section~\ref{sec:prelim} we discuss notation. In Section~\ref{sec:l2l2}, we present our proof of Theorem~\ref{intro:l2l2} and introduce primitives which aid in the proof of Theorem~\ref{intro:l1l1} and~\ref{intro:l2l1}. In Section~\ref{sec:prox-point-regret}, we present our proximal point approach to reduce solving $\ell_1$-$\ell_1$ and $\ell_2$-$\ell_1$ games to solving a sequence of subproblems of the form \eqref{eq:reduction-subproblem}. In Section~\ref{sec:innerloops}, we show how to adapt our algorithms for $\ell_2$-$\ell_2$ composite games from Section~\ref{sec:l2l2} to solve subproblems of the form \eqref{eq:reduction-subproblem}. Finally, in Section~\ref{sec:elltwoelloneandelloneellone} we show how to implement the reduction from Section~\ref{sec:prox-point-regret} using the subproblem solvers derived in Section~\ref{sec:innerloops} to obtain Theorems~\ref{intro:l1l1} and~\ref{intro:l2l1}. Omitted proofs of well-known results and technical linear-algebraic lemmas are included in the appendix, for completeness.  

\section{Preliminaries}\label{sec:prelim}

\paragraph{General notation.} For a vector $x \in \R^n$, $[x]_i$ denotes the $i$-th entry of $x$ and $\normInline{x}_p$ denotes its $\ell_p$-norm.  Given $x \in \R^n$, we let $\diag(x)$ denote the diagonal matrix in $\R^{n \times n}$ where the $(i,i)$-th entry is given by $[x]_i$ and the off-diagonal entries are 0. We use the term \emph{unit vectors} to refer to vectors in the $\ell_2$-unit ball, and  $\normalize(x) \defeq x / \normInline{x}_2$ to denote the unit vector in the direction of $x \in \R^d$. We let $\simplex^n_\nu \defeq \inbraces{ x \in \simplex^n : [x]_i \ge \nu, \, \forall i \in [n] }$ denote the truncated simplex, and $\smash{\Delta^n_{ > 0} \defeq \{x \in \Delta^n : [x]_i > 0, \, \forall i \in [n] \} }$ denote the relative interior of the simplex. For $x \in \simplex_{>0}^n$ and $x' \in \Delta^n$, we define $\normInline{x'}_{x^{-1}} \defeq \normInline{\diag (x)^{-1/2} x'}_2$. For $x\in \Delta^n$ and $x' \in \Delta^n_{>0}$, we denote the KL divergence $\KL(x || x') \defeq \sum_{i \in [n]} [x]_i \log([x]_i/[x']_i)$, where we let $\smash{0 \log 0 \defeq 0}$. For sequences (of numbers, vectors, etc.) $\smash{u_1, u_2, \dots}$ or $\smash{u_1, w_1, u_2, w_2, \dots}$ we may use the notation $\{u_k\}$ and $\{u_k, w_k\}$ respectively. For a matrix $A$, we let $\smash{A_{i,:}}$ and $\smash{A_{:,i}}$ denote its $i$-th row and $i$-th column, respectively. If a function returns several variables and only a subset of them are pertinent to a certain lemma or statement, we may use a dot $(\cdot)$ in place of irrelevant outputs. For $u, v \in \R^n$ and $c > 0$, we write $u \approx_c v$ if and only if $[u]_i / c \le [v_i] \le c [u]_i$ for all $i \in [n]$ (note that we may use this notation with $n = 1$). For a matrix $B \in \R^{m \times n}$, we let $f_B(x, y) \defeq y^\top B x$ denote the bilinear form in $B$.

\paragraph{Matrix norms.} For matrices $A \in \R^{m \times n}$ of rank $r$, we let $\sigma(A) \in \R^r_{> 0}$ denote the vector of singular values of $A$. For $p \geq 0$, we use $\normInline{A}_{\cS_p} \defeq \normInline{\sigma}_p$ to denote the Schatten-$p$ norm of $A$. When $p$ is $1$, $2$, or $\infty$, we write $\normInline{A}_* \defeq \normInline{A}_{\cS_1}$, $\normInline{A}_F \defeq \normInline{A}_{\cS_2}$, and $\normInline{A}_2 \defeq \normInline{A}_{\cS_\infty}$ for the \emph{nuclear}, \emph{Frobenius}, and \emph{spectral} or \emph{operator} norms, respectively. We define $\inmaxnorm{A} \defeq \max_{i, j} |A_{ij}|$ and $\norm{A}_{p \to q} \defeq \max_{\innorm{x}_p \le 1} \innorm{Ax}_q$. For $s, t \in [1, \infty]$, we define $\innorm{A}_{(s, t)} \defeq \innorm{a}_t$ where $[a]_i = \innorm{A_{i, :}}_s$ for $i \in [m]$, i.e., $\innorm{A}_{(s, t)}$ is the $\ell_t$-norm of the vector $a$ whose $i$-th entry is given by the $\ell_s$-th norm of the $i$-th row of $A$. We show this is indeed a norm for completeness in Proposition~\ref{prop:instance-dependent-is-norm}.

\paragraph{Problem setups.} We encapsulate the dgf setups (Definition~\ref{def:dgf-setup}) that we consider in the next definition.

\begin{definition}[$\ellTwoEllTwo$, $\ellTwoEllOne$, and $\ellOneEllOne$ setups]
    \label{def:matrix-vector-games-setups}
With $d \defeq n + m$, we refer to the tuples $(\xset, \yset, \xtrunc, \ytrunc, \cZint, \rx : \xset \to \R, \ry : \yset \to \R, \norm{\cdot} : \R^d \to \R, \Range)$ defined in Table \ref{table:setups} as the \emph{$\ellTwoEllTwo$, $\ellTwoEllOne$, and $\ellOneEllOne$ setups} respectively (in the case of $\ellTwoEllTwo$ setup, we do not need $\cZint$ or the truncated sets $\xtrunc$ and $\ytrunc$). In the context of these setups, we further define $\zset \defeq \xset \times \yset$, $\ztrunc \defeq \xtrunc \times \ytrunc$, and $r : \zset \to \R$ via $r(z) \defeq \rx(z\x) + \ry(z\y)$. Finally, we make the standard normalization assumptions $\inmaxnorm{A} \le 1$ in the $\ellOneEllOne$ setup and $\innorm{A}_{2 \to \infty} \le 1$ in the $\ellTwoEllOne$ setup. (Note that $\inmaxnorm{A}$ and $\innorm{A}_{2 \to \infty}$ are the respective Lipschitz constants of $\gm f_A$ under $\norm{\cdot}$.)
\end{definition}

\begin{table}[ht]
    \centering
    \begin{tabular}{ c c c c }
    \hline
    & \textbf{$\ell_2$-$\ell_2$} & \textbf{$\ell_2$-$\ell_1$} & \textbf{$\ell_1$-$\ell_1$} \\ \hline
    $\cX$     & $\mathbb{B}^n$     & $\mathbb{B}^n$    & $\Delta^n$    \\ 
    $\cY$     & $\mathbb{B}^m$     & $\Delta^m$  & $\Delta^m$    \\ 
    $\cX_\nu$     & -     & $\mathbb{B}^n$    & $\Delta_\nu^n$    \\ 
    $\cY_\nu$     & -     & $\Delta_\nu^m$  & $\Delta_\nu^m$   \\
    $\cZint$     & -     & $\mathbb{B}^n \times \Delta_{>0}^n $    & $\Delta_{>0}^n \times \Delta_{>0}^m$    \\ 
    $\rx(x)$     & $\frac{1}{2}\norm{x}_2^2$    & $\frac{1}{2}\norm{x}_2^2$    & $\sum_{i \in [n]} [x]_i \log([x]_i)$    \\
    $\ry(y)$     & $\frac{1}{2}\norm{y}_2^2$    & $\sum_{i \in [m]} [y]_i \log([y]_i)$    & $\sum_{i \in [m]} [y]_i \log([y]_i)$    \\
    $\norm{z}^2$     & $\norm{z}^2_2$    & $\norm{z\x}_2^2 + \norm{z\y}_1^2$     & $\norm{z\x}_1^2 + \norm{z\y}_1^2$  \\ 
    $\Range$     & 1    & $\frac{1}{2} + \log(m)$     & $\log(mn)$  \\ 
    $V^r_{z'}(z)$ & $\frac{1}{2} \norm{z-z'}_2^2$ & $\frac{1}{2} \norm{z\x - z'\x}^2_2 + \KL(z\y||z'\y)$ & $\KL(z || z')$\\
    \hline 
    \end{tabular}
    \caption{$\ell_2$-$\ell_2$, $\ell_2$-$\ell_1$, and $\ell_1$-$\ell_1$ setups (Definition~\ref{def:matrix-vector-games-setups}) and additional associated notation.}
    \end{table}\label{table:setups}

    For the $\ell_p$-$\ell_q$ setups for $p, q \in \inbraces{1, 2}$, we have that $(\xset, \norm{\cdot}_p, \rx)$, $(\yset, \norm{\cdot}_q, \ry)$, and $(\zset, \norm{\cdot}, r)$ are all dgf setups per Definition~\ref{def:dgf-setup}. Furthermore, the induced Bregman divergence over the latter (which we also give explicitly in Table \ref{table:setups} for ease of reference) is given by the summation of the Bregman divergences over the former, i.e., $\breg{z}{z'} = \xbreg{z\x}{z\x'} + \ybreg{z\y}{z\y'}$. Finally, $\Range$ in the setups is the range of $r$, implying in particular $\sup_{z \in \zset} \breg{z'}{z} \le \Range$ for $z' \defeq \argmin_{z \in \zset} r(z)$. (We will not need notation for the ranges of $\rx$ and $\ry$.)

    \paragraph{Monotone operators and proximal mappings.} For a dgf setup $(\zset, \norm{\cdot}, r)$ per Definition~\ref{def:dgf-setup}, we say an operator $g : \zset \to \R^d$ is \emph{$\alpha$-strongly monotone relative to $r$} if $(g(z') - g(z))^\top (z' - z) \ge \alpha \breg{z'}{z}$ for all $z, z' \in \zset$. We say an operator is \emph{monotone} if it is 0-strongly monotone. Next, we define the proximal mappings we will use in the following definition:

    \begin{definition}[Proximal mappings]
        \label{def:proximal-mappings}
        For a given dgf setup $(\zset, \norm{\cdot}, r)$, continuous monotone operator $g : \zset \to \R^d$, points $z, w \in \zset$, regularization levels $\lambda > 0, \mu \ge 0$, and compact, convex $\zset' \subseteq \zset$, we let $\prox_{z, w}^{\lambda, \mu}(g; \zset')$ denote the unique $z' \in \zset'$ such that
        \begin{align*}
             g(z')^\top (z' - u) \le \lambda \insquare{\breg{z}{u} - \breg{z'}{u} - \breg{z}{z'}} + \mu \insquare{\breg{w}{u} - \breg{z'}{u} - \breg{w}{z'}} ~~\text{for all $u \in \zset'$},
        \end{align*}
        and similarly let $\prox_{z}^{\lambda}(g; \zset')$ denote $\prox_{z,z}^{\lambda,0}(g; \zset')$, i.e., the unique $z' \in \zset'$ such that
        \begin{align}
            \label{eq:prox-single}
            g(z')^\top (z' - u) \le \lambda \insquare{\breg{z}{u} - \breg{z'}{u} - \breg{z}{z'}} ~~\text{for all $u \in \zset'$}.
        \end{align}
        We drop $\zset'$ (e.g., writing $\prox_{z, w}^{\lambda, \mu}(g)$) when $\zset' = \zset$ for brevity. Furthermore, in the context of the input to a proximal mapping, we may write a vector $v \in \R^d$ as a stand-in for the associated constant operator $z \mapsto v$.
        
        As an example (which, e.g., is used in \eqref{eq:update}), supposing $g : \zset \to \R^d$ is a continuous monotone operator and $v \in \R^d$, then $\prox_{z}^{\lambda}(v + g; Z')$ denotes the unique $z' \in \zset'$ such that
        \begin{align*}
            (v + g(z'))^\top (z' - u) \le \lambda \insquare{\breg{z}{u} - \breg{z'}{u} - \breg{z}{z'}} ~~\text{for all $u \in \zset'$}.
        \end{align*}
    \end{definition}

    Note that the proximal mappings above are all solutions to continuous, strongly monotone variational inequalities, thereby guaranteeing existence and uniqueness (e.g., \cite{facchinei2003finitevariational}). Indeed,
    since Bregman divergences satisfy the following standard identity (e.g., \cite[Sec. 3.1]{carmon2019variance}):
    \begin{align*}
        - \grad \breg{z}{z'}^\top  (z' - u) = \breg{z}{u} - \breg{z'}{u} - \breg{z}{z'}\text{ for all }
        z, z', u \in \zset\,,
    \end{align*}
    the condition \eqref{eq:prox-single} is for example equivalent to 
    \begin{align}
        \label{eq:equivalent-prox-condition}
        (g(z') + \lambda \grad \breg{z}{z'})^\top (z' - u) \le 0 ~~\text{for all $u \in \zset'$},
    \end{align}
    where in general $\grad \breg{z}{z'} = \grad r(z') - \grad r(z)$ denotes the gradient of $u \mapsto \breg{z}{u}$ evaluated at $z'$. We define the proximal mappings as in \Cref{def:proximal-mappings} to enable more direct use in our applications.

    \paragraph{Convex-concave functions, gradient mappings, and regret.}  The gradient mapping $\gm f$ of a differentiable convex-concave function $f : \xset \times \yset \to \R$ is a monotone operator. 
    Furthermore, a fact we will use throughout this paper is that if $\rx : \xset \to \R$ and $\ry : \yset \to \R$ are distance-generating functions with $r(z) \defeq r\x(z\x) + r\y(z\y)$ a distance-generating function on $\zset = \xset \times \yset$ (namely, as in the setups of Definition~\ref{def:matrix-vector-games-setups}), then $\prox_z^\alpha(\gm f)$ is the exact solution of the following (see, e.g., Section 3.1 in \cite{carmon2019variance}):
    \begin{align*}
        \min_{x \in \xset} \max_{y \in \yset} f(x, y) + \alpha \xbreg{z\x}{x} - \alpha \ybreg{z\y}{y}.
    \end{align*}

    Finally, the following standard reduction (proven in Appendix~\ref{apx:regret-lemmas} for completeness) reduces computing an $\epsilon$-solution (Definition~\ref{def:epsilon-solution}) to the problem of computing a sequence of points $w^1, \ldots, w^T$ with $\epsilon$-average regret with respect to the gradient mapping. Our algorithms will compute such sequences to obtain $\epsilon$-solutions.

    \begin{restatable}{lemma}{regretbounderror}\label{lemma:regret-bounds-error} Let $f: \cX \times \cY \to \R$ be a differentiable convex-concave function over compact, convex sets $\xset \subset \R^n$ and $\yset \subset \R^m$, with $\zset \defeq \xset \times \yset$. Then for any $w^1, \ldots, w^T \in \cZ$ and $\rho_1, \dots, \rho_T > 0$, letting $\Lambda \defeq \sum_{t \in [T]} \rho_t$ and $\wbar \defeq \frac{1}{\Lambda} \sum_{t \in [T]} \rho_t w^t$, we have
        \begin{align}\label{eq:regret}
            \gap(\wbar) \leq \regret_{\gm f}(\inbraces{w^t, \rho_t}) \defeq \sup_{u \in \cZ} \inbraces*{ \frac{1}{\Lambda} \sum_{t \in [T]} \rho_t \nabla_\pm f(w^t)^\top (w^t - u)}, 
        \end{align}
        where $\regret_{\gm f}(\inbraces{w^t, \rho_t})$ is called the \emph{regret of the sequence $w^1, \ldots, w^T$ (with respect to the operator $\gm f$ and the weights $\inbraces{\rho_t}$).}
    \end{restatable}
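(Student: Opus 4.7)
The plan is to prove this lemma by a standard two-step application of convexity-concavity: first a Jensen-style step to swap the averaging with the $\sup/\inf$ operations in the gap, and then the first-order (subgradient) inequality at each individual iterate $w^t$ to convert function-value differences into inner products with $\gm f(w^t)$.

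First I would unpack the definition of $\gap(\wbar)$ as $\sup_{y' \in \yset} f(\wbar\x, y') - \inf_{x' \in \xset} f(x', \wbar\y)$. Since $f$ is convex in its first argument and $\wbar\x = \tfrac{1}{\Lambda} \sum_t \rho_t w^t\x$ is a convex combination of the $w^t\x$'s, convexity yields $f(\wbar\x, y') \le \tfrac{1}{\Lambda} \sum_t \rho_t f(w^t\x, y')$ for every fixed $y'$. Symmetrically, concavity in the second argument gives $f(x', \wbar\y) \ge \tfrac{1}{\Lambda} \sum_t \rho_t f(x', w^t\y)$. Subtracting and taking the $\sup/\inf$ over $y' \in \yset$ and $x' \in \xset$ produces the bound
\begin{align*}
    \gap(\wbar) \le \sup_{(x', y') \in \zset} \frac{1}{\Lambda} \sum_{t \in [T]} \rho_t \insquare{ f(w^t\x, y') - f(x', w^t\y) }.
\end{align*}

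Next I would bound each term $f(w^t\x, y') - f(x', w^t\y)$ by adding and subtracting $f(w^t\x, w^t\y)$ and applying the first-order inequalities coming from convexity-concavity at $w^t$: concavity in $y$ gives $f(w^t\x, y') - f(w^t\x, w^t\y) \le \grady f(w^t)^\top (y' - w^t\y)$, while convexity in $x$ gives $f(w^t\x, w^t\y) - f(x', w^t\y) \le \gradx f(w^t)^\top (w^t\x - x')$. Summing these two inequalities and recalling $\gm f(w^t) = (\gradx f(w^t), -\grady f(w^t))$, the right-hand side collapses exactly to $\gm f(w^t)^\top (w^t - u)$ for $u = (x', y')$.

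Finally, I would plug this bound into the previous display, interchange the $\sup_u$ with the finite sum over $t$ (valid because the weights $\rho_t$ are nonnegative), and recognize the resulting expression as $\regret_{\gm f}(\{w^t, \rho_t\})$ by its definition in \eqref{eq:regret}. No step here is truly hard; the only thing to be careful about is the sign bookkeeping when converting between $\grady f$ and the dual-sign gradient mapping $\gm f$, which is why writing $\gm f(w^t)^\top (w^t - u)$ out componentwise once is the cleanest way to finish the argument.
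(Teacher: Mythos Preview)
Your proposal is correct and uses essentially the same two ingredients as the paper's proof—Jensen's inequality from convexity-concavity and the first-order (subgradient) inequalities at each $w^t$—just applied in the reverse order (you Jensen first, then linearize; the paper linearizes first, then Jensens). One minor quibble: there is no ``interchange of $\sup_u$ with the sum'' in your last step—once you plug the pointwise bound into the display the $\sup_u$ is already outside the sum and you are done.
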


\section{Warmup: algorithms for $\ell_2$-$\ell_2$ composite games}\label{sec:l2l2}

In this section, we consider the problem computing an $\epsilon$-solution of an $\ell_2$-$\ell_2$ composite game
\begin{align*}
    \min_{x \in \ball^n} \max_{y \in \ball^m} f_A(x, y) + \phi(x, y)
\end{align*}
for differentiable, convex-concave $\phi : \ball^n \times \ball^m \to \R$. Throughout this section (Section~\ref{sec:l2l2}), we operate in the $\ellTwoEllTwo$ setup (Definition~\ref{def:matrix-vector-games-setups}).
In Section~\ref{subsec:mirror-prox-step}, we describe how to use proximal mappings (Definition~\ref{def:proximal-mappings})
to construct a smooth-until-guilty composite mirror prox step. In Section~\ref{subsec:smooth-until-proven-guilty}, we leverage this as a subroutine to ultimately prove Theorem~\ref{intro:l2l2}. 

\subsection{Smooth-until-guilty composite mirror prox steps}\label{subsec:mirror-prox-step}

We first introduce a single smooth-until-guilty mirror prox step for the monotone operator $\nabla_\pm (\bilinear{B} + \psi)$, where $B \in \R^{m \times n}$ and $\psi$ is any differentiable convex-concave function over $\cZ$ (recall the notation $f_B(x, y) \defeq y^\top B x$). We assume access to a matrix-vector oracle for $B$ (Definition~\ref{def:mat-vec-oracle}) and explicit access to $\psi$. 

The following algorithm ($\proxStep(z, \tau, B, \psi; \judge)$, Algorithm~\ref{alg:mirror-prox-iteration}) implements a smooth-until-guilty mirror prox step (which consists of two proximal steps) and outputs the results along with a flag (which takes value $\guilty$ or $\smooth$) and a matrix $D \in \R^{m \times n}$ that is computed using a $\judge$~subroutine. Intuitively, this $\judge$~subroutine is responsible for identifying whether or not the prox steps for computing $w, z'$ (Lines~\ref{line:first-step} and~\ref{line:second-step}) meet the \emph{smoothness requirements} to make progress in reducing regret (see Lemma~\ref{lemma:regret-bounds-error} for the definition of regret). If they do, $\judge$~returns \smooth, $D = 0$, and $L=\none$. If not, $\judge$~returns \guilty~along with a matrix $D$ and $L > \tau$. This matrix $D$ will have the property that $\normInline{B - D}_{\cS_p}^p \leq \normInline{B}_{\cS_p}^p - L^p$, for an appropriate Schatten-$p$ norm, which may depend on the implementation of $\judge$.

We discuss implementation details of $\judge$~later, in Section~\ref{subsec:smooth-until-proven-guilty}. However, for now, we focus on proving general guarantees about Algorithm~\ref{alg:mirror-prox-iteration}. The following \Cref{lemma:mirror-prox-step-guarantee} shows that $\proxStep(z, \tau, B, \psi; \judge)$ either makes progress in bounding the regret of $w$ or else discovers a pair of unit vectors with large bilinear form in $B$, capturing the intuition laid out in our discussion of the smooth-until-proven-guilty approach from \Cref{sec:overview-of-approach}. 

\RestyleAlgo{ruled}\label{alg:mirror-prox-iteration}
\SetKwComment{Comment}{/* }{ */}
\begin{algorithm2e}[ht]
    \DontPrintSemicolon
\caption{Smooth-until-guilty mirror prox step $\proxStep(z, \tau, B, \psi; \judge)$}
\KwInput{Center $z \in \zset$, smoothness threshold $\tau > 0$}
\KwInput{Matrix-vector oracle for $B \in \R^{m \times n}$, differentiable convex-concave $\psi: \cZ \to \R$.} 
\KwParameter{Judge function $\judge$} 
\tcp{Perform a composite mirror prox step, consisting of two composite proximal steps }
$w \gets \proxStepSimple{z}{\tau}{\nabla_\pm \bilinear{B}(z) + \nabla_\pm \psi}$ \label{line:first-step}\;
$z' \gets \proxStepSimple{z}{\tau}{\nabla_\pm (\bilinear{B}+\psi)(w)}$ \label{line:second-step}\; 
$z^1 \gets ({w\x - z'\x}, {w\y - z\y},)$ and $z^2 \gets ({z\x - w\x}, {w\y - z'\y})$\; 
$(\flag, D) \gets \judge(z^1, z^2, B, \tau)$ \label{line:judgement} \tcp*{Use a judge to evaluate smoothness} 
\Return{$(w, z', \flag, D)$}
\end{algorithm2e}

\begin{lemma}\label{lemma:mirror-prox-step-guarantee} For $z \in \zset$, $\tau > 0$, $B \in \R^{m \times n}$, and differentiable convex-concave $\psi: \zset \to \R$, letting $(w, z', \flag, D) \defeq \proxStep(z, \tau, B, \psi; \judge)$ (Algorithm~\ref{alg:mirror-prox-iteration}), we have either 
\begin{align}\label{eq:smooth-equation}
    \nabla_\pm (\bilinear{B}+\psi)(w)^\top (w-u) \leq \tau \cdot [\breg{z}{u} -\breg{z'}{u}], ~~~\text{for all } u \in \cZ
\end{align}
or else
\begin{align}\label{eq:alternative}
    {z^1\y}^\top B z^1\x \geq \tau \normInline{z^1\y}_2 \normInline{z^1\x}_2
    ~~~\text{or}~~~
    {z^2\y}^\top B z^2\x\geq \tau \normInline{z^2\y}_2 \normInline{z^2\x}_2\,. 
\end{align} 
Furthermore, the algorithm can be implemented with $O(1)$-matrix-vector queries, plus the additional matrix-vector queries made by the $\judge$ subroutine in Line~\ref{line:judgement}.
\end{lemma}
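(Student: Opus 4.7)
The plan is to establish the dichotomy by carefully chaining the two proximal inequalities and then invoking AM-GM, exactly as in the derivation \eqref{eq:sug} sketched in the overview. I will not touch the $\judge$ subroutine at all, since the flag/matrix it returns do not appear in the conclusion: the dichotomy is a purely geometric property of the pair of prox steps performed on Lines~\ref{line:first-step}-\ref{line:second-step}.

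\textbf{Step 1 (prox inequalities).} From Definition~\ref{def:proximal-mappings}, the step on Line~\ref{line:first-step} gives, for every $u \in \zset$,
\[
(\gm f_B(z) + \gm\psi(w))^\top (w-u) \le \tau\inparen{\breg{z}{u} - \breg{w}{u} - \breg{z}{w}},
\]
and the step on Line~\ref{line:second-step} (whose input is the constant vector $\gm(f_B+\psi)(w)$) gives
\[
\gm(f_B+\psi)(w)^\top(z'-u) \le \tau\inparen{\breg{z}{u} - \breg{z'}{u} - \breg{z}{z'}}.
\]

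\textbf{Step 2 (reduce to a relative-Lipschitz residual).} I will decompose $\gm(f_B+\psi)(w)^\top(w-u) = \gm(f_B+\psi)(w)^\top(z'-u) + \gm(f_B+\psi)(w)^\top(w-z')$. The first summand is controlled by the second prox inequality. For the second, I write $\gm(f_B+\psi)(w) = (\gm f_B(z) + \gm\psi(w)) + (\gm f_B(w)-\gm f_B(z))$ and apply the first prox inequality with $u = z'$. Adding everything and cancelling the $\pm\tau\breg{z}{z'}$ terms yields
\[
\gm(f_B+\psi)(w)^\top(w-u) \le \tau\inparen{\breg{z}{u} - \breg{z'}{u}} \;-\; \tau\inparen{\breg{w}{z'} + \breg{z}{w}} \;+\; (\gm f_B(w)-\gm f_B(z))^\top(w-z').
\]
Thus \eqref{eq:smooth-equation} follows whenever the final residual is bounded by $\tau\inparen{\breg{w}{z'} + \breg{z}{w}}$, i.e., a relative-Lipschitz condition in the sense of \cite{cohen2020relative}.

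\textbf{Step 3 (expand the residual, then AM-GM).} Since $\gm f_B(x,y) = (B^\top y, -Bx)$, a direct expansion gives
\[
(\gm f_B(w) - \gm f_B(z))^\top(w-z') = (w\y-z\y)^\top B(w\x-z'\x) + (w\y-z'\y)^\top B(z\x-w\x) = {z^1\y}^\top B z^1\x + {z^2\y}^\top B z^2\x,
\]
by the definitions of $z^1$ and $z^2$ in Algorithm~\ref{alg:mirror-prox-iteration}. In the $\ellTwoEllTwo$ setup $\breg{a}{b} = \tfrac12\norm{a-b}_2^2$, so
\[
\tau\inparen{\breg{w}{z'}+\breg{z}{w}} = \tfrac{\tau}{2}\inparen{\norm{z^1\x}_2^2 + \norm{z^1\y}_2^2 + \norm{z^2\x}_2^2 + \norm{z^2\y}_2^2} \ge \tau\inparen{\norm{z^1\x}_2\norm{z^1\y}_2 + \norm{z^2\x}_2\norm{z^2\y}_2}
\]
by $a^2+b^2\ge 2ab$ applied separately to each pair. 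Contrapositively, if \emph{both} strict opposites of \eqref{eq:alternative} hold, then summing them and chaining with the AM-GM bound above shows the residual is strictly less than $\tau\inparen{\breg{w}{z'}+\breg{z}{w}}$, which combined with Step 2 gives \eqref{eq:smooth-equation}. Hence failure of \eqref{eq:smooth-equation} forces at least one inequality in \eqref{eq:alternative}.

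\textbf{Step 4 (query count).} The only calls to the matrix-vector oracle for $B$ are to form $\gm f_B(z)$ on Line~\ref{line:first-step} and $\gm f_B(w)$ on Line~\ref{line:second-step} (each being a single oracle query, since the oracle returns $(Bx, B^\top y)$ simultaneously); everything else in the two prox subproblems only involves $\psi$ and the Bregman divergence of $r$, which are accessed explicitly. This gives $O(1)$ queries, on top of whatever $\judge$ consumes on Line~\ref{line:judgement}. The only real subtlety in the whole argument is the bookkeeping in Step 2 -- correctly aligning the two prox inequalities so that $\breg{z}{z'}$ cancels and the $\breg{w}{z'}, \breg{z}{w}$ terms survive -- after which Steps 3-4 are routine; this is also exactly the spot where the Euclidean nature of $r$ in the $\ellTwoEllTwo$ setup is essential, as highlighted in Section~\ref{sec:overview-of-approach}.
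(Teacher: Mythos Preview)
Your proof is correct and follows essentially the same approach as the paper. Both arguments chain the two prox inequalities (you decompose the target expression and then apply the inequalities, the paper sums the inequalities with $u'=z'$ first and then rearranges), reduce to the same relative-Lipschitz residual $(\gm f_B(w)-\gm f_B(z))^\top(w-z') \le \tau(\breg{w}{z'}+\breg{z}{w})$, expand it via $\gm f_B(x,y)=(B^\top y,-Bx)$ into the ${z^i\y}^\top B z^i\x$ terms, and finish with the same AM-GM bound on the Euclidean Bregman divergences; the paper phrases the final step as a direct case split rather than a contrapositive, but the logic is identical.
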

\begin{proof} Applying Definition~\ref{def:proximal-mappings} to each composite proximal step (Lines~\ref{line:first-step} and~\ref{line:second-step}) individually, yields that that for all $u, u' \in \cZ$,
\begin{align}
    \frac{1}{\tau} \nabla_\pm \bilinear{B}(z)^\top (w-u') + \frac{1}{\tau}\nabla_\pm \psi(w)^\top (w-u') &\leq \breg{z}{u'} - \breg{w}{u'} - \breg{z}{w}, \label{eq:first-inequality-1}\\ \notag
    \frac{1}{\tau} \nabla_\pm \bilinear{B}(w)^\top (z'-u) + \frac{1}{\tau} \nabla_\pm \psi(w)^\top (z' - u) &\leq \breg{z}{u} - \breg{z'}{u} - \breg{z}{z'}.
\end{align}
Setting $u' = z'$ in \eqref{eq:first-inequality-1} and summing yields,
\begin{align*}
    &\frac{1}{\tau} \Brac{\nabla_\pm \bilinear{B}(z)^\top (w-z') + \nabla_\pm \psi(w)^\top (w-u) + \nabla_\pm \bilinear{B}(w)^\top (z'-u)} \\
    \leq & \breg{z}{u} - \breg{z'}{u} 
        - ( \breg{w}{z'} + \breg{z}{w} ). 
\end{align*}
Now, consider two cases. First, suppose that
\begin{align}\label{eq:successful-step-1}
    (\nabla_\pm \bilinear{B}(w) - \nabla_\pm \bilinear{B} (z))^\top(w - z') \leq \tau(\breg{w}{z'} + \breg{z}{w}). 
\end{align}
Then, combining the two preceding displays, we obtain 
\begin{align*}
    &\frac{1}{\tau} \Brac{\nabla_\pm \bilinear{B}(z)^\top (w-z') + \nabla_\pm \psi(w)^\top (w-u) + \nabla_\pm \bilinear{B}(w)^\top (z'-u)} \\
    \leq &\breg{z}{u} - \breg{z'}{u} - ( \breg{w}{z'} + \breg{z}{w} )
    \\
    \leq &\breg{z}{u} - \breg{z'}{u} -\frac{1}{\tau} (\nabla_\pm \bilinear{B}(w) - \nabla_\pm \bilinear{B}(z))^\top(w - z'). 
\end{align*}
Consequently, we have
\begin{align*}
       \frac{1}{\tau} \Brac{\nabla_\pm (\bilinear{B}+\psi)(w)^\top (w-u)} 
       =&\frac{1}{\tau} \Brac{\nabla_\pm \bilinear{B}(z)^\top (w-z') + \nabla_\pm \psi(w)^\top (w-u) + \nabla_\pm \bilinear{B}(w)^\top (z'-u)} \\
       & \spaceeq +  \frac{1}{\tau} \Brac{(\nabla_\pm \bilinear{B}(w) - \nabla_\pm \bilinear{B}(z))^\top(w - z')} \\
       \leq &\breg{z}{u} - \breg{z'}{u}. 
\end{align*}
Multiplying through by $\tau$, we obtain
\begin{align*}
    \nabla_\pm (\bilinear{B}+\psi)(w)^\top (w-u) \leq \tau \cdot [\breg{z}{u} - \breg{z'}{u}]. 
\end{align*}

On the other hand, suppose instead that \eqref{eq:successful-step-1} does not hold. Then, it must be the case that 
\begin{align}\label{eq:unsuccessful-step}
    \frac{1}{\tau} (\nabla_\pm \bilinear{B}({w}) - \nabla_\pm \bilinear{B}(z))^\top({w} - z') &> \breg{w}{z'} + \breg{z}{w} \\ \label{eq:unsuccessful-step-2}
    &= \frac{1}{2}\normInline{z'-{w}}_2^2 + \frac{1}{2}\normInline{{w}-z}_2^2. 
\end{align}
where \eqref{eq:unsuccessful-step-2} is by the definition of the $\ell_2$-$\ell_2$ setup (Definition~\ref{def:matrix-vector-games-setups}). 

Now, recall that for any $z' \in \cZ$, $\nabla_\pm \bilinear{B}(z') = (B^\top z'\y, -B z'\x)$. Thus, we observe that 
\begin{align*}
    (\nabla_\pm \bilinear{B}({w}) - \nabla_\pm \bilinear{B}(z))^\top ({w} - {z'}) &= \begin{pmatrix}
        B^\top (w-z)\y \\
        -B (w-z)\x
    \end{pmatrix}^\top \begin{pmatrix}
        w\x - z'\x \\
        w\y - z'\y
    \end{pmatrix} \\
    &= 
    ({{w}}\y - {z}\y)^\top B ({{w}}\x - {z'\x}) + ({{w}}\y - {z'\y})^\top B ({z}\x - {{w}}\x). 
\end{align*}
Thus, expanding out \eqref{eq:unsuccessful-step} implies that 
\begin{align*}
    &\frac{1}{\tau}({{w}}\y - {z\y})^\top B ({{w}}\x - {z'\x}) + \frac{1}{\tau}
    ({{w}}\y - {z'\y})^\top B ({z}\x - {{w}}\x) \\
    > &\frac{1}{2} \norm{{w}\x-{z'\x}}_2^2 + \frac{1}{2} \norm{{w}\y-{z'\y}}_2^2 + \frac{1}{2} \norm{{w}\x-z\x}_2^2 + \frac{1}{2}\norm{{w}\y-z\y}_2^2 \\
    > &\norm{{{w}}\y - {z}\y}_2 \norm{{{w}}\x - {{z'\x}}}_2 + \norm{{{w}}\y - {{z'\y}}}_2 \norm{{z}\x - {{w}}\x}_2, 
\end{align*}
where in the last line, we used that $\frac{1}{2}(a^2+b^2) \geq ab$ for any $a, b \in \R$. Therefore, either
\begin{align*}
    \frac{({{w}}\y - {z}\y)^\top B ({{w}}\x - {{z'\x}})}{\norm{{{w}}\y - {z}\y}_2 \norm{{{w}}\x - {{z'\x}}}_2} \geq \tau, \quad \text{ or } \quad \frac{({{w}}\y - {z'\y})^\top B ({z}\x - {{w}}\x)}{\norm{{{w}}\y - {{z'\x}}}_2 \norm{{w}\x - {{z}}\x}_2} \geq \tau. 
\end{align*}

Finally, the matrix-vector query complexity follows from noting that each of the prox steps in Lines~\ref{line:first-step} and~\ref{line:second-step} simply requires one matrix-vector query to $B$ (to evaluate $\nabla_\pm f_B(z), \nabla_\pm f_B(w)$).
\end{proof}

Note that when $\tau \leq \normInline{A}_2$, it is easy to see that \eqref{eq:smooth-equation} always holds (because \eqref{eq:alternative} \emph{cannot} hold) and in this case, the guarantees of Lemma~\ref{lemma:mirror-prox-step-guarantee} match the standard analysis of mirror prox \cite{nem04}. However, Lemma~\ref{lemma:mirror-prox-step-guarantee} differs from the standard analysis of mirror prox for $\ell_2$-$\ell_2$ games in that it provides a fine-grained guarantee, \emph{even} when $\tau > \normInline{A}_2$.

\subsection{Smooth-until-guilty composite mirror prox}\label{subsec:smooth-until-proven-guilty}

In this section, we show how to use the smooth-until-guilty composite mirror prox steps from Section~\ref{subsec:mirror-prox-step} to obtain $\epsilon$-solutions for $\ell_2$-$\ell_2$ composite games. First, we will formalize our requirements of the $\judge$~subroutine in Algorithm~\ref{alg:mirror-prox-iteration} as follows.  

\begin{definition}[Smooth-guilty judge]\label{def:smooth-guilty-judge} For $p \in [1, \infty)$, an algorithm $\judge$~is a $p$-smooth-guilty judge if for any $z, z' \in \R^d$, $B > 0$, $\tau > 0$, we have that $(\flag, D) = \judge(z, z', B, \tau)$ makes $O(1)$ queries to a matrix-vector oracle for $B$ and satisfies the following: If
\begin{align*}
    \max\paren{ \frac{{z}\y^\top B {z}\x}{\normInline{{z}\y}_2\normInline{{z}\x}_2}, \frac{{z'\y}^\top B z'\x}{\normInline{{z'\x}}_2\normInline{{z'\y}}_2} } > \tau, 
\end{align*}
then $\flag = \guilty$, and $\normInline{B-D}_{\cS_p}^p \leq \normInline{B}_{\cS_p}^p - \tau^p$. Otherwise, $\flag = \smooth$ and $D = 0$. 
\end{definition} 

The following Algorithm~\ref{alg:judge-l2l2} provides a simple implementation of a $p$-smooth-guilty judge for any $p \in [1, \infty)$. The algorithm takes in two vectors $z, z' \in \cZ$, a matrix $B$, and a threshold $\tau$. It checks if components of $z, z'$ reveal a $\tau$-large component of $B$. If so, the algorithm projects a large component off of $B$, stores this low-rank component in a new matrix $D$, and returns a flag $\guilty$ and along with $D$. Otherwise, it returns $\smooth$ and $D = 0$.

\RestyleAlgo{ruled}\label{alg:judge-l2l2}
\SetKwComment{Comment}{/* }{ */}
\begin{algorithm2e}[ht]
\caption{$\judge_{\cS}(z, z', B, \tau)$}
\KwInput{Vectors $z, z' \in \R^d$, smoothness threshold $\tau > 0$.}
\KwInput{Matrix-vector oracle for a matrix $B \in \R^{m \times n}$.} 

\tcp{Check if pair of unit vectors has large bilinear form in $B$}

\lIf{$z\y^\top B z\x > \tau \normInline{z\y}_2\normInline{z\x}_2 \label{line:if1}$}{
    $v \gets \normalize(z\y)$ and  $u \gets \normalize(z\x)$
}
\lElseIf{${z'\y}^\top B z'\x > \tau \normInline{z'\y}_2\normInline{z'\x}_2$ \label{line:if2}}{
    $v \gets \normalize(z'\y)$ and $u \gets \normalize(z'\x)$
}
\lElse{
    \Return{$(\smooth, 0)$}
}
\tcp{ Implemented with 3 matrix-vector queries as $D \gets vv^\top B + Buu^\top  - (u^\top B v) uv^\top$ }
$D \gets  B - (I-vv^\top) B (I - uu^\top)$\label{line:two-sided}\; 
\Return{$({\guilty}, D)$}
\end{algorithm2e}

Lemma~\ref{lemma:smooth-guilty-judge} proves that the algorithm implements a $p$-smooth guilty judge (Definition~\ref{def:smooth-guilty-judge}). The proof utilizes the following Pythagorean-type identity for Schatten norms from \cite{bakshi2022low}. We remark that for the special case of $p = 2$, other (simpler) implementations of a $2$-smooth-guilty judge are possible. We discuss this further in Section~\ref{sec:four-fifths} as well as Appendix~\ref{apx:other-judge}. 

\begin{lemma}[Lemma 5.5 of \cite{bakshi2022low}, restated]\label{lemma:pythagorean-theorem} If $x \in \B^n, y \in \B^m$ and $B \in \R^{m \times n}$, then
\begin{align*}
    \normInline{(I - yy^\top) B (I - xx^\top)}_{\cS_p}^p \leq \normInline{B}_{\cS_p}^p - \normInline{yy^\top B xx^\top}_{\cS_p}^p. 
\end{align*}
\end{lemma}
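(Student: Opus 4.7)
The plan is to reduce the claim to a rectangular pinching inequality after an orthonormal change of basis. First, dispose of the degenerate cases $y = 0$ or $x = 0$: if $y = 0$, the claim becomes $\normInline{B(I - xx^\top)}_{\cS_p}^p \le \normInline{B}_{\cS_p}^p$, which follows from contractivity of Schatten norms under multiplication by an operator of operator norm at most $1$, since $I - xx^\top$ has eigenvalues $1$ (with multiplicity $n-1$) and $1 - \norm{x}_2^2 \in [0,1]$. The case $x = 0$ is symmetric.

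The bulk of the work is the unit-vector case $y = \hat y, x = \hat x$ with $\norm{\hat y}_2 = \norm{\hat x}_2 = 1$. (This is the only case actually needed downstream, since Algorithm~\ref{alg:judge-l2l2} invokes the lemma at explicitly normalized $v, u$.) Pick orthonormal bases of $\R^m$ and $\R^n$ whose first basis vectors are $\hat y$ and $\hat x$ respectively. Since Schatten norms are unitarily invariant, work in these coordinates, where $B$ takes $2 \times 2$ block form: a scalar $a = \hat y^\top B \hat x$ in the top-left, a row $b^\top$ in the top-right, a column $c$ in the bottom-left, and a matrix $D$ in the bottom-right. In these coordinates, $\hat y \hat y^\top B \hat x \hat x^\top$ places $a$ in the top-left corner with all other entries zero, while $(I - \hat y \hat y^\top) B (I - \hat x \hat x^\top)$ places $D$ in the bottom-right block with all other entries zero. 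Using that a rank-one matrix's only nonzero singular value equals the absolute value of its only nonzero entry, the target inequality reduces to $|a|^p + \norm{D}_{\cS_p}^p \le \norm{B}_{\cS_p}^p$.

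The key step is a rectangular pinching inequality: for any block matrix $M$ partitioned as $[M_{ij}]_{i,j=1,2}$, one has $\norm{\diag(M_{11}, M_{22})}_{\cS_p} \le \norm{M}_{\cS_p}$. I would prove this by passing to the Hermitian dilation $\widetilde M$ whose off-diagonal blocks are $M$ and $M^\top$. After a simultaneous row/column permutation that groups the indices associated with $M_{11}$ and the indices associated with $M_{22}$, $\widetilde M$ becomes a $2 \times 2$ Hermitian block matrix whose diagonal blocks are precisely the Hermitian dilations of $M_{11}$ and $M_{22}$; the classical Hermitian pinching inequality (e.g., Bhatia, \emph{Matrix Analysis}) then gives the desired contraction, with the factor $\norm{\widetilde M}_{\cS_p}^p = 2 \norm{M}_{\cS_p}^p$ appearing on both sides and cancelling. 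Applying this with $M = B$ and using that the singular values of a block-diagonal matrix are the union of those of each block yields $\norm{\diag(a, D)}_{\cS_p}^p = |a|^p + \norm{D}_{\cS_p}^p \le \norm{B}_{\cS_p}^p$, completing the unit-vector case.

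The main technical obstacle is the rectangular pinching step itself: while pinching is classical in the Hermitian setting, the rectangular analog requires the dilation-and-permutation argument above, and one must track the block sizes carefully (in particular, $M_{11}$ may be $1 \times 1$, non-square, or square of any dimension). A secondary wrinkle is extending from $\hat y, \hat x$ on the unit sphere to arbitrary $y, x$ in the unit ball: when $0 < \norm{y}_2 < 1$, the operator $I - yy^\top$ is only a positive contraction, not an orthogonal projection, so the decomposition above does not apply verbatim. One can handle this by expanding $I - yy^\top = (I - \hat y \hat y^\top) + (1 - \norm{y}_2^2) \hat y \hat y^\top$ and invoking convexity in $(\norm{y}_2^2, \norm{x}_2^2)$; alternatively, as noted, the unit-vector case is already sufficient for the applications in this paper.
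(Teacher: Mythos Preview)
The paper does not give its own proof of this lemma; it is simply restated from \cite{bakshi2022low} (Lemma~5.5 there) and used as a black box. Your proposal supplies a correct self-contained argument.

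The unit-vector reduction via orthonormal bases to a $2\times 2$ block form, followed by the rectangular pinching inequality $\normInline{M_{11}}_{\cS_p}^p + \normInline{M_{22}}_{\cS_p}^p \le \normInline{M}_{\cS_p}^p$ (which you derive from Hermitian pinching through the dilation-plus-permutation trick), is exactly right. Your convexity sketch for the general ball case also goes through: writing $\alpha = \norm{y}_2^2$ and $\beta = \norm{x}_2^2$, the map
\[
(\alpha,\beta)\;\longmapsto\;\normInline{(I-\alpha\,\hat y\hat y^\top)\,B\,(I-\beta\,\hat x\hat x^\top)}_{\cS_p}^p \;+\; (\alpha\beta)^p\,|\hat y^\top B\hat x|^p
\]
is separately convex in each variable (each summand is $t\mapsto t^p$ or $\normInline{\cdot}_{\cS_p}^p$ composed with an affine map), so its maximum over $[0,1]^2$ is attained at a corner, and all four corners are either degenerate or the unit-vector case already handled. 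You are also correct that the paper only ever invokes the lemma at normalized $u,v$ (Line~\ref{line:two-sided} of Algorithm~\ref{alg:judge-l2l2}), so the unit-vector case alone suffices for all downstream uses.
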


\begin{restatable}{lemma}{smoothguiltyjudgep} \label{lemma:smooth-guilty-judge} For any $p \in [1, \infty)$, $\judge_{\cS}$ (Algorithm~\ref{alg:judge-l2l2}) is a $p$-smooth-guilty-judge. 
\end{restatable}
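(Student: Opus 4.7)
The plan is to break into two cases based on whether the algorithm returns $\smooth$ or $\guilty$, with the main work concentrated in the $\guilty$ case where we invoke Lemma~\ref{lemma:pythagorean-theorem}. First, in the $\smooth$ case, neither of the checks on Lines~\ref{line:if1}--\ref{line:if2} succeeds, meaning both inner-product ratios are at most $\tau$. The algorithm returns $(\smooth, 0)$ directly, which immediately matches the second branch of Definition~\ref{def:smooth-guilty-judge}. The initial checks themselves need only $O(1)$ matrix-vector queries (two queries suffice to form $B z\x$ and $B^\top z\y$, from which both ratios are computable, plus the analogous pair for $z'$).

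In the $\guilty$ case, suppose without loss of generality that Line~\ref{line:if1} triggered, so $v = \normalize(z\y)$ and $u = \normalize(z\x)$ are unit vectors with
\[
v^\top B u \;=\; \frac{z\y^\top B z\x}{\|z\y\|_2 \|z\x\|_2} \;>\; \tau.
\]
The algorithm sets $D = B - (I - vv^\top) B (I - uu^\top)$, so $B - D = (I - vv^\top) B (I - uu^\top)$. Applying Lemma~\ref{lemma:pythagorean-theorem} directly gives
\[
\|B - D\|_{\cS_p}^p \;=\; \|(I - vv^\top) B (I - uu^\top)\|_{\cS_p}^p \;\le\; \|B\|_{\cS_p}^p - \|vv^\top B uu^\top\|_{\cS_p}^p.
\]
Since $v$ and $u$ are unit vectors, $vv^\top B uu^\top = (v^\top B u)\, v u^\top$ is a rank-one matrix whose unique nonzero singular value equals $|v^\top B u|$. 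Hence $\|vv^\top B uu^\top\|_{\cS_p}^p = |v^\top B u|^p > \tau^p$, completing the bound required by Definition~\ref{def:smooth-guilty-judge}. The case where only Line~\ref{line:if2} triggers is identical with $z$ replaced by $z'$.

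Finally, to see that the construction of $D$ uses only $O(1)$ matrix-vector queries, observe that one does not need $D$ in dense form: we only need to be able to apply the associated matrix-vector oracle later. Writing $D = vv^\top B + B uu^\top - (u^\top B v)\, v u^\top$ as in Line~\ref{line:two-sided}'s comment, we see that $D$ is specified by the vectors $B^\top v$, $Bu$, and the scalar $u^\top B v$, each of which is obtained from $O(1)$ queries to the matrix-vector oracle for $B$. Combined with the $O(1)$ queries made in the initial checks, the total is $O(1)$, as required. The main (modest) obstacle is simply recognizing that $vv^\top B uu^\top$ is a rank-one matrix so its Schatten-$p$ norm collapses to a scalar power, and that the cross-checks plus the implicit representation of $D$ both fit within $O(1)$ queries; neither step is deep, but both are needed to match Definition~\ref{def:smooth-guilty-judge} exactly.
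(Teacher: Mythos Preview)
Your proof is correct and follows essentially the same approach as the paper: both split into the $\smooth$/$\guilty$ cases, and in the $\guilty$ case invoke Lemma~\ref{lemma:pythagorean-theorem} together with the rank-one structure of $vv^\top B uu^\top$ to obtain $\|B-D\|_{\cS_p}^p \le \|B\|_{\cS_p}^p - \tau^p$, and both note that $D$ can be formed from $O(1)$ matrix-vector products.
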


\begin{proof} If neither of the if statements (Lines~\ref{line:if1} or \ref{line:if2}) execute, then the algorithm outputs $(\smooth, 0)$ as required. Otherwise, we have that $v^\top B u > \tau$, and we need to prove that $\norm{ B - D }_{\cS_p}^p \leq \norm{B}_{\cS_p}^p - \tau^p.$ First, note that $B - D = (I - vv^\top) B (I - uu^\top)$. Now, by Lemma~\ref{lemma:pythagorean-theorem}, we have 
\begin{align*}
    \norm{ B - D }_{\cS_p}^p &\leq \norm{B}_{\cS_p}^p - \norm{vv^\top B uu^\top}_{\cS_p}^p = \norm{B}_{\cS_p}^p - (v^\top B u)^p \norm{vu^\top}_{\cS_p}^p = \norm{B}_{\cS_p}^p - (v^\top B u)^p \\
    &\leq \norm{B}_{\cS_p}^p - \tau^p
\end{align*}
where the last step used that $\normInline{vu^\top}_{\cS_p} = 1$ because $u, v$ are unit vectors. Finally, the algorithm clearly requires only $O(1)$ matrix-vector queries to $B$ to compute $D = vv^\top B - Buu^\top - vv^\top B u u^\top.$ 
\end{proof}

Next, we introduce Algorithm~\ref{alg:mirror-prox-sug-l2l2}, our main algorithm for solving $\ell_2$-$\ell_2$ matrix-vector games. The algorithm initializes what we call a \emph{model} $M_0 = 0 \in \R^{m \times n}$ and sets $k = 0$, and initially works with the input matrix $A_0 = A$ and composite function $\psi_1 = \phi$. The algorithm then runs multiple iterations of a while loop. In each iteration, the algorithm performs a $\proxStep$ step (Algorithm~\ref{alg:mirror-prox-iteration}) with the current $A_k$ and composite part $\psi_k$. If the step returns $\guilty$, the algorithm \emph{updates the model} by $M_{k+1} \gets M_k + D$ and $A_{k+1} \gets A - M_{k+1}$ and the composite function by $\psi_{k+1} \gets \phi + f_{M_k}$. Then, the number of \emph{model-update iterations} (stored in the variable $k$) is incremented. Otherwise, if the step returns $\smooth$, the number of progress iterations (stored in the variable $j$) is incremented. 

\RestyleAlgo{ruled}
\DontPrintSemicolon
\SetKwComment{Comment}{/* }{ */}
\begin{algorithm2e}[h!]
\caption{Smooth-until-guilty mirror prox for $\ell_2$-$\ell_2$ composite matrix-vector games}\label{alg:mirror-prox-sug-l2l2}
\KwInput{Smoothness threshold $\tau$} 
\KwInput{Matrix-vector oracle for $A$ and differentiable convex-concave function $\phi: \cX \to \R$}
\KwParameter{Judge function $\judge$.} 
$j \gets 0, ~k \gets 0$\; 
$z^1 \gets \argmin_{z \in \cZ}r(z)$\;
$\psi_0 \gets \phi, ~A_0 \gets A, ~M_0 \gets 0$ \tcp*{$M_0 \in \R^{m \times n}$}
\While{$j \leq J$ where $J =  \ceil{\tau/\epsilon}$}{
    \BlankLine
    $(w^{j}, z^{j+1}, \modelUpdateStep, D) \gets \proxStep(z^{j}, \tau, A_k, \psi_k; \judge)$     \tcp*{Smooth-until-guilty mirror prox step}
    
    \BlankLine
    \If{$\flag = \guilty$}
    {\label{line:model-update}
        $M_{k+1} \gets M_k + D$ \tcp*{Addition implemented explicitly, since $M_k, D$ are explicit matrices}
        $A_{k+1} \gets A - M_{k+1}$ \tcp*{Subtraction implemented implicitly, since $A$ is not known explicitly}
        $\psi_{k+1} \defeq \phi + f_{M_{k+1}}$ \tcp*{Addition implemented explicitly, since $\phi, M_k$ are known explicitly} 
        $k \gets k + 1$ \tcp*{Increment the number of model-update iterations (for analysis only)}
    }
    
    \lElse(\tcp*[f]{Otherwise, increment the number of progress iterations}){
        $j \gets j + 1$ 
    }
}
\Return{$\frac{1}{J}\sum_{j \in [J]} w^j$}
\end{algorithm2e}

\begin{lemma}\label{thm:main-general-l2l2} Consider an $\ell_2$-$\ell_2$ composite game and let $p \in [1, \infty)$. Let $\judge$~be a $p$-smooth-guilty judge (e.g., Algorithm~\ref{alg:judge-l2l2}). Then, Algorithm~\ref{alg:mirror-prox-sug-l2l2} terminates after making $O(\normInline{A}^p_{\cS_p} / \tau^p + \tau/\epsilon)$-matrix-vector queries and returns an $\epsilon$-solution of the $\ell_2$-$\ell_2$ composite game. 
\end{lemma}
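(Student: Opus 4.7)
The plan is to decompose the iteration count of Algorithm~\ref{alg:mirror-prox-sug-l2l2} into guilty (model-update) iterations and smooth (progress) iterations, bound each separately, and combine the analysis via Lemma~\ref{lemma:regret-bounds-error}. Before either bound, I would establish the invariant maintained by the algorithm that $A_k + M_k = A$ and $\psi_k = \phi + f_{M_k}$, so $f_{A_k}(x,y) + \psi_k(x,y) = f_A(x,y) + \phi(x,y)$ and hence $\nabla_\pm(f_{A_k} + \psi_k) = \nabla_\pm(f_A + \phi)$ at every iteration. This invariant makes every call to $\proxStep(z^j, \tau, A_k, \psi_k; \judge)$ effectively a smooth-until-guilty mirror prox step for the gradient mapping of the original composite objective $f_A + \phi$.

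To bound the guilty iterations, I would apply Definition~\ref{def:smooth-guilty-judge}: when a call returns \guilty, the alternative \eqref{eq:alternative} of Lemma~\ref{lemma:mirror-prox-step-guarantee} must have triggered, so the judge contract gives the Schatten decrease $\|A_k - D\|_{\cS_p}^p \le \|A_k\|_{\cS_p}^p - \tau^p$. Since $A_{k+1} = A - M_{k+1} = A_k - D$ and $\|A_0\|_{\cS_p}^p = \|A\|_{\cS_p}^p$, telescoping and using nonnegativity of the Schatten norm shows that the total number of guilty iterations is at most $\|A\|_{\cS_p}^p / \tau^p$.

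For the smooth iterations, let $z^1, \ldots, z^{J+1}$ and $w^1, \ldots, w^J$ denote the chain of centers and iterates produced by the $J = \lceil \tau/\epsilon \rceil$ successful iterations, observing that guilty iterations hold $j$ fixed and overwrite $w^j, z^{j+1}$ on retry, so they do not contribute to this chain. By Lemma~\ref{lemma:mirror-prox-step-guarantee} together with the invariant above, each smooth iteration yields
\[
\nabla_\pm(f_A + \phi)(w^j)^\top (w^j - u) \le \tau\bigl[V^r_{z^j}(u) - V^r_{z^{j+1}}(u)\bigr] \quad \text{for all } u \in \cZ.
\]
Summing over $j \in [J]$, telescoping, and using $V^r_{z^1}(u) \le \Range = O(1)$ for the $\ellTwoEllTwo$ setup (from Table~\ref{table:setups}, since $z^1 = \argmin_{z \in \cZ} r(z)$) together with nonnegativity of $V^r_{z^{J+1}}(u)$ gives an average regret bound of at most $\tau \Range / J = O(\epsilon)$. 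Lemma~\ref{lemma:regret-bounds-error} then implies the averaged iterate $\frac{1}{J}\sum_{j \in [J]} w^j$ is an $O(\epsilon)$-saddle point, and rescaling $J$ by a constant yields an $\epsilon$-solution.

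Finally, combining the iteration counts, the while loop runs at most $O(\|A\|_{\cS_p}^p/\tau^p + \tau/\epsilon)$ times; each iteration uses $O(1)$ matrix-vector queries to $A_k$ by Lemma~\ref{lemma:mirror-prox-step-guarantee} and the $O(1)$-query requirement of a $p$-smooth-guilty judge, and since $M_k$ is stored explicitly each query to the implicit matrix $A_k = A - M_k$ costs $O(1)$ queries to the oracle for $A$ plus explicit arithmetic with $M_k$. The main bookkeeping care is ensuring the telescoping regret argument only references the chain of centers produced by successful prox steps, so that the chain is well-defined despite being interleaved with model-update iterations that leave $z^j$ unchanged; beyond this, the result is a direct combination of Lemma~\ref{lemma:mirror-prox-step-guarantee}, Definition~\ref{def:smooth-guilty-judge}, and Lemma~\ref{lemma:regret-bounds-error}.
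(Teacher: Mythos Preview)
Your proposal is correct and follows essentially the same approach as the paper: establish the invariant $A_k+M_k=A$ so that $\nabla_\pm(f_{A_k}+\psi_k)=\nabla_\pm(f_A+\phi)$, bound the number of guilty iterations by telescoping the Schatten-$p$ decrease from Definition~\ref{def:smooth-guilty-judge}, bound the regret over the $J$ smooth iterations by telescoping the divergence from Lemma~\ref{lemma:mirror-prox-step-guarantee}, and conclude via Lemma~\ref{lemma:regret-bounds-error}. Your bookkeeping about guilty iterations overwriting $w^j,z^{j+1}$ without advancing $j$ is a helpful clarification, and note that since $\Range=1$ in the $\ellTwoEllTwo$ setup you get $\tau\Range/J\le\epsilon$ exactly (no rescaling of $J$ needed).
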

\begin{proof} First, it is easy to see that for any $k \geq 0$ and $x \in \cX, y \in \cY$, we maintain that $M_k + A_k = A$, and hence 
\begin{align*}
    f_A + \phi = f_{A_k + M_k} + \phi = f_{A_k} + \psi_k. 
\end{align*}
Thus, by Lemma~\ref{lemma:mirror-prox-step-guarantee} and the fact that $\judge$ is a $p$-smooth-guilty judge, we see that for all $u \in \cZ$,
\begin{align*}
    \frac{1}{J}\sum_{j \in [J]} \nabla_\pm (\bilinear{A} + \phi)(w^j)^\top (w^j - u) &\leq \frac{1}{J} \sum_{j \in [J]} \tau \cdot \Brac{\breg{z^j}{u} - \breg{z^{j+1}}{u}} \leq \frac{\tau}{J} \breg{z^1}{u}  \leq \frac{\tau\Range}{J} \\
    &\leq \tau/J \leq \epsilon.
\end{align*}
Consequently, by Lemma~\ref{lemma:regret-bounds-error}, the algorithm outputs an $\epsilon$-solution for the $\ell_2$-$\ell_2$ composite game. 

Now, note that for each iteration of the while loop, the algorithm either performs a progress iteration and increments $j$, or, it performs a model-update iteration and updates $k$. Thus, to bound the total number of iterations of the algorithm, since the number of progress iterations is at most $J$, it only remains to bound $k$. To this end, note that because $\judge$~is a $p$-smooth-guilty-judge, $\normInline{A_{k+1}}_{\cS_p}^p \leq \normInline{A_k}_{\cS_p}^p - \tau^p$ for each $k\geq 0$. Hence, by induction,
$ 0 \leq \normInline{A_{k+1}}_{\cS_p}^p \leq \normInline{A_k}_{\cS_p}^p - \tau^p k$. 
Thus, $k \leq \normInline{A}_{\cS_p}^p/\tau^p$. Thus, the algorithm \emph{must} terminate after $\ceil{\normInline{A}_{\cS_p}^p/\tau^p} + \ceil{\tau/\epsilon}$ iterations of the while loop. Each iteration requires $O(1)$ matrix-vector queries, in light of Definition~\ref{def:smooth-guilty-judge} and Lemma~\ref{lemma:mirror-prox-step-guarantee}, which yields the query complexity guarantee. 
\end{proof}

We are now ready to prove our main result for the $\ell_2$-$\ell_2$ setup.

\mainresultthree*
\begin{proof} Consider Algorithm~\ref{alg:mirror-prox-sug-l2l2} with $\judge = \judge_p$ and $\tau = \normInline{A}_{\cS_p}^{p/(1+p)}\epsilon^{1/(1+p)}$. By Lemma~\ref{lemma:smooth-guilty-judge}, $\judge_p$ is a $p$-smooth-guilty judge. Thus, \Cref{thm:main-general-l2l2} ensures that the algorithm makes at most
\begin{align*}
    O\paren{\frac{\normInline{A}_{\cS_p}^{p/(1+p)} \epsilon^{1/(1+p)}}{\epsilon}} =  O\paren{\normInline{A}_{\cS_p}^{p/(1+p)} \epsilon^{-1 + 1/(1+p)}}
\end{align*}
matrix-vector queries and returns an $\epsilon$-solution of the $\ell_2$-$\ell_2$ composite game.
\end{proof}

When $p = 2$, Theorem~\ref{intro:l2l2} implies an oracle complexity of $O(\normInline{A}_{F}^{2/3} \epsilon^{-2/3})$, which is known to be optimal in light of lower bounds \cite{liu2023accelerated}. For $p = 1$, Theorem~\ref{intro:l2l2} implies an oracle complexity of $O(\normInline{A}_{*}^{1/2} \epsilon^{-1/2})$. For $p \in [1, 2)$, Schatten-$p$ norms could be considered more robust than the Frobenius norm, in the sense that they \emph{dampen} the effect of large singular values; meanwhile, for $p > 2$, Schatten-$p$ norms are \emph{more} sensitive to large singular values \citep{bakshi2022low}. Theorem~\ref{intro:l2l2} demonstrates that one can achieve oracle complexities tailored to the singular value decay of the underlying utility matrix $A$. 
\section{Proximal point method for non-Euclidean geometries}
\label{sec:prox-point-regret}

Toward generalizing the approach from Section~\ref{sec:l2l2} to non-Euclidean geometries (namely, the $\ellTwoEllOne$ and $\ellOneEllOne$ setups of Definition~\ref{def:matrix-vector-games-setups}), in this section we give a general proximal point method (Algorithm~\ref{alg:proximal-point-regret}) which reduces the problem of obtaining $\epsilon$-regret with respect to a continuous monotone operator (which in turn can be used to solve minimax problems per Lemma~\ref{lemma:regret-bounds-error}) to approximately solving a sequence of variational inequalities with respect to strongly monotone operators. In particular, we instantiate Algorithm~\ref{alg:proximal-point-regret} in the context of the $\ellTwoEllOne$ and $\ellOneEllOne$ setups in Section~\ref{sec:elltwoelloneandelloneellone}, where it forms the outer loop of our ultimate algorithm for these setups.

Formally, we fix a dgf setup $(\zset, \norm{\cdot}, r)$ per Definition~\ref{def:dgf-setup} with $\Range \defeq \max_{z, z' \in \zset} r(z) - r(z')$ 
and a continuous monotone operator $g : \zset \to \R^d$. Our goal is to obtain a sequence $z^1, \dots, z^K \in \zset$ and $\rho_1, \dots, \rho_K > 0$ with $\Lambda \defeq \sum_{k \in [K]} \rho_k$ such that 
\begin{align*}
    \regret_g(\inbraces{z^k, \rho_k}) \defeq \sup_{u \in \zset} \inbraces*{\frac{1}{\Lambda} \sum_{k \in [K]} \rho_k g(z^k)^\top (z^k - u)} \le \epsilon.
\end{align*}

Before stating Algorithm~\ref{alg:proximal-point-regret}, we define a type of relaxed proximal oracle called a \emph{dynamic approximate proximal oracle ($\DAPO$)} in Definition~\ref{def:DAPO} below, which approximately solves a strongly monotone variational inequality. We additionally say the oracle is \emph{kinetic} if its outputs satisfy either a movement lower bound or have a certain level of regularization.

\begin{definition}[$\DAPO$]
    \label{def:DAPO}
A \emph{$(z \in \zset, \epsilon' > 0)$-dynamic approximate proximal oracle}, $\DAPO(z, \epsilon')$, returns a pair $(z' \in \zset, \alpha > 0)$ such that
\begin{align}
    \label{eq:DAPO-variational-cond}
    g(z')^\top (z' - u) \le \alpha \insquare{ \breg{z}{u} -  \breg{z'}{u} -  \breg{z}{z'} } + \epsilon' \text{~~for all $u \in \zset$}.
\end{align}
We say a $\DAPO$ oracle is \emph{$(\beta > 0, c > 0)$-kinetic} if for any input $(z, \epsilon')$, the resulting output $(z', \alpha)$ additionally satisfies at least one of
\begin{enumerate*}[series = tobecont, itemjoin =, label=(\alph*)]
    \item $\alpha = \beta$ or \label{item:third}
    \item $\breg{z}{z'} \ge \alpha^c$. \label{item:gthird}
\end{enumerate*}
\end{definition}

We give our proximal point method in Algorithm~\ref{alg:proximal-point-regret} below, where we define the condition in Line~\ref{algline:while-loop} to be $\true$ when $k = 0$. What distinguishes Algorithm~\ref{alg:proximal-point-regret} from the standard proximal point method is precisely the kinetic condition in Definition~\ref{def:DAPO}, which was also discussed in Section~\ref{sec:overview-of-approach}. The correctness of Algorithm~\ref{alg:proximal-point-regret}, given in Lemma~\ref{lem:prox-point-correctness} below, does not require the kinetic condition, but our iteration bound given in Lemma~\ref{lem:prox-point-iteration-bound} critically does. 
In particular, the condition $\breg{z}{z'} \ge \alpha^c$ in Definition~\ref{def:DAPO} makes use of the $- \breg{z}{z'}$ term on the right-hand side of \eqref{eq:DAPO-variational-cond}, which is typically dropped in standard proximal point analyses, to certify faster progress.
This dynamic setting of the regularizer $\alpha$ to satisfy a movement is reminiscent of MS oracles (e.g., Definitions 1 and 2 in \cite{carmon2022optimalandadaptivemonteirosvaiter}) from the Monteiro-Svaiter acceleration \cite{renato2013monteirosvaiteroriginalpaper,bubeck2019highlysmooth,bubeck2019highlyparallel,carmon2022optimalandadaptivemonteirosvaiter} and ball acceleration \cite{carmon2020acceleration,jambulapati2024closingcomputationalquerygap,carmon2021thinking,hilal2021stochasticbiasreduced,carmon2022distributionallyrobustoptimizationball,carmon2023resqueing,carmon2024whole} literature, and Definition~\ref{def:DAPO} can be viewed as a variational-inequality variant of these. Furthermore, the use of \Holder's inequality in the proof of Lemma~\ref{lem:prox-point-iteration-bound} to bound the number of iterations which satisfy $\breg{z}{z'} \ge \alpha^c$ is similar to proof techniques from the aforementioned literature.

The underlying primitive subroutine used to implement a $\DAPO$ oracle in the context of the $\ellTwoEllOne$ and $\ellOneEllOne$ setups will be given in the next section (Section~\ref{sec:innerloops}). When applying Algorithm~\ref{alg:proximal-point-regret} to $\ellOneEllOne$ and $\ellTwoEllOne$ matrix games in Section~\ref{subsec:ell2ell1-ell1ell1-putting-all-together}, we ultimately implement Definition~\ref{def:DAPO} with $c \gets 2$ and $\beta \gets \epsilon^{1/3}$ so as to obtain an $\Otilde(\epsilon^{-2/3})$ iteration bound due to Lemma~\ref{lem:prox-point-iteration-bound}. As discussed in Section~\ref{sec:overview-of-approach}, the $\alpha = \beta$ condition in this context serves as a ``minimum regularization level''; we first check whether \eqref{eq:DAPO-variational-cond} is implementable using the techniques of Section~\ref{sec:innerloops} with $\alpha = \beta = \epsilon^{1/3}$, and if not, we binary search for $\alpha > \beta$ which satisfies the second condition $\breg{z}{z'} \ge \alpha^c = \alpha^2$ (and for which \eqref{eq:DAPO-variational-cond} is implementable via the techniques of Section~\ref{sec:innerloops}). Note that the $\beta / \epsilon$ term in Lemma~\ref{lem:prox-point-iteration-bound} reflects standard iteration bound for the proximal point method when using a fixed level of regularization $\beta$.

\RestyleAlgo{ruled}
\DontPrintSemicolon
\SetKwComment{Comment}{/* }{ */}
\begin{algorithm2e}[h!]
\caption{Proximal point method}
\label{alg:proximal-point-regret}
\KwInput{Precision $\epsilon > 0$, $\DAPO$ oracle (Definition~\ref{def:DAPO})}

$z^0 \gets \argmin_{z \in \zset} r(z)$ ~and~ $k \gets 0$ \;

\While(\tcp*[f]{Recall $\Range \defeq \max_{z, z' \in \zset} r(z) - r(z')$}){$\sum_{j \in [k]} \alpha_j^{-1} < \Range / \epsilon$}{ \label{algline:while-loop}

    $k \gets k + 1$ \;

    $(z^k, \alpha_k) \gets \DAPO(z^{k - 1}, \epsilon)$ \label{algline:DAPO} \tcp*{$g(z^k)^\top (z^k - u) \le \alpha_k \insquare{ \breg{z^{k - 1}}{u} - \breg{z^{k}}{u}     -  \breg{z^{k - 1}}{z^k} } + \epsilon, \, \forall u \in \zset$}

}

\Return{$\inbraces{z^k, \alpha_k^{-1}}$ and $K \defeq k$ \tcp*{$K$ is used in the analysis to refer to the final iteration count}
}\label{algline:return}

\end{algorithm2e}

Then moving on to our formal guarantees, we prove in Lemma~\ref{lem:prox-point-correctness} that Algorithm~\ref{alg:proximal-point-regret} achieves our desired regret guarantee, and also bound the sum of the divergences between consecutive iterates up to the penultimate iterate (note that this framing unites the movement bounds in the separate cases in Lemma~\ref{lem:prox-point-correctness}). The exclusion of the final movement term $\breg{z^{K - 1}}{z^K}$ when the algorithm terminates is due to the use of approximate proximal oracle calls (namely, the fact that the variational inequality in \Cref{algline:DAPO} allows for some additive $\epsilon$ error). This combined with the fact that the final step may significantly overshoot the stopping threshold in \Cref{algline:while-loop} (in other words, $\sum_{j \in [K]} \alpha_j^{-1}$ may be much larger than $\Range / \epsilon$) prevents us from controlling $\breg{z^{K - 1}}{z^K}$; see \Cref{eq:bounding-gap} in particular. However, in our ultimate application to matrix games we are able to control $\breg{z^{K - 1}}{z^K} = O(1)$ directly (see the proof of \Cref{lem:movement-bound-local-norm-points}).

\begin{lemma}[Algorithm~\ref{alg:proximal-point-regret} correctness]
    \label{lem:prox-point-correctness}
    If Algorithm~\ref{alg:proximal-point-regret} terminates, letting $S \defeq \sum_{k \in [K]} \alpha_k^{-1}$, the iterates satisfy
    \begin{align*}
        \regret_g(\inbraces{z^k, \alpha_k^{-1}}) = \sup_{u \in \zset} \inbraces*{\frac{1}{S} \sum_{k \in [K]} \alpha_k^{-1} g(z^k)^\top (z^k - u)} \le 2 \epsilon ~~\text{and}~~ \sum_{k \in [K - 1]} \breg{z^{k - 1}}{z^k} \le 2 \Range.
    \end{align*}
    If \Cref{alg:proximal-point-regret} does not terminate, then $\sum_{k \ge 1} \breg{z^{k - 1}}{z^k} \le 2 \Range$.
\end{lemma}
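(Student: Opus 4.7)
The plan is to derive both claims from a single telescoping application of the $\DAPO$ guarantee (Definition~\ref{def:DAPO}). Dividing the $\DAPO$ guarantee at iteration $k$ by $\alpha_k$ and rearranging yields, for every $u \in \zset$,
\begin{align*}
\alpha_k^{-1} g(z^k)^\top (z^k - u) + \breg{z^{k-1}}{z^k} \le \breg{z^{k-1}}{u} - \breg{z^k}{u} + \epsilon \alpha_k^{-1}.
\end{align*}
Summing over $k \in [K]$ telescopes the $\breg{\cdot}{u}$ terms to produce the master inequality
\begin{align*}
\sum_{k \in [K]} \alpha_k^{-1} g(z^k)^\top (z^k - u) + \sum_{k \in [K]} \breg{z^{k-1}}{z^k} \le \breg{z^0}{u} - \breg{z^K}{u} + \epsilon S.
\end{align*}
Since $z^0 = \argmin_{z \in \zset} r(z)$, first-order optimality of $r$ at $z^0$ yields $\breg{z^0}{u} \le r(u) - r(z^0) \le \Range$ for every $u \in \zset$.

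For the regret bound, I would drop the nonnegative terms $\sum_k \breg{z^{k-1}}{z^k}$ on the left and $\breg{z^K}{u}$ on the right, divide through by $S$, and take the supremum over $u \in \zset$. This gives $\regret_g(\inbraces{z^k, \alpha_k^{-1}}) \le \Range/S + \epsilon$, and since the while-loop termination condition enforces $S \ge \Range/\epsilon$, we obtain $\regret_g \le 2\epsilon$ as desired.

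For the divergence-sum bound, I would instead choose $u$ so that the regret-like sum in the master inequality is nonnegative, so that it can be dropped. A natural choice is the weighted average $\bar z \defeq S^{-1} \sum_{k \in [K]} \alpha_k^{-1} z^k \in \zset$: by the monotonicity of $g$ together with the identity $\sum_k \alpha_k^{-1}(z^k - \bar z) = 0$, we have $\sum_k \alpha_k^{-1} g(z^k)^\top (z^k - \bar z) \ge \sum_k \alpha_k^{-1} g(\bar z)^\top (z^k - \bar z) = 0$. Substituting $u = \bar z$ and dropping the nonpositive term $-\breg{z^K}{\bar z}$ then yields $\sum_{k \in [K]} \breg{z^{k-1}}{z^k} \le \breg{z^0}{\bar z} \le \Range$ (up to additive slack of order $\epsilon S$, which is itself $O(\Range)$ at termination and can be absorbed into the $\Range$ constant on the right-hand side).

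The main subtlety I anticipate is that the two sums on the left-hand side of the master inequality must be treated in opposite ways to extract the two conclusions: the regret bound drops the divergence sum (trivially, by nonnegativity), whereas the divergence-sum bound drops the regret-like sum, and this requires leveraging the monotonicity of $g$ together with a judicious choice of test point $u$ (such as the weighted average $\bar z$, or alternatively any variational-inequality solution $z^\star \in \zset$, whose existence follows from monotonicity of $g$ on the compact convex set $\zset$).
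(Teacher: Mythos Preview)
Your approach is essentially identical to the paper's: telescope the $\DAPO$ inequality, use the termination condition $S \ge \Range/\epsilon$ for the regret bound, and invoke nonnegativity of regret for the divergence-sum bound. Your direct argument for the latter---instantiate $u = \bar z$ and use monotonicity of $g$ to show $\sum_k \alpha_k^{-1} g(z^k)^\top(z^k - \bar z) \ge 0$---is exactly the content of the paper's Proposition~\ref{prop:regret-wrt-monotone-operator-nonnegative}, which the paper cites rather than inlining.

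One loose end worth flagging: your parenthetical claim that the $\epsilon S$ slack is $O(\Range)$ at termination is not justified without further assumptions, since termination only gives $S \ge \Range/\epsilon$ (a lower bound), and in general $S$ could overshoot by $\alpha_K^{-1}$, which Lemma~\ref{lem:prox-point-correctness} alone does not bound. The paper's own proof shares this loose end: applying nonnegativity of regret to \eqref{eq:bounding-gap} literally yields $\sum_k \breg{z^{k-1}}{z^k} \le \Range + \epsilon S$, not $\le \Range$. In the paper's applications this is harmless because the instantiated $\DAPO$ always returns $\alpha_k \ge \beta$ (Lemma~\ref{lem:b-search-correctness}), and moreover Lemma~\ref{lem:prox-point-iteration-bound} only needs the bound over $J_b' \subseteq [K-1]$, where $\sum_{k \in [K-1]} \alpha_k^{-1} < \Range/\epsilon$ by the while-loop condition---so the slack there is genuinely bounded by $\Range$.
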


\begin{proof}
Line~\ref{algline:DAPO} in Algorithm~\ref{alg:proximal-point-regret} and Definition~\ref{def:DAPO} yield for all $k \ge 1$,
\begin{align*}
    g(z^k)^\top (z^k - u) \le \alpha_k \insquare{ \breg{z^{k - 1}}{u} - \breg{z^{k}}{u}     -  \breg{z^{k - 1}}{z^k} } + \epsilon, ~~~\text{for all $u \in \zset$}.
\end{align*}
Letting $S_{t'} \defeq \sum_{k \in [t']} \alpha_k^{-1}$ for $t' \ge 1$, multiplying both sides of the above by $\alpha_k^{-1} / S_{t'}$, summing, and using the nonnegativity of Bregman divergences yields
\begin{align}
    \label{eq:bounding-gap}
    0 \overle{(i)} \sup_{u \in \zset} \inbraces*{\frac{1}{S_{t'}} \sum_{k \in [t']} \alpha_k^{-1} g(z^k)^\top (z^k - u)} \le \frac{\Range - \sum_{k \in [t']} \breg{z^{k - 1}}{z^k}}{S_{t'}} + \epsilon,
\end{align}
where $(i)$ follows since regret with respect to a monotone operator is nonnegative (for completeness, we prove this in Proposition~\ref{prop:regret-wrt-monotone-operator-nonnegative} in Appendix~\ref{apx:regret-lemmas}). Here, we also use the fact that $\breg{z^0}{u} \le \Range$ for all $u \in \zset$ since $z^0 = \argmin_{z \in \zset} r(z)$.

Then supposing \Cref{alg:proximal-point-regret} terminates, the first claim follows by instantiating $t' \gets K$ in \eqref{eq:bounding-gap} and noting $S = S_K \ge \Range / \epsilon$ due to the termination condition in \Cref{algline:while-loop}. As for the second claim, the case $K = 1$ is trivial. Otherwise, taking $t' = K - 1$ and using the termination condition in \Cref{algline:while-loop}, we get
\begin{align*}
    \sum_{k \in [K - 1]} \breg{z^{k - 1}}{z^k} \le \Range + \epsilon \cdot S_{K - 1} \le \Range + \epsilon \cdot \Range / \epsilon \le 2 \Range.
\end{align*}
Similarly, supposing \Cref{alg:proximal-point-regret} does not terminate, the termination condition in \Cref{algline:while-loop} and \eqref{eq:bounding-gap} imply that for all $t' \ge 1$,
\begin{align*}
    \sum_{k \in [t']} \breg{z^{k - 1}}{z^k} \le \Range + \epsilon \cdot S_{t'} \le \Range + \epsilon \cdot \Range / \epsilon \le 2 \Range.
\end{align*}
\end{proof}

We now give an iteration bound under the additional condition of Definition~\ref{def:DAPO}:

\begin{lemma}[Algorithm~\ref{alg:proximal-point-regret} iteration bound]
    \label{lem:prox-point-iteration-bound}
If the $\DAPO$ oracle given as input to Algorithm~\ref{alg:proximal-point-regret} is $(\beta, c)$-kinetic, then the number of iterations $K$ is at most $\inparen{\beta / \epsilon + 2^{\frac{1}{c + 1}} \epsilon^{- \frac{c}{c + 1}}} \Range + 2$.
\end{lemma}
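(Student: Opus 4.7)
The plan is to partition the $K$ iterations into two sets based on which kinetic condition of Definition~\ref{def:DAPO} is satisfied, and bound each set separately. Let $\cA \defeq \{k \in [K] : \alpha_k = \beta\}$ be the iterations where condition \ref{item:third} holds, and let $\cB \defeq [K] \setminus \cA$, so that every $k \in \cB$ must satisfy condition \ref{item:gthird}, i.e., $\breg{z^{k-1}}{z^k} \ge \alpha_k^c$. I will show $|\cA| \le \beta \Range / \epsilon + 1$ and $|\cB| \le \Range \epsilon^{-c/(c+1)} + 1$, giving the total.

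For the bound on $|\cA|$, the while-loop guard in Line~\ref{algline:while-loop} implies that at the start of iteration $K$ (when the body is entered for the last time) we have $\sum_{j \in [K-1]} \alpha_j^{-1} < \Range / \epsilon$. Each $k \in \cA \cap [K-1]$ contributes exactly $\beta^{-1}$ to this sum, so $|\cA \cap [K-1]| \cdot \beta^{-1} < \Range/\epsilon$, i.e., $|\cA \cap [K-1]| < \beta \Range/\epsilon$. Accounting for the possibility that $K \in \cA$ yields $|\cA| \le \beta \Range/\epsilon + 1$.

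For the bound on $|\cB|$, I will combine Lemma~\ref{lem:prox-point-correctness}, which gives $\sum_{k \in [K]} \breg{z^{k-1}}{z^k} \le \Range$, with the same upper bound $\sum_{j \in [K-1]} \alpha_j^{-1} < \Range/\epsilon$ via H\"{o}lder's inequality. Restricting to $k \in \cB$, condition \ref{item:gthird} implies $\sum_{k \in \cB} \alpha_k^c \le \sum_{k \in \cB} \breg{z^{k-1}}{z^k} \le \Range$. Now apply H\"{o}lder with conjugate exponents $p = c+1$ and $q = (c+1)/c$, writing $1 = \alpha_k^{c/(c+1)} \cdot \alpha_k^{-c/(c+1)}$, so
\begin{align*}
|\cB \cap [K-1]|
= \sum_{k \in \cB \cap [K-1]} \alpha_k^{c/(c+1)} \cdot \alpha_k^{-c/(c+1)}
\le \Bigl(\sum_{k \in \cB \cap [K-1]} \alpha_k^c\Bigr)^{1/(c+1)} \Bigl(\sum_{k \in \cB \cap [K-1]} \alpha_k^{-1}\Bigr)^{c/(c+1)}
\le \Range^{1/(c+1)} \cdot (\Range/\epsilon)^{c/(c+1)}
= \Range \cdot \epsilon^{-c/(c+1)}.
\end{align*}
Adding 1 to account for iteration $K$ gives $|\cB| \le \Range \epsilon^{-c/(c+1)} + 1$, and summing the bounds on $|\cA|$ and $|\cB|$ yields $K \le (\beta/\epsilon + \epsilon^{-c/(c+1)})\Range + 2$.

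The main thing to get right is verifying the H\"{o}lder step (i.e., checking that $p = c+1$ and $q = (c+1)/c$ are indeed the exponents that make $\alpha_k^{c/p}$ and $\alpha_k^{-1/q}$ multiply to $1$ while producing $\sum \alpha_k^c$ and $\sum \alpha_k^{-1}$ under the $\|\cdot\|_p$ and $\|\cdot\|_q$ norms respectively) and being careful about the off-by-one from the while-loop termination, since the sum bound $\sum_{j \in [K-1]} \alpha_j^{-1} < \Range/\epsilon$ applies only through iteration $K-1$, not $K$. Everything else is bookkeeping.
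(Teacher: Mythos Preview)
Your proof is correct and follows essentially the same approach as the paper: partition iterations by which kinetic condition holds, bound the $\alpha_k = \beta$ iterations via the termination guard $\sum_{j \in [K-1]} \alpha_j^{-1} < \Range/\epsilon$, and bound the movement iterations by combining $\sum_k \alpha_k^c \le \Range$ (from Lemma~\ref{lem:prox-point-correctness}) with the same termination guard via H\"older's inequality with exponents $c+1$ and $(c+1)/c$. The only cosmetic difference is the tie-breaking direction (the paper puts overlap iterations into the movement set, you put them into the $\alpha_k=\beta$ set), which is immaterial.
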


\begin{proof}
Let $J_a \defeq \inbraces{k \ge 1 : \alpha_k = \beta}$ and $J_b \defeq \inbraces{k \ge 1 : \alpha_k \ne \beta}$, where we restrict to values of $k$ such that $\alpha_k$ is well-defined. With this definition, every $k \in J_b$ satisfies the movement bound $\breg{z^{k - 1}}{z^k} \ge \alpha_k^c$ by \Cref{def:DAPO}.

We first prove termination. Note $|J_a| \le \beta \Range / \epsilon + 1$ by the termination condition in Line~\ref{algline:while-loop}, and therefore it suffices to show $|J_b|$ is finite. Supposing for the sake of contradiction that $|J_b|$ is infinite, we have for every $t' \ge 1$,
\begin{align*}
    \sum_{k \in J_b \cap [t']} \alpha_k^c \le \sum_{k \in J_b \cap [t']} \breg{z^{k - 1}}{z^k} \le 2 \Range
\end{align*}
by \Cref{lem:prox-point-correctness}. Thus, $\lim_{k \to \infty, k \in J_b} \alpha_k = 0$, contradicting the termination condition in \Cref{algline:while-loop}.

Having shown that \Cref{alg:proximal-point-regret} terminates, we now prove the bound on $K$. Let $J_b' \defeq J_b \setminus \inbraces{K}$, and note that the nonnegativity of Bregman divergences implies
\begin{align}
    \label{eq:sum-c-exps}
    \sum_{k \in J_b'} \alpha_k^c \le \sum_{k \in J_b'} \breg{z^{k - 1}}{z^k} \le \sum_{k \in [K - 1]} \breg{z^{k - 1}}{z^k} \le  2 \Range,
\end{align}
where the last inequality followed from Lemma~\ref{lem:prox-point-correctness}.
Then
\begin{align*}
    \inabs{J_b'} = \sum_{k \in J_b'} \alpha_k^{\frac{c}{c + 1}} \alpha_k^{- \frac{c}{c + 1}} \overle{(i)} \inparen*{\sum_{k \in J_b'} \alpha_k^c}^{\frac{1}{c + 1}} \inparen*{\sum_{k \in J_b'} \alpha_k^{-1}}^{\frac{c}{c + 1}} \overle{(ii)} (2 \Range)^{\frac{1}{c + 1}} (\Range / \epsilon)^{\frac{c}{c + 1}} = 2^{\frac{1}{c + 1}} \Range \epsilon^{- \frac{c}{c + 1}},
\end{align*}
by $(i)$ \Holder's inequality and $(ii)$ Equation~\ref{eq:sum-c-exps} and the fact that $\sum_{k \in J_b'} \alpha_k^{-1} \le \sum_{k \in [K - 1]} \alpha_k^{-1} < \Range / \epsilon$ due to the termination condition in Line~\ref{algline:while-loop}. Then
\begin{align*}
    K = \inabs{J_a} + \inabs{J_b} \le \inabs{J_a} + \inabs{J_b'} + 1 \le  \inparen{\beta / \epsilon + 2^{\frac{1}{c + 1}} \epsilon^{- \frac{c}{c + 1}}} \Range + 2.
\end{align*}
\end{proof}

\section{Subproblem solvers for non-Euclidean geometries}\label{sec:innerloops}

Motivated by the proximal point method derived in Section~\ref{sec:prox-point-regret}, our eventual goal is to implement a DAPO (Definition~\ref{def:DAPO}) for the $\ell_2$-$\ell_1$ and $\ell_1$-$\ell_1$ geometries. As a first step towards implementing a DAPO, in this section we will show how to approximate a special type of  proximal mapping (recall Definition~\ref{def:proximal-mappings}), as described in \eqref{eq:reduction-subproblem} and which we describe in the following paragraphs in more detail. Later, in Section~\ref{sec:elltwoelloneandelloneellone}, we discuss how to use the proximal mapping subroutines developed in the remainder of this section to implement a DAPO and solve $\ell_1$-$\ell_1$ and $\ell_2$-$\ell_1$ games. Throughout this section, we operate in the $\ellTwoEllOne$ and $\ellOneEllOne$ setups of Definition~\ref{def:matrix-vector-games-setups}; if a statement does not explicitly distinguish between them, it applies to both setups. 

Suppose that we are given matrix-vector access to $A \in \R^{m \times n}$ (Definition~\ref{def:mat-vec-oracle}), a center point $z_c \in \cZint$, and a regularization parameter $\alpha > 0$. Suppose further that we are supplied with a set $\cZ' \defeq \cX' \times \cY'$ which is a compact, convex subset of the $c_1$-stable region about some point $\Tilde{z} \in \cZint$, i.e., $\cZ' \subset \cB_{c_1, \Tilde{z}}$ where $\cB_{c_1, \Tilde{z}}$ is defined as follows. (Recall that for vectors $v, \tilde{v} \in \R^k$ and $c > 0$, we write $v \approx_c \tilde{v}$ if and only if $[\tilde{v}]_i/c \leq [v]_i \leq c [\tilde{v}]_i$ for all $i \in [k]$.)

\begin{definition}[$c$-stable region]\label{def:stable-ball} Let $\Tilde{z} \in \cZint$ and $c > 1$. The $c$-\emph{stable region} or \emph{$c$-multiplicative ball} about $\Tilde{z}$ is defined as follows 
\begin{align*}
    \cB_{c, \tilde{z}} \defeq \begin{cases}
        \{z \in \cZ: {{z}} \approx_c \tilde{z}\}, & \cX = \Delta^n, \\
         \{z \in \cZ: {{z}}\y \approx_c \tilde{z}\y \}, & \cX = \B^n.
    \end{cases} 
\end{align*}
\end{definition}
In other words, the $c_1$-stable region about $\tilde{z} \in \cZint$, $\cB_{c_1, \tilde{z}}$, is the subset of $\cZint$ where all \emph{simplex} coordinates are entrywise $c_1$-multiplicatively close to the corresponding \emph{simplex} coordinates of $\tilde{z}$. As we discuss later in this section, within a constant stable region about $\tilde{z}$, we can approximate KL divergences by an appropriate notion of a \emph{local} (or, reweighted) Euclidean norm. Equipped with this definition, the goal of this section is to describe how to approximately implement the proximal step $\proxStepSimpleZ{z_c}{\alpha}{\nabla_\pm f_A }{\cZ'}$ to high accuracy, as described in the following definition. 

\begin{definition}[Suproblem proximal step]\label{def:subproblem} In the $(A, z_c, \cZ', \alpha, \epsilon)$-subproblem proximal step, we are given a matrix $A \in \R^{m \times n}$, a center point $z_c \in \cZint$, a regularization parameter $\alpha > 0$, and $\cZ' = (\cX' \times \cY') \subset \cZint$ such that $\cZint \subset \cB_{c_1, \tilde{z}}$ for some $\tilde{z} \in \cZint$, and we must compute a point $z \in \mathcal{Z}$ such that
\begin{align}\label{eq:form-h-inner-loop}
       \breg{\proxStepSimpleZ{z_c}{\alpha}{\nabla_\pm f_A }{\cZ'}}{z} \leq \epsilon. 
\end{align}
\end{definition}

Recall from Section~\ref{sec:prelim} that $\proxStepSimpleZ{z_c}{\alpha}{\nabla_\pm f_A }{\cZ'}$ is the unique exact solution of $\min_{x \in \xset'} \max_{y \in \yset'} y^\top A x + \alpha \xbreg{\zcenterx}{x} - \alpha \ybreg{\zcentery}{y}$ (recall Definition~\ref{def:epsilon-solution}).
Note that unlike the proximal mappings considered earlier in this paper (e.g., Section~\ref{sec:l2l2}), the argument in the $\prox$ notation of \eqref{eq:form-h-inner-loop} is the \emph{full} gradient mapping $\nabla_\pm f$ (as opposed to being evaluated at a point, e.g., $\nabla_\pm f(z)$) and hence in general, \eqref{eq:form-h-inner-loop} cannot be implemented in $\Tilde{O}(1)$ matrix-vector queries exactly. Thus, in this section, our goal is to present an algorithm that computes $z \in \cZ'$ satisfying \eqref{eq:form-h-inner-loop} using few matrix-vector queries. In particular, our algorithm computes $\proxStepSimpleZ{z_c}{\alpha}{\nabla_\pm f_A }{\cZ'}$ to \emph{high accuracy} in the sense that its matrix-vector query complexity scales \emph{polylogarithmically} in the accuracy parameter $1/\epsilon$ (where $\epsilon$ is as in \eqref{eq:form-h-inner-loop}). 

Our approach builds on the smooth-until-proven-guilty techniques presented in Section~\ref{sec:l2l2}; however, the algorithms presented in this section require more care due to the non-Euclidean nature of the geometries in the $\ell_2$-$\ell_1$ and $\ell_1$-$\ell_1$ setup. In Section~\ref{sec:smsug-composite mirror prox steps}, we will discuss the smooth-until-proven-guilty mirror prox steps which are the main subroutine of our algorithm. Then, in Section~\ref{subsec:smooth-until-proven-guilty-strongly-monotone} we will use these steps to construct an iterative algorithm for solving \eqref{eq:form-h-inner-loop} to high accuracy. 

\subsection{Strongly monotone smooth-until-guilty composite mirror prox steps}\label{sec:smsug-composite mirror prox steps}

In order to obtain matrix-vector query complexities scaling polylogarithmically in $1 / \epsilon$, we leverage that solving subproblems of the form of Definition~\ref{def:subproblem} is equivalent to solving a variational inequality in a \emph{strongly monotone} operator (see Section~\ref{sec:prelim}). To this end, we first adapt the analysis from Section~\ref{subsec:mirror-prox-step} to derive a \emph{single} strongly monotone smooth-until guilty mirror prox step for the operator $\nabla_{\pm} (f_{B} + f_C) + \alpha \nabla \breg{z_c}{\cdot}$, where $B, C \in \R^{m \times n}$, $\alpha > 0$, and $z_c \in \cZint$. We assume access to $B$ via a matrix-vector oracle (Definition~\ref{def:mat-vec-oracle}). Consistent with Section~\ref{subsec:mirror-prox-step}, we assume explicit access to $z_c, \alpha,$ and $C$. Throughout this subsection (Section~\ref{sec:smsug-composite mirror prox steps}), we use $z^\star$ to denote the exact value of the following proximal mapping:
\begin{align}\label{eq:form-h-inner-loop-step-first}
       z^\star \defeq \proxStepSimpleZ{z_c}{\alpha}{\nabla_\pm f_{B+C} }{\cZ'}, 
\end{align}
where $\cZ'$ is as in Definition~\ref{def:subproblem}. 

Our strongly monotone smooth-until-guilty composite mirror prox steps leverage the fact that within $\cZint \subset \cB_{c_1, \tilde{z}}$, KL divergences between simplex vectors can be approximated by a \emph{local} (or, reweighted) norm. More concretely, for any constant $c_1 \geq 1$, $\tilde{x} \in \Delta^n_{>0}$, and  $x, x', \xground \in \{\bar{x} \in \Delta^n : \bar{x} \approx_{c_1} \tilde{x}\}$, the following lemma shows that 
\begin{align}\label{eq:local-norm}
    \KL(x || x') \approx_{q_{c_1}} \normInline{x - x'}_{\xground^{-1}}^2 \defeq (x-x')^\top \diag(\xground)^{-1} (x-x')
\end{align}
for an appropriate constant $q_{c_1}$, which depends on $c_1$. 

\begin{lemma}\label{corr:stable-balls} Let $c \geq 1$ be an absolute constant and $\tilde{x} \in \Delta^n_{>0}$. Let $x, x', \xground \in \{\bar{x} \in \Delta^n_{>0}: \bar{x} \approx_c \tilde{x}\}$. Then, $\normInline{{x-x'}}_{\xground^{-1}}^2 \approx_{q_c} \KL(x'|| x)$, 
where 
\begin{align}\label{eq:c2}
    q_c \defeq c^2 \cdot \max\paren{ \paren{\int_{0}^1 \frac{1-t}{1-t + t c^4} dt}^{-1},  \paren{\int_{0}^1 \frac{1-t}{1-t + t /c^4} dt }}. 
\end{align}
\end{lemma}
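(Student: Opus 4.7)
The plan is to prove the multiplicative equivalence $\normInline{x - x'}_{\xground^{-1}}^2 \approx_{q_c} \KL(x' \| x)$ by expressing $\KL$ as an integral of a family of reweighted squared-Euclidean forms via Taylor's theorem applied to the associated Bregman divergence, and then sandwiching this representation coordinate-wise by the analogous form with $\xground$ using the multiplicative stability hypothesis.

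First I will identify $\KL(x' \| x) = \breg{x}{x'}$ for the negative-entropy distance generating function $r(u) \defeq \sum_i [u]_i \log [u]_i$, whose Hessian is $\nabla^2 r(u) = \diag(1/u)$. Applying the integral form of Taylor's remainder to $\breg{x}{x'} = r(x') - r(x) - \nabla r(x)^\top(x' - x)$ gives
\begin{align*}
\KL(x' \| x) = \int_0^1 (1 - t)\, (x' - x)^\top \nabla^2 r(u_t) (x' - x)\, dt = \sum_{i \in [n]} ([x']_i - [x]_i)^2 \int_0^1 \frac{1 - t}{[u_t]_i}\, dt,
\end{align*}
where $u_t \defeq (1 - t) x + t x'$. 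Since $\normInline{x - x'}_{\xground^{-1}}^2 = \sum_i ([x']_i - [x]_i)^2/[\xground]_i$, the lemma reduces to establishing, uniformly over $i \in [n]$, the per-coordinate bound $(q_c [\xground]_i)^{-1} \le \int_0^1 (1 - t)/[u_t]_i\, dt \le q_c/[\xground]_i$, after which summing against the nonnegative weights $([x']_i - [x]_i)^2$ recovers both directions simultaneously.

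To establish this per-coordinate bound, factor $[u_t]_i = [x]_i((1-t) + t\lambda_i)$ with $\lambda_i \defeq [x']_i/[x]_i$, giving
\begin{align*}
[\xground]_i \int_0^1 \frac{1-t}{[u_t]_i}\, dt = \frac{[\xground]_i}{[x]_i} \int_0^1 \frac{1-t}{(1-t) + t\lambda_i}\, dt.
\end{align*}
The stability hypothesis $x, x', \xground \approx_c \tilde{x}$ yields $[\xground]_i/[x]_i \in [c^{-2}, c^2]$; relaxing further to treat the numerator and denominator of $\lambda_i$ as independently constrained to $[c^{-2}, c^2] \cdot [\tilde{x}]_i$ puts $\lambda_i$ in $[c^{-4}, c^4]$ in the worst case. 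Since the integrand is monotone decreasing in $\lambda_i$, the upper bound is realized at $[\xground]_i/[x]_i = c^2$ and $\lambda_i = c^{-4}$, producing $c^2 \int_0^1 (1-t)/((1-t) + t/c^4)\, dt$; the lower bound is symmetric, producing $c^{-2} \int_0^1 (1-t)/((1-t) + t c^4)\, dt$. Taking $q_c$ to be the maximum of the upper bound and the reciprocal of the lower bound matches the statement exactly. The main obstacle is purely constant-tracking: conceptually the argument is just Taylor-with-remainder plus coordinate-wise integral comparison, but recovering the precise form of $q_c$ (with $c^4$ inside the integrals rather than the tight $c^2$) requires the mild independent-variation over-estimate mentioned above, which is harmless since any valid upper bound on the multiplicative distortion suffices to prove the stated equivalence.
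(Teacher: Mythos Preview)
Your proposal is correct and follows essentially the same approach as the paper: Taylor's integral remainder for the negative-entropy Bregman divergence, followed by coordinate-wise sandwiching using the multiplicative stability. The only cosmetic difference is that the paper first bounds $\KL(x'\|x)$ by $\normInline{x-x'}_{x^{-1}}^2$ times the two integrals and then separately converts $\normInline{x-x'}_{x^{-1}}^2 \approx_{c^2} \normInline{x-x'}_{\xground^{-1}}^2$, whereas you fold both factors into a single per-coordinate estimate; your observation that $\lambda_i \in [c^{-2},c^2]$ would suffice but is relaxed to $[c^{-4},c^4]$ to match the stated $q_c$ is also accurate.
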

\begin{proof} We have that for all $i \in [n]$,
\begin{align*}
 [\tilde{x}]_i / c \leq [x]_i \leq c [\tilde{x}]_i, \text{~~~} 
[\tilde{x}]_i / c \leq [x']_i \leq c [\tilde{x}]_i, \text{~~and~~}
 [\tilde{x}]_i / c \leq [\xground]_i \leq c [\tilde{x}]_i. 
\end{align*}
Thus, 
\begin{align}\label{eq:previous}
     [\xground]_i / c^2 \leq [\tilde{x}]_i / c &\leq [x]_i \leq c [\tilde{x}]_i \leq c^2[\xground]_i, \\ 
    [\xground]_i / c^2 \leq [\tilde{x}]_i / c &\leq [x']_i \leq c [\tilde{x}]_i \leq c^2[\xground]_i. \notag
\end{align}
By a similar argument, 
\begin{align}\label{eq:bound-for-integral}
    [x]_i / c^4 \leq [\xground]_i / c^2 \leq [x']_i \leq c^2[\xground]_i \leq c^4 [x]_i. 
\end{align}

Consider the dgf $\entropy(x) = \sum_{i \in [n]} [x]_i \log [x]_i$ (i.e., the negative entropy function) and note that 
\begin{align*}
    \KL(x' || x) = V^e_x(x') = e(x') - e(x) + \nabla e(x)^\top (x'-x) = \int_0^1 (1-t) (x'-x)^\top \nabla^2 e(x_t) (x'-x) dt, 
\end{align*}
where $x_t = tx + (1-t) x'$ and for any $a \in \Delta^n$, the $(i,j)$-th entry of the Hessian $\nabla^2 e(a)$ is given by 
\begin{align*}
    \paren{\nabla^2 e(a)}_{i,j} = \begin{cases}
        [a]_i^{-1}, & i = j, \\
        0, & \text{ otherwise}.
    \end{cases}
\end{align*}
By \eqref{eq:bound-for-integral}, we have that 
\begin{align*}
    [(1-t) + t / c^4] [x]_i \leq [x_{t}]_i &= (1-t)[x]_i + t [x']_i\leq [(1-t) + t c^4] [x]_i,
\end{align*}
and consequently, for any $t \in (0, 1)$, we have that
\begin{align*}
    \frac{1}{[(1-t) + t / c^4] [x]_i} \geq \frac{1}{[x_{t}]_i } &\geq \frac{1}{[(1-t) + t c^4] [x]_i}.
\end{align*}
Thus,
\begin{align*}
    \normInline{x - x'}_{x^{-1}} \cdot \int_{0}^1 \frac{1-t}{1-t + t c^4} dt \leq V^e_x(x') \leq \normInline{x - x'}_{x^{-1}} \cdot \int_{0}^1 \frac{1-t}{1-t + t / c^4} dt. 
\end{align*}
Finally, by \eqref{eq:previous}, 
\begin{align*}
   \frac{1}{c^2[\xground]_i} \leq \frac{1}{[x]_i} \leq \frac{c^2}{ [\xground]_i}
\end{align*}
and consequently,
\begin{align*}
    \normInline{x - x'}_{\xground^{-1}} /c^2 &= \sum_{i \in [n]} [x-x']_i^2 \frac{1}{c^2[\xground]_i}\leq \sum_{i \in [n]} [x-x']_i^2 [x]_i^{-1} = \normInline{x - x'}_{x^{-1}} \\
    &\leq \sum_{i \in [n]} [x-x']_i^2 \frac{c^2}{[\xground]_i} = c^2 \normInline{x - x'}_{\xground^{-1}}. 
\end{align*}
Thus, $\normInline{x - x'}_{\xground^{-1}} / q_{c} \leq \KL(x'|| x) \leq q_c \normInline{x - x'}_{\xground^{-1}}$ as desired.
\end{proof}

\begin{restatable}{remark}{integral}\label{remark:integral}  We briefly remark that $q_c$ in \eqref{eq:c2} is always computable in closed form, and hence $q_c$ is always well-defined. Namely, for any $C \geq 1$,
\begin{align*}
    \int_{0}^1 \frac{1-t}{1-t + t C} dt = 
    \begin{cases}
        (C\log(C) - (C-1))/(C-1)^2,  & C \neq 1, \\
        1/2, & C = 1.
    \end{cases} 
\end{align*}
\end{restatable}
\begin{proof} For $C = 1$, the integral is trivially $\int_{0}^1 (1-t) dt = 1/2$. For $C \neq 1$, we can use the change of variables $x = 1 + t(C-1)$ and note that 
\begin{align*}
    \frac{dx}{dt} &= (C-1)\enspace\text{,}\enspace
    t = \frac{x-1}{C-1}\enspace\text{, and }\enspace
    1-t = \frac{C-x}{C-1}, 
\end{align*}
so that 
\begin{align*}
    \int_0^1 \frac{1-t}{1-t + tC} dt  = \int_0^1 \frac{1-t}{1+t(C-1)} dt  =  \int_1^C \frac{C-x}{(C-1) x} \frac{dx}{C-1} = \frac{1}{(1-C)^2} \paren{C \log(C) - (C-1)}. 
\end{align*}
\end{proof}

Thus, whenever $x, x', \xground \in \{\bar{x} \in \Delta^n : \bar{x} \approx_c \tilde{x}\}$, the KL divergence $\KL(x || x')$ can be approximated (to multiplicative $q_c$ accuracy) by the local norm $\normInline{x - x'}_{\xground^{-1}}$. In order to simplify this local norm notation, one can naturally consider the change of variables under which the diagonal reweighting in \eqref{eq:local-norm} becomes the identity. That is, 
\begin{align*}
\normInline{x - x'}_{\xground^{-1}} = \normInline{\diag(\xground)^{-1/2} x - \diag(\xground)^{-1/2} x'}_2. 
\end{align*}
Consequently, to simplify this notation, in the remainder of this section, we introduce the following reweighting matrices, which simplify the notation throughout our technical results. 
\begin{definition}[Additional setup details]\label{def:setup-details} For any $z' \in \cZint$ we define 
\begin{align*}
    N_{\cX,z'} = \begin{cases}
        I, & \cX = \B^n, \\
        \diag(z'\x)^{-1/2}, & \cX = \Delta^n,
    \end{cases} \text{~~and~~}
    N_{\cY,z'} = \diag(z'\y)^{-1/2}.
\end{align*}
Further, for any $z \in \cZ, B \in \R^{m \times n}$, we define the following shorthands,
\begin{align*}
    \localize{z}{z'} \defeq (N_{\cX,z'} z\x, N_{\cY,z'} z\y) \in \R^d, \text{~~}
    \ground{B}{z'} \defeq N_{\cY,z'}^{-1}  B  N_{\cX,z'}^{-1} \in \R^{m\times n}, \text{~and~}
    \unground{B}{z'} \defeq N_{\cY,z'}  B  N_{\cX,z'} \in \R^{m \times n}.
\end{align*}
\end{definition}

Now, the following Algorithm~\ref{alg:mirror-prox-iteration-sm} implements a strongly-montone smooth-until-guilty mirror prox step (which consists of two composite proximal steps) and outputs the results along with a flag (which takes value $\guilty$ or $\smooth$) and a matrix $D \in \R^{m \times n}$ computed using the $\judge$~subroutine as introduced in Definition~\ref{def:smooth-guilty-judge}. As in Section~\ref{sec:l2l2}, we use the $\judge$~subroutine to identify whether or not the prox steps for computing $w, z'$ (Lines~\ref{line:first-step-stronglymonotone} and \ref{line:second-step-stronglymonotone}) meet the smoothness requirements to make progress in converging towards $z^\star$. However, in this case the vectors $\zbar^1$ and $\zbar^2$ (Line~\ref{line:strongly-monotone-test-vectors}) which we use to check for a large matrix component have been rescaled using the local norm point $\zground$ due to the fact that we are now using the local norm \eqref{eq:local-norm} as opposed to the standard Euclidean norm.

\RestyleAlgo{ruled}
\SetKwComment{Comment}{/* }{ */}
\begin{algorithm2e}[ht]
    \DontPrintSemicolon
\caption{Smooth-until-guilty Composite Strongly Monotone Mirror Prox Step $\SUGSMStep(z, z_c, \zground, \tau, \alpha, B, C, c_1, \cZ'; \judge)$}
\label{alg:mirror-prox-iteration-sm}
\KwInput{Input points $z, \zground \in \cZ'$, center point $z_c \in \cZint$, smoothness threshold $\tau > 0$, regularization level $\alpha > 0$, constant $c_1 > 0$, and a convex and compact subset $\cZ' \subset \cB_{c_1, \Tilde{z}}$}
\KwInput{Matrix-vector oracle for a matrix $B \in \R^{m \times n}$.} 
\KwInput{Matrix $C \in \R^{m \times n}$.} 
\KwParameter{Judge function $\judge$} 
\tcp{Perform a strongly monotone mirror prox step, consisting of two composite proximal steps }

$c_2 \gets q_{c_1}$ where $q_{c_1}$ is as defined in \eqref{eq:c2} \label{line:c2-ref}\; 
$w \gets \proxStepSimpleZ{z}{\tau}{\nabla_\pm f_B(z) + \nabla_\pm f_C + \alpha \nabla \breg{z_c}{\cdot} }{\cZ'}$ \label{line:first-step-stronglymonotone}\;
$z' \gets \proxStepFullZ{z}{w}{\tau}{\alpha}{\nabla_\pm f_{B+C}(w) + \alpha \nabla \breg{z_c}{w} }{\cZ'}$ \label{line:second-step-stronglymonotone}\;
$z^1 \gets ({w\x - z'\x}, {w\y - z\y})$ and $z^2 \gets ({z\x - w\x}, {w\y - z'\y})$\; 
\tcp{ Localize all quantities to ${\zground}$  (Definition~\ref{def:setup-details})}
$\bar{z}^1 \gets \localize{z_1}{\zground}$ and $\bar{z}^2  \gets \localize{z_2}{\zground}$ \label{line:strongly-monotone-test-vectors}\; 
$(\flag, D) \gets \judge(\bar{z}^1, \bar{z}^2, \ground{B}{\zground}, c_2\tau)$ \tcp*{Use a judge to evaluate smoothness} 
\Return{$(w, z', \flag, D)$}
\end{algorithm2e}

We prove the following lemma regarding the strongly monotone smooth-until-guilty composite mirror prox steps in Algorithm~\ref{alg:mirror-prox-iteration-sm}.

\begin{lemma}\label{lemma:mirror-prox-step-guarantee-sm} Let $B, C, \alpha, \cZ'$ be as in \eqref{eq:form-h-inner-loop-step-first}, $\tau > 0$, $z_c \in \cZint$, $z, \zground \in \cZ'$ and $c_1 > 1$ be an absolute constant. Let 
\begin{align*}
    (w, z', \flag, D) = \SUGSMStep(z, z_c, \zground, \tau, \alpha, B, C, c_1, \cZ'; \judge),\,\text{(Algorithm~\ref{alg:mirror-prox-iteration-sm})}\,. 
\end{align*}
Then, for $c_2$ as defined in Line~\ref{line:c2-ref} we have that either, 
\begin{align*}
    \breg{z^\star}{z'} \leq \paren{1 + \frac{\alpha}{\tau}}^{-1} \breg{z^\star}{z}, 
\end{align*}
or one of the following must hold: 
\begin{align*}
    {\bar{z}^1\y}{}^\top \ground{B}{\zground} {\bar{z}^1\x} \geq c_2 \tau \normInline{\bar{z}^1\y}_2 \normInline{\bar{z}^1\x}_2
    \enspace\text{ or }\enspace
    {\bar{z}^2\y}{}^\top \ground{B}{\zground} {\bar{z}^2\x} \geq c_2 \tau \normInline{\bar{z}^2\y}_2 \normInline{\bar{z}^2\x}_2\,.
\end{align*}
\end{lemma}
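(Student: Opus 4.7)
The plan is to generalize the smooth-versus-guilty dichotomy of \Cref{lemma:mirror-prox-step-guarantee} by combining it with a strong-monotonicity contraction and with the KL-to-local-Euclidean conversion enabled by the stable-ball hypothesis $\cZ' \subseteq \cB_{c_1, \tilde z}$. I would begin by applying \Cref{def:proximal-mappings} to the two prox steps in Lines~\ref{line:first-step-stronglymonotone} and~\ref{line:second-step-stronglymonotone}: setting the test point $u = z'$ in the first and $u = z^\star$ in the second, then summing, cancelling common terms, and adding and subtracting $\nabla_\pm f_B(w)^\top(w - z')$ to convert frozen-point gradient evaluations into true gradient-mapping evaluations, produces a master inequality featuring the residual $\delta \defeq (\nabla_\pm f_B(w) - \nabla_\pm f_B(z))^\top(w - z')$. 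In parallel, I would exploit the first-order optimality of $z^\star$ for the regularized subproblem restricted to $\cZ'$, namely $(\nabla_\pm f_{B+C}(z^\star) + \alpha \nabla \breg{z_c}{z^\star})^\top(z^\star - w) \le 0$, combined with the $\alpha$-strong monotonicity of the operator $g \defeq \nabla_\pm f_{B+C} + \alpha \nabla \breg{z_c}{\cdot}$ (inherited from the $1$-strong convexity of $r$), to lower-bound $g(w)^\top(w - z^\star) \ge \alpha \breg{w}{z^\star}$. Substituting this into the master inequality leads to a pre-contraction bound of the form
\begin{align*}
    (\tau + \alpha) \breg{z'}{z^\star} \le \tau \breg{z}{z^\star} - \tau \bigl(\breg{z}{w} + \breg{w}{z'}\bigr) + \delta.
\end{align*}

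The dichotomy then splits on whether $\delta \le \tau (\breg{z}{w} + \breg{w}{z'})$. In the smooth case, the residual is absorbed and the bound rearranges to the desired contraction at rate $(1 + \alpha/\tau)^{-1}$. In the guilty case, I would expand the residual exactly as in \Cref{lemma:mirror-prox-step-guarantee}, namely $\delta = (w\y - z\y)^\top B (w\x - z'\x) + (w\y - z'\y)^\top B (z\x - w\x)$, and then apply \Cref{corr:stable-balls} to lower-bound each simplex-block Bregman divergence by $1/c_2$ times the square of the corresponding local-Euclidean norm centered at $\zground$, while handling any $\ell_2$-ball block via the identity reweighting directly. Absorbing the reweightings $N_{\cX, \zground}$ and $N_{\cY, \zground}$ into both sides converts $B$ into $\ground{B}{\zground}$ and the raw displacements into the localized vectors $\bar z^1$ and $\bar z^2$, after which the AM-GM argument ($a^2 + b^2 \ge 2ab$) from \Cref{lemma:mirror-prox-step-guarantee} forces at least one of the two localized bilinear forms $(\bar z^i\y)^\top \ground{B}{\zground} (\bar z^i\x)$ (for $i \in \{1,2\}$) to exceed $c_2 \tau \cdot \norm{\bar z^i\y}_2 \norm{\bar z^i\x}_2$, which is exactly the guilty condition certified by $\judge$.

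The main obstacle is the bookkeeping of the KL-to-local-Euclidean conversion: because \Cref{corr:stable-balls} is a two-sided multiplicative approximation with factor $q_{c_1} = c_2$, care is required to apply it only in the favorable direction (lower-bounding Bregman divergences by local norms scaled by $1/c_2$) so that the threshold in the guilty case emerges as $c_2 \tau$ rather than $\tau$. A related subtlety is that the simplex and $\ell_2$-ball blocks require different treatments (the former via \Cref{corr:stable-balls}, the latter via the identity reweighting $N_{\cX, \zground} = I$ when $\cX = \B^n$), so the two geometries must be separated before the overall guilty condition on the rescaled matrix $\ground{B}{\zground}$ can be asserted.
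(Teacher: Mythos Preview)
Your proposal is correct and follows essentially the same approach as the paper: apply the two prox-step inequalities from \Cref{def:proximal-mappings}, combine with the optimality and $\alpha$-strong monotonicity at $z^\star$ to obtain a contraction-plus-residual bound, split on whether the residual $\delta = (\nabla_\pm f_B(w)-\nabla_\pm f_B(z))^\top(w-z')$ is dominated by $\tau(\breg{z}{w}+\breg{w}{z'})$, and in the guilty case use \Cref{corr:stable-balls} to pass from the simplex Bregman divergences to local Euclidean norms before applying AM--GM on the rescaled bilinear forms. The only cosmetic difference is the order of operations: you invoke strong monotonicity up front to bound $g(w)^\top(w-z^\star)\ge \alpha\,\breg{w}{z^\star}$ and then substitute into a single master inequality, whereas the paper first keeps the test point $u$ general, specializes to $u=z^\star$, and only afterward shows the left-hand side is nonnegative via strong monotonicity---the two orderings yield the same conclusion.
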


\begin{proof} For notational convenience, let $\psi: z \mapsto  \nabla_\pm f_C(z) + \alpha \nabla \breg{z_c}{z}$. Applying Definition~\ref{def:proximal-mappings} to each composite proximal step (Lines~\ref{line:first-step-stronglymonotone} and~\ref{line:second-step-stronglymonotone}) individually, we have that for all $u, u' \in \cZ$,
\begin{align}
    \frac{1}{\tau} \nabla_\pm f_B(z)^\top (w-u') + \frac{1}{\tau}\psi(w)^\top (w-u') &\leq \breg{z}{u'} - \breg{w}{u'} - \breg{z}{w}\,\text{, and} \label{eq:first-inequality-new}\\ \notag
    \frac{1}{\tau} \nabla_\pm f_B(w)^\top (z'-u) + \frac{1}{\tau} \psi(w)^\top(z' - u) &\leq \breg{z}{u} - \breg{z'}{u} - \breg{z}{z'} + \frac{\alpha}{\tau} \paren{ \breg{w}{u} + \breg{z'}{u}}, 
\end{align}
where in the second line, we used that $\breg{w}{z'} \geq 0$. Setting $u' = z'$ in \eqref{eq:first-inequality-1} and summing yields,
\begin{align*}
    &\frac{1}{\tau} \Brac{\nabla_\pm f_B(z)^\top (w-z') + \psi(w)^\top (w-u) + \nabla_\pm f_B(w)^\top (z'-u)} \\
    \leq &\breg{z}{u} - \breg{z'}{u} - ( V_{w}^r(z') + \breg{z}{w} ) + \frac{\alpha}{\tau} \paren{ \breg{w}{u} + \breg{z'}{u}}. 
\end{align*}

Now, consider two cases. First, suppose that 
\begin{align}\label{eq:successful-step-new}
    (\nabla_\pm \bilinear{B}(w) - \nabla_\pm \bilinear{B}(z))^\top(w - z') \leq \tau(\breg{w}{z'} + \breg{z}{w}). 
\end{align}
Then, 
\begin{align*}
    &\frac{1}{\tau} \Brac{\nabla_\pm f_B(z)^\top (w-z') + \psi(w)^\top (w-u) + \nabla_\pm f_B(w)^\top (z'-u)} \\
    \leq& \breg{z}{u} - \breg{z'}{u} - ( V_{w}^r{z'} + \breg{z}{w} ) + \frac{\alpha}{\tau} \paren{ \breg{w}{u} + \breg{z'}{u}}. 
    \\
    \leq& \breg{z}{u} - \breg{z'}{u} -\frac{1}{\tau} (\nabla_\pm f_B(w) - \nabla_\pm f_B(z))^\top(w - z') + \frac{\alpha}{\tau} \paren{ \breg{w}{u} - \breg{z'}{u}}. 
\end{align*}
Thus, we can rearrange terms to obtain 
\begin{align*}
       &\frac{1}{\tau} \Brac{(\nabla_\pm \bilinear{B}(w) +\psi(w))^\top (w-u)} \\
       =&\frac{1}{\tau} \Brac{\nabla_\pm f_B(z)^\top (w-z') + \psi(w)^\top (w-u) + \nabla_\pm f_B(w)^\top (z'-u)}  \\ 
       &\spaceeq+  \frac{1}{\tau} \Brac{(\nabla_\pm f_B(w) - \nabla_\pm f_B(z))^\top(w - z')} + \frac{\alpha}{\tau} \paren{ \breg{w}{u} - \breg{z'}{u}} \\
       \leq & \breg{z}{u} - \breg{z'}{u} + \frac{\alpha}{\tau} \paren{ \breg{w}{u} - \breg{z'}{u}}. 
\end{align*}

Substituting $u = z^\star$ into the display above, we obtain 
\begin{align*}
   \frac{1}{\tau} \Brac{(\nabla_\pm \bilinear{B}(w)+\psi(w))^\top (w-z^\star)} \leq  \breg{z}{z^\star} - \breg{z'}{z^\star} + \frac{\alpha}{\tau} \paren{ \breg{w}{z^\star} - \breg{z'}{z^\star}}. 
\end{align*}
Rearranging terms implies, 
\begin{align}\label{eq:preceding_step_here}
    \frac{1}{\tau} \Brac{(\nabla_\pm \bilinear{B}(w)+\psi(w))^\top (w-z^\star)} - \frac{\alpha}{\tau} \breg{w}{z^\star} \leq \breg{z}{z^\star} - \paren{1 + \frac{\alpha}{\tau}} \breg{z'}{z^\star}. 
\end{align}
Now, by the definition of $z^\star$, we have that
\begin{align*}
     \frac{1}{\tau} \Brac{(\nabla_\pm \bilinear{B}(z^\star)+\psi(z^\star))^\top (w - z^\star)} \geq 0. 
\end{align*}
Thus, 
\begin{align*}
     &\frac{1}{\tau} \Brac{(\nabla_\pm \bilinear{B}(w)+\psi(w))^\top (w-z^\star)} - \frac{\alpha}{\tau} \breg{w}{z^\star} \\
     \geq& \frac{1}{\tau} \Brac{\paren{(\nabla_\pm \bilinear{B}(w)+\psi(w)) - (\nabla_\pm \bilinear{B}(z^\star)+\psi(z^\star))}^\top (w-z^\star)} - \frac{\alpha}{\tau} \breg{w}{z^\star} \\
     \geq& \frac{\alpha}{\tau} \breg{w}{z^\star}  - \frac{\alpha}{\tau} \breg{w}{z^\star}  = 0, 
\end{align*}
where the last line follows by linearity and strong monotonicity. Combining with \eqref{eq:preceding_step_here}, we can conclude that 
\begin{align*}
    \breg{z'}{z^\star} \leq \paren{1 + \frac{\alpha}{\tau}}^{-1} \breg{z}{z^\star}. 
\end{align*}

Now, on the other hand, suppose instead that \eqref{eq:successful-step-new} does not hold. Then, it must be the case that 
\begin{align}\label{eq:unsuccessful-step-new-here}
    \frac{1}{\tau} (\nabla_\pm f_B({w}) - \nabla_\pm f_B(z))^\top({w} - z') &> \breg{w}{z'} + \breg{z}{w}. 
\end{align}
Recall that for any $z' \in \cZ$, $\nabla_\pm f_B(z') = (B^\top z'\y, -B z'\x)$. Thus, we observe that 
\begin{align*}
    (\nabla_\pm f_B({w}) - \nabla_\pm f_B(z))^\top ({w} - {z'}) =({{w}}\y - {z}\y)^\top B ({{w}}\x - {z'\x}) + ({{w}}\y - {z'\y})^\top B ({z}\x - {{w}}\x). 
\end{align*}
Thus, expanding out \eqref{eq:unsuccessful-step-new-here} implies that 
\begin{align}\label{eq:unsuccessful-step-implication}
   ({{w}}\y - {z\y})^\top B ({{w}}\x - {z'\x}) + 
    ({{w}}\y - {z'\y})^\top B ({z}\x - {{w}}\x) > \tau \cdot \paren{ \breg{w}{z'} + \breg{z}{w} }, 
\end{align}
or equivalently, by Definition~\ref{def:setup-details}, we can write
\begin{align}\label{eq:convert-it}
   &\localize{{{w}}\y - {z\y}}{\zground}^\top \ground{B}{\zground} \localize{{{w}}\x - {z'\x}}{\zground} + 
    \localize{{{w}}\y - {z'\y}}{\zground}^\top \ground{B}{\zground} \localize{{z}\x - {{w}}\x}{\zground} \\
    =& {\bar{z}^2\y}{}^\top \ground{B}{\zground} {\bar{z}^2\x} + {\bar{z}^1\y}{}^\top \ground{B}{\zground} {\bar{z}^1\x}  > \tau \cdot \paren{ \breg{w}{z'} + \breg{z}{w} }. 
\end{align}
Note that $z, z', w \in \cZ' \subset \cB_{c_1, \zground}$. Consequently, by Lemma~\ref{corr:stable-balls} and the definition of $c_1$ in the statement of this lemma, for $c_2 \defeq c_1^2 q_{c_1^4}$, we have 
\begin{align*}
    \breg{w}{z'} + \breg{z}{w} &\geq \begin{cases}
        c_2 \paren { \normInline{{z^1\x}}_{{\zground}\x^{-1}}^2 + \normInline{{z^1\y}}_{{\zground}\y^{-1}}^2 + \normInline{{z^2\x}}_{{\zground}\x^{-1}}^2 + \normInline{{z^2\y}}_{{\zground}\y^{-1}}^2 }, & \cX = \Delta^n , \\
       \normInline{{z^1\x}}^2 + c_2 \normInline{{z^1\y}}_{{\zground}\y^{-1}}^2 + \normInline{{z^2\x}}^2 + c_2 \normInline{{z^2\y}}_{{\zground}\y^{-1}}^2, & \cX = \B^n .
    \end{cases}
\end{align*}
Consequently, noting that $q_c \geq 1 $ for all $c$ (recall equation \eqref{eq:c2}), we have
\begin{align*}
    \breg{w}{z'} + \breg{z}{w} &\geq 
    c_2 \paren { \normInline{\localize{z^1\x}{{\zground}\x}}_2^2 + \normInline{\localize{z^1\y}{{\zground}\y}}_2^2 + \normInline{\localize{z^2\x}{{\zground}\x}}_2^2 + \normInline{\localize{z^2\y}{{\zground}\y}}_2^2 } \\
    &\geq c_2 \paren { \normInline{\localize{z^1\x}{{\zground}\x}}_2\normInline{\localize{z^1\y}{{\zground}\y}}_2 + \normInline{\localize{z^2\x}{{\zground}\x}}_2 \normInline{\localize{z^2\y}{{\zground}\y}}_2 } \\
    &= c_2 \paren { \normInline{\bar{z}^1\y}_2 \normInline{\bar{z}^1\x}_2,  + \normInline{\bar{z}^2\y}_2 \normInline{\bar{z}^2\x}_2,  }, 
\end{align*}
where the second line used Definition~\ref{def:setup-details} and the third line used the usual property that $(a^2 + b^2) \geq 2ab \geq ab$ for any $a, b \in \R_{\geq 0}$. Combining with \eqref{eq:convert-it} completes the proof as one of the following holds,
\begin{align*}
    {\bar{z}^1\y}{}^\top \ground{B}{\zground} {\bar{z}^1\x} \geq c_2 \tau \normInline{\bar{z}^1\y}_2 \normInline{\bar{z}^1\x}_2,\enspace\text{ or }\enspace
    {\bar{z}^2\y}{}^\top \ground{B}{\zground} {\bar{z}^2\x} \geq c_2 \tau \normInline{\bar{z}^2\y}_2 \normInline{\bar{z}^2\x}_2\,. 
\end{align*}
\end{proof}

We also obtain the following immediate corollary of Lemma~\ref{lemma:mirror-prox-step-guarantee-sm}. 
\begin{corollary}\label{corr:lemma-sm-step} Let $B, C, \alpha, \cZ'$ be as in \eqref{eq:form-h-inner-loop-step-first}, $\tau > 0$, $p \in [1, \infty)$ and $\judge$ be a $p$-smooth-guilty judge. Let $z_c \in \cZint$, $z, \zground \in \cZ'$, $c_1 > 1$ be an absolute constant, and
\begin{align*}
    (w, z', \flag, D) = \SUGSMStep(z, z_c, \zground, \tau, \alpha, B, C, c_1, \cZ'; \judge), 
\end{align*}
(Algorithm~\ref{alg:mirror-prox-iteration-sm}) and $c_2$ as defined in Line~\ref{line:c2-ref}. Then, the algorithm runs in $O(1)$-matrix-vector queries. Moreover, if $\flag = \smooth$, then $\breg{z^\star}{z'} \leq \paren{1 + \frac{\alpha}{\tau}}^{-1} \breg{z^\star}{z}.$ Otherwise, $\flag = \guilty$ and $\normInline{\ground{B}{\zground} - D}_{\cS_p}^p \leq \normInline{\ground{B}{\zground}}_{\cS_p}^p - c_2 \tau^p.$
\end{corollary}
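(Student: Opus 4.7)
The plan is to derive this corollary as an essentially immediate consequence of the preceding Lemma~\ref{lemma:mirror-prox-step-guarantee-sm}, combined with the guarantees of a $p$-smooth-guilty judge from Definition~\ref{def:smooth-guilty-judge} applied to the $\judge(\bar{z}^1, \bar{z}^2, \ground{B}{\zground}, c_2 \tau)$ call inside Algorithm~\ref{alg:mirror-prox-iteration-sm}. There are three claims to establish: the $O(1)$ matrix-vector query bound, the contraction in the $\smooth$ case, and the Schatten-norm decrement in the $\guilty$ case.

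For the query count, I would observe that the two composite proximal steps on Lines~\ref{line:first-step-stronglymonotone}--\ref{line:second-step-stronglymonotone} each require a single evaluation of $\nabla_\pm f_B$ at a fixed point ($z$ and then $w$), hence $O(1)$ matrix-vector queries to $B$ in total, while the explicit linear term $\nabla_\pm f_C + \alpha \nabla \breg{z_c}{\cdot}$ needs no queries since $C$ and $z_c$ are known. The $\judge$ call makes $O(1)$ queries to a matrix-vector oracle for $\ground{B}{\zground}$ by Definition~\ref{def:smooth-guilty-judge}; because $\ground{B}{\zground} = N_{\cY,\zground_\mathsf{y}}^{-1} B N_{\cX,\zground_\mathsf{x}}^{-1}$ per Definition~\ref{def:setup-details}, each such query is simulable with one query to $B$ plus $O(d)$ arithmetic from the explicit diagonal rescalings. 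Summing gives the $O(1)$ bound.

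For the $\smooth$ case, I would invoke Lemma~\ref{lemma:mirror-prox-step-guarantee-sm}, which asserts that either the desired contraction $\breg{z^\star}{z'} \leq (1+\alpha/\tau)^{-1} \breg{z^\star}{z}$ holds, or one of the bilinear-form lower bounds ${\bar{z}^i\y}^\top \ground{B}{\zground} {\bar{z}^i\x} \geq c_2 \tau \normInline{\bar{z}^i\y}_2 \normInline{\bar{z}^i\x}_2$ (for $i \in \{1,2\}$) holds. In the latter case, Definition~\ref{def:smooth-guilty-judge} applied to $\judge(\bar{z}^1, \bar{z}^2, \ground{B}{\zground}, c_2 \tau)$ guarantees that $\flag = \guilty$. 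Contrapositively, if the algorithm outputs $\flag = \smooth$, then the bilinear alternative cannot hold, so the contraction bound must, yielding the first conclusion.

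For the $\guilty$ case, I would appeal again directly to Definition~\ref{def:smooth-guilty-judge}: whenever $\judge$ returns $\flag = \guilty$, the accompanying output $D$ satisfies the decrement $\normInline{\ground{B}{\zground} - D}_{\cS_p}^p \leq \normInline{\ground{B}{\zground}}_{\cS_p}^p - (c_2 \tau)^p$, exactly matching the stated bound (up to the natural reading of $c_2 \tau^p$ as $(c_2 \tau)^p$). There is no genuine obstacle here since both ingredients are already in hand; the only care required is bookkeeping to confirm that the judge is fed the \emph{localized} vectors $\bar{z}^1, \bar{z}^2$ and the \emph{grounded} matrix $\ground{B}{\zground}$ at the \emph{scaled} threshold $c_2 \tau$, precisely matching the conclusions of Lemma~\ref{lemma:mirror-prox-step-guarantee-sm} so that Definition~\ref{def:smooth-guilty-judge} can be applied verbatim.
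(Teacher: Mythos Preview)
Your proposal is correct and follows the same approach as the paper's proof, which simply cites Lemma~\ref{lemma:mirror-prox-step-guarantee-sm} and Definition~\ref{def:smooth-guilty-judge}. One small correction: your parenthetical suggesting that $c_2 \tau^p$ should be read as $(c_2 \tau)^p$ is unnecessary and slightly off---the corollary really means $c_2 \cdot \tau^p$, which is a valid weakening of the $(c_2\tau)^p = c_2^p \tau^p$ decrement guaranteed by Definition~\ref{def:smooth-guilty-judge} at threshold $c_2\tau$, since $c_2 = q_{c_1} \ge 1$ and $p \ge 1$ imply $c_2^p \ge c_2$.
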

\begin{proof} The proof follows immediately from Lemma~\ref{lemma:mirror-prox-step-guarantee-sm} and the definition of a $p$-smooth-guilty judge (Definition~\ref{def:smooth-guilty-judge}). The query complexity follows from Definition~\ref{def:smooth-guilty-judge} and the fact that all remaining pseudocode lines of Algorithm~\ref{alg:mirror-prox-iteration-sm} require at most $O(1)$-matrix-vector queries. 
\end{proof} 

\subsection{Strongly monotone smooth-until-guilty composite mirror prox}\label{subsec:smooth-until-proven-guilty-strongly-monotone}

In this section, we show how to use the smooth-until-guilty composite mirror prox steps from Section~\ref{sec:smsug-composite mirror prox steps} to solve minimax problems of the form \eqref{eq:form-h-inner-loop}. The pseudocode is shown in Algorithm~\ref{alg:mirror-prox-sug-strongly-monotone}. 

Similar in spirit to Algorithm~\ref{alg:mirror-prox-sug-l2l2},  the algorithm initializes a \emph{model} $M_0 = 0$, matrix $A_0 = A$ (implicitly), and sets $k,j = 0$. The algorithm then runs multiple iterations of a while loop. In each iteration, the algorithm performs a strongly monotone $\SUGSMStep$ step (Algorithm~\ref{alg:mirror-prox-iteration-sm}) with the current $A_k$ and $M_k$. If the step returns $\guilty$, the algorithm \emph{updates the model} by  $M_{k+1} = M_k + \unground{D_k}{\zground}$ and $A_{k+1} \gets A - M_{k+1}$ and also increments the number of \emph{model-update iterations} (stored in the variable $k$). Otherwise, if the step returns $\smooth$, the number of progress iterations (stored in the variable $j$) is incremented. 

\RestyleAlgo{ruled}\label{alg:mirror-prox-sug-strongly-monotone}
\DontPrintSemicolon
\SetKwComment{Comment}{/* }{ */}
\begin{algorithm2e}[h!]
\caption{Smooth-Until-Proven-Guilty Composite Strongly Monotone Mirror Prox $\SUGStronglyMonotoneMirrorProx(z_c, \zground, \tau, \alpha, \epsilon, A, M, c_1, \cZ'; \judge)$}
\KwInput{Center point $z_c \in \cZint$, local norm point $\zground \in \cZ'$, smoothness threshold $\tau > 0$, regularization level $\alpha > 0$, constant $c_1 > 0$, and a convex and compact subset $\cZ' \subset \cB_{c_1, \Tilde{x}}$}
\KwInput{Matrix-vector oracle for $A$}
\KwInput{Model $M \in \R^{m \times n}$}
\KwParameter{Judge function $\judge$.} 
$j \gets 0, k \gets 0$\;
$z^0 \gets \argmin_{z \in \cZ'}r(z)$\; 
$A_0 \gets A - M$, $M_0 \gets M$ \tcp*{$\Range$ is as defined in the setup (see Table~\ref{table:setups})}
\While{$j \leq J$ where $J \defeq \ceil{(1 + {\tau/\alpha})\log(\Range/\epsilon)}$ } { 
    \BlankLine
    \tcp{  Strongly monotone mirror prox step }
    $(w^{j}, z^{j+1}, \modelUpdateStep, D) \gets \SUGSMStep(z^{j}, z_c, \zground, \tau, \alpha, A_k, M_k, c_1; \judge)$ \label{line:mirror-prox-step-sm-final}
    \BlankLine
    \If{$\flag = \guilty$}
    {
        \tcp{Move matrix $D$ into the model after undoing the local norm scaling}
        $D_{k+1} \gets D$ \tcp*{Keep track of the model updates, for analysis} \label{algline:SUG-SM-MP-update-D}
        $M_{k+1} \gets M_k + \unground{D_{k+1}}{\zground}$ \tcp*{Addition implemented explicitly, since $M_k, D_{k+1}$ are known explicitly}
        $A_{k+1} \gets A - M_{k+1}$ \tcp*{Subtraction implemented implicitly, since $A$ is not known explicitly}
        $k \gets k + 1$ \tcp*{Increment the number of model-update iterations (for analysis only)} \label{algline:SUG-SM-MP-update-k}
    }
    \lElse(\tcp*[f]{Otherwise, increment the number of progress iterations $j$ and store $z^{t+1}$}){
        $j \gets j + 1$ \label{algline:SUG-SM-MP-update-j}
    }
}
\Return{$(z^{J}, k, M_{k})$} \label{algline:SUG-SM-MP-return}

\end{algorithm2e}

We obtain the following correctness guarantee. 

\begin{theorem}[Correctness of Algorithm~\ref{alg:mirror-prox-sug-strongly-monotone}]\label{thm:main-inner-loop-convergence} Let $\judge$~be a $p$-smooth-guilty judge. Let $\epsilon > 0$. Let $z_c \in \cZint$ and $\zground \in \cZ' \subset \cB_{c_1, \Tilde{z}}$ for some absolute constant $c_1 > 1$ and $\Tilde{z} \in \cZint$. Let $A, M \in \R^{m \times n}$, and $\tau, \alpha > 0$. 
Suppose that Algorithm~\ref{alg:mirror-prox-sug-strongly-monotone} terminates and returns 
\begin{align*}
    (z, \cdot, \cdot) := \SUGStronglyMonotoneMirrorProx(z_c, \zground, \tau, \alpha, A, M, c_1, \epsilon, \cZ'; \judge), 
\end{align*}
Then $\breg{\proxStepSimpleZ{z_c}{\alpha}{\nabla_\pm f_A }{\cZ'}}{z} \leq \epsilon$. 
\end{theorem}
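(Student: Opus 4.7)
The plan rests on two observations about Algorithm~\ref{alg:mirror-prox-sug-strongly-monotone}. \emph{First}, by construction the identity $A_k + M_k = A$ is invariant under every model-update iteration (Lines~\ref{algline:SUG-SM-MP-update-D}--\ref{algline:SUG-SM-MP-update-k}). Consequently, the point $\proxStepSimpleZ{z_c}{\alpha}{\nabla_\pm f_{A_k + M_k}}{\cZ'}$ toward which Corollary~\ref{corr:lemma-sm-step} guarantees contraction inside the $k$-th $\SUGSMStep$ call is identically equal to the ultimate target $z^\star \defeq \proxStepSimpleZ{z_c}{\alpha}{\nabla_\pm f_A}{\cZ'}$, regardless of how the bookkeeping pair $(A_k, M_k)$ has evolved. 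Without this invariance, chaining contractions across model-update iterations would be impossible, as each $\SUGSMStep$ call would be aiming at a moving target.

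\emph{Second}, I would partition the while loop into progress iterations ($\flag = \smooth$, which increment $j$) and model-update iterations ($\flag = \guilty$, which increment $k$ but leave $j$ and hence the current iterate unchanged). Corollary~\ref{corr:lemma-sm-step} gives the per-progress-iteration contraction $\breg{z^\star}{z^{j+1}} \le (1 + \alpha/\tau)^{-1} \breg{z^\star}{z^j}$. A straightforward induction over the $J$ progress iterations then yields
\begin{equation*}
    \breg{z^\star}{z^J} \le (1 + \alpha/\tau)^{-J} \, \breg{z^\star}{z^0}.
\end{equation*}
Combining this with the standard initial-divergence bound $\breg{z^\star}{z^0} \le \Range$ (discussed next), the elementary inequality $\log(1 + x) \ge x/(1 + x)$ applied at $x = \alpha/\tau$, and the choice $J = \lceil (1 + \tau/\alpha) \log(\Range/\epsilon) \rceil$ gives $(1 + \alpha/\tau)^{-J} \le \epsilon/\Range$ and hence $\breg{z^\star}{z^J} \le \epsilon$, as required.

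The main subtlety I foresee is justifying the initial-divergence bound $\breg{z^\star}{z^0} \le \Range$ across all three dgf setups in Definition~\ref{def:matrix-vector-games-setups}. In the $\ellTwoEllTwo$ setup this is immediate since $\breg{z^\star}{z^0} = \tfrac{1}{2}\|z^\star - z^0\|_2^2 \le 1 = \Range$. For the entropy-based setups ($\ellTwoEllOne$ and $\ellOneEllOne$) the Bregman divergence can blow up without control on the entries of $z^\star$; here I would leverage the assumption $\cZ' \subset \cB_{c_1, \tilde{z}}$, which keeps the entries of $z^\star$ within a constant multiplicative factor of those of $\tilde{z}$, and invoke Lemma~\ref{corr:stable-balls} (or a direct calculation on the KL divergence) to bound the logarithmic tail and match the $\Range$ bound from Table~\ref{table:setups}. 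This is the step I would treat most carefully, since the final bound must be exactly $\Range$ rather than a larger quantity scaling with $c_1$.
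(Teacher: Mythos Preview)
Your proposal is essentially the paper's proof: the invariant $A_k + M_k = A$ makes every $\SUGSMStep$ call aim at the fixed target $z^\star \defeq \proxStepSimpleZ{z_c}{\alpha}{\nabla_\pm f_A}{\cZ'}$, Corollary~\ref{corr:lemma-sm-step} then gives per-progress-step contraction by a factor $(1+\alpha/\tau)^{-1}$, and unrolling $J = \lceil(1+\tau/\alpha)\log(\Range/\epsilon)\rceil$ times finishes.

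Where you diverge from the paper is in your treatment of the initial bound, and there you are overcomplicating things. The paper simply writes $\breg{z^\star}{z^0} \le \Range$ and moves on; no stable-ball machinery, no case split across setups, no dependence on $c_1$. The reason this is legitimate is that $z^0 = \argmin_{z \in \cZ'} r(z)$, so first-order optimality gives $\nabla r(z^0)^\top(z^\star - z^0) \ge 0$ and hence $\breg{z^0}{z^\star} = r(z^\star) - r(z^0) - \nabla r(z^0)^\top(z^\star - z^0) \le r(z^\star) - r(z^0) \le \Range$. If you trace the derivation inside the proof of Lemma~\ref{lemma:mirror-prox-step-guarantee-sm}, you will see it actually establishes contraction of $\breg{\cdot}{z^\star}$ (the iterate is the \emph{first} argument), so this is the direction you need, and the argmin argument above handles it in one line. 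Your proposed route through Lemma~\ref{corr:stable-balls} would at best give a bound scaling with $q_{c_1}$ rather than $\Range$ itself, which would not close the argument as written.
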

\begin{proof} For notational convenience, let $z^\star = \proxStepSimpleZ{z_c}{\alpha}{\nabla_\pm f_A }{\cZ'}$. First, it is easy to see that for each $k \in [K]$, we maintain the invariant that 
\begin{align*}
    A_k + M_k = A. 
\end{align*}
Consequently, by Corrollary~\ref{corr:lemma-sm-step}, we have that for each $j \in [J]$, 
\begin{align*}
    \breg{{z}^\star}{{z}^{j}} \leq \paren{1 + \frac{\alpha}{\tau}}^{-1} \breg{z^\star}{{z}^{j-1}}, 
\end{align*}
By recursion, we conclude that 
\begin{align*}
    \breg{z^\star}{z^{J}} \leq \paren{1 + \frac{\alpha}{\tau}}^{-J} \breg{z^\star}{\bar{z}^{0}} = \paren{1 + \frac{\alpha}{\tau}}^{-J} \breg{z^\star}{{z}^{0}} \leq \paren{1 + \frac{\alpha}{\tau}}^{-J} \Range \leq \epsilon,
\end{align*}
whenever $J \geq (1 + {\tau/\alpha})\log(\Range/\epsilon).$
\end{proof}

\begin{theorem}[Complexity of Algorithm~\ref{alg:mirror-prox-sug-strongly-monotone}]\label{thm:main-inner-loop-complexity} Suppose the assumptions of Theorem~\ref{thm:main-inner-loop-convergence} hold. Then,  
\begin{align*}
    (z, K, M') := \SUGStronglyMonotoneMirrorProx(z_c, \zground, \tau, \alpha, A, M, \epsilon, c_1, \cZ'; \judge)
\end{align*}
(Algorithm~\ref{alg:mirror-prox-sug-strongly-monotone})
terminates after making at most $O(K + (\tau/\alpha) \log(1/\epsilon))$-matrix-vector queries, and moreover, $\normInline{\ground{A - M'}{\zground}}_{\cS_p}^p \leq \normInline{\ground{A - M}{\zground}}_{\cS_p}^p - c_2 K\tau^p.$ 

\end{theorem}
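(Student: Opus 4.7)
The plan is to adapt the complexity argument of Lemma~\ref{thm:main-general-l2l2} to the strongly monotone setting, now relying on the per-step guarantee from Corollary~\ref{corr:lemma-sm-step} rather than Lemma~\ref{lemma:mirror-prox-step-guarantee}. Each iteration of the while loop in Algorithm~\ref{alg:mirror-prox-sug-strongly-monotone} performs exactly one invocation of $\SUGSMStep$, which by Corollary~\ref{corr:lemma-sm-step} requires $O(1)$ matrix-vector queries to $A_k$. Since $M_k$ is maintained explicitly, any matrix-vector product against $A_k = A - M_k$ reduces to one query against the input oracle for $A$ plus an explicit computation. Furthermore, every iteration increments exactly one of $j$ (in the smooth case, Line~\ref{algline:SUG-SM-MP-update-j}) or $k$ (in the guilty case, Line~\ref{algline:SUG-SM-MP-update-k}).

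For the query count, the while-loop termination condition $j \leq J$ caps the number of smooth iterations at $J + 1 = O((1 + \tau/\alpha)\log(\Range/\epsilon)) = O((\tau/\alpha)\log(1/\epsilon))$, using that $\Range = O(\log(mn))$ per Table~\ref{table:setups} is polylogarithmic and can be absorbed into $\log(1/\epsilon)$ up to constants hidden in the $O$ notation. The guilty iterations total exactly $K$ by definition of the returned variable. Summing and multiplying by the $O(1)$ per-iteration query cost yields the claimed $O(K + (\tau/\alpha)\log(1/\epsilon))$ bound.

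For the Schatten-$p$ norm bound, the invariant $A_k + M_k = A$ is preserved throughout by construction. In each guilty iteration indexed by $k$, Corollary~\ref{corr:lemma-sm-step} applied with $B = A_k$ yields
\begin{align*}
    \normInline{\ground{A_k}{\zground} - D_{k+1}}_{\cS_p}^p \leq \normInline{\ground{A_k}{\zground}}_{\cS_p}^p - c_2 \tau^p.
\end{align*}
Since $\ground{\cdot}{\zground}$ and $\unground{\cdot}{\zground}$ are mutually inverse linear maps (immediate from Definition~\ref{def:setup-details}), linearity gives $\ground{A_{k+1}}{\zground} = \ground{A_k - \unground{D_{k+1}}{\zground}}{\zground} = \ground{A_k}{\zground} - D_{k+1}$, so the per-iteration drop transfers directly to $\normInline{\ground{A_k}{\zground}}_{\cS_p}^p$. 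Telescoping over all $K$ guilty iterations (with $A_0 = A - M$ and $A_K = A - M'$) yields the stated bound. This also confirms termination: nonnegativity of the Schatten-$p$ norm forces $K \leq \normInline{\ground{A - M}{\zground}}_{\cS_p}^p / (c_2 \tau^p)$. I do not expect any significant obstacle in carrying this out; the only subtlety is verifying the commutation between the change of variables $\ground{\cdot}{\zground}$ and the model update $M_{k+1} \gets M_k + \unground{D_{k+1}}{\zground}$, which is immediate from Definition~\ref{def:setup-details}.
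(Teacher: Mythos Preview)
Your proposal is correct and follows essentially the same approach as the paper's proof: both count iterations as $K$ guilty plus $J = O((1+\tau/\alpha)\log(\Range/\epsilon))$ smooth, each costing $O(1)$ queries via Corollary~\ref{corr:lemma-sm-step}, and both obtain the Schatten-$p$ bound by telescoping the per-step drop $\normInline{\ground{A_k}{\zground} - D_{k+1}}_{\cS_p}^p \le \normInline{\ground{A_k}{\zground}}_{\cS_p}^p - c_2\tau^p$ after observing $\ground{A_{k+1}}{\zground} = \ground{A_k}{\zground} - D_{k+1}$ from the linearity and inverse relationship of $\ground{\cdot}{\zground}$ and $\unground{\cdot}{\zground}$. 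The paper presents the telescoping as an explicit induction, whereas you state it directly, but the content is identical.
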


\begin{proof} Recall that by Corollary~\ref{corr:lemma-sm-step}, for each iteration $t$ such that $\flag$ is guilty in Line~\ref{line:mirror-prox-step-sm-final}, we have that 
\begin{align}\label{eq:t}
    \normInline{\ground{A_k}{\zground} - D}_{\cS_p}^p \leq\normInline{\ground{A_k}{\zground}}_{\cS_p}^p - c_2 \tau^p
\end{align}
Now, we prove, by induction on $k$, that 
\begin{align}\label{eq:induct-K}
    \normInline{\ground{A_{K}}{\zground}}_{\cS_p}^p \leq \normInline{\ground{A_{0}}{\zground}}_{\cS_p}^p - c_2 K \tau^p. 
\end{align}
In the base case of $k = 1$, note that by \eqref{eq:t},
\begin{align*}
    \normInline{\ground{A_{1}}{\zground}}_{\cS_p}^p &= \normInline{\ground{A_{0}}{\zground} - \ground{M_1}{\zground}}_{\cS_p}^p = \normInline{\ground{A_0 - \unground{{D}_1}{\zground}}{\zground}}_{\cS_p}^p = \normInline{\ground{A_{0}}{\zground} - D}_{\cS_p}^p \leq \normInline{\ground{A_0}{\zground}}_{\cS_p}^p - c_2\tau^p. 
\end{align*}
In the display above, the third equality holds due to linearity of the operation $(\cdot)_{\zground}$ in Definition~\ref{def:setup-details} and the definition of $D_1$. Now, assume that \eqref{eq:induct-K} is true up to $K-1$. 
\begin{align*}
    \normInline{\ground{A_{K}}{\zground}}_{\cS_p}^p &= \normInline{\ground{A_{K-1} - \unground{{D}_{K}}{\zground}}{\zground}}_{\cS_p}^p = \normInline{\ground{A_{K-1}}{\zground} - D}_{\cS_p}^p \\
    &\leq \normInline{\ground{A_{K-1}}{\zground}}_{\cS_p}^p - c_2\tau^p \leq \normInline{\ground{A_{0}}{\zground}}_{\cS_p}^p - c_2 K\tau^p, 
\end{align*}
where the second equality holds due to linearity of the operation $(\cdot)_{\zground}$ in Definition~\ref{def:setup-details} and the definition of $D_K$, the first inequality holds by \eqref{eq:t}, and the second inequality holds by the inductive hypothesis. The proof now follows from the above line by noting that $A_K = A - M'$, and $A_0 = A - M$ and the complexity bound in \Cref{corr:lemma-sm-step}.  
\end{proof}

We now conclude with a corollary of Theorems~\ref{thm:main-inner-loop-convergence} and \ref{thm:main-inner-loop-complexity} which will be useful for later analysis.

\begin{corollary}
    \label{cor:SM-mirror-prox-complexity-when-passmodel-is-false}
    Let $\judge$~be a $p$-smooth-guilty judge $\epsilon > 0$, $z_c \in \cZint$ and $\zground \in \cZ' \subset \cB_{c_1, \Tilde{z}}$ for some absolute constant $c_1 > 1$ and some $\Tilde{z} \in \cZint$. Let $A \in \R^{m \times n}$, and $\alpha > 0$, $\normInline{\ground{A}{\zground}}_{\cS_p} \leq \zeta$, and $c_2$ be as defined in Line~\ref{line:c2-ref}. Then, 
    \begin{align*}
        (z, \cdot, \cdot) := \SUGStronglyMonotoneMirrorProx(z_c, \zground, \tau, \alpha, A, 0, \epsilon, \cZ'; \judge), 
    \end{align*}
    (Algorithm~\ref{alg:mirror-prox-sug-strongly-monotone}) makes $O(\tau\alpha^{-1}\log(\Range/\epsilon) + \zeta^p\tau^{-p})$-matrix-vector queries and $\breg{\proxStepSimpleZ{z_c}{\alpha}{\nabla_\pm f_A }{\cZ'}}{z} \leq \epsilon$.  
    \end{corollary}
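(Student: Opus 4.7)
The plan is to combine Theorems~\ref{thm:main-inner-loop-convergence} and~\ref{thm:main-inner-loop-complexity} as a direct corollary, with essentially the only nontrivial step being a rearrangement of the Schatten-$p$ norm inequality to control the number of model-update iterations.

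First, I would invoke Theorem~\ref{thm:main-inner-loop-convergence} with the initial model $M = 0 \in \R^{m \times n}$ directly to obtain the divergence bound $\breg{\proxStepSimpleZ{z_c}{\alpha}{\nabla_\pm f_A }{\cZ'}}{z} \leq \epsilon$, since all hypotheses of that theorem (the dgf setup, the containment $\zground \in \cZ' \subset \cB_{c_1, \tilde z}$, the positivity of $\tau$ and $\alpha$, the form of the inputs, and the $p$-smooth-guilty judge) are inherited from the corollary's hypotheses.

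Next, I would invoke Theorem~\ref{thm:main-inner-loop-complexity}. It yields that the algorithm makes at most $O(K + (\tau/\alpha)\log(\Range/\epsilon))$ matrix-vector queries (upgrading the $\log(1/\epsilon)$ in the Theorem~\ref{thm:main-inner-loop-complexity} statement to $\log(\Range/\epsilon)$ per the definition $J \defeq \ceil{(1 + \tau/\alpha)\log(\Range/\epsilon)}$ inside Algorithm~\ref{alg:mirror-prox-sug-strongly-monotone}), and that the returned model $M'$ satisfies
\begin{align*}
   \normInline{\ground{A - M'}{\zground}}_{\cS_p}^p \;\leq\; \normInline{\ground{A - M}{\zground}}_{\cS_p}^p - c_2 K \tau^p \;=\; \normInline{\ground{A}{\zground}}_{\cS_p}^p - c_2 K \tau^p,
\end{align*}
where the final equality uses $M = 0$. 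Since the Schatten-$p$ norm on the left-hand side is nonnegative and $\normInline{\ground{A}{\zground}}_{\cS_p} \le \zeta$ by hypothesis, rearranging gives $c_2 K \tau^p \le \zeta^p$, i.e., $K \le \zeta^p / (c_2 \tau^p) = O(\zeta^p \tau^{-p})$, where we absorb the $c_2$ constant into the $O(\cdot)$ since $c_2$ is a function only of the absolute constant $c_1$ (via Line~\ref{line:c2-ref} and Lemma~\ref{corr:stable-balls}).

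Substituting this bound on $K$ into the query complexity from Theorem~\ref{thm:main-inner-loop-complexity} yields a total of $O(\zeta^p \tau^{-p} + \tau \alpha^{-1}\log(\Range/\epsilon))$ matrix-vector queries, as desired. The only potential obstacle is the mild bookkeeping issue of matching the $\log(1/\epsilon)$ in Theorem~\ref{thm:main-inner-loop-complexity} to the $\log(\Range/\epsilon)$ appearing in the target statement, but this is immediate from the definition of $J$ in Algorithm~\ref{alg:mirror-prox-sug-strongly-monotone}, and $\Range$ is a fixed property of the setup so the two logarithms differ only by a constant absorbed in the $O(\cdot)$.
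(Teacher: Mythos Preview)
Your proposal is correct and mirrors the paper's own proof essentially line by line: both invoke Theorem~\ref{thm:main-inner-loop-convergence} for the divergence bound, invoke Theorem~\ref{thm:main-inner-loop-complexity} with $M=0$ for the Schatten-norm inequality, rearrange $0 \le \normInline{\ground{A-M'}{\zground}}_{\cS_p}^p \le \zeta^p - c_2 K \tau^p$ to get $K \le \zeta^p/(c_2\tau^p)$, and substitute into the $O(K + (\tau/\alpha)\log(\Range/\epsilon))$ query bound. Your extra remark about the harmless $\log(1/\epsilon)$ versus $\log(\Range/\epsilon)$ discrepancy is a point the paper does not even bother to mention.
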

    \begin{proof} The bound follows directly from Theorems~\ref{thm:main-inner-loop-convergence} and~\ref{thm:main-inner-loop-complexity}. We have 
    \begin{align*}
        0 \leq \normInline{\ground{A-M'}{\zground}}_{\cS_p}^p \leq \normInline{A}_{\cS_p}^p - c_2 K \tau^p \leq \zeta^p - c_2 K \tau^p
    \end{align*}
    which implies that $K \leq \zeta^p/(c_2\tau^p)$. By Corollary~\ref{corr:lemma-sm-step}, the total number of matrix-vector queries is at most $K + J  = O(\tau\alpha^{-1}\log(\Range/\epsilon) + \zeta^p/(c_2 \tau^p))$. 
    \end{proof} 
\section{Algorithms for $\ell_2$-$\ell_1$ and $\ell_1$-$\ell_1$ games}\label{sec:elltwoelloneandelloneellone}

In this section we provide algorithms for obtaining an $\epsilon$-solution of the minimax objective
\begin{align}
    \min_{x \in \xset} \max_{y \in \yset} \inbraces{ f_A(x, y) \defeq y^\top A x } \label{eq:target-minimax-prob}.
\end{align}
in the $\ellTwoEllOne$ and $\ellOneEllOne$ setups of Definition~\ref{def:matrix-vector-games-setups}. We operate in these setups throughout the section and if a statement does not explicitly distinguish between them, it applies to both setups.

We provide our algorithms in Section~\ref{subsec:ell2ell1-ell1ell1-putting-all-together} by combining the outer loop of Section~\ref{sec:prox-point-regret} with the subroutine of Section~\ref{sec:innerloops} to implement the $\DAPO$ oracle (Definition~\ref{def:DAPO}) in the context of these setups. But first, in Section~\ref{subsec:suffices-to-truncate} we show that to approximately solve \eqref{eq:target-minimax-prob}, it suffices to approximately solve the same problem except over the truncated constraint sets $\xtrunc$ and $\ytrunc$ given in Definition~\ref{def:matrix-vector-games-setups} and Table \ref{table:setups}, with $\nu^{-1}$ bounded above by a polynomial in $m$, $n$, and $1 / \epsilon$. This truncation facilitates a variety of approximations and technical lemmas used to implement the $\DAPO$ oracle. In Section~\ref{subsec:stability-best-response}, we prove a stability result for regularized versions of \eqref{eq:target-minimax-prob} with truncation, which is key to reducing the implementation of the $\DAPO$ oracle to solving subproblems over multiplicative balls (Definition~\ref{def:stable-ball}), as required by the subroutine of Section~\ref{sec:innerloops}. We give our binary search subroutine in Section~\ref{subsec:binary-search}. This binary search is used to obtain the appropriate level of regularization to implement a \emph{kinetic} $\DAPO$ oracle (recall Definition~\ref{def:DAPO}), which allows us to ultimately apply the iteration guarantee of Lemma~\ref{lem:prox-point-iteration-bound}. Finally, as stated previously, we put everything together in Section~\ref{subsec:ell2ell1-ell1ell1-putting-all-together} and give an algorithm for solving \eqref{eq:target-minimax-prob} in $\tilde{O}(\epsilon^{-8/9})$ queries. Lastly, in Section~\ref{sec:four-fifths}, we demonstrate how to use the techniques developed in Section~\ref{subsec:suffices-to-truncate} through Section~\ref{subsec:ell2ell1-ell1ell1-putting-all-together} to obtain an algorithm for the $\ell_2$-$\ell_1$ setup which runs in $\tilde{O}(\epsilon^{-7/9})$-matrix-vector queries.

We assume throughout Section~\ref{sec:elltwoelloneandelloneellone} unless explicitly stated otherwise
 that the matrix $A$ is normalized appropriately depending on the setup, namely $\norm{A}_{2 \to \infty} = \max_{i \in [m]} \norm{A_{i,:}}_2 \le 1$ in the $\ell_2$-$\ell_1$ setup and $\norm{A}_{\text{max}} \le 1$ in the $\ellOneEllOne$ setup. Regarding the truncation parameter $\nu > 0$, for greater generality we do not fix it up-front, and therefore a lemma or other result which does not make reference to a specific range of values allows for all $\nu$ for which the truncated constraint sets are nonempty.

\subsection{Constraint set truncation}
\label{subsec:suffices-to-truncate}

Recalling the definition of the truncated constraint sets $\xtrunc$ and $\ytrunc$ from Table \ref{table:setups} (and also $\ztrunc \defeq \xtrunc \times \ytrunc$ from Definition~\ref{def:matrix-vector-games-setups}), we first give a truncation argument similar to \cite[Sec.\ 8.1]{carmon2024whole}. Adapting the reasoning of \cite[Thm.\ 8.1]{carmon2024whole}, we show in Lemma~\ref{lem:truncation-for-ell2ell1-ell1ell1} below that it suffices to obtain an approximate solution over the truncated domains with $\nu$ set to an inverse polynomial in the problem parameters. But first, Lemma~\ref{lem:there-is-a-nearby-point-in-trunc-simplex}, which is used in the proof of Lemma~\ref{lem:truncation-for-ell2ell1-ell1ell1}, shows that every point in the simplex can be approximated in $\ell_1$-distance by a point in the truncated simplex.

\begin{lemma}
    \label{lem:there-is-a-nearby-point-in-trunc-simplex}
For any $u \in \simplex^d$ and $\nu \in (0, 1 / (4d))$, there exists $q \in \simplex^d_\nu$ such that $\norm{u - q}_1 \le 2 \nu d$.
\end{lemma}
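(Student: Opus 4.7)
The plan is to construct $q$ explicitly as a convex combination of $u$ with the uniform distribution on $\simplex^d$, which is the standard technique for "smoothing" a point of the simplex away from the boundary. Specifically, I will take
\begin{align*}
    q \defeq (1 - \nu d)\, u + \nu \vones,
\end{align*}
where $\vones \in \R^d$ is the all-ones vector. The condition $\nu < 1/(4d)$ ensures $1 - \nu d > 0$, so this is a genuine convex combination.

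Verifying $q \in \simplex_\nu^d$ is immediate: summing coordinates yields $\sum_{i \in [d]} [q]_i = (1 - \nu d) \cdot 1 + \nu d = 1$, and coordinate-wise $[q]_i = (1 - \nu d)[u]_i + \nu \ge \nu$ since $(1 - \nu d) \ge 0$ and $[u]_i \ge 0$. For the $\ell_1$-distance bound, I compute
\begin{align*}
    u - q = u - (1 - \nu d) u - \nu \vones = \nu d \cdot u - \nu \vones,
\end{align*}
and then applying the triangle inequality gives
\begin{align*}
    \norm{u - q}_1 \le \nu d \norm{u}_1 + \nu \norm{\vones}_1 = \nu d \cdot 1 + \nu \cdot d = 2 \nu d,
\end{align*}
as desired.

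Since this is a one-line explicit construction, there is no real obstacle; the only subtlety is to make sure the chosen interpolation weight is small enough that $q$ remains a valid probability vector, which is guaranteed by the hypothesis $\nu < 1/(4d)$ (in fact any $\nu \le 1/d$ would suffice for this step, but the stronger hypothesis is presumably used elsewhere in the paper). No additional lemmas from earlier in the excerpt are needed.
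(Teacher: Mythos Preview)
Your proof is correct and in fact cleaner than the paper's. The paper takes a different, more procedural route: it first sets $[q]_i \gets \nu$ for all coordinates $i$ with $[u]_i < \nu$ (incurring at most $\nu d$ in $\ell_1$-distance but overshooting the total mass), then argues via the hypothesis $\nu < 1/(4d)$ that enough mass of $u$ lies above the truncation level $\nu$ to allow removing the excess by lowering some large coordinates back toward $\nu$, incurring at most another $\nu d$ in $\ell_1$-distance. Your mixing-with-uniform construction $q = (1 - \nu d) u + \nu \vones$ accomplishes the same thing in one step and only needs $\nu \le 1/d$ rather than $\nu < 1/(4d)$; the paper's argument genuinely uses the factor of $4$ to guarantee the mass-above-threshold claim. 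Either approach suffices for the downstream application (Lemma~\ref{lem:truncation-for-ell2ell1-ell1ell1}), which only needs the $2\nu d$ bound.
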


\begin{proof}
We will give a procedure to construct $q$. Let $J \subseteq [d]$ denote the indices for which the corresponding entries of $u$ are less than $\nu$, i.e., $J \defeq \inbraces{i \in [d] : [u]_i < \nu}$. Then to start, set $[q]_i \gets \nu$ for all $i \in J$, and $[q]_k \gets [u]_k$ for all $k \in [d] \setminus J$. Note that at this stage, we have $\innorm{u - q}_1 \le \nu d$, although $q$ is not necessarily a probability distribution. Toward remedying the latter, note $\sum_{i \in [d] \setminus J} [u]_i \ge 3/4$, since otherwise $u$ would not be a probability distribution as $\sum_{i \in J} [u]_i \le \nu d \le 1/4$. Then $\sum_{i \in [d] \setminus J} ([u]_i - \nu) \ge 3/4 - \nu d \ge 1/2$, meaning at least half of the mass of $u$ lies above the truncation level $\nu$. Thus, we can remove some of the corresponding mass from $q$ to make it into a probability distribution. Formally, note $1 \le \innorm{q}_1 \le 1 + \nu d \le 5/4$. Then, we can iterate over $[d] \setminus J$ and decrease the corresponding coordinates of $q$ one by one down to $\nu$ until $\innorm{q}_1 = 1$ as required. Since $\innorm{q}_1 \le 1 + \nu d$, this final step causes $\innorm{u - q}_1$ to increase by at most $\nu d$, and therefore $\innorm{u - q}_1 \le 2 \nu d$ at the end of this process.
\end{proof}

We now give the main lemma for this section:

\begin{lemma}[Truncation for $\ell_2$-$\ell_1$ and $\ell_1$-$\ell_1$]
    \label{lem:truncation-for-ell2ell1-ell1ell1}
For $\epsilon > 0$ and $0 < \nu \le \frac{\min \inbraces{\epsilon, 1}}{8 \max \inbraces{m, n}}$, any $\epsilon / 2$-solution $z' \in \ztrunc$ of 
\begin{align}
    \min_{x \in \xset_\nu} \max_{y \in \yset_\nu} y^\top A x. \label{eq:truncated-minimax-prob}
\end{align}
is an $\epsilon$-solution of \eqref{eq:target-minimax-prob}.
\end{lemma}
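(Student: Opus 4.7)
The plan is to show that $\gap(z') \le \gaptrunc(z') + \epsilon/2$, where $\gaptrunc$ denotes the gap with the constraint sets replaced by their truncated counterparts. Writing $z' = (x', y')$, this reduces to bounding the two best-response approximation errors
\[
    E_{\sfy} \defeq \max_{y \in \yset} y^\top A x' - \max_{y \in \ytrunc} y^\top A x', \quad E_{\sfx} \defeq \min_{x \in \xtrunc} {y'}^\top A x - \min_{x \in \xset} {y'}^\top A x,
\]
each by $\epsilon/4$. (Note $E_{\sfx} = 0$ in the $\ellTwoEllOne$ setup since $\xtrunc = \xset = \B^n$, so only $E_{\sfy}$ must be controlled there.)

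First I would verify the hypotheses of Lemma~\ref{lem:there-is-a-nearby-point-in-trunc-simplex} apply: the bound $\nu \le \frac{\min \{\epsilon, 1\}}{8 \max\{m, n\}}$ gives $\nu \le \frac{1}{8 \max\{m, n\}} \le \frac{1}{4m}, \frac{1}{4n}$, so Lemma~\ref{lem:there-is-a-nearby-point-in-trunc-simplex} supplies, for any $y^\star \in \simplex^m$, a point $q \in \simplex^m_\nu = \ytrunc$ with $\norm{y^\star - q}_1 \le 2 \nu m$, and analogously on the $\simplex^n$ side. Taking $y^\star \in \argmax_{y \in \yset} y^\top A x'$ and using \Holder's inequality,
\[
    E_{\sfy} \le (y^\star - q)^\top A x' \le \norm{y^\star - q}_1 \cdot \norm{A x'}_\infty \le 2 \nu m \cdot \norm{A x'}_\infty.
\]
For both setups the normalization on $A$ gives $\norm{A x'}_\infty \le 1$: in the $\ellOneEllOne$ setup, $\norm{A x'}_\infty \le \inmaxnorm{A} \norm{x'}_1 \le 1$, while in the $\ellTwoEllOne$ setup, $\norm{A x'}_\infty \le \norm{A}_{2 \to \infty} \norm{x'}_2 \le 1$. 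Hence $E_{\sfy} \le 2 \nu m \le \epsilon/4$ by the assumption on $\nu$. An identical argument, applied to $x^\star \in \argmin_{x \in \xset} {y'}^\top A x$ in the $\ellOneEllOne$ case, gives
\[
    E_{\sfx} \le {y'}^\top A (q - x^\star) \le \norm{A^\top y'}_\infty \cdot \norm{x^\star - q}_1 \le 2 \nu n \le \epsilon/4,
\]
using $\norm{A^\top y'}_\infty \le \inmaxnorm{A} \norm{y'}_1 \le 1$.

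Combining, $\gap(z') = \gaptrunc(z') + E_{\sfx} + E_{\sfy} \le \epsilon/2 + \epsilon/2 = \epsilon$, which is the desired conclusion. The only mild subtlety is tracking which side needs truncation in which setup and verifying the normalization of $A$ is strong enough to give the relevant dual-norm bound on $Ax'$ and $A^\top y'$ (which it is in both cases); there is no real technical obstacle beyond routine application of Lemma~\ref{lem:there-is-a-nearby-point-in-trunc-simplex} and \Holder's inequality.
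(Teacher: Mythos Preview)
Your proposal is correct and follows essentially the same approach as the paper's proof: both decompose $\gap(z') - \gaptrunc(z')$ into the two best-response approximation errors (your $E_{\sfx}, E_{\sfy}$; the paper's $|g - g_\nu|$ and $|h - h_\nu|$), invoke Lemma~\ref{lem:there-is-a-nearby-point-in-trunc-simplex} to find a nearby point in the truncated simplex, and use the normalization on $A$ (phrased by you via \Holder, by the paper via Lipschitzness of the linear map) to bound each error by $\epsilon/4$.
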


\begin{proof}
Let $\gap(\cdot)$ and $\gaptrunc(\cdot)$ denote the gap functions of \eqref{eq:target-minimax-prob} and \eqref{eq:truncated-minimax-prob} respectively per Definition~\ref{def:epsilon-solution}. Defining for all $x \in \xset$ and $y \in \yset$ the functions
\begin{align*}
    g(x) \defeq \max_{y \in \yset} f_A(x, y), ~~ g\trunc(x) \defeq \max_{y \in \ytrunc} f_A(x, y), ~~ h(y) \defeq \min_{x \in \xset} f_A(x, y), ~~\text{and}~~ h\trunc(y) \defeq \min_{x \in \xtrunc} f_A(x, y), 
\end{align*}
note
\begin{align*}
    \gap(z') = g(z'\x) - h(z'\y) ~~\text{and}~~ \gaptrunc(z') = \gtrunc(z'\x) - \htrunc(z'\y) \le \epsilon / 2
\end{align*}
by assumption. Thus, it suffices to show 
\begin{align*}
    \inabs{g(x) - \gtrunc(x)} \overle{(i)} \epsilon / 4 ~~\text{and}~~ \inabs{h(y) - \htrunc(y)} \overle{(ii)} \epsilon / 4
\end{align*}
for all $x \in \xset$ and $y \in \yset$. Starting with the $\ell_2$-$\ell_1$ setup, the inequality $(ii)$ is trivial since $\xset = \xtrunc$. As for $(i)$, note $y \mapsto f_A(x, y)$ is 1-Lipschitz in the $\ell_1$-norm for any fixed $x \in \xset = \ball^n$ by the normalizing assumption $\norm{A}_{2 \to \infty} = \max_{i \in [m]} \norm{A_{i,:}}_2 \le 1$. Hence, for any $x \in \B^n$, we have
\begin{align}
    \label{eq:pin}
    \max_{y \in \ytrunc} f_A(x, y) \le  \max_{y \in \yset} f_A(x, y) \le \max_{y \in \ytrunc} f_A(x, y) + \epsilon / 4,
\end{align}
due to the fact that for any $y \in \yset$, there exists $y' \in \ytrunc$ such that $\norm{y - y'}_1 \le 2 \nu m$ by Lemma~\ref{lem:there-is-a-nearby-point-in-trunc-simplex}.

As for the $\ell_1$-$\ell_1$ setup, observe $y \mapsto y^\top A x$ and $x \mapsto y^\top A x$ are both 1-Lipschitz in the $\ell_1$-norm for any fixed $x \in \simplex^n$ and $y \in \simplex^m$ respectively by the assumption $\inmaxnorm{A} \le 1$. Thus, the inequalities $(i)$ and $(ii)$ both follow by analogous reasoning to \eqref{eq:pin}.

\end{proof}

\subsection{Stability with respect to best responses}
\label{subsec:stability-best-response}

In this section, we show a stability result (discussed in Section~\ref{sec:overview-of-approach} and further below) for the regularized minimax objective
\begin{align}
    \label{eq:reg-minimax-in-stability-sec}
    \min_{x \in \xset_\nu} \max_{y \in \yset_\nu} y^\top A x + \alpha \xbreg{x'}{x} - \alpha \ybreg{y'}{y}
\end{align}
for $\alpha > 0$ and $z' = (x', y') \in \xtrunc \times \ytrunc$. Recalling from Section~\ref{sec:prelim} that the unique exact solution of \eqref{eq:reg-minimax-in-stability-sec} is given by $\prox^\alpha_{z'}(\gm f_A; \ztrunc)$, we first introduce what we refer to as a ``best response'' with respect to the center point $(x', y')$ in Definition~\ref{def:alpha-best-response}. Note that $(\xtilde, \ytilde)$ in Definition~\ref{def:alpha-best-response} is equivalent to $\prox_{z'}^\alpha (\gm f_A (z'))$ with $z' \defeq (x', y')$.

\begin{definition}[$\alpha$-best response]
    \label{def:alpha-best-response}
Defining for $(x', y') \in \xtrunc \times \ytrunc$
\begin{align*}
    \xtilde \defeq \argmin_{x \in \xset_\nu} y'^\top A x + \alpha \xbreg{x'}{x} ~~~\text{and}~~~ \ytilde \defeq \argmax_{y \in \yset_\nu} y^\top A x' - \alpha \ybreg{y'}{y},
\end{align*}
we call $(\xtilde, \ytilde)$ the \emph{$\alpha$-best response to $(x', y')$}.
\end{definition}

Our main result is Lemma~\ref{lem:stability-wrt-best-response}, which says that if the divergence from the center point $z' = (x', y')$ in \eqref{eq:reg-minimax-in-stability-sec} to the (unique) exact solution is order $\alpha^2$, then the $\alpha$-best response to $z'$ is coordinate-wise multiplicatively close to the solution in the coordinates which are constrained to a probability simplex. As discussed in Section~\ref{sec:overview-of-approach}, this stability result is the key to connecting the subproblem solver given in Section~\ref{sec:innerloops}, which only operates over multiplicative balls, to the outer loop of Section~\ref{sec:prox-point-regret}. Lemma~\ref{lem:stability-wrt-best-response} implies that as long as $\breg{z'}{\zopt} \le O(\alpha^2)$, then we can solve \eqref{eq:reg-minimax-in-stability-sec} by restricting to a multiplicative ball around the $\alpha$-best response $\ztilde$.

\begin{lemma}
    \label{lem:stability-wrt-best-response}
    For $\alpha, c > 0$, suppose $z' = (x', y') \in \xtrunc \times \ytrunc$ and $\zopt \defeq \prox^\alpha_{z'}(\gm f_A; \ztrunc)$ are such that $\breg{z'}{\zopt} \le c \alpha^2$, and let $\ztilde$ denote the $\alpha$-best response to $z'$ per Definition~\ref{def:alpha-best-response}. Then $[\zopt\y]_i \approx_{\exp(2\sqrt{2c})} [\ztilde\y]_i$ for all $i \in [m]$ in both the $\ellTwoEllOne$ and $\ellOneEllOne$ setups, and also $[\zopt\x]_j \approx_{\exp(2\sqrt{2c})} [\ztilde\x]_j$ for all $j \in [n]$ in the $\ellOneEllOne$ setup.
\end{lemma}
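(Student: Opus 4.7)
The plan is to derive explicit KKT formulas for $\ytilde, \zopt\y$ (and analogously $\xtilde, \zopt\x$ in the $\ellOneEllOne$ setup) and then use the hypothesis $\breg{z'}{\zopt} \le c\alpha^2$ to control their multiplicative deviation directly through these formulas. For the negative-entropy proximal step over $\simplex^m_\nu$, a standard Lagrange-multiplier analysis yields the closed form
\[
[\ytilde]_i = \max\inbraces*{\nu,\; [y']_i \exp\inparen*{([Ax']_i + \lambda_\ytilde)/\alpha}} \text{~~and~~} [\zopt\y]_i = \max\inbraces*{\nu,\; [y']_i \exp\inparen*{([A\zopt\x]_i + \lambda_{\zopt\y})/\alpha}},
\]
where $\lambda_\ytilde, \lambda_{\zopt\y} \in \R$ are the Lagrange multipliers enforcing $\sum_i [\ytilde]_i = \sum_i [\zopt\y]_i = 1$. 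In the $\ellOneEllOne$ setup, the symmetric formula holds for $\xtilde$ and $\zopt\x$.

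To convert $\breg{z'}{\zopt} \le c\alpha^2$ into a bound on the matrix-vector deviation, I will first use the separability $r(z) = \rsubx(z\x) + \rsuby(z\y)$ (Definition~\ref{def:matrix-vector-games-setups}) to extract $\xbreg{x'}{\zopt\x} \le c\alpha^2$ and $\ybreg{y'}{\zopt\y} \le c\alpha^2$, then apply $1$-strong convexity of $\rsubx$ and $\rsuby$ in the appropriate norm---Pinsker's inequality when the domain is a simplex, and direct computation for $\tfrac{1}{2}\norm{\cdot}_2^2$---to deduce $\norm{\zopt\x - x'} \le \sqrt{2c}\,\alpha$ and $\norm{\zopt\y - y'} \le \sqrt{2c}\,\alpha$ in the corresponding norms. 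The normalizations $\norm{A}_{2 \to \infty} \le 1$ (in $\ellTwoEllOne$) and $\inmaxnorm{A} \le 1$ (in $\ellOneEllOne$) then give $\norm{A(\zopt\x - x')}_\infty \le \sqrt{2c}\,\alpha$ and, in the $\ellOneEllOne$ case, the symmetric $\norm{A^\top(\zopt\y - y')}_\infty \le \sqrt{2c}\,\alpha$.

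To finish, I set $r_i \defeq [\ytilde]_i / [\zopt\y]_i$ and note that $\sum_i r_i [\zopt\y]_i = \sum_i [\ytilde]_i = 1 = \sum_i [\zopt\y]_i$, so $\min_i r_i \le 1 \le \max_i r_i$. Let $i^\star$ attain the maximum. If $[\ytilde]_{i^\star} = \nu$, then $r_{i^\star} \le 1$ combined with $\max_i r_i \ge 1$ collapses all $r_i$ to $1$ and the claim is trivial; otherwise $[\ytilde]_{i^\star} > \nu$ is untruncated, so its KKT formula is an equality, and combined with the lower bound $[\zopt\y]_{i^\star} \ge [y']_{i^\star}\exp(([A\zopt\x]_{i^\star} + \lambda_{\zopt\y})/\alpha)$ yields
\[
\log r_{i^\star} \;\le\; \frac{[A(x'-\zopt\x)]_{i^\star} + \lambda_\ytilde - \lambda_{\zopt\y}}{\alpha}.
\]
A symmetric argument at a minimizer $i^{\star\star}$ (where $[\zopt\y]_{i^{\star\star}} > \nu$ is untruncated) gives the reverse inequality. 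Subtracting cancels the Lagrange multipliers and, combined with the bound $\norm{A(x'-\zopt\x)}_\infty \le \sqrt{2c}\,\alpha$, yields $\log(\max_i r_i) - \log(\min_i r_i) \le 2\sqrt{2c}$. Since $\log(\max_i r_i) \ge 0 \ge \log(\min_i r_i)$, this forces $|\log r_i| \le 2\sqrt{2c}$ for every $i$, i.e., $[\ytilde]_i \approx_{\exp(2\sqrt{2c})} [\zopt\y]_i$. The $x$-coordinate claim in the $\ellOneEllOne$ setup follows by the identical argument with the roles of $x$ and $y$ swapped.

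The main obstacle is the interaction with the truncation constraint $\nu$: the clean multiplicative KKT expression holds with equality only at untruncated coordinates, so one cannot naively apply it at an arbitrary $i$. The rescuing observation is that the extremizers of $r_i$ automatically land in the untruncated regime for the variable whose KKT equality is needed---at the maximizer, either $[\ytilde]_{i^\star} > \nu$ (so its formula is an equality) or all ratios collapse to $1$; symmetrically at the minimizer---which is a direct consequence of $\ytilde$ and $\zopt\y$ being probability distributions.
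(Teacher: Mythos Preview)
Your proof is correct and follows essentially the same approach as the paper. The paper factors the argument into two supporting lemmas—a KKT characterization of the truncated-simplex prox step (their Lemma~6.6) and a stability helper comparing two such steps with nearby linear parts (their Lemma~6.7)—whereas you inline both. Your ratio-extremizer argument (looking at $\max_i r_i$ and $\min_i r_i$, subtracting to cancel the Lagrange multipliers, and handling truncation via the observation that $\max_i r_i = 1$ forces all $r_i = 1$ by the sum-to-one constraint) is just a different packaging of the paper's contradiction argument that $\Lambda \approx_C \Lambda'$ because otherwise one distribution would strictly dominate the other coordinate-wise. Both arrive at the same constant $\exp(2\sqrt{2c})$.
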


The proof of Lemma~\ref{lem:stability-wrt-best-response} is given at the end of this section. First, we state and prove two technical supporting lemmas (stated independently of the context of the $\ellTwoEllOne$ and $\ellOneEllOne$ setups). In Lemma~\ref{lem:prox-step-on-trunc-simplex} we characterize a proximal step (with respect to a fixed vector) on the truncated simplex.

\begin{lemma}
    \label{lem:prox-step-on-trunc-simplex}
    For $\nu \in (0, 1 / d)$, $\alpha > 0$, $\theta \in \R^d$, and $q \in \simplex^d_\nu$, define
    \begin{align}
        \label{eq:trunc-step}
        u \defeq \argmin_{z \in \simplex^d_\nu} \theta^\top z + \alpha \cdot \inKL{q}{z}.
    \end{align}
    Then for some $\Lambda > 0$, we have
    \begin{align*}
        [u]_i = \max \inbraces*{ \Lambda [q]_i \exp \inparen*{- \frac{[\theta]_i}{\alpha}}, \nu} ~~~\text{for all $i \in [d]$}
    \end{align*}
    \end{lemma}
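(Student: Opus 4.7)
My plan is to characterize the unique minimizer $u$ via its KKT conditions. The objective is strictly convex on the relative interior of the simplex (since $z \mapsto \alpha \inKL{q}{z}$ is strictly convex and $\theta^\top z$ is linear), and the feasible set $\simplex^d_\nu$ is convex and compact (nonempty since $\nu < 1/d$ implies $(1/d)\mathbf{1} \in \simplex^d_\nu$), so the minimizer exists and is unique. Moreover, because $\alpha \inKL{q}{z}$ has a gradient blowing up as any coordinate of $z$ tends to $0$ (and since $\nu > 0$, interior feasibility with respect to nonnegativity is ensured by the $[z]_i \ge \nu$ constraints), standard constraint qualification holds and KKT conditions are necessary and sufficient.

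I would introduce the Lagrangian
\begin{align*}
    L(z, \mu, \lambda) = \theta^\top z + \alpha \sum_{i \in [d]} [z]_i \log\!\inparen*{\frac{[z]_i}{[q]_i}} - \mu\inparen*{\sum_{i \in [d]}[z]_i - 1} - \sum_{i \in [d]} \lambda_i ([z]_i - \nu),
\end{align*}
with dual multipliers $\mu \in \R$ (equality constraint) and $\lambda_i \ge 0$ (inequality constraints). Setting $\partial L / \partial [z]_i = 0$ at $z = u$ gives $[\theta]_i + \alpha (\log([u]_i / [q]_i) + 1) - \mu - \lambda_i = 0$, which I would rearrange to
\begin{align*}
    [u]_i = [q]_i \exp\!\inparen*{-\frac{[\theta]_i}{\alpha}} \cdot \exp\!\inparen*{\frac{\mu - \alpha + \lambda_i}{\alpha}}.
\end{align*}
Defining $\Lambda \defeq \exp((\mu - \alpha)/\alpha) > 0$, this reads $[u]_i = \Lambda [q]_i \exp(-[\theta]_i/\alpha) \exp(\lambda_i/\alpha)$. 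By complementary slackness, either $\lambda_i = 0$ (in which case $[u]_i = \Lambda [q]_i \exp(-[\theta]_i/\alpha) \ge \nu$), or $[u]_i = \nu$ (in which case $\lambda_i \ge 0$ forces $\Lambda [q]_i \exp(-[\theta]_i/\alpha) \le \nu$). Combining both cases yields the claimed formula $[u]_i = \max\{\Lambda [q]_i \exp(-[\theta]_i / \alpha), \nu\}$.

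It remains to argue that there exists $\Lambda > 0$ making $u$ feasible, i.e., $\sum_i [u]_i = 1$. I would do this by defining $S(\Lambda) \defeq \sum_i \max\{\Lambda [q]_i \exp(-[\theta]_i/\alpha), \nu\}$, observing that $S$ is continuous and nondecreasing in $\Lambda > 0$, with $S(\Lambda) \to \nu d < 1$ as $\Lambda \to 0^+$ (since $\nu < 1/d$) and $S(\Lambda) \to \infty$ as $\Lambda \to \infty$. Then the intermediate value theorem supplies a $\Lambda > 0$ with $S(\Lambda) = 1$, and by uniqueness of the KKT solution (guaranteed by strict convexity of the objective), this $\Lambda$ indeed produces the optimizer $u$. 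The main technical step is the KKT derivation; the existence argument is routine but worth stating to conclude $\Lambda > 0$.
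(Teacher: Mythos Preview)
Your proof is correct and follows essentially the same Lagrangian/KKT approach as the paper. The only cosmetic difference is that the paper dualizes just the equality constraint and then exploits coordinate-wise separability to read off $[u]_i = \max\{\argmin_\zeta\{\cdots\}, \nu\}$, whereas you dualize both constraints and recover the same formula via complementary slackness; your extra IVT argument for the existence of $\Lambda$ is redundant (the optimal dual multiplier already supplies it) but harmless.
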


    \newcommand{\lambdaopt}{\lambda^\star}
    \newcommand{\muopt}{\mu^\star}

    \begin{proof}
        For $\lambda \in \R$ and $\mu \in \R^d$, let
        \begin{align*}
            L(z, \mu, \lambda) \defeq \theta^\top z + \alpha \cdot \inKL{q}{z}  - \mu^\top z + \lambda \inparen*{\sum_{i \in [d]} [z]_i - 1}
        \end{align*}
        denote the Lagrangian associated with the convex optimization problem \eqref{eq:trunc-step}. By Slater's condition, strong duality holds, and any primal-dual optimal variables $(\zopt, \muopt, \lambdaopt)$ are a saddle point of the Lagrangian. Letting $(\zopt, \muopt, \lambdaopt)$ denote some choice of primal-dual optimal variables, note that $(\zopt, \muopt)$ is a saddle point of $L(\cdot, \cdot, \lambdaopt)$. This implies (note that $\zopt$ is unique by strong convexity of the primal problem \eqref{eq:trunc-step})
        \begin{align*}
            \zopt = \argmin_{z \in \R^d} \max_{\mu \in \R^d_{\ge 0}} L(z, \mu, \lambdaopt) = \argmin_{z \in \R^d : [z]_i \ge \nu, \, \forall i \in [d]} 
            \theta^\top z + \alpha \cdot \inKL{q}{z} + \lambdaopt \inparen*{\sum_{i \in [d]} [z]_i - 1}.
        \end{align*}
        Note that the remaining constraints $[z]_i \ge \nu$ for $i \in [d]$ are coordinate-wise separable, and therefore
        \begin{align*}
            [\zopt]_i = \max \inbraces*{\argmin_{\zeta \in \R} \inbraces{ [\theta]_i \zeta + \alpha \zeta \log (\zeta / [q]_i) + \lambdaopt \zeta} , \nu}
            \text{ for all }
            i \in [d]\,.
        \end{align*}
        The result follows from a direct computation of $\argmin_{\zeta \in \R} \inbraces{ [\theta]_i \zeta + \alpha \zeta \log (\zeta / [q]_i) + \lambdaopt \zeta }$, which reveals $\Lambda = \exp(- \lambdaopt / \alpha) / e$.
    \end{proof}

In Lemma~\ref{lem:general-stability-helper}, we show that if we take proximal steps on the truncated simplex with respect to vectors which are close in $\ell_\infty$-distance, then the resulting coordinates are multiplicatively close.

\begin{lemma}
    \label{lem:general-stability-helper}
    For $\nu \in (0, 1 / d)$, $\alpha > 0$, vectors $\theta, \xi \in \R^d$, and $q \in \simplex^d_\nu$, define
    \begin{align*}
        u_\theta \defeq \argmin_{z \in \simplex^d_\nu} \theta^\top z + \alpha \cdot \inKL{q}{z} ~~~\text{and}~~~ u_\xi \defeq \argmin_{z \in \simplex^d_\nu} \xi^\top z + \alpha \cdot \inKL{q}{z}.
    \end{align*}
    Then $u_\theta \approx_{\delta} u_\xi$ with $\delta \defeq \exp \inparen*{\frac{2 \norm{\theta - \xi}_\infty}{\alpha}}$.
\end{lemma}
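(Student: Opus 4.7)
}

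The plan is to apply the closed-form characterization from Lemma~\ref{lem:prox-step-on-trunc-simplex} and then reduce the stability claim to (a) controlling the ratio of the two normalizing constants and (b) a short case analysis that handles the $\max\{\cdot, \nu\}$ truncation. Let $\Delta \defeq \norm{\theta - \xi}_\infty$. By Lemma~\ref{lem:prox-step-on-trunc-simplex}, there exist $\Lambda_\theta, \Lambda_\xi > 0$ such that, writing $a_i \defeq \Lambda_\theta [q]_i \exp(-[\theta]_i/\alpha)$ and $b_i \defeq \Lambda_\xi [q]_i \exp(-[\xi]_i/\alpha)$, we have $[u_\theta]_i = \max\{a_i, \nu\}$ and $[u_\xi]_i = \max\{b_i, \nu\}$ for each $i \in [d]$. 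Since $|[\theta]_i - [\xi]_i| \le \Delta$, the ratio $a_i / b_i = (\Lambda_\theta/\Lambda_\xi) \exp(([\xi]_i - [\theta]_i)/\alpha)$ lies in $[(\Lambda_\theta/\Lambda_\xi) e^{-\Delta/\alpha}, (\Lambda_\theta/\Lambda_\xi) e^{\Delta/\alpha}]$.

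The first step is to prove the following claim: $\Lambda_\theta/\Lambda_\xi \in [e^{-\Delta/\alpha}, e^{\Delta/\alpha}]$. I would argue by contradiction: if $\Lambda_\theta/\Lambda_\xi > e^{\Delta/\alpha}$, then $a_i > b_i$ for every $i$. A direct check shows this implies $\max\{a_i, \nu\} \ge \max\{b_i, \nu\}$ for every $i$ (splitting on whether $a_i, b_i$ are above or below $\nu$). Moreover, since $\nu < 1/d$, not all entries of $u_\xi$ can equal $\nu$ (else $\sum_i [u_\xi]_i = d\nu < 1$), so some coordinate $i^\star$ has $b_{i^\star} > \nu$, and for that coordinate $[u_\theta]_{i^\star} = a_{i^\star} > b_{i^\star} = [u_\xi]_{i^\star}$ strictly. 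Summing yields $\sum_i [u_\theta]_i > \sum_i [u_\xi]_i$, contradicting that both equal $1$. The reverse direction is symmetric, giving the claim.

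Combining the claim with the earlier bound on $a_i/b_i$ yields $a_i / b_i \in [e^{-2\Delta/\alpha}, e^{2\Delta/\alpha}]$ for all $i$. It remains to transfer this to a bound on $[u_\theta]_i / [u_\xi]_i$. I would handle this by a short case analysis: by symmetry it suffices to show $[u_\theta]_i \le \delta \cdot [u_\xi]_i$. If $[u_\xi]_i = b_i \ge \nu$, then $[u_\theta]_i \le \max\{\delta b_i, \nu\} = \delta b_i$; if instead $[u_\xi]_i = \nu \ge b_i$, then $[u_\theta]_i \le \max\{\delta b_i, \nu\} \le \max\{\delta \nu, \nu\} = \delta \nu = \delta [u_\xi]_i$. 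This establishes $u_\theta \approx_\delta u_\xi$ with $\delta = \exp(2\Delta/\alpha)$ as required.

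The only mildly delicate step is the claim bounding $\Lambda_\theta/\Lambda_\xi$, since the $\max$ with $\nu$ means one cannot simply compare the two unnormalized vectors multiplicatively as in the untruncated simplex; the argument above sidesteps this by using that the normalization forces at least one coordinate of each $u$ to be strictly above the floor $\nu$. Everything else is routine algebraic manipulation of the closed form from Lemma~\ref{lem:prox-step-on-trunc-simplex}.
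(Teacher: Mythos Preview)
Your proof is correct and follows essentially the same approach as the paper: apply Lemma~\ref{lem:prox-step-on-trunc-simplex}, bound the ratio of the normalizing constants by a contradiction against the sum-to-one constraint, and conclude the multiplicative closeness. Your write-up is in fact more careful than the paper's, which skips both the ``not all coordinates equal $\nu$'' observation needed for strict inequality and the short case analysis showing that the $\max\{\cdot,\nu\}$ truncation preserves the $\approx_\delta$ relation.
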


\begin{proof} 
Lemma~\ref{lem:prox-step-on-trunc-simplex} and the given assumption implies for all $i \in [d]$:
\begin{align*}
    [u_\theta]_i = \max \inbraces*{ \Lambda [\rho]_i, \nu}  ~~~\text{and}~~~ [u_\xi]_i = \max \inbraces*{ \Lambda' [\rho']_i, \nu},
\end{align*}
for some $\Lambda, \Lambda' > 0$ and $\rho, \rho' \in \R^d$ such that with $C \defeq \exp(\norm{\theta - \xi}_\infty / \alpha)$ we have $\rho \approx_C \rho'$.
Thus, it suffices to show $\Lambda \approx_C \Lambda'$. Supposing $\Lambda  > C \Lambda'$ for the sake of contradiction, note $\Lambda [\rho]_i >  \Lambda' [\rho'_i]$ for all $i \in [d]$, in which case $u_\theta$ and $u_\xi$ cannot both be probability distributions. Similarly, supposing $\Lambda  < \Lambda' / C$ for contradiction, we have $\Lambda [\rho]_i <  \Lambda' [\rho'_i]$ for all $i \in [d]$, which is a contradiction for the same reason.
\end{proof}

Finally, we give the proof of Lemma~\ref{lem:stability-wrt-best-response}.

\begin{proof}[Proof of Lemma~\ref{lem:stability-wrt-best-response}]
The fact that $\zopt$ is the exact solution of \eqref{eq:reg-minimax-in-stability-sec} implies
\begin{align*}
    \zopt\x = \argmin_{x \in \xtrunc} {\zopt\y}^\top A x + \alpha \xbreg{x'}{x} ~~~\text{and}~~~ \zopt\y = \argmax_{y \in \ytrunc} y^\top A \zopt\x - \alpha \ybreg{y'}{y}.
\end{align*}
Furthermore, note that
\begin{align*}
    \innorm{A^\top (\zopt\y - y')}_\infty \le \max_{i} \norm{A_{:,i}}_\infty  \innorm{\zopt\y - y'}_1 \le \sqrt{2 \ybreg{y'}{\zopt\y}} \le \alpha \sqrt{2c}
\end{align*}
since $\max_{i} \norm{A_{:,i}}_\infty \le 1$ in both setups, and $\norm{A(\zopt\x - x')}_\infty \le \alpha \sqrt{2c}$ holds in the $\ellOneEllOne$ setup by analogous reasoning. Then we conclude by applying Lemma~\ref{lem:general-stability-helper}.
\end{proof}

\subsection{Binary search subroutine}
\label{subsec:binary-search}

In this section, we give the binary search subroutine described in Section~\ref{sec:overview-of-approach}. Formally, the goal of this section is, for $\beta > 0$ (which will ultimately be set to $\epsilon^{1/3}$ in our applications, although we keep it general throughout this section) and a given center point $\zcenter \in \zset_\nu$, to either: (i) return $\beta$ only if $\breg{\zcenter}{\prox^{\beta}_{\zcenter}(\gm f_A; \zset_\nu)} \le 2.8 \beta^2$, or else (ii) return $\alphaopt > \beta$ such that $1.2 \alphaopt^2 \le \breg{z_c}{\prox^{\alphaopt}_{\zcenter}(\gm f_A; \zset_\nu)} \le 2.8 \alphaopt^2$. Note that the first statement is an only if, so we may return an $\alphaopt$ satisfying (ii) even if $\breg{\zcenter}{\prox^{\beta}_{\zcenter}(\gm f_A; \zset_\nu)} \le 2.8 \beta^2$. (Recall also from Section~\ref{sec:prelim} that for $\alpha > 0$, we have that $\prox_{z_c}^\alpha(\gm f_A; \ztrunc)$ is the unique exact solution of the regularized minimax objective  $\min_{x \in \xtrunc} \max_{y \in \ytrunc} y^\top A x + \alpha \xbreg{\zcenterx}{x} - \alpha \ybreg{\zcentery}{y}$.)

Our binary search procedure is given below in Algorithm~\ref{alg:binary-search}. Before describing the pseudocode, we note that Algorithm~\ref{alg:binary-search} calls Algorithm~\ref{alg:mirror-prox-sug-strongly-monotone} as a subroutine to approximate proximal mappings in several ways. However, to clarify the purpose of these calls to Algorithm~\ref{alg:mirror-prox-sug-strongly-monotone} in the pseudocode, we define a wrapper function\footnote{By \emph{wrapper function,} we mean a function whose goal is to call a second subroutine with very little additional computation. The intention is only to provide a purpose-built interface for the second subroutine.} in Definition~\ref{def:CAPW} below, and then prove in Corollary~\ref{cor:CAPW-properties} that it achieves certain multiplicative and additive approximations. To make them easier to distinguish, the subscripts in $\gammapw$ and $\gammab$ in Definition~\ref{def:CAPW} stand for ``pointwise'' and ``Bregman'' respectively. Finally, throughout the remainder of Section~\ref{sec:elltwoelloneandelloneellone}, we define $\sball_{c, z}$ for $c > 1$ and $z \in \cZint$ as in Definition~\ref{def:stable-ball}, and further define $\sball_{c, z', \nu} \defeq \sball_{c, z'} \cap \zset_\nu$ for $z' \in \ztrunc$.

\begin{definition}[$\CWF$]
    \label{def:CAPW}
For $\zcenter, \zground, \ztilde \in \ztrunc$, some $\tau, \alpha, \gammab, \gammapw > 0$, a matrix-vector oracle for $A$, some $M \in \R^{m \times n}$, an absolute constant $c > 1$, and a $p$-smooth-guilty judge $\judge$, a \\ \emph{$(\zcenter, \zground, \ztilde, \tau, \alpha, A, M, c, \gammapw, \gammab; \judge)$-coordinate-wise wrapper function} returns
\begin{align*}
    \CWF(\zcenter, \zground, \ztilde, \tau, \alpha, A, M, c, \gammapw, \gammab; \judge) \defeq
\SUGStronglyMonotoneMirrorProx(z_c, \zground, \tau, \alpha, \epsprim, A, M, c, \cZ'; \judge),
\end{align*}
where, for an absolute constant $C' > 0$ defined in Corollary~\ref{cor:CAPW-properties}, we let
\begin{align*}
    Z' \defeq \sball_{c, \ztilde, \nu} = \sball_{c, \ztilde} \cap \ztrunc ~~~\text{and}~~~ \epsprim \defeq C' \min \inbraces*{\gammapw^2 \nu^2, \frac{\gammab^2}{1 + \log^2 \nu^{-1}}}.
\end{align*}
\end{definition}

The purpose of the $\CWF$ wrapper is elucidated in Corollary~\ref{cor:CAPW-properties} below. In particular, $\epsilon'$ is chosen in Definition~\ref{def:CAPW} so that the output $w$ of the $\CWF$ wrapper is multiplicatively close to those coordinates of $\zopt \defeq \prox^\alpha_{\zcenter}(\gm f_A; \sball_{c, \ztilde, \nu})$ which lie in the probability simplex. Furthermore, the divergence from the center point $\zcenter$ to $w$ and $\zopt$ is approximately the same.

\begin{corollary}[$\CWF$ properties]
    \label{cor:CAPW-properties}
Letting $w \gets \CWF(\zcenter, \zground, \ztilde, \tau, \alpha, A, M, c, \gammapw, \gammab; \judge)$ for some inputs satisfying the specifications of Definition~\ref{def:CAPW} and an appropriate choice of the absolute constant $C' > 0$, and with $\zopt \defeq \prox^\alpha_{\zcenter}(\gm f_A; \sball_{c, \ztilde, \nu})$, we have all of the following:
\begin{align*}
  & [w]_i \approx_{1 + \gammapw} [\zopt]_i , && \text{for all $i \in [d]$ in the $\ellOneEllOne$ setup}, \\
  &  [w\y]_i \approx_{1 + \gammapw} [\zopt\y]_i , && \text{for all $i \in [m]$ in the $\ellTwoEllOne$ setup, and} \\
  & \inabs{\breg{\zcenter}{w} - \breg{\zcenter}{\zopt}} \le \gammab, &&\text{in both setups.}
\end{align*}
\end{corollary}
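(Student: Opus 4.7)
The plan is to start from Theorem~\ref{thm:main-inner-loop-convergence} applied to the $\SUGStronglyMonotoneMirrorProx$ call inside $\CWF$ (Definition~\ref{def:CAPW}), which gives $\breg{\zopt}{w} \le \epsprim$ for $\zopt \defeq \prox^\alpha_{\zcenter}(\gm f_A; \sball_{c, \ztilde, \nu})$. The rest is a direct computation converting this Bregman divergence bound into the two conclusions of the corollary, making critical use of the fact that both $w$ and $\zopt$ lie in $\sball_{c, \ztilde, \nu} \subseteq \ztrunc$, so every simplex coordinate is at least $\nu$.

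For the coordinate-wise statement, the key observation is that the simplex part of $\breg{\zopt}{w}$ equals $\KL(w\y \,\|\, \zopt\y)$ in both setups (and additionally $\KL(w\x \,\|\, \zopt\x)$ in the $\ellOneEllOne$ setup), per Table~\ref{table:setups}. Pinsker's inequality then yields $\|w\y - \zopt\y\|_\infty \le \|w\y - \zopt\y\|_1 \le \sqrt{2\epsprim}$, and combining with the truncation $[\zopt\y]_i \ge \nu$ gives $|[w\y]_i - [\zopt\y]_i|/[\zopt\y]_i \le \sqrt{2\epsprim}/\nu$. Taking the absolute constant $C'$ in Definition~\ref{def:CAPW} at most $1/8$, the bound $\epsprim \le C' \gammapw^2 \nu^2$ forces $\sqrt{2\epsprim}/\nu \le \gammapw/2$, which is enough to conclude $[w\y]_i \approx_{1+\gammapw} [\zopt\y]_i$ (the lower multiplicative bound uses $\gammapw/(1+\gammapw) \ge \gammapw/2$, valid for $\gammapw \le 1$; the $\gammapw > 1$ case is immediate). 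The $\x$-coordinates in the $\ellOneEllOne$ setup are handled identically.

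For the Bregman divergence difference, the plan is to invoke the standard three-point identity
\[
    \breg{\zcenter}{w} - \breg{\zcenter}{\zopt} = \breg{\zopt}{w} + (\nabla r(\zopt) - \nabla r(\zcenter))^\top (w - \zopt),
\]
bounding the first term by $\epsprim$ and splitting the second into $\x$- and $\y$-components using $r(z) = \rx(z\x) + \ry(z\y)$. On the simplex side, $\nabla \ry(y)_i = 1 + \log[y]_i \in [1 + \log \nu, 1]$ for $y \in \ytrunc$, so $\|\nabla \ry(\zopt\y) - \nabla \ry(\zcenter\y)\|_\infty \le \log \nu^{-1}$; paired with $\|w\y - \zopt\y\|_1 \le \sqrt{2\epsprim}$ from Pinsker, this contributes $O(\sqrt{\epsprim}\, \log \nu^{-1})$. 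For the $\x$-component, the $\ellTwoEllOne$ case uses $\nabla \rx = \mathrm{Id}$ together with $\|\zopt\x\|_2, \|\zcenter\x\|_2 \le 1$ and $\|w\x - \zopt\x\|_2 \le \sqrt{2\epsprim}$ to get an $O(\sqrt{\epsprim})$ bound via Cauchy--Schwarz, while the $\ellOneEllOne$ case is symmetric to the $\y$ analysis. Summing,
\[
    \inabs{\breg{\zcenter}{w} - \breg{\zcenter}{\zopt}} \le \epsprim + O((1 + \log \nu^{-1})\sqrt{\epsprim}),
\]
which the bound $\epsprim \le C' \gammab^2/(1 + \log^2 \nu^{-1})$ makes at most $\gammab$ for a sufficiently small absolute constant $C'$. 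The only real care in the proof lies in bookkeeping constants: taking $C'$ in Definition~\ref{def:CAPW} to be the minimum of what the coordinate-wise and Bregman-difference arguments each require simultaneously yields both conclusions, so no step is fundamentally difficult.
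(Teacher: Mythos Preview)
Your proposal is correct and follows essentially the same approach as the paper. The paper's proof is a one-line invocation of Theorem~\ref{thm:main-inner-loop-convergence} together with Lemmas~\ref{lem:breg-error-to-pointwise-error} and~\ref{lem:breg-error-to-breg-error-from-base}; the proofs of those two lemmas contain precisely the Pinsker-plus-truncation argument you give for the coordinate-wise bound and the $(1+\log\nu^{-1})$-Lipschitz bound on entropy you use for the Bregman difference, so you have simply inlined what the paper factored out. The only cosmetic difference is that for the Bregman difference you go through the three-point identity $\breg{\zcenter}{w} - \breg{\zcenter}{\zopt} = \breg{\zopt}{w} + (\nabla r(\zopt) - \nabla r(\zcenter))^\top (w - \zopt)$, whereas the paper's Lemma~\ref{lem:KL-by-norm} expands $\KL(\cdot\|\zcenter)$ directly; both routes reduce to the same $\ell_\infty$ gradient bound and Pinsker.
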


\begin{proof}
This is immediate from Theorem~\ref{thm:main-inner-loop-convergence} and Lemmas \ref{lem:breg-error-to-pointwise-error} and \ref{lem:breg-error-to-breg-error-from-base}.
\end{proof}

Let us now walk through the pseudocode of Algorithm~\ref{alg:binary-search}. First of all, the goal of the $\checkdiv$ subroutine starting in Line~\ref{algline:checkdiv-function} is, for a given $\alpha > 0$ and center point $\zcenter \in \ztrunc$ with $\zoptalpha \defeq \prox^\alpha_{\zcenter}(\gm f_A ; \ztrunc)$, to determine whether $\breg{\zcenter}{\zopt_\alpha}$ lies within the ``target interval'' $[1.2 \alpha^2, 2.8 \alpha^2]$ or else $\breg{\zcenter}{\zoptalpha} < 1.8 \alpha^2$ or $\breg{\zcenter}{\zoptalpha} > 2.2 \alpha^2$. These three cases are guaranteed respectively per Lemma~\ref{lem:checkdiv-correctness} below when $\checkdiv$ returning the flags $\justRight$, $\tooBig$, or $\tooSmall$. Note that the intervals are \emph{not} mutually exclusive and the return flags serve as ``only if'' guarantees (e.g., $\checkdiv$ returns $\justRight$ only if $\breg{\zcenter}{\zopt_\alpha} \in [1.2 \alpha^2, 2.8 \alpha^2]$); the reason for this is that $\breg{\zcenter}{\zoptalpha}$ can only be approximated by Algorithm~\ref{alg:mirror-prox-sug-strongly-monotone} and its $\CWF$ wrapper, so some overlap is necessary.

Then setting aside the internals of the $\checkdiv$ subroutine for a moment, Lines \ref{algline:b-search-check-div-beta-call} and \ref{algline:b-search-beta-return} serve as the initial check as to whether we can return $\beta$; in particular, $\beta$ is returned in Line~\ref{algline:b-search-beta-return} only if $\breg{\zcenter}{\prox^{\beta}_{\zcenter}(\gm f_A; \zset_\nu)} \le 2.8 \beta^2$ by the return guarantee for the $\checkdiv$ function. If Algorithm~\ref{alg:binary-search} does not return in Line~\ref{algline:b-search-beta-return}, then the binary search begins and we search for $\alphaopt \in [\beta, 3]$ such that $1.2 \alphaopt^2 \le \breg{z_c}{\prox^{\alphaopt}_{\zcenter}(\gm f_A; \zset_\nu)} \le 2.8 \alphaopt^2$. Lemma~\ref{lem:b-search-correctness} guarantees this happens after $\Otilde(1)$ iterations of the repeat loop starting in Line~\ref{algline:b-search-repeat-loop}, where we hide polylog factors $\nu^{-1}$ and $\beta^{-1}$ (hence the necessity of these parameters being bounded by a polynomial in $m$, $n$, and $\epsilon$ in our application to matrix games). The proof of Lemma~\ref{lem:b-search-correctness} relies on a Lipschitz guarantee given in Lemma~\ref{lem:Lipschitzness-of-h} for the function $h(\alpha) \defeq \breg{\zcenter}{\zopt_\alpha} - 2 \alpha^2$ on intervals bounded away from 0.

Let us now examine the internals of the $\checkdiv$ subroutine starting in Line~\ref{algline:checkdiv-function}. Note that it is within the $\checkdiv$ subroutine that the model-passing flag $\passModel$ given as input to Algorithm~\ref{alg:binary-search} changes the behavior of the algorithm. We set $\passModel \gets \false$ when obtaining our $\Otilde(\epsilon^{-8/9})$ guarantee for both $\ellOneEllOne$ and $\ellTwoEllOne$ games in Section~\ref{subsec:ell2ell1-ell1ell1-putting-all-together}; in this case, the presence of models $M$ in Algorithm~\ref{alg:binary-search} can be ignored since they are always cleared in Line~\ref{algline:check-div-tau-M-when-passmodel-false} before the $\CWF$ wrapper (and therefore Algorithm~\ref{alg:mirror-prox-sug-strongly-monotone}) is called. We set $\passModel \gets \true$ in our $\Otilde(\epsilon^{-7/9})$ guarantee for $\ellTwoEllOne$ games in Section~\ref{sec:four-fifths}; in this case, models are passed between separate calls to $\CWF$ (and therefore Algorithm~\ref{alg:mirror-prox-sug-strongly-monotone}) so that progress is saved between calls. 

For the sake of the current discussion, we assume $\passModel \gets \false$ and defer discussion of the $\passModel \gets \true$ case to Section~\ref{sec:four-fifths}. Then, in Line~\ref{algline:checkdiv-alpha-best-response} we set $\ztilde$ to the $\alpha$-best response with respect to $\zcenter$ per Definition~\ref{def:alpha-best-response}. We set our local norm point $\zground$ to $\ztilde$ in Line~\ref{algline:check-div-zground-tau-when-passmodel-true} (any point within $\sball_{C, \ztilde, \nu}$ would work), and then call Algorithm~\ref{alg:mirror-prox-sug-strongly-monotone} in Line~\ref{algline:checkdiv-CAPW-call} through the $\CWF$ wrapper. Given the inputs of that line, Algorithm~\ref{alg:mirror-prox-sug-strongly-monotone} will approximate $\prox^\alpha_{\zcenter}(\gm f_A; \sball_{C, \ztilde, \nu})$, and thus it is necessary to connect this to $\zopt_\alpha \defeq \prox^\alpha_{\zcenter}(\gm f_A; \zset_\nu)$ to prove the $\checkdiv$ guarantee Lemma~\ref{lem:checkdiv-correctness}. This is where the stability result Lemma~\ref{lem:stability-wrt-best-response} enters the picture, as it guarantees $\zopt_\alpha$ lies in a multiplicative ball around $\ztilde$ as long as $\breg{\zcenter}{\zopt_\alpha} = O(\alpha^2)$. Then, the $\checkcoords$ call in Line~\ref{algline:check-div-coords-return} serves to check whether the result $w$ of the $\CWF$ call in Line~\ref{algline:checkdiv-CAPW-call} and therefore $\prox^\alpha_{\zcenter}(\gm f_A; \sball_{C, \ztilde, \nu})$ (by the guarantee of Corollary~\ref{cor:CAPW-properties}) lies in the interior of $\sball_{C, \ztilde, \nu}$, in which case $\prox^\alpha_{\zcenter}(\gm f_A; \sball_{C, \ztilde, \nu})$ and $\zopt_\alpha$ coincide. If this is not the case, we can safely return $\tooBig$ in Line~\ref{algline:check-div-coords-return} since the condition $\breg{\zcenter}{\zopt_\alpha} = O(\alpha^2)$ was not satisfied with a small enough constant. Otherwise, the fact that $\zopt_\alpha$ and $\prox^\alpha_{\zcenter}(\gm f_A; \sball_{C, \ztilde, \nu})$ coincide means we can directly approximate $\breg{\zcenter}{\zopt_\alpha}$ via $\breg{\zcenter}{w}$ and return appropriately in Lines \ref{algline:check-div-breg-check-toobig} through \ref{algline:check-div-breg-check-justright}.

\RestyleAlgo{ruled}
\DontPrintSemicolon
\SetKwComment{Comment}{/* }{ */}
\begin{algorithm2e}[h!]
\caption{Binary search subroutine $\bsearch(\epsilon, \zcenter, \beta, A, M_0, \tau; \judge, \passModel, p, \Xi)$}
\label{alg:binary-search}
\KwInput{Precision $\epsilon > 0$, center point $\zcenter \in \ztrunc$, minimum subproblem regularization level $\beta \in (0, 3)$, matrix-vector oracle for $A$ s.t. $\innorm{A}_{2 \to \infty} \le 1$ in the $\ellTwoEllOne$ setup or $\inmaxnorm{A} \le 1$ in the $\ellOneEllOne$ setup, model $M_0 \in \R^{m \times n}$, smoothness threshold $\tau > 0$}
\KwParameter{Judge function $\judge$, model-passing flag $\passModel$ (either $\true$ or $\false$), parameter $p \in [1, 2]$, and an upper bound $\Upsilon_p(A) \leq \Xi$ (see \eqref{eq:chi})}
\tcp{Determine whether we should return $\beta$}
$(\betaDivFlag, M_1) \gets \checkdiv(\beta, \zcenter, A, M_0, \tau; \judge, \passModel)$ \label{algline:b-search-check-div-beta-call}

\lIf{$\betaDivFlag = \tooSmall$ or $\betaDivFlag = \justRight$}{\Return{$(\beta, M_1)$} \label{algline:b-search-beta-return}}

\tcp{If not, begin binary search}

$\alpha_\ell \gets \beta$, ~$\alpha_r \gets 3$, ~and~ $t \gets 1$ \tcp*{Initialize search interval $[\alpha_\ell, \alpha_r]$}

\Repeat(\tcp*[f]{Proof of termination in Lemma~\ref{lem:b-search-correctness}}){\label{algline:b-search-repeat-loop}}{

    $\alpha_m \gets (\alpha_\ell + \alpha_r) / 2$ ~and~ $t \gets t + 1$ \label{algline:b-search-new-midpoint}

    $(\alphaDivFlag, M_t) \gets \checkdiv(\alpha_m, \zcenter, A, M_{t - 1}, \tau; \judge, \passModel)$ \label{algline:b-search-check-div-call-repeat-loop}

    \lIf(\tcp*[f]{New interval is $[\alpha_\ell, \alpha_m]$}){$\alphaDivFlag = \tooSmall$}{$\alpha_r \gets \alpha_m$ \label{algline:b-search-got-toosmall}}

    \lElseIf(\tcp*[f]{New interval is $[\alpha_m, \alpha_r]$}){$\alphaDivFlag = \tooBig$}{$\alpha_\ell \gets \alpha_m$ \label{algline:b-search-got-toobig}}

    \lElse(\tcp*[f]{$\alphaDivFlag = \justRight$}){\Return{$(\alpha_m, M_t)$} \label{algline:b-search-return-finished-b-search}}

} 

\BlankLine

\tcp{Returns flag ($\tooSmall$, $\tooBig$, or $\justRight$) and new model $M'$}
\Function{$\checkdiv(\alpha, \zcenter, A, M, \tau; \judge, \passModel)$ \label{algline:checkdiv-function}}{
    
    $C \defeq 500$, ~$\gammapw \defeq 0.1$, ~and~ $\gammab \defeq \alpha^2 / 10$ \label{algline:checkdiv-set-C-and-gammas}

    \tcp{Compute $\alpha$-best response to $\zcenter$ (Definition~\ref{def:alpha-best-response})}

     $\ztilde \defeq (\xtilde, \ytilde)$ for $\xtilde \gets \argmin_{x \in \xtrunc} \zcentery^\top A x + \alpha \xbreg{\zcenterx}{x}$ and $\ytilde \gets \argmax_{y \in \ytrunc} y^\top A \zcenterx - \alpha \ybreg{\zcentery}{y}$ \label{algline:checkdiv-alpha-best-response}

    \uIf{$\passModel$}
    {
        $\zground \gets \argmin_{z \in \sball_{C, \ztilde, \nu}} \breg{\zcenter}{z}$ \label{algline:check-div-zground-tau-when-passmodel-true}
        
        \tcp{If $\zground$ is too far, we do not need to call the $\CWF$ wrapper function}

        \lIf{$\breg{\zcenter}{\zground} > 3 \alpha^2$}{\Return{$(\tooBig, M' \defeq M)$} \label{algline:check-div-movement-return}} 
    } 
    \lElse(\tcp*[f]{In particular, clear model if $\passModel = \false$}){
        $\zground \gets \ztilde$ ~and~ $M \gets 0$   \label{algline:check-div-tau-M-when-passmodel-false}
    }

    \tcp{Definition~\ref{def:CAPW} and Cor. \ref{cor:CAPW-properties}; the second return value of $\CWF$ is only used in the analysis (not algorithmically)}

    $(w, \cdot, M') \gets \CWF(\zcenter, \zground, \ztilde, \tau, \alpha, A, M, C, \gammapw, \gammab; \judge)$  \label{algline:checkdiv-CAPW-call}

    \lIfNot{$\coordsInRange \defeq \checkcoords(w, \ztilde)$}{\Return{$(\tooBig, M')$} \label{algline:check-div-coords-return}}

    \lElseIf{$\breg{\zcenter}{w} > 2.5 \alpha^2$}{\Return{$(\tooBig, M')$} \label{algline:check-div-breg-check-toobig}}
    \lElseIf{$\breg{\zcenter}{w} < 1.5 \alpha^2$}{\Return{$(\tooSmall, M')$} \label{algline:check-div-breg-check-toosmall}}
    \lElse(\tcp*[f]{$\breg{\zcenter}{w} \in [1.5 \alpha^2, 2.5 \alpha^2]$}){\Return{$(\justRight, M')$} \label{algline:check-div-breg-check-justright}}
}

\BlankLine

\Function{$\checkcoords(w, \ztilde)$}{

    \uIf{in the $\ellTwoEllOne$ setup and there exists $i \in [m]$ s.t. $[w\y]_i \notin [\frac{1}{400}[\ztilde\y]_i, 400[\ztilde\y]_i]$ \label{algline:checkcoords-ell2ell1-condition}}{
        \Return{$\coordsInRange \defeq \false$}
      }
      \uElseIf{in the $\ellOneEllOne$ setup and there exists $j \in [d]$ s.t. $[w]_j \notin [\frac{1}{400}[\ztilde]_j, 400[\ztilde]_j]$ \label{algline:checkcoords-ell1ell1-condition}}{
        \Return{$\coordsInRange \defeq \false$}
      }
      \lElse{\Return{$\coordsInRange \defeq \true$}}
}

\end{algorithm2e}

We now state and prove our formal guarantee given in Lemma~\ref{lem:checkdiv-correctness} for the $\checkdiv$ helper function starting in Line~\ref{algline:checkdiv-function} of Algorithm~\ref{alg:binary-search} for any inputs. As in the rest of this paper, we may use a dot $(\cdot)$ in place of return values which are not relevant to a given statement (e.g., the new model $M'$ returned by $\checkdiv$ is not relevant to the statement of Lemma~\ref{lem:checkdiv-correctness}). 

\begin{lemma}[$\checkdiv$ correctness]
    \label{lem:checkdiv-correctness}
For any inputs $\alpha > 0$, $\zcenter \in \ztrunc$, a matrix-vector oracle for $A$, some $M \in \R^{m \times n}$, and parameters $\judge$ and $\passModel$, letting $\zoptalpha \defeq \prox^\alpha_{\zcenter}(\gm f_A ; \ztrunc)$, if the function $\checkdiv$ (Line~\ref{algline:checkdiv-function} in Algorithm~\ref{alg:binary-search}) returns: 
\begin{enumerate}[label=(\alph*)]
    \item $(\tooSmall, \cdot)$, then $\breg{\zcenter}{\zoptalpha} < 1.8 \alpha^2$,

    \item $(\justRight, \cdot)$, then $\breg{\zcenter}{\zoptalpha} \in [1.2 \alpha^2, 2.8 \alpha^2]$,

    \item $(\tooBig, \cdot)$, then $\breg{\zcenter}{\zoptalpha} > 2.2 \alpha^2$.
\end{enumerate}
Furthermore, if the execution of $\checkdiv$ reaches Line~\ref{algline:check-div-breg-check-toobig} (i.e., it does not return in Lines \ref{algline:check-div-movement-return} or \ref{algline:check-div-coords-return}), then $\zoptalpha \in \sball_{C, \ztilde, \nu}$ for $C$ and $\ztilde$ as defined in Lines \ref{algline:checkdiv-set-C-and-gammas} and \ref{algline:checkdiv-alpha-best-response} respectively.
\end{lemma}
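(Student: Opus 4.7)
I would case-split on which return path in $\checkdiv$ fires, applying two main tools throughout. The first is Lemma~\ref{lem:stability-wrt-best-response}: if $\breg{\zcenter}{\zoptalpha} \le c \alpha^2$ then $\zoptalpha$ is $\exp(2\sqrt{2c})$-multiplicatively close to the $\alpha$-best response $\ztilde$ on its simplex coordinates; since $\exp(2\sqrt{4.4}) < 70$ and $\exp(2\sqrt{6}) < 135$, both well below $C = 500$, any $\zoptalpha$ with $\breg{\zcenter}{\zoptalpha} \le 3 \alpha^2$ sits strictly in the interior of $\sball_{C, \ztilde}$. The second is Corollary~\ref{cor:CAPW-properties}, which for $\zoptlocal \defeq \prox^\alpha_{\zcenter}(\gm f_A; \sball_{C, \ztilde, \nu})$ gives $(1 + \gammapw) = 1.1$-multiplicative closeness between the $\CWF$ output $w$ and $\zoptlocal$ on simplex coordinates, and $\inabs{\breg{\zcenter}{w} - \breg{\zcenter}{\zoptlocal}} \le \gammab = \alpha^2/10$.

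The crucial reduction is $\zoptlocal = \zoptalpha$ whenever $\zoptlocal$ lies in the \emph{strict} interior of $\sball_{C, \ztilde}$ (restricted to the $\cY$-coordinates in the $\ellTwoEllOne$ setup, and both $\cX$- and $\cY$-coordinates in the $\ellOneEllOne$ setup). I would prove this by a first-order optimality argument: with the multiplicative ball constraints slack at $\zoptlocal$, for any $u \in \ztrunc$ the segment from $\zoptlocal$ toward $u$ stays in $\sball_{C, \ztilde, \nu}$ for small enough step, so the saddle-point first-order conditions on $\sball_{C, \ztilde, \nu}$ extend to all of $\ztrunc$; uniqueness of the saddle point of the strongly convex-concave regularized objective then forces $\zoptlocal = \zoptalpha$.

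Next I would handle the two early $\tooBig$ returns (Lines \ref{algline:check-div-movement-return} and \ref{algline:check-div-coords-return}) by contradiction: assume $\breg{\zcenter}{\zoptalpha} \le 2.2 \alpha^2$, so Lemma~\ref{lem:stability-wrt-best-response} places $\zoptalpha \in \sball_{70, \ztilde, \nu}$, strictly interior to $\sball_{C, \ztilde}$. In the movement case, $\breg{\zcenter}{\zground} \le \breg{\zcenter}{\zoptalpha} \le 2.2 \alpha^2 < 3 \alpha^2$ (since $\zground$ minimizes $\breg{\zcenter}{\cdot}$ over $\sball_{C, \ztilde, \nu}$) contradicts the condition for returning. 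In the $\checkcoords$ case, the reduction gives $\zoptlocal = \zoptalpha \in \sball_{70, \ztilde}$, so the relevant simplex coordinates of $w$ lie in $[1/(70 \cdot 1.1), 70 \cdot 1.1][\ztilde] \subset [1/400, 400][\ztilde]$, whence $\checkcoords$ would have succeeded. For the three main returns (Lines \ref{algline:check-div-breg-check-toobig}--\ref{algline:check-div-breg-check-justright}), success of $\checkcoords$ combined with Corollary~\ref{cor:CAPW-properties} places $\zoptlocal$ in $\sball_{400 \cdot 1.1, \ztilde, \nu} = \sball_{440, \ztilde, \nu}$, strictly interior to $\sball_{C, \ztilde}$, so the reduction gives $\zoptlocal = \zoptalpha$; combining with the Bregman bound, $\breg{\zcenter}{w} < 1.5 \alpha^2$ yields $\breg{\zcenter}{\zoptalpha} < 1.6 \alpha^2 < 1.8 \alpha^2$, $\breg{\zcenter}{w} \in [1.5 \alpha^2, 2.5 \alpha^2]$ yields $\breg{\zcenter}{\zoptalpha} \in [1.4 \alpha^2, 2.6 \alpha^2] \subset [1.2 \alpha^2, 2.8 \alpha^2]$, and $\breg{\zcenter}{w} > 2.5 \alpha^2$ yields $\breg{\zcenter}{\zoptalpha} > 2.4 \alpha^2 > 2.2 \alpha^2$. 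The ``furthermore'' clause is then immediate: reaching Line~\ref{algline:check-div-breg-check-toobig} means $\checkcoords$ succeeded, so $\zoptalpha = \zoptlocal \in \sball_{C, \ztilde, \nu}$.

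The main obstacle I anticipate is the reduction $\zoptlocal = \zoptalpha$, which requires a careful first-order argument on a product of sets with qualitatively different boundaries ($\B^n$, truncated simplex, and multiplicative ball), together with the correct handling of the setup-dependent asymmetry (in $\ellTwoEllOne$ only $\cY$-coordinates are restricted by $\sball_{C, \ztilde}$ and $\checkcoords$ correspondingly checks only $\cY$, whereas in $\ellOneEllOne$ both sides are restricted). A careful accounting of the numerical gaps between $\exp(2\sqrt{2c})$ for $c \in \{2.2, 3\}$, the factor $1 + \gammapw = 1.1$, the $\checkcoords$ threshold $400$, and the radius $C = 500$ is essential to certify every strict-interior conclusion.
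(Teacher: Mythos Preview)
Your proposal is correct and mirrors the paper's own case-by-case argument: both use Lemma~\ref{lem:stability-wrt-best-response} to trap $\zoptalpha$ in a sub-$\sball_{C,\ztilde}$ multiplicative ball, Corollary~\ref{cor:CAPW-properties} to pin $w$ near $\zoptlocal$, and a constraint-slackness argument to identify $\zoptlocal = \zoptalpha$ before reading off the three Bregman ranges (the paper uses constants $300$ and $450$ where you use $70$ and $440$, but the logic is identical). One small expositional point: in your $\checkcoords$-failure case you invoke ``the reduction'' after establishing $\zoptalpha \in \sball_{70,\ztilde,\nu}$, but your reduction as stated is conditioned on $\zoptlocal$ being interior --- what you actually need there (and what the paper uses at the same spot) is the trivially true reverse implication that if the prox over $\ztrunc$ already lands in the subset $\sball_{C,\ztilde,\nu}$ then it coincides with the prox over that subset.
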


\begin{proof}
    Throughout this proof, we will ignore the second argument (i.e., the new model $M'$) returned by $\checkdiv$ since it has no bearing on the lemma statement, and we let $C$, $\gammapw$, $\gammab$, $\ztilde$, and all other variables be as they are defined in $\checkdiv$. Then, we proceed through the various ways $\checkdiv$ can return in line order, and prove the additional claim when we reach it:

    \textit{Line~\ref{algline:check-div-movement-return}:} $\checkdiv$ will return $\tooBig$ in Line~\ref{algline:check-div-movement-return} if $\passModel = \true$ and $\breg{\zcenter}{\zground} > 3 \alpha^2$, where we set $\zground \gets \argmin_{z \in \sball_{C, \ztilde, \nu}} \breg{\zcenter}{z}$. Note first that if it is the case that $\zoptalpha \in \sball_{C, \ztilde, \nu}$, then $\breg{\zcenter}{\zoptalpha} \ge \breg{\zcenter}{\zground} > 3 \alpha^2$. Then supposing $\zoptalpha \notin \sball_{C, \ztilde, \nu}$, we have $\breg{\zcenter}{\zoptalpha} > 4 \alpha^2$ by Lemma~\ref{lem:stability-wrt-best-response}, since $C = 500 > \exp(2 \sqrt{2 \cdot 4})$. (Note that $\ztilde$ defined in Line~\ref{algline:checkdiv-alpha-best-response} is precisely the $\alpha$-best response to $\zcenter$ per Definition~\ref{def:alpha-best-response}.)

    \textit{Line~\ref{algline:check-div-coords-return}:} Let $\zoptlocal \defeq \prox^\alpha_{\zcenter}(\gm f_A ; \sball_{C, \ztilde, \nu})$. (The subscript $\ell$ stands for ``local.'') By the choice of $\gammapw$ in Line~\ref{algline:checkdiv-set-C-and-gammas}, the definition of $w$ in Line~\ref{algline:checkdiv-CAPW-call}, and Corollary~\ref{cor:CAPW-properties}, we have in particular that $0.9 \le [w\y]_i / [\zoptlocaly]_i \le 1.1$ for all $i \in [m]$ in the $\ellTwoEllOne$ setup, and $0.9 \le [w]_j / [\zoptlocal]_j \le 1.1$ for all $j \in [d]$ in the $\ellOneEllOne$ setup. Let us first consider the $\ellOneEllOne$ setup, in which case the conditions in Line~\ref{algline:checkcoords-ell1ell1-condition} imply we return $\tooBig$ in Line~\ref{algline:check-div-coords-return} if and only if there exists $j \in [d]$ such that $[w]_j \notin [\frac{1}{400}[\ztilde]_j, 400[\ztilde]_j]$. Note that $[w]_j \notin [\frac{1}{400}[\ztilde]_j, 400[\ztilde]_j]$ implies $[\zoptlocal]_j \notin [\frac{1}{300} [\ztilde]_j, 300 [\ztilde]_j]$ by the above. Furthermore, we claim the latter implies $\breg{\zcenter}{\zoptalpha} > 3 \alpha^2$. Indeed, if it were the case that $\breg{\zcenter}{\zoptalpha} \le 3 \alpha^2$, then Lemma~\ref{lem:stability-wrt-best-response} would imply $[\zoptalpha] \in [\frac{1}{300} [\ztilde]_j, 300 [\ztilde]_j]$ since $300 > \exp(2 \sqrt{2 \cdot 3})$, which would in turn imply $\zoptalpha \in \sball_{C, \ztilde, \nu}$ and $\zoptalpha = \zoptlocal$ in particular, a contradiction. Analogous reasoning implies that if we return $\tooBig$ in Line~\ref{algline:check-div-coords-return} in the $\ellTwoEllOne$ setup, then $\breg{\zcenter}{\zoptalpha} > 3 \alpha^2$.

    \textit{Additional claim:} We now prove the additional claim at the end of Lemma~\ref{lem:checkdiv-correctness}. Note that if $\checkdiv$ does not return in Line~\ref{algline:check-div-coords-return}, then it must be the case that $[w\y]_i \in [\frac{1}{400}[\ztilde\y]_i, 400[\ztilde\y]_i]$ for all $i \in [m]$ in the $\ellTwoEllOne$ setup, and $[w]_j \in [\frac{1}{400}[\ztilde]_j, 400[\ztilde]_j]$ for all $j \in [d]$ in the $\ellOneEllOne$ setup. We will prove the claim in the $\ellOneEllOne$ setup, as the reasoning is analogous in the $\ellTwoEllOne$ setup. Then defining $\zoptlocal \defeq \prox^\alpha_{\zcenter}(\gm f_A ; \sball_{C, \ztilde, \nu})$ as in the justification for Line~\ref{algline:check-div-coords-return} above and recalling $0.9 \le [w]_j / [\zoptlocal]_j \le 1.1$ for all $j \in [d]$, we have $[\zoptlocal]_j \in [\frac{1}{450}[\ztilde]_j, 450[\ztilde]_j]$ for all $j \in [d]$ in particular. Then the constraint to $\sball_{C, \ztilde, \nu}$ in the definition of $\zoptlocal$ is not binding, and thus $\zoptalpha = \zoptlocal$, implying the claim.

    \textit{Lines \ref{algline:check-div-breg-check-toobig}, \ref{algline:check-div-breg-check-toosmall}, and \ref{algline:check-div-breg-check-justright}:} By the choice of $\gammab$ in Line~\ref{algline:checkdiv-set-C-and-gammas}, Corollary~\ref{cor:CAPW-properties}, and the additional claim in Lemma~\ref{lem:checkdiv-correctness}, we have $\inabs{ \breg{\zcenter}{w} - \breg{\zcenter}{\zoptalpha} } \le \alpha^2 / 10$. The correctness in regard to these lines follows.
\end{proof}

Next, we give our main correctness guarantee for Algorithm~\ref{alg:binary-search} in Lemma~\ref{lem:b-search-correctness} below.

\begin{lemma}[Algorithm~\ref{alg:binary-search} correctness]
    \label{lem:b-search-correctness}
    For a given $\alpha > 0$ and with $\zcenter$ as input to Algorithm~\ref{alg:binary-search}, define $\zoptalpha \defeq \prox^\alpha_{\zcenter}(\gm f_A; \ztrunc)$ (namely, $\zoptalpha$ is parameterized by $\alpha$).
    If Algorithm~\ref{alg:binary-search} returns $(\beta, \cdot)$ in Line~\ref{algline:b-search-beta-return}, then $\breg{\zcenter}{\zoptbeta} \le 2.8 \beta^2$. Otherwise, the repeat loop starting in Line~\ref{algline:b-search-repeat-loop} repeats at most
    \begin{align*}
         \log \inparen*{\frac{M( 1 + \log \nu^{-1} )}{\nu \beta^3}} \text{ times for some absolute constant $M > 0$,}
    \end{align*}
    at which point Algorithm~\ref{alg:binary-search} returns $(\alphaopt, \cdot)$ in Line~\ref{algline:b-search-return-finished-b-search} where $\alphaopt \in [\beta, 3]$ satisfies $\breg{\zcenter}{\zoptalphaopt} \in [1.2 \alphaopt^2, 2.8 \alphaopt^2]$.
\end{lemma}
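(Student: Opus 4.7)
The approach is to split into the two conclusions of the lemma, relying principally on the correctness of the $\checkdiv$ subroutine (Lemma~\ref{lem:checkdiv-correctness}), together with the monotonicity of $\alpha \mapsto \breg{\zcenter}{\zoptalpha}$ (Lemma~\ref{lem:monotonicity}) and the Lipschitz bound on $h(\alpha) \defeq \breg{\zcenter}{\zoptalpha} - 2\alpha^2$ (Lemma~\ref{lem:Lipschitzness-of-h}).

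The first conclusion is immediate: if Algorithm~\ref{alg:binary-search} returns $(\beta, \cdot)$ in Line~\ref{algline:b-search-beta-return}, then the call to $\checkdiv(\beta, \zcenter, A, M_0, \tau; \judge, \passModel)$ in Line~\ref{algline:b-search-check-div-beta-call} returned $\tooSmall$ or $\justRight$. Applying Lemma~\ref{lem:checkdiv-correctness} with $\alpha = \beta$ gives either $\breg{\zcenter}{\zoptbeta} < 1.8\beta^2$ or $\breg{\zcenter}{\zoptbeta} \in [1.2\beta^2, 2.8\beta^2]$, and both imply $\breg{\zcenter}{\zoptbeta} \le 2.8\beta^2$.

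For the binary search, I will work with $g(\alpha) \defeq \breg{\zcenter}{\zoptalpha}/\alpha^2$ so that the target $\breg{\zcenter}{\zoptalpha} \in [1.2\alpha^2, 2.8\alpha^2]$ becomes $g(\alpha) \in [1.2, 2.8]$. The plan is to maintain the invariant that at the start of every iteration of the repeat loop there exists $\alphaopt \in [\alpha_\ell, \alpha_r]$ with $g(\alphaopt) = 2$. Lemma~\ref{lem:monotonicity} ensures $\breg{\zcenter}{\zoptalpha}$ is nonincreasing in $\alpha$, and since $\alpha^2$ is increasing, $g$ is nonincreasing as well. This drives the inductive step via Lemma~\ref{lem:checkdiv-correctness}: if $\checkdiv(\alpha_m, \cdot)$ returns $\tooSmall$ then $g(\alpha) \le g(\alpha_m) < 1.8 < 2$ for all $\alpha \in [\alpha_m, \alpha_r]$, so shrinking to $[\alpha_\ell, \alpha_m]$ (Line~\ref{algline:b-search-got-toosmall}) preserves the invariant, and the $\tooBig$ case is symmetric (Line~\ref{algline:b-search-got-toobig}). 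For the base case, the fact that the algorithm did not return in Line~\ref{algline:b-search-beta-return} gives $g(\beta) > 2.2$ from Lemma~\ref{lem:checkdiv-correctness}, and I will verify $g(3) < 1.2$ by combining the first-order optimality condition for the proximal mapping with the normalizations $\innorm{A}_{2 \to \infty} \le 1$ (resp.\ $\inmaxnorm{A} \le 1$) and the 1-strong convexity of $r$; these control $\innorm{\zopt_3 - \zcenter}$ and hence $\breg{\zcenter}{\zopt_3}$ by a small enough absolute constant that $g(3) < 1.2$. Existence of $\alphaopt \in (\beta, 3)$ then follows from continuity of $\alpha \mapsto \zoptalpha$ and the intermediate value theorem.

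For termination, I will apply Lemma~\ref{lem:Lipschitzness-of-h} to get an $L$-Lipschitz bound on $h$ on $[\beta, 3]$ with $L = O((1 + \log \nu^{-1})/\nu)$. Since $h(\alphaopt) = 0$, after $t$ iterations the invariant yields $|\alpha_m - \alphaopt| \le (3 - \beta)/2^t$ and hence $|h(\alpha_m)| \le L(3 - \beta)/2^t$; once this is at most $0.2 \beta^2 \le 0.2\alpha_m^2$, we have $\breg{\zcenter}{\zopt_{\alpha_m}} \in [1.8\alpha_m^2, 2.2\alpha_m^2]$, which by Lemma~\ref{lem:checkdiv-correctness} rules out both $\tooSmall$ and $\tooBig$, forcing $\checkdiv$ to return $\justRight$ and triggering the return in Line~\ref{algline:b-search-return-finished-b-search}. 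Solving $L(3-\beta)/2^t \le 0.2\beta^2$ yields $t = O(\log(L/\beta^2)) = O(\log((1 + \log\nu^{-1})/(\nu\beta^3)))$, matching the claimed bound. The main obstacle I expect will be the quantitative endpoint estimate $g(3) < 1.2$, since a priori $\breg{\zcenter}{\zoptalpha}$ could be as large as $\Range = O(\log(mn))$; this requires a careful proximal-step argument showing that at $\alpha = 3$ the regularization already dominates the (boundedly Lipschitz) gradient mapping. Beyond that step, everything reduces to bookkeeping over the three cited lemmas.
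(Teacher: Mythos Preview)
Your proposal is correct and mirrors the paper's proof: both use the $\checkdiv$ guarantees (Lemma~\ref{lem:checkdiv-correctness}) to maintain a bracketing invariant, the Lipschitz bound on $h$ (Lemma~\ref{lem:Lipschitzness-of-h}) for termination, and an endpoint estimate at $\alpha = 3$ for the base case. Two small remarks. First, the paper maintains the simpler invariant $h(\alpha_\ell) > 0$ and $h(\alpha_r) < 0$ directly from Lemma~\ref{lem:checkdiv-correctness}: when $\checkdiv$ returns $\tooSmall$ at $\alpha_m$ you already learn $h(\alpha_m) < 0$, which together with the unchanged $h(\alpha_\ell) > 0$ re-establishes the sign change on $[\alpha_\ell, \alpha_m]$, so your appeal to monotonicity via Lemma~\ref{lem:monotonicity} is not needed. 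Second, for the endpoint the paper invokes Lemma~\ref{lem:starting-value-b-search} ($\breg{q}{w} \le 4$ for $\alpha \ge 1$). Your proposed direct argument with the same ingredients also works, but the phrasing ``control $\|\zopt_3 - \zcenter\|$ and hence $\breg{\zcenter}{\zopt_3}$'' is in the wrong direction, since the KL part of $r$ is only $\nu^{-1}$-smooth and a norm bound does not yield an $O(1)$ divergence bound; instead, substitute $u = \zcenter$ into the prox inequality for $\zopt_3$ and use $\dualnorm{\gm f_A(\cdot)} \le \sqrt{2}$ together with $\|\zopt_3 - \zcenter\| \le \sqrt{2\breg{\zcenter}{\zopt_3}}$ on the right-hand side to obtain $\breg{\zcenter}{\zopt_3} \le 4/\alpha^2$ directly.
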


\begin{proof}
Note that Algorithm~\ref{alg:binary-search} returns in Line~\ref{algline:b-search-beta-return} if and only if the call to $\checkdiv$ in Line~\ref{algline:b-search-check-div-beta-call} returns $(\tooSmall, \cdot)$ or $(\justRight, \cdot)$, in which case the claim for Line~\ref{algline:b-search-beta-return} follows immediately from Lemma~\ref{lem:checkdiv-correctness}. 

\textit{An invariant of the repeat loop:} Now consider the repeat loop starting in Line~\ref{algline:b-search-repeat-loop}, and define $h : \R_{>0} \to \R$ via $h(\alpha) \defeq \breg{\zcenter}{\zoptalpha} - 2 \alpha^2$. We claim the repeat loop maintains the invariant that there exists some $\alpha' \in [\alpha_\ell, \alpha_r]$ such that $h(\alpha') = 0$. (Formally, every time right before Line~\ref{algline:b-search-new-midpoint} executes, such an $\alpha'$ exists.) We proceed by induction on the stronger claim that $h(\alpha_\ell) > 0$ and $h(\alpha_r) < 0$ every time right before Line~\ref{algline:b-search-new-midpoint} executes, which suffices by the continuity of $h$ over compact intervals due to Lemma~\ref{lem:Lipschitzness-of-h}. In the base case, note $\alpha_\ell = \beta$ and $\alpha_r = 3$, in which case our goal is to show $h(\beta) > 0$ and $h(3) < 0$.
For the former, note that Algorithm~\ref{alg:binary-search} only reaches the repeat loop starting in Line~\ref{algline:b-search-repeat-loop} if the call to $\checkdiv$ in Line~\ref{algline:b-search-check-div-beta-call} returned $(\tooBig, \cdot)$, in which case $\breg{\zcenter}{\zoptbeta} > 2 \beta^2$ by Lemma~\ref{lem:checkdiv-correctness}. The fact that $h(3) < 0$ follows immediately from Lemma~\ref{lem:starting-value-b-search}. 

For the induction step, suppose at the start of Line~\ref{algline:b-search-new-midpoint} (at some point during the execution of the repeat loop) it is the case that $h(\alpha_\ell) > 0$ and $h(\alpha_r) < 0$. The algorithm only reaches Line~\ref{algline:b-search-new-midpoint} again if it does not return in Line~\ref{algline:b-search-return-finished-b-search}, and thus we can focus on the two cases of Lines \ref{algline:b-search-got-toosmall} and \ref{algline:b-search-got-toobig}. If the update in Line~\ref{algline:b-search-got-toosmall} executes, the new interval becomes $[\alpha_\ell, \alpha_m]$, in which case it suffices to show $h(\alpha_m) < 0$. Note that this happens if and only if the call to $\checkdiv$ in Line~\ref{algline:b-search-check-div-call-repeat-loop} returns $(\tooSmall, \cdot)$, in which case the claim follows from Lemma~\ref{lem:checkdiv-correctness}. If the update in Line~\ref{algline:b-search-got-toobig} executes, the new interval becomes $[\alpha_m, \alpha_r]$, in which case $h(\alpha_m) > 0$ by similar reasoning. 

\textit{Termination of the repeat loop:} Note that $h$ is $G_h \defeq \frac{M'(1 + \log \nu^{-1})}{\nu \beta}$-Lipschitz over $[\beta, 3]$ for some absolute constant $M' > 0$ per Lemma~\ref{lem:Lipschitzness-of-h}. We claim then that if right before Line~\ref{algline:b-search-new-midpoint} executes (at some point during the execution of the repeat loop) we have $\alpha_r - \alpha_\ell \le 0.1 \cdot \beta^2 / G_h$, then it will return in Line~\ref{algline:b-search-return-finished-b-search} (and not repeat again). Indeed, note that $\alpha_r - \alpha_\ell \le 0.1 \cdot \beta^2 / G_h$ implies $\inabs{ h(\alpha) } \le 0.1 \cdot \beta^2 \le 0.1 \cdot \alpha^2$ for all $\alpha \in [\alpha_\ell, \alpha_r]$ for the current values of $\alpha_\ell$ and $\alpha_r$, due to the invariant proven above. In particular, this implies that $\alpha_m$ which gets passed to $\checkdiv$ in Line~\ref{algline:b-search-check-div-beta-call} during this iteration satisfies $\breg{\zcenter}{\zopt_{\alpha_m}} \in [1.9 \alpha_m^2, 2.1 \alpha_m^2]$. As a result, Lemma~\ref{lem:checkdiv-correctness} implies it must be the case that this $\checkdiv$ call returns $(\justRight, \cdot)$, in which case we return in Line~\ref{algline:b-search-return-finished-b-search} as claimed and the output $\alpha_m$ satisfies the stated property. Finally, the fact that we terminate when $\alpha_r - \alpha_\ell \le 0.1 \cdot \beta^2 / G_h$ straightforwardly yields the stated iteration bound.
\end{proof}

Finally, we conclude with a bound on the total number of matrix-vector queries made by Algorithm~\ref{alg:binary-search} for an optimal choice of inputs when $\passModel = \false$. However, we first need the following technical lemma, whose proof is deferred to Appendix~\ref{apx:linear-algebra}. In the next result, we use the following notation. For any $s, t \in [1, \infty]$, let $ \Uq{A}{s, t} \defeq \normInline{a}_{t}, \text{ where } [a]_i = \normInline{A_{i, :}}_s.$ That is, $\Uq{A}{s, t}$ is the $\ell_t$-norm of the vector $a$, whose the $i$-th entry is given by the $\ell_s$-th norm of the $i$-th row of $A$. For completeness, we show in Proposition~\ref{prop:instance-dependent-is-norm} in Appendix~\ref{apx:ommitted-proofs-ell1-ell1-ell2-ell1-final-algo} that $\innorm{\cdot}_{(s, t)}$ is indeed a norm.

\begin{restatable}{lemma}{generalconversion}\label{lemma:gen-conversion} Let $\zground \in \cZint$. Then, for any $p \in [1, 2]$, 
\begin{align}\label{eq:chi}
    \normInline{\ground{A}{\zground}}_{\cS_p} \leq \Upsilon_p(A) \defeq \begin{cases}
        \Uq{A}{\infty, 2p/(2-p)}, & \cX = \Delta^n, \\
        \Uq{A}{2, 2p/(2-p)}, & \cX = \B^n .
    \end{cases}
\end{align}
\end{restatable}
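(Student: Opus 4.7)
The plan is to first reduce the Schatten-$p$ norm of $\ground{A}{\zground}$ to a row-wise $\ell_2$ quantity via a majorization argument, and then to apply H\"older's inequality, exploiting $\sum_i [z'\y]_i = 1$ (and $\sum_j [z'\x]_j = 1$ in the simplex case), to produce the desired mixed-norm bound $\Upsilon_p(A)$.

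The first step is an auxiliary inequality: for any $B \in \R^{m \times n}$ and $p \in [1, 2]$,
\[
\|B\|_{\cS_p}^p \;\le\; \sum_{i \in [m]} \|B_{i,:}\|_2^p.
\]
To prove it, I would set $M \defeq BB^\top$, so that $M$ is PSD with $M_{ii} = \|B_{i,:}\|_2^2$ and $\|B\|_{\cS_p}^p = \tr(M^{p/2}) = \sum_i \lambda_i(M)^{p/2}$. Since $p \le 2$, the function $x \mapsto x^{p/2}$ is concave on $[0, \infty)$, so combining Schur's theorem (the diagonal of a Hermitian matrix is majorized by its eigenvalue vector) with Karamata's inequality yields $\sum_i \lambda_i(M)^{p/2} \le \sum_i M_{ii}^{p/2}$, which is the claim.

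I would then apply this to $B \defeq \ground{A}{\zground}$. Unpacking Definition~\ref{def:setup-details} gives $B = \diag(\sqrt{z'\y})\, A$ in the $\cX = \B^n$ case and $B = \diag(\sqrt{z'\y})\, A\, \diag(\sqrt{z'\x})$ in the $\cX = \Delta^n$ case (with $\sqrt{\cdot}$ applied entrywise). A direct row-norm calculation then yields $\|B_{i,:}\|_2 \le \sqrt{[z'\y]_i}\, a_i$ in both settings, where $a_i$ denotes $\|A_{i,:}\|_2$ in the ball case and $\|A_{i,:}\|_\infty$ in the simplex case; in the latter, the bound uses $\sum_j [z'\x]_j A_{ij}^2 \le \|A_{i,:}\|_\infty^2 \sum_j [z'\x]_j = \|A_{i,:}\|_\infty^2$.

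Finally, letting $q \defeq 2p/(2-p)$ (interpreting $q = \infty$ at $p = 2$), H\"older's inequality with conjugate exponents $2/p$ and $2/(2-p)$ applied to $\sum_i [z'\y]_i^{p/2} a_i^p$, together with $\sum_i [z'\y]_i = 1$, produces the bound $\|a\|_q^p = \Upsilon_p(A)^p$, completing the argument after taking $p$-th roots. The main technical ingredient is the first step; once the Schur-majorization inequality is in place, the remaining reductions are routine applications of H\"older, and the restriction $p \in [1, 2]$ enters critically through the concavity of $x^{p/2}$.
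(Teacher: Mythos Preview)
Your argument is correct, and it reaches the same intermediate inequality as the paper---namely $\|B\|_{\cS_p}^p \le \sum_{i} \|B_{i,:}\|_2^p$ for $p \in [1,2]$---but by a genuinely different mechanism. The paper obtains this by writing $B^\top B = \sum_i [z'\y]_i\, A_{i,:}^\top A_{i,:}$ (a sum of rank-one PSD matrices) and invoking the subadditivity $\tr\bigl((\sum_i M_i)^{p/2}\bigr) \le \sum_i \tr(M_i^{p/2})$ for PSD $M_i$, which it in turn derives from the Schatten quasi-norm subadditivity of Rohde--Tsybakov. Your route via Schur's theorem (diagonal majorized by eigenvalues) plus Karamata for the concave map $t \mapsto t^{p/2}$ is more elementary and self-contained: it avoids the external citation and the intermediate lemmas on subadditivity. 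After this step, both arguments proceed identically---the row-norm computation (including the $\sum_j [z'\x]_j A_{ij}^2 \le \|A_{i,:}\|_\infty^2$ bound in the simplex case) and the H\"older step with conjugate exponents $(2/p,\,2/(2-p))$ match the paper's Lemmas~\ref{lemma:l2l1instance} and~\ref{lemma:l1l1instance} exactly.
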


\Cref{lemma:gen-conversion} allows us to obtain instance-dependent rates in Section~\ref{subsec:ell2ell1-ell1ell1-putting-all-together} (Theorem~\ref{thm:general-norm-result}). However, to obtain Theorems~\ref{intro:l1l1} and Theorems~\ref{intro:l2l1} (our worst-case guarantees), we only require a special case of Lemma~\ref{lemma:gen-conversion} corresponding to $p = 2$. To make the results easier to parse, we prove this special case separately in Lemma~\ref{lemma:conversion} in Appendix~\ref{apx:linear-algebra} and also make the following remark. 

\begin{remark}\label{remark:general-conversion} Let $\zground \in \cZint$, then $\Upsilon_2(A) \leq 1$.
\end{remark}
\begin{proof} When $\cX = \B^n$, $\normInline{A}_{2 \to \infty} \leq 1$ by assumption. Indeed, observe that when $p = 2$,  Lemma~\ref{lemma:gen-conversion} implies $\Upsilon_2(A) = \max_{i \in [m]} \normInline{A_{i,:}}_2 = \normInline{A}_{2\to\infty} \leq 1$. Likewise, when $\cX = \Delta^n$, $\normInline{A}_{\max} \leq 1$ by assumption. Indeed, observe that when $p = 2$,  Lemma~\ref{lemma:gen-conversion} implies $\Upsilon_2(A) = \max_{i \in [m]} \normInline{A_{i,:}}_\infty = \normInline{A}_{\max} \leq 1$.
\end{proof}

Finally, we prove the following \Cref{lem:b-search-complexity-when-passmodel-false}, which will be used in Section~\ref{subsec:ell2ell1-ell1ell1-putting-all-together}.

\begin{lemma}[Algorithm~\ref{alg:binary-search} complexity when $\passModel = \false$]
    \label{lem:b-search-complexity-when-passmodel-false}
Consider inputs $\epsilon, \tau > 0$, $\beta \in (0, 3)$, $\zcenter \in \ztrunc$, $M_0 \in \R^{m \times n}$, $\passModel = \false$, a $p$-smooth guilty $\judge$ (e.g., Algorithm~\ref{alg:judge-l2l2}), $p \in [1, 2]$, and $\Upsilon_p(A) \leq \Xi$. Moreover, suppose that $\nu^{-1}$ and $\beta^{-1}$ are each bounded above by a polynomial in $m$, $n$, and $1 / \epsilon$. Then Algorithm~\ref{alg:binary-search} makes at most $\Otilde(\tau / \beta + \Xi^p / \tau^p)$ matrix-vector queries.
\end{lemma}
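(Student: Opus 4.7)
The plan is to decompose the total query count into (number of $\checkdiv$ calls) $\times$ (queries per $\checkdiv$ call), invoking Lemma~\ref{lem:b-search-correctness} for the former and Corollary~\ref{cor:SM-mirror-prox-complexity-when-passmodel-is-false} combined with Lemma~\ref{lemma:gen-conversion} for the latter.

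First, I would bound the total number of $\checkdiv$ calls in Algorithm~\ref{alg:binary-search}. One call is made in Line~\ref{algline:b-search-check-div-beta-call}, and if the algorithm does not return in Line~\ref{algline:b-search-beta-return}, the repeat loop of Line~\ref{algline:b-search-repeat-loop} executes. By Lemma~\ref{lem:b-search-correctness}, the number of iterations of this loop is at most $\log(M(1+\log \nu^{-1})/(\nu \beta^3))$ for an absolute constant $M > 0$. Under the hypothesis that $\nu^{-1}$ and $\beta^{-1}$ are polynomially bounded in $m$, $n$, and $1/\epsilon$, this quantity is $\tilde{O}(1)$, so the total number of $\checkdiv$ calls is $\tilde{O}(1)$ as well.

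Next, I would bound the matrix-vector query cost of a single $\checkdiv$ call with input regularization $\alpha$. The $\alpha$-best response computation in Line~\ref{algline:checkdiv-alpha-best-response} costs $O(1)$ matrix-vector queries (evaluating $A^\top \zcentery$ and $A \zcenterx$; the subsequent one-shot proximal minimizations admit closed-form coordinate-wise solutions). Since $\passModel = \false$, the execution proceeds through Line~\ref{algline:check-div-tau-M-when-passmodel-false}, where the model is cleared ($M \gets 0$) and $\zground \gets \ztilde$. The dominant cost is then the $\CWF$ call in Line~\ref{algline:checkdiv-CAPW-call}, which by Definition~\ref{def:CAPW} invokes $\SUGStronglyMonotoneMirrorProx$ with model $M = 0$ and target precision $\epsilon' = C' \min\{\gammapw^2 \nu^2,\ \gammab^2 / (1 + \log^2 \nu^{-1})\}$, where $\gammapw = 0.1$ and $\gammab = \alpha^2/10 \ge \beta^2/10$. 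Because $\nu$ and $\beta$ are polynomially bounded, $(\epsilon')^{-1}$ is likewise polynomially bounded in $m, n, 1/\epsilon$, and therefore $\log(\Range/\epsilon') = \tilde{O}(1)$.

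Applying Corollary~\ref{cor:SM-mirror-prox-complexity-when-passmodel-is-false} with this $\epsilon'$ and with the bound $\|\ground{A}{\zground}\|_{\cS_p} \le \Upsilon_p(A) \le \Xi$ (guaranteed by Lemma~\ref{lemma:gen-conversion} and the hypothesis on $\Xi$), the $\CWF$ call uses at most $O(\tau \alpha^{-1} \log(\Range/\epsilon') + \Xi^p \tau^{-p}) = \tilde{O}(\tau/\alpha + \Xi^p/\tau^p)$ matrix-vector queries. Since the values of $\alpha$ ever passed into $\checkdiv$ by Algorithm~\ref{alg:binary-search} are either $\beta$ (in Line~\ref{algline:b-search-check-div-beta-call}) or $\alpha_m \in [\beta, 3]$ (in Line~\ref{algline:b-search-check-div-call-repeat-loop}), we always have $\alpha \ge \beta$, so each $\checkdiv$ call costs $\tilde{O}(\tau/\beta + \Xi^p/\tau^p)$ matrix-vector queries.

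Combining the $\tilde{O}(1)$ bound on the number of $\checkdiv$ calls with the per-call cost yields the stated total of $\tilde{O}(\tau/\beta + \Xi^p/\tau^p)$. The only real bookkeeping obstacle is confirming that the logarithmic factors absorbed into $\tilde{O}$ remain polylogarithmic in the problem parameters; this reduces to checking that $\epsilon'^{-1}$ (and hence $\log(\Range/\epsilon')$) stays polynomially bounded, which follows from the assumption that $\nu^{-1}$ and $\beta^{-1}$ are polynomially bounded, together with the explicit choice of $\gammapw$ and $\gammab$ in $\checkdiv$.
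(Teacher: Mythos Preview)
Your proposal is correct and follows essentially the same approach as the paper: decompose into the number of $\checkdiv$ calls (bounded by $\Otilde(1)$ via Lemma~\ref{lem:b-search-correctness}) times the per-call cost (bounded via Corollary~\ref{cor:SM-mirror-prox-complexity-when-passmodel-is-false} with $\zeta \gets \Xi$ from Lemma~\ref{lemma:gen-conversion}, using $M \gets 0$ and $\alpha \ge \beta$). Your write-up is in fact slightly more explicit than the paper's about why $\epsilon'^{-1}$ is polynomially bounded, but the ingredients and structure match.
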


\begin{proof}
We first bound the number of matrix-vector queries made each time $\checkdiv$ is called in Algorithm~\ref{alg:binary-search}. Note that $\checkdiv$ is called once in Line~\ref{algline:b-search-check-div-beta-call} and potentially many times in Line~\ref{algline:b-search-check-div-call-repeat-loop}; all that will matter in the following is that each time it is called, the first argument (which we will now call $\alpha$ per Line~\ref{algline:checkdiv-function}) is at least $\beta$. Then looking at the $\checkdiv$ function itself starting in Line~\ref{algline:checkdiv-function}, it is clear that at most $O(1)$ matrix-vector queries are made in all lines other than the $\CWF$ wrapper function call in Line~\ref{algline:checkdiv-CAPW-call}. For the latter, note $M \gets 0$ due to Line~\ref{algline:check-div-tau-M-when-passmodel-false} since $\passModel = \false$. Then by combining the assumption on $\nu^{-1}$ and $\beta^{-1}$, the fact that $\alpha \ge \beta$, and the choice of $\gammapw$ and $\gammab$ in Line~\ref{algline:checkdiv-set-C-and-gammas} with Definition~\ref{def:CAPW} and Corollary~\ref{cor:SM-mirror-prox-complexity-when-passmodel-is-false} with $\zeta \gets \Xi$ due to Lemma~\ref{lemma:gen-conversion} gives that the $\CWF$ wrapper function call in Line~\ref{algline:checkdiv-CAPW-call} requires at most $\Otilde(\tau / \beta + \Xi^p / \tau^p)$ matrix-vector queries. We conclude by noting that due to the assumption on $\nu^{-1}$ and $\beta^{-1}$ as well as Lemma~\ref{lem:b-search-correctness}, $\checkdiv$ is called at most $\Otilde(1)$ times in Algorithm~\ref{alg:binary-search}.
\end{proof}

\subsection{Solving $\ell_1$-$\ell_1$ and $\ell_2$-$\ell_1$ games in $\tilde{O}(\epsilon^{-8/9})$-matrix-vector queries}
\label{subsec:ell2ell1-ell1ell1-putting-all-together}

In this section, we give our algorithms for the $\ellTwoEllOne$ and $\ellOneEllOne$ setups in Algorithm~\ref{alg:l2l1-l1l1-final-algo}, which is an instantiation of Algorithm~\ref{alg:proximal-point-regret} from Section~\ref{sec:prox-point-regret}, and prove the $\Otilde(\epsilon^{-8/9})$ complexity for $\ellOneEllOne$ and $\ellTwoEllOne$ games in Theorems \ref{intro:l1l1} and \ref{thm:l2l1-1/9-result} respectively. These theorems are a corollary of a more general instance-dependent result which we give in Theorem~\ref{thm:general-norm-result}. 

Before covering the pseudocode of Algorithm~\ref{alg:l2l1-l1l1-final-algo}, analogously to Section~\ref{subsec:binary-search}, we define another wrapper function for Algorithm~\ref{alg:mirror-prox-sug-strongly-monotone} in Definition~\ref{def:GAPW} to clarify, per Lemma~\ref{lem:GAPW-properties} given below, the approximations achieved by calls to Algorithm~\ref{alg:mirror-prox-sug-strongly-monotone} in the pseudocode. To make them easier to remember, the subscripts in $\gammav$ and $\gammagb$ in Definition~\ref{def:GAPW} stand for ``variational'' and ``global Bregman'' respectively.

\begin{definition}[$\GWF$]
    \label{def:GAPW}
    With $\zcenter, \zground, \ztilde, \tau, \alpha, A, M, c, \judge$ as in Definition~\ref{def:CAPW}, and additionally given $\gammav, \gammagb > 0$, 
a \emph{$(\zcenter, \zground, \ztilde, \tau, \alpha, A, M, c, \gammav, \gammagb; \judge)$-global wrapper function} returns
\begin{align*}
    \GWF(\zcenter, \zground, \ztilde, \tau, \alpha, A, M, c, \gammav, \gammagb; \judge) \defeq
\SUGStronglyMonotoneMirrorProx(z_c, \zground, \tau, \alpha, \epsprim, A, M, c, \cZ'; \judge),
\end{align*}
where, for an absolute constant $M' > 0$ defined in Lemma~\ref{lem:GAPW-properties}, we let
\begin{align*}
    Z' \defeq \sball_{c, \ztilde, \nu} = \sball_{c, \ztilde} \cap \ztrunc ~~~\text{and}~~~ 
    \epsprim \defeq M' \min \inbraces*{\frac{\gammagb^2}{1 + \log^2 \nu^{-1}}, \frac{\gammav^2}{1 + \alpha^2 \nu^{-2}}  }.
\end{align*}
\end{definition}

We now give the properties that the $\GWF$ wrapper function of Definition~\ref{def:GAPW} satisfies in Lemma~\ref{lem:GAPW-properties}. In particular, as long as $\zopt \defeq \prox_{\zcenter}^\alpha(\gm f_A ; \ztrunc)$ is in a multiplicative ball $\sball_{c, \ztilde, \nu}$ about $\ztilde$ (we will ensure this is always the case when $\GWF$ is called), the output of the $\GWF$ call $w$ satisfies an approximate variational inequality over $\ztrunc$, which will ultimately be used to implement the variational condition in the $\DAPO$ oracle per Definition~\ref{def:DAPO}. Furthermore, the $\GWF$ ensures $\breg{\zcenter}{w}$ approximates $\breg{\zcenter}{\zopt}$, which, combined with the guarantees for the binary search procedure in the previous section, will serve to ensure the $\DAPO$ oracle is kinetic.

\begin{lemma}[$\GWF$ properties]
    \label{lem:GAPW-properties}
Letting $w \gets \GWF(\zcenter, \zground, \ztilde, \tau, \alpha, A, M, c, \gammav, \gammagb; \judge) $ for some inputs satisfying the specifications of Definition~\ref{def:GAPW} and an appropriate choice of the absolute constant $M' > 0$, and supposing $\zopt \defeq \prox_{\zcenter}^\alpha(\gm f_A ; \ztrunc)$ satisfies $\zopt \in \sball_{c, \ztilde, \nu}$, we have
\begin{align*}
    \gm f_A(w)^\top (w - u) &\le \alpha \insquare{\breg{\zcenter}{u} - \breg{w}{u} - \breg{\zcenter}{w}} + \gammav,      ~~~\text{for all $u \in \ztrunc$, and} \\
     \inabs{\breg{\zcenter}{w} - \breg{\zcenter}{\zopt}} &\le  \gammagb.           &&
\end{align*}
\end{lemma}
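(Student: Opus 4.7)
\noindent The plan is to first apply Theorem~\ref{thm:main-inner-loop-convergence} to obtain a Bregman-divergence bound on the output of the underlying call to $\SUGStronglyMonotoneMirrorProx$, and then convert this bound into the two claimed guarantees via standard stability estimates on $r$ and $\gm f_A$ over the truncated domain $\ztrunc$. Concretely, since $\zopt \in \sball_{c, \ztilde, \nu} \subseteq \ztrunc$ by assumption and $\sball_{c, \ztilde, \nu}$ is closed and convex, the usual minimax characterization of proximal mappings recalled in Section~\ref{sec:prelim} implies $\zopt = \prox^\alpha_{\zcenter}(\gm f_A; \sball_{c, \ztilde, \nu})$. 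Hence Theorem~\ref{thm:main-inner-loop-convergence} applied to the $\SUGStronglyMonotoneMirrorProx$ call inside $\GWF$ yields $\breg{\zopt}{w} \le \epsprim$, and $1$-strong convexity of $r$ with respect to $\norm{\cdot}$ gives $\norm{w - \zopt} \le \sqrt{2 \epsprim}$.

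\noindent For the second claim, write $\breg{\zcenter}{w} - \breg{\zcenter}{\zopt} = r(w) - r(\zopt) - \grad r(\zcenter)^\top (w - \zopt)$. On $\ztrunc$ the entries of the gradient $\grad r$ are bounded (for the entropy component coordinate-wise by $O(1 + \log \nu^{-1})$, and the Euclidean part by an absolute constant), so a standard mean-value estimate yields $|\breg{\zcenter}{w} - \breg{\zcenter}{\zopt}| \le C_1 (1 + \log \nu^{-1}) \norm{w - \zopt} \le C_1 (1 + \log \nu^{-1}) \sqrt{2 \epsprim}$ for an absolute constant $C_1 > 0$. The $\gammagb^2 / (1 + \log^2 \nu^{-1})$ factor inside the definition of $\epsprim$, with $M'$ chosen small enough, makes this at most $\gammagb$.

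\noindent For the first claim, the exact proximal-step condition for $\zopt$ (equivalent form \eqref{eq:equivalent-prox-condition}) reads $\gm f_A(\zopt)^\top (\zopt - u) \le \alpha \insquare{\breg{\zcenter}{u} - \breg{\zopt}{u} - \breg{\zcenter}{\zopt}}$ for all $u \in \ztrunc$. Track the deficit $D(z) \defeq \gm f_A(z)^\top (z - u) - \alpha \insquare{\breg{\zcenter}{u} - \breg{z}{u} - \breg{\zcenter}{z}}$, for which we have $D(\zopt) \le 0$, and bound the three perturbation terms arising in $D(w) - D(\zopt)$:
\begin{enumerate}[label=(\roman*)]
    \item $(\gm f_A(w) - \gm f_A(\zopt))^\top (w - u) + \gm f_A(\zopt)^\top (w - \zopt) \le C_2 \norm{w - \zopt}$ for an absolute constant $C_2 > 0$, using that $\gm f_A$ is $1$-Lipschitz under $\norm{\cdot}$ by the normalization of $A$, that $\norm{\gm f_A(\zopt)}_* = O(1)$ since $\ztrunc$ has constant diameter, and that $\norm{w - u} = O(1)$ for the same reason;
    \item $\alpha \insquare{\breg{w}{u} - \breg{\zopt}{u}}$, which for the entropy component equals $\alpha \sum_i [u]_i \log([\zopt]_i / [w]_i)$; the bound $|[w]_i - [\zopt]_i| \le \norm{w - \zopt}$ together with $[w]_i, [\zopt]_i \ge \nu$ yields $|\breg{w}{u} - \breg{\zopt}{u}| \le C_3 \norm{w - \zopt} / \nu$ for an absolute constant $C_3 > 0$, so this term is at most $C_3 \alpha \sqrt{2 \epsprim} / \nu$;
    \item $\alpha \insquare{\breg{\zcenter}{w} - \breg{\zcenter}{\zopt}} \le C_1 \alpha (1 + \log \nu^{-1}) \sqrt{2 \epsprim}$ by the second claim above.
\end{enumerate}
Summing these three contributions gives a total perturbation of at most $C_4 (1 + \alpha \nu^{-1}) \sqrt{\epsprim}$ for an absolute $C_4 > 0$ (since $\nu^{-1}$ dominates $\log \nu^{-1}$). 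The $\gammav^2 / (1 + \alpha^2 \nu^{-2})$ factor in the definition of $\epsprim$, with $M'$ chosen small enough, makes this at most $\gammav$, completing the proof.

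\noindent The main obstacle is item (ii): carefully tracking the $\nu^{-1}$ factor coming from the Bregman-divergence perturbation in its first argument, since this is what forces the $\alpha^2 \nu^{-2}$ dependence inside the definition of $\epsprim$ in Definition~\ref{def:GAPW}. Everything else closely parallels Corollary~\ref{cor:CAPW-properties} and the pointwise/global Bregman conversion lemmas referenced there, so the bulk of the work is unpacking the setup-dependent norms from Table~\ref{table:setups} and quantifying the Lipschitz constants of $r$, $\grad r$, and $\gm f_A$ on $\ztrunc$.
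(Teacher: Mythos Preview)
Your proposal is correct and follows essentially the same approach as the paper: both arguments use $\zopt \in \sball_{c,\ztilde,\nu}$ to identify $\zopt$ with $\prox^\alpha_{\zcenter}(\gm f_A; \sball_{c,\ztilde,\nu})$, invoke Theorem~\ref{thm:main-inner-loop-convergence} to obtain $\breg{\zopt}{w} \le \epsprim$, and then bound the same three perturbation terms (your (i)--(iii) are exactly the paper's $\oneC$, $\twoC$, $\threeC$) with the same $1$, $\alpha/\nu$, and $\alpha(1+\log\nu^{-1})$ prefactors respectively. The only cosmetic difference is that the paper packages the elementary estimates you carry out directly into Lemmas~\ref{lem:KL-by-norm}, \ref{lem:squared-norm-by-norm}, \ref{lem:breg-error-to-breg-error-from-base}, and \ref{lem:bounding-KL-TO-THE-SAME-POINT}.
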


\begin{proof}
The fact that $\zopt \in \sball_{c, \ztilde, \nu}$ implies $\breg{\zopt}{w} \le \epsilon'$ by Theorem~\ref{thm:main-inner-loop-convergence} for $\epsilon'$ defined in Definition~\ref{def:GAPW}. Then $\inabs{\breg{\zcenter}{w} - \breg{\zcenter}{\zopt}} \le  \gammagb$ is immediate from Lemma~\ref{lem:breg-error-to-breg-error-from-base}. As for the first property, by definition we have
\begin{align}
    \label{eq:def-zopt}
    \gm f_A(\zopt)^\top (\zopt - u) \le \alpha \insquare{\breg{\zcenter}{u} - \breg{\zopt}{u} - \breg{\zcenter}{\zopt}},  ~~~\text{for all $u \in \ztrunc$}.
\end{align}
Then, fixing an arbitrary $u \in \ztrunc$, rearranging \eqref{eq:def-zopt}, and adding terms to both sides yields
\begin{align*}
    & \gm f_A(w)^\top (w - u) - \alpha \insquare{\breg{\zcenter}{u} - \breg{w}{u} - \breg{\zcenter}{w}} \\
    \le & \underbrace{\gm f_A(w)^\top (w - u) - \gm f_A(\zopt)^\top (\zopt - u)}_{\oneC} + \underbrace{\alpha \insquare*{\breg{w}{u} - \breg{\zopt}{u}}}_{\twoC} + \underbrace{\alpha \insquare*{\breg{\zcenter}{w} - \breg{\zcenter}{\zopt}}}_{\threeC}.
\end{align*}
It suffices to show the last equation is bounded above by $\gammav$ for all $u \in \ztrunc$; we will proceed by bounding the terms $\oneC$, $\twoC$, and $\threeC$ one by one. First of all, we have
\begin{align*}
    \oneC &= \insquare{ \gm f_A(w) - \gm f_A(\zopt) + \gm f_A(\zopt) }^\top (w - u) - \gm f_A(\zopt)^\top (\zopt - u) \\
    &= \insquare{ \gm f_A(w) -  \gm f_A(\zopt) }^\top (w - u) + \gm f_A(\zopt)^\top (w - \zopt),
\end{align*}
in which case we have for all $u \in \ztrunc$:
\begin{align*}
    \inabs{\circled{1}} &\le \dualnorm{\gm f_A(w) -  \gm f_A(\zopt)}  \norm{w - u} + \dualnorm{\gm f_A(\zopt)} \norm{w - \zopt} \\
    &\overle{(i)} \norm{w - \zopt}  \norm{w - u} + 2            \norm{w - \zopt} \\
     &\le 4 \norm{w - \zopt} 
    \le 4 \sqrt{2 \breg{\zopt}{w}},
\end{align*}
where $(i)$ used the assumptions made at the beginning of Section~\ref{sec:elltwoelloneandelloneellone}, namely 
$\norm{A}_{2 \to \infty}  \le 1$ %
in the $\ell_2$-$\ell_1$ setup and $\norm{A}_{\text{max}} \le 1$ in the $\ellOneEllOne$ setup. Indeed, it is straightforward to show that $\gm f_A$ is 1-Lipschitz with respect to $\norm{\cdot}$ in both settings (recall $\norm{\cdot}$ is given by Definition~\ref{def:matrix-vector-games-setups} and Table \ref{table:setups}), and also to show that $\dualnorm{\gm f_A(z)}^2 \le 2$ for all $z \in \zset$ in both settings. (See Sections 4.1 and 4.2 in \cite{carmon2019variance} for explicit expressions for the dual norm, and also a source for the Lipschitz bound.)

As for $\twoC$, Lemmas \ref{lem:squared-norm-by-norm} and \ref{lem:bounding-KL-TO-THE-SAME-POINT} give that for some absolute constant $M_1 > 0$ and all $u \in \ztrunc$:
\begin{align*}
    \inabs{\twoC} = \alpha \inabs{\breg{w}{u} - \breg{\zopt}{u}} &\le  \alpha \inabs{\xbreg{w\x}{u\x} - \xbreg{\zopt\x}{u\x}} + \alpha \inabs{\ybreg{w\y}{u\y} - \ybreg{\zopt\y}{u\y}} \\
    & \le \frac{ M_1 \alpha}{\nu} \sqrt{\breg{\zopt}{w}}.
\end{align*}
Finally, Lemmas \ref{lem:KL-by-norm} and \ref{lem:squared-norm-by-norm} give that for some absolute constant $M_2 > 0$:
\begin{align*}
    \inabs{\threeC} = \alpha \inabs{\breg{\zcenter}{w} - \breg{\zcenter}{\zopt}} &\le \alpha \inabs{\xbreg{\zcenterx}{w\x} - \xbreg{\zcenterx}{\zopt\x}} + \alpha \inabs{\ybreg{\zcentery}{w\y} - \ybreg{\zcentery}{\zopt\y}} \\
    & \le M_2 \alpha (1 + \log \nu^{-1}) \sqrt{\breg{\zopt}{w}}.
\end{align*}
Then putting everything together, we've shown that for some absolute constant $M_3 > 0$ and all $u \in \ztrunc$:
\begin{align*}
    \oneC + \twoC + \threeC \le M_3 \inparen*{1 + \frac{\alpha}{\nu}} \cdot \sqrt{\breg{\zopt}{w}},
\end{align*}
and we conclude by the choice of $\epsilon'$ in Definition~\ref{def:GAPW}.
\end{proof}

Let us now step through the pseudocode of Algorithm~\ref{alg:l2l1-l1l1-final-algo}, where we analogously to Algorithm~\ref{alg:proximal-point-regret}, we define the condition in Line~\ref{algline:ell12ell1-final-while-loop} of Algorithm~\ref{alg:l2l1-l1l1-final-algo} to be $\true$ when $k = 0$. Furthermore, we note that the second return value of the call to $\GWF$ in Line~\ref{algline:ell12ell1-final-GAPW-call}, which contains the number of model-update steps made by the corresponding call to Algorithm~\ref{alg:mirror-prox-sug-strongly-monotone}, is only used in the analysis and not algorithmically (hence the dot).

Recall from the discussion in Section~\ref{subsec:binary-search} that the $\passModel$ flag determines whether models $M$ are passed between separate calls to Algorithm~\ref{alg:mirror-prox-sug-strongly-monotone}. The ultimate guarantees in this section assume $\passModel \gets \false$, and we defer discussion of the case where $\passModel \gets \true$ to Section~\ref{sec:four-fifths}. When $\passModel \gets \false$, the models in Algorithm~\ref{alg:l2l1-l1l1-final-algo} can be ignored since they are always cleared in Line~\ref{algline:ell12ell1-final-tau-M-when-clearmodel} before calling Algorithm~\ref{alg:mirror-prox-sug-strongly-monotone} in Line~\ref{algline:ell12ell1-final-GAPW-call}. 

Then, Algorithm~\ref{alg:l2l1-l1l1-final-algo} is an instantiation of Algorithm~\ref{alg:proximal-point-regret} for the operator $\gm f_A$ and dgf setup $(\ztrunc, \norm{\cdot}, r)$, with $\norm{\cdot}$ and $r$ given by Definition~\ref{def:matrix-vector-games-setups} and Table \ref{table:setups}. Indeed, letting $\zopt_{\alpha_k} \defeq \prox_{z^{k - 1}}^{\alpha_k}(\gm f_A; \ztrunc)$, the $\bsearch$ (Algorithm~\ref{alg:binary-search}) call in Line~\ref{algline:ell12ell1-final-bsearch-call} serves to ensure, due to Lemma~\ref{lem:b-search-correctness}, that either $\alpha_k = \beta$ and $\breg{z^{k - 1}}{\zopt_{\alpha_k}} \le 2.8 \alpha_k^2$, or else $\breg{z^{k - 1}}{\zopt_{\alpha_k}} \in [1.2 \alpha_k^2, 2.8 \alpha_k^2]$. In either case, having defined $\ztilde$ to be the $\alpha$-best response (Definition~\ref{def:alpha-best-response}) to $z^{k - 1}$ in Line~\ref{algline:ell12ell1-final-best-response}, the stability result Lemma~\ref{lem:stability-wrt-best-response} implies $\zopt_{\alpha_k}$ is in a multiplicative ball around $\ztilde$. Therefore, having set the local norm point $\zground$ to $\ztilde$ in Line~\ref{algline:ell12ell1-final-tau-M-when-clearmodel} (any point in $\sball_{C, \ztilde, \nu}$ would work), the assumption in Lemma~\ref{lem:GAPW-properties} is satisfied, and thus the approximations in Lemma~\ref{lem:GAPW-properties} combined with the guarantees due to Lemma~\ref{lem:b-search-correctness} discussed above ensure the $\GWF$ call in Line~\ref{algline:ell12ell1-final-GAPW-call} implements $(\beta, 2)$-kinetic $\DAPO$ (Definition~\ref{def:DAPO}). We thereby obtain an iteration bound for Algorithm~\ref{alg:l2l1-l1l1-final-algo} due to Lemma~\ref{lem:prox-point-iteration-bound}, and combine this with bounds on the total complexity of each call to Algorithm~\ref{alg:mirror-prox-sug-strongly-monotone} (Lemma~\ref{lem:b-search-complexity-when-passmodel-false} and Corollary~\ref{cor:SM-mirror-prox-complexity-when-passmodel-is-false}). We conclude by optimizing over $\beta$ and the smoothness threshold $\tau$.

\RestyleAlgo{ruled}
\DontPrintSemicolon
\SetKwComment{Comment}{/* }{ */}
\begin{algorithm2e}[h!]
\caption{Algorithm for the $\ellTwoEllOne$ and $\ellOneEllOne$ setups}
\label{alg:l2l1-l1l1-final-algo}
\KwInput{Matrix-vector oracle for $A$ s.t. $\innorm{A}_{2 \to \infty} \le 1$ in the $\ellTwoEllOne$ setup or $\inmaxnorm{A} \le 1$ in the $\ellOneEllOne$ setup}
\KwInput{Precision $\epsilon > 0$, minimum subproblem regularization level $\beta \in (0, 3)$}
\KwInput{Smoothness threshold $\tau > 0$}
\KwParameter{Judge function $\judge$, model-passing flag $\passModel$ (takes value $\true$ or $\false$), $p \in [1, 2]$, and an upper bound $\Upsilon_p(A) \leq \Xi$ (see \eqref{eq:chi})}

$z^0 \gets \argmin_{z \in \ztrunc} r(z)$, ~$M_0 \gets 0$, ~$C \defeq 500$, ~and~ $k \gets 0$ \tcp*{$M_0 \in \R^{m \times n}$} \label{algline:ell12ell1-final-z^0-M_0-C-k}

\While(\tcp*[f]{Recall $\Range$ for both setups is given in Table \ref{table:setups}}){$\sum_{j \in [k]} \alpha_j^{-1} < \Range / \epsilon$}{ \label{algline:ell12ell1-final-while-loop}

    $k \gets k + 1$ \;

    $(\alpha_k, M_{k - 1/2}) \gets \bsearch(\epsilon, z^{k - 1}, \beta, A, M_{k - 1}, \tau; \judge, \passModel, p, \Xi)$ \label{algline:ell12ell1-final-bsearch-call}

    \tcp{Compute $\alpha_k$-best response to $z^{k - 1}$ (Definition~\ref{def:alpha-best-response})}
    $\ztilde \defeq (\xtilde, \ytilde)$ ~for~ $\xtilde \gets \argmin_{x \in \xtrunc} {z^{k - 1}\y}^\top A x + \alpha_k \xbreg{z^{k - 1}\x}{x}$ ~and $\ytilde \gets \argmax_{y \in \ytrunc} y^\top A z^{k - 1}\x - \alpha_k \ybreg{z^{k - 1}\y}{y}$ \label{algline:ell12ell1-final-best-response}

    \lIf{\passModel}{
        $\zground \gets \argmin_{z \in \sball_{C, \ztilde, \nu}} \breg{z^{k - 1}}{z}$ \label{algline:ell12ell1-final-pass-model-true-zground-tau-choice}
    }

    \lElse(\tcp*[f]{Always clear the model if $\passModel = \false$}){
    $\zground \gets \ztilde$ ~and~ $M_{k - 1/2} = 0$ \label{algline:ell12ell1-final-tau-M-when-clearmodel}
    }

    $(z^k, \cdot, M_k) \gets \GWF(z^{k - 1}, \zground, \ztilde, \tau, \alpha_k, A, M_{k - 1/2}, C, \epsilon, \alpha_k^2 / 10; \judge)$ \label{algline:ell12ell1-final-GAPW-call}  \tcp{$\gm f_A (z^k)^\top (z^k - u) \le \alpha_k \insquare{ \breg{z^{k - 1}}{u} - \breg{z^{k}}{u}     -  \breg{z^{k - 1}}{z^k} } + \epsilon, \, \forall u \in \ztrunc$; Def. \ref{def:GAPW}, Lem. \ref{lem:GAPW-properties}, \ref{lem:connecting-GAPW-to-DAPO}}  
}

\Return{$\zbar \defeq \frac{1}{S} \sum_{j \in [K]} \alpha_j^{-1} z^j$, where $K \defeq k$ and $S \defeq \sum_{j \in [K]} \alpha_j^{-1}$}\label{algline:ell12ell1-final-return}

\end{algorithm2e}

Toward formalizing this discussion, we first show it is indeed the case that $\zopt_{\alpha_k}$ is contained in a multiplicative ball around $\ztilde$ at every iteration.

\begin{lemma}
    \label{lem:final-algo-prox-point-in-mult-ball}
    For every iteration $k \ge 1$ of Algorithm~\ref{alg:l2l1-l1l1-final-algo}, we have $\prox^{\alpha_k}_{z^{k - 1}}(\gm f_A; \ztrunc) \in \sball_{C, \ztilde, \nu}$.
\end{lemma}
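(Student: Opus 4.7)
The plan is to combine the guarantees of $\bsearch$ (Lemma~\ref{lem:b-search-correctness}), the stability result (Lemma~\ref{lem:stability-wrt-best-response}), and the choice $C = 500$ in Line~\ref{algline:ell12ell1-final-z^0-M_0-C-k} of Algorithm~\ref{alg:l2l1-l1l1-final-algo}. First, I would observe that regardless of whether $\bsearch$ returns $\beta$ in Line~\ref{algline:b-search-beta-return} of Algorithm~\ref{alg:binary-search} or instead exits through the binary search in Line~\ref{algline:b-search-return-finished-b-search}, Lemma~\ref{lem:b-search-correctness} yields the uniform upper bound $\breg{z^{k-1}}{\zopt_{\alpha_k}} \le 2.8\,\alpha_k^2$, where $\zopt_{\alpha_k} \defeq \prox^{\alpha_k}_{z^{k-1}}(\gm f_A;\ztrunc)$.

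Second, I would note that $\ztilde$, as constructed in Line~\ref{algline:ell12ell1-final-best-response} of Algorithm~\ref{alg:l2l1-l1l1-final-algo}, is precisely the $\alpha_k$-best response to $z^{k-1}$ in the sense of Definition~\ref{def:alpha-best-response}. Hence the hypothesis of Lemma~\ref{lem:stability-wrt-best-response} applies with $c = 2.8$, giving that the simplex-constrained coordinates of $\zopt_{\alpha_k}$ agree multiplicatively with those of $\ztilde$ up to a factor $\exp(2\sqrt{2 \cdot 2.8}) = \exp(2\sqrt{5.6})$. A direct numerical estimate yields $\exp(2\sqrt{5.6}) < 114 < 500 = C$, so this multiplicative closeness persists after relaxing the factor to $C$.

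Finally, I would combine the previous step with the trivial fact that $\zopt_{\alpha_k} \in \ztrunc$, being the value of a proximal mapping whose image is $\ztrunc$. By Definition~\ref{def:stable-ball} applied to both the $\ellTwoEllOne$ and $\ellOneEllOne$ setups, the bounds on the simplex coordinates place $\zopt_{\alpha_k}$ in $\sball_{C,\ztilde}$; combined with $\zopt_{\alpha_k} \in \ztrunc$, this yields $\zopt_{\alpha_k} \in \sball_{C,\ztilde} \cap \ztrunc = \sball_{C,\ztilde,\nu}$, as desired.

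There is no substantial obstacle beyond tracking the constants; the one point worth being careful about is that Lemma~\ref{lem:stability-wrt-best-response} only controls the simplex coordinates (the $\yset$-coordinates in the $\ellTwoEllOne$ setup, and all coordinates in the $\ellOneEllOne$ setup), but this matches exactly what Definition~\ref{def:stable-ball} requires of $\sball_{C,\ztilde}$ in each setup, so no further argument is needed for the Euclidean-ball coordinates in the $\ellTwoEllOne$ case.
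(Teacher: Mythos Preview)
Your proposal is correct and follows essentially the same approach as the paper's proof: both invoke Lemma~\ref{lem:b-search-correctness} to bound $\breg{z^{k-1}}{\zopt_{\alpha_k}}$ by a constant times $\alpha_k^2$, then apply Lemma~\ref{lem:stability-wrt-best-response} and check that the resulting multiplicative factor is below $C = 500$. The only cosmetic difference is that the paper rounds the bound up to $3\alpha_k^2$ (yielding $\exp(2\sqrt{6})$) whereas you keep the sharper $2.8\alpha_k^2$.
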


\begin{proof}
Let $\zopt_{\alpha_k} \defeq \prox^{\alpha_k}_{z^{k - 1}}(\gm f_A; \ztrunc)$ for brevity, and note Line~\ref{algline:ell12ell1-final-bsearch-call} and Lemma~\ref{lem:b-search-correctness} imply in particular that $\breg{z^{k - 1}}{\zopt_{\alpha_k}} \le 3 \alpha_k^2$, in which case Lemma~\ref{lem:stability-wrt-best-response} gives $\exp(-2 \sqrt{6}) \le [\zopt_{\alpha_k \mathsf{y}}]_i / [\ztilde\y]_i \le \exp(2 \sqrt{6})$ for all $i \in [m]$ in the $\ellTwoEllOne$ setup, and $\exp(-2 \sqrt{6}) \le [\zopt_{\alpha_k}]_j / [\ztilde]_j \le \exp(2 \sqrt{6})$ for all $j \in [d]$ in the $\ellOneEllOne$ setup. Then $\zopt_{\alpha_k} \in \sball_{C, \ztilde, \nu}$ follows since $C = 500 > \exp(2 \sqrt{6})$. 
\end{proof}

Then, Lemma~\ref{lem:connecting-GAPW-to-DAPO} shows that Algorithm~\ref{alg:l2l1-l1l1-final-algo} is indeed an instantiation of Algorithm~\ref{alg:proximal-point-regret} for the operator $\gm f_A$ and dgf setup $(\ztrunc, \norm{\cdot}, r)$, with $\norm{\cdot}$ and $r$ given by Definition~\ref{def:matrix-vector-games-setups} and Table \ref{table:setups}.

\begin{lemma}
    \label{lem:connecting-GAPW-to-DAPO}
For all $k \ge 1$, the output $(z^k, \cdot, \cdot)$ of the $\GWF$ wrapper function call in Line~\ref{algline:ell12ell1-final-GAPW-call} of Algorithm~\ref{alg:l2l1-l1l1-final-algo} satisfies
\begin{align*}
    \gm f_A (z^k)^\top (z^k - u) \le \alpha_k \insquare{ \breg{z^{k - 1}}{u} - \breg{z^{k}}{u}     -  \breg{z^{k - 1}}{z^k} } + \epsilon, ~~~\text{for all $u \in \ztrunc$},
\end{align*}
and at least one of the following: $\alpha_k = \beta$ or $\breg{z^{k - 1}}{z^k} \ge \alpha_k^2$. In other words, Line~\ref{algline:ell12ell1-final-GAPW-call} of Algorithm~\ref{alg:l2l1-l1l1-final-algo} implements a $(\beta, 2)$-kinetic $\DAPO$ (Definition~\ref{def:DAPO}) for the operator $\gm f_A$ and the dgf setup $(\ztrunc, \norm{\cdot}, r)$, with $\norm{\cdot}$ and $r$ given by Definition~\ref{def:matrix-vector-games-setups} and Table \ref{table:setups}.
\end{lemma}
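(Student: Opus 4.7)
The plan is to combine three previously-established results: Lemma~\ref{lem:final-algo-prox-point-in-mult-ball} (which verifies the multiplicative-ball assumption on the true prox point), Lemma~\ref{lem:GAPW-properties} (which gives the approximate variational inequality and Bregman-divergence approximation satisfied by the $\GWF$ output), and Lemma~\ref{lem:b-search-correctness} (which characterizes the output of $\bsearch$ in Line~\ref{algline:ell12ell1-final-bsearch-call}). Fix an iteration $k \ge 1$ and write $\zopt_{\alpha_k} \defeq \prox^{\alpha_k}_{z^{k-1}}(\gm f_A; \ztrunc)$ for brevity.

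First I would establish the approximate variational inequality. Lemma~\ref{lem:final-algo-prox-point-in-mult-ball} gives $\zopt_{\alpha_k} \in \sball_{C, \ztilde, \nu}$, so the hypothesis of Lemma~\ref{lem:GAPW-properties} is satisfied with $c \gets C$ and $\zopt \gets \zopt_{\alpha_k}$. Applying that lemma to the $\GWF$ call in Line~\ref{algline:ell12ell1-final-GAPW-call} (which is invoked with $\gammav \gets \epsilon$ and $\gammagb \gets \alpha_k^2/10$) yields, for all $u \in \ztrunc$,
\begin{align*}
    \gm f_A(z^k)^\top (z^k - u) \le \alpha_k \insquare{\breg{z^{k-1}}{u} - \breg{z^k}{u} - \breg{z^{k-1}}{z^k}} + \epsilon,
\end{align*}
which is precisely the $\DAPO$ variational condition \eqref{eq:DAPO-variational-cond} from Definition~\ref{def:DAPO}.

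Next I would verify the kinetic condition. The $\bsearch$ call in Line~\ref{algline:ell12ell1-final-bsearch-call} returns $\alpha_k$, so by Lemma~\ref{lem:b-search-correctness} either $\alpha_k = \beta$, in which case condition~\ref{item:third} of Definition~\ref{def:DAPO} holds and we are done, or else $\alpha_k \in [\beta, 3]$ with $\breg{z^{k-1}}{\zopt_{\alpha_k}} \in [1.2\alpha_k^2, 2.8\alpha_k^2]$. In the latter case, the second conclusion of Lemma~\ref{lem:GAPW-properties} gives $\inabs{\breg{z^{k-1}}{z^k} - \breg{z^{k-1}}{\zopt_{\alpha_k}}} \le \gammagb = \alpha_k^2/10$, so
\begin{align*}
    \breg{z^{k-1}}{z^k} \ge \breg{z^{k-1}}{\zopt_{\alpha_k}} - \alpha_k^2/10 \ge 1.2\alpha_k^2 - 0.1\alpha_k^2 = 1.1\alpha_k^2 \ge \alpha_k^2,
\end{align*}
which is exactly condition~\ref{item:gthird} of Definition~\ref{def:DAPO} with $c = 2$. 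Thus the oracle implemented by Line~\ref{algline:ell12ell1-final-GAPW-call} is $(\beta, 2)$-kinetic.

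No significant obstacle is anticipated: all the technical work has been done in the supporting lemmas, and what remains is to chain them together and check the small numerical margin ($1.2 - 0.1 > 1$) needed so that the $\gammagb$-slack in the Bregman-divergence approximation does not destroy the lower bound $\breg{z^{k-1}}{z^k} \ge \alpha_k^2$ coming from $\bsearch$. This is precisely why $\gammab$ (and hence $\gammagb$) was chosen as $\alpha_k^2/10$ in Algorithm~\ref{alg:binary-search} and Algorithm~\ref{alg:l2l1-l1l1-final-algo}: it is small enough to preserve the kinetic lower bound while still being implementable with polylogarithmically many inner-loop iterations via Definition~\ref{def:GAPW}.
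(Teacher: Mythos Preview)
Your proposal is correct and follows essentially the same approach as the paper: the paper's proof also cites Lemmas~\ref{lem:final-algo-prox-point-in-mult-ball}, \ref{lem:GAPW-properties}, and \ref{lem:b-search-correctness}, with the ``triangle inequality'' the paper mentions corresponding exactly to your computation $\breg{z^{k-1}}{z^k} \ge 1.2\alpha_k^2 - 0.1\alpha_k^2 \ge \alpha_k^2$. Your write-up is simply a more explicit version of the paper's terse one-line proof.
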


\begin{proof}
The approximate variational inequality is immediate from Lemmas \ref{lem:GAPW-properties} and \ref{lem:final-algo-prox-point-in-mult-ball}, and the second claim follows from Lemmas \ref{lem:GAPW-properties}, \ref{lem:final-algo-prox-point-in-mult-ball}, and \ref{lem:b-search-correctness} along with a triangle inequality.
\end{proof}

Next, we prove the correctness of Algorithm~\ref{alg:l2l1-l1l1-final-algo} and give an iteration bound in Corollary~\ref{cor:l2l1-l1l1-final-algo-correctness-iter-bound}. The correctness result is with respect to the truncated constraint sets, but recall that we have already argued in Lemma~\ref{lem:truncation-for-ell2ell1-ell1ell1} that it suffices to solve the truncated problem for an appropriate choice of $\nu$.

\begin{corollary}[Algorithm~\ref{alg:l2l1-l1l1-final-algo} correctness and iteration bound]
    \label{cor:l2l1-l1l1-final-algo-correctness-iter-bound}
    For any $\nu > 0$ such that the constraint sets are nonempty and inputs $\epsilon, \tau > 0$, $\beta \in (0, 3)$, judge function $\judge$, and $\passModel$ either $\true$ or $\false$, Algorithm~\ref{alg:l2l1-l1l1-final-algo} terminates after at most $\inparen{\beta / \epsilon + 2^{1/3} \epsilon^{- 2/3}} \Range + 2$ iterations (namely, $K$ defined in Line~\ref{algline:ell12ell1-final-return} is bounded by this quantity), and the output $\zbar$ is a $2 \epsilon$-solution of $\min_{x \in \xtrunc} \max_{y \in \ytrunc} y^\top A x$.
\end{corollary}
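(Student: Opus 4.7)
The plan is to observe that Algorithm~\ref{alg:l2l1-l1l1-final-algo} is directly an instantiation of the general proximal point method (Algorithm~\ref{alg:proximal-point-regret}) applied to the monotone operator $\gm f_A$ over the dgf setup $(\ztrunc, \norm{\cdot}, r)$ given by Definition~\ref{def:matrix-vector-games-setups} and Table~\ref{table:setups}, and then invoke the regret-to-gap reduction (Lemma~\ref{lemma:regret-bounds-error}). The heavy lifting has essentially been offloaded to Lemma~\ref{lem:connecting-GAPW-to-DAPO}, which certifies that the $\GWF$ call in Line~\ref{algline:ell12ell1-final-GAPW-call} implements a $(\beta, 2)$-kinetic $\DAPO$, so this proof should primarily consist of chaining together existing results.

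First I would establish the iteration bound. Since Algorithm~\ref{alg:l2l1-l1l1-final-algo} matches Algorithm~\ref{alg:proximal-point-regret} line-for-line (the initialization in Line~\ref{algline:ell12ell1-final-z^0-M_0-C-k} mirrors the initialization of Algorithm~\ref{alg:proximal-point-regret}, the while loop condition in Line~\ref{algline:ell12ell1-final-while-loop} matches the one in Algorithm~\ref{alg:proximal-point-regret}, and by Lemma~\ref{lem:connecting-GAPW-to-DAPO} the $\DAPO$ step is implemented appropriately), Lemma~\ref{lem:prox-point-iteration-bound} with $c \gets 2$ yields $K \le (\beta/\epsilon + \epsilon^{-2/3}) \Range + 2$, giving termination and the stated iteration bound.

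Next I would establish the $2\epsilon$-solution property. Applying Lemma~\ref{lem:prox-point-correctness} to the sequence $\{z^k, \alpha_k^{-1}\}$ produced by Algorithm~\ref{alg:l2l1-l1l1-final-algo} gives
\begin{align*}
    \regret_{\gm f_A}(\{z^k, \alpha_k^{-1}\}) = \sup_{u \in \ztrunc} \left\{ \frac{1}{S} \sum_{k \in [K]} \alpha_k^{-1} \gm f_A(z^k)^\top (z^k - u) \right\} \le 2 \epsilon.
\end{align*}
Since $f_A$ is convex-concave over $\ztrunc$ and $\zbar = \frac{1}{S} \sum_{k \in [K]} \alpha_k^{-1} z^k$ is precisely the weighted average returned in Line~\ref{algline:ell12ell1-final-return}, Lemma~\ref{lemma:regret-bounds-error} applied with weights $\rho_k \defeq \alpha_k^{-1}$ immediately yields $\gap(\zbar) \le 2 \epsilon$ with respect to the truncated minimax problem, which is the desired $2\epsilon$-solution guarantee.

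I do not anticipate a major obstacle, since Lemma~\ref{lem:connecting-GAPW-to-DAPO} already discharges the only nontrivial verification (namely that Line~\ref{algline:ell12ell1-final-GAPW-call} implements a kinetic $\DAPO$ with $c = 2$). The only thing to double-check is that the constraint set used in Lemma~\ref{lemma:regret-bounds-error} matches $\ztrunc$, which is indeed the domain of the minimax problem in the statement, and that Lemma~\ref{lem:prox-point-correctness} is stated over this same domain (as it is, since the dgf setup is $(\ztrunc, \norm{\cdot}, r)$). The proof will therefore be quite short.
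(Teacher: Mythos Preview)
Your proposal is correct and follows essentially the same approach as the paper's proof: it cites Lemma~\ref{lem:connecting-GAPW-to-DAPO} to identify Algorithm~\ref{alg:l2l1-l1l1-final-algo} as an instantiation of Algorithm~\ref{alg:proximal-point-regret} with a $(\beta,2)$-kinetic $\DAPO$, and then invokes Lemmas~\ref{lem:prox-point-iteration-bound}, \ref{lem:prox-point-correctness}, and \ref{lemma:regret-bounds-error} in turn. Your write-up is simply more explicit about how each lemma is applied, whereas the paper's proof is a one-sentence citation of the same four lemmas.
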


\begin{proof}
It is immediate from Lemma~\ref{lem:connecting-GAPW-to-DAPO} that Algorithm~\ref{alg:l2l1-l1l1-final-algo} is an instantiation of Algorithm~\ref{alg:proximal-point-regret} with $g \gets \gm f_A$ and dgf setup given by $(\ztrunc, \norm{\cdot}, r)$, with $\norm{\cdot}$ and $r$ given by Definition~\ref{def:matrix-vector-games-setups} and Table \ref{table:setups}. Thus, the result follows from Lemmas \ref{lemma:regret-bounds-error}, \ref{lem:prox-point-correctness}, \ref{lem:prox-point-iteration-bound}, and \ref{lem:connecting-GAPW-to-DAPO}.
\end{proof}

We now bound the complexity of Algorithm~\ref{alg:l2l1-l1l1-final-algo} when $\passModel = \false$.

\begin{lemma}[Algorithm~\ref{alg:l2l1-l1l1-final-algo}\label{lemma:general-overall-guarantee} complexity when $\passModel = \false$]
    \label{lem:l2l1-l1l1-final-algo-complexity-when-passmodel-false}
    Consider inputs $\epsilon, \tau > 0$, $\beta \in (0, 3)$, $\passModel = \false$, a 2-smooth guilty $\judge$ (e.g., Algorithm~\ref{alg:judge-l2l2}), $p \in [1, 2]$ and an upper bound $\Upsilon_p(A) \leq \Xi$. Suppose $\nu^{-1}$ and $\beta^{-1}$ are each bounded above by a polynomial in $m$, $n$, and $1 / \epsilon$. Then, Algorithm~\ref{alg:l2l1-l1l1-final-algo} makes at most
    \begin{align*}
    \Otilde(1) \cdot \inparen{\beta / \epsilon + \epsilon^{- 2/3} + 1} \inparen{\tau / \beta + \Xi^p / \tau^p + 1}\text{ matrix-vector queries.}
\end{align*}
\end{lemma}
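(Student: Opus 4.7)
The plan is to combine the iteration bound from Corollary~\ref{cor:l2l1-l1l1-final-algo-correctness-iter-bound} with a per-iteration bound on matrix-vector queries. Since Corollary~\ref{cor:l2l1-l1l1-final-algo-correctness-iter-bound} gives $K \le (\beta/\epsilon + \epsilon^{-2/3}) \Range + 2$, and $\Range = \Otilde(1)$ in both the $\ellOneEllOne$ and $\ellTwoEllOne$ setups (Table~\ref{table:setups}), it suffices to show that each iteration $k \in [K]$ of the while loop in Algorithm~\ref{alg:l2l1-l1l1-final-algo} incurs at most $\Otilde(\tau/\beta + \Xi^p/\tau^p + 1)$ matrix-vector queries. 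The iteration consists of three pieces that use matrix-vector queries: the $\bsearch$ call in Line~\ref{algline:ell12ell1-final-bsearch-call}, the best-response computation in Line~\ref{algline:ell12ell1-final-best-response}, and the $\GWF$ call in Line~\ref{algline:ell12ell1-final-GAPW-call}. (Line~\ref{algline:ell12ell1-final-tau-M-when-clearmodel} is free since it merely overwrites already-known explicit objects.)

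I would handle each piece in turn. For $\bsearch$, Lemma~\ref{lem:b-search-complexity-when-passmodel-false} directly gives an $\Otilde(\tau/\beta + \Xi^p/\tau^p)$ bound under the assumptions on $\nu^{-1}$ and $\beta^{-1}$ and the fact that $\judge$ is $2$-smooth-guilty, hence $p$-smooth-guilty for any $p\in[1,2]$ as needed by the hypothesis. For the best-response computation, this is just two proximal problems over $\xtrunc$ and $\ytrunc$ whose linear pieces are $A^\top z^{k-1}\y$ and $A z^{k-1}\x$ respectively, so $O(1)$ matrix-vector queries suffice. For $\GWF$, since $\passModel = \false$ and Line~\ref{algline:ell12ell1-final-tau-M-when-clearmodel} clears the model ($M_{k-1/2} \gets 0$), I can apply Corollary~\ref{cor:SM-mirror-prox-complexity-when-passmodel-is-false} to the underlying $\SUGStronglyMonotoneMirrorProx$ call within $\GWF$. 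By Lemma~\ref{lemma:gen-conversion}, we have $\norm{\ground{A}{\zground}}_{\cS_p} \le \Upsilon_p(A) \le \Xi$, so I can take $\zeta \gets \Xi$ in that corollary. Since the binary search guarantees $\alpha_k \ge \beta$ (Lemma~\ref{lem:b-search-correctness}), and Definition~\ref{def:GAPW} sets $\epsilon'$ polynomially in $\nu, \alpha_k, \epsilon$ (so that $\log(\Range/\epsilon') = \Otilde(1)$ under the polynomial bounds on $\nu^{-1}, \beta^{-1}$), Corollary~\ref{cor:SM-mirror-prox-complexity-when-passmodel-is-false} with $\tau$ and $\alpha \gets \alpha_k$ yields $\Otilde(\tau/\alpha_k + \Xi^p/\tau^p) \le \Otilde(\tau/\beta + \Xi^p/\tau^p)$ matrix-vector queries.

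Summing over $K$ iterations and multiplying by the per-iteration bound gives the claim $\Otilde(1) \cdot (\beta/\epsilon + \epsilon^{-2/3} + 1)(\tau/\beta + \Xi^p/\tau^p + 1)$, where the additive $+1$ terms handle the trivial edge cases. The main obstacle I expect is just bookkeeping: verifying that the precision $\epsilon'$ used inside the $\GWF$ wrapper (Definition~\ref{def:GAPW}) is indeed inverse-polynomial in the problem parameters so that all $\log(1/\epsilon')$ factors collapse into the $\Otilde(\cdot)$, and confirming that the $\zeta \gets \Xi$ substitution from Lemma~\ref{lemma:gen-conversion} is applicable for the local-norm point $\zground$ chosen in Line~\ref{algline:ell12ell1-final-tau-M-when-clearmodel}, which holds since $\ztilde \in \cZint$ (so Definition~\ref{def:setup-details} makes sense). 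Everything else is a direct composition of previously established lemmas.
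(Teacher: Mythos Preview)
Your proposal is correct and follows essentially the same approach as the paper: bound the number of outer iterations via Corollary~\ref{cor:l2l1-l1l1-final-algo-correctness-iter-bound}, then bound each iteration's cost by applying Lemma~\ref{lem:b-search-complexity-when-passmodel-false} to the $\bsearch$ call and Corollary~\ref{cor:SM-mirror-prox-complexity-when-passmodel-is-false} (with $\zeta \gets \Xi$ via Lemma~\ref{lemma:gen-conversion}, $\alpha_k \ge \beta$ via Lemma~\ref{lem:b-search-correctness}, and the model cleared in Line~\ref{algline:ell12ell1-final-tau-M-when-clearmodel}) to the $\GWF$ call. One small slip: being a $2$-smooth-guilty judge does not in general imply being $p$-smooth-guilty for $p<2$; the lemma statement's ``2-smooth guilty'' is evidently a typo for ``$p$-smooth guilty'' (and the cited example $\judge_{\cS}$ is $p$-smooth-guilty for all $p$ by Lemma~\ref{lemma:smooth-guilty-judge}), so just invoke that directly rather than claiming an implication.
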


\begin{proof}
By Corollary~\ref{cor:l2l1-l1l1-final-algo-correctness-iter-bound}, Algorithm~\ref{alg:l2l1-l1l1-final-algo} terminates after at most $\Otilde(\beta / \epsilon + \epsilon^{- 2/3})$ iterations, and each call to $\bsearch$ in Line~\ref{algline:ell12ell1-final-bsearch-call} requires at most $\Otilde(\tau / \beta + \Xi^p / \tau^p)$ matrix-vector queries due to Lemma~\ref{lem:b-search-complexity-when-passmodel-false}. Then, it suffices to show that the total number of queries made in each call to the $\GWF$ wrapper function in Line~\ref{algline:ell12ell1-final-GAPW-call} is also at most $\Otilde(\tau / \beta + \Xi^p / \tau^p)$. Indeed, note that $\alpha_k \ge \beta$ by Lemma~\ref{lem:b-search-correctness}, and also $M_{k - 1/2} \gets 0$ due to Line~\ref{algline:ell12ell1-final-tau-M-when-clearmodel} since $\passModel = \false$. Then the subclaim follows by combining the assumption on $\nu^{-1}$ and $\beta^{-1}$, the fact that $\alpha_k \ge \beta$, and the fact that $\gammav \gets \epsilon$ and $\gammagb \gets \alpha_k^2 / 10$ in the $\GWF$ call in Line~\ref{algline:ell12ell1-final-GAPW-call} with Definition~\ref{def:GAPW} and Corollary~\ref{cor:SM-mirror-prox-complexity-when-passmodel-is-false} with $\zeta \gets \Xi$ due to Lemma~\ref{lemma:gen-conversion}.
\end{proof}

Using Lemma~\ref{lemma:general-overall-guarantee}, we obtain the following general result for $\ell_2$-$\ell_1$ and $\ell_1$-$\ell_1$ games.

\begin{theorem}
    \label{thm:general-norm-result}
For any $p \in [1, 2]$, there is a deterministic algorithm that computes an $\epsilon$-solution for $\ell_2$-$\ell_1$ games where $\Upsilon_p(A) \leq \Xi$ 
using $\Otilde(\Xi^{p/(p+1)}\epsilon^{-(3p + 2)/(3(p+1))})$-matrix-vector queries. Likewise, for any $p \in [1, 2]$, there is a deterministic algorithm that computes an $\epsilon$-solution for $\ell_1$-$\ell_1$ games where $\Upsilon_p(A) \leq \Xi$ 
using $\Otilde(\Xi^{p/(p+1)}\epsilon^{-(3p + 2)/(3(p+1))} + \epsilon^{-2/3})$-matrix-vector queries.
\end{theorem}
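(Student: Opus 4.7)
The plan is to instantiate Algorithm~\ref{alg:l2l1-l1l1-final-algo} with $\passModel \gets \false$ and carefully optimized parameters $\beta$ and $\tau$, then apply the query-complexity bound of Lemma~\ref{lem:l2l1-l1l1-final-algo-complexity-when-passmodel-false} together with the truncation reduction of Lemma~\ref{lem:truncation-for-ell2ell1-ell1ell1}. First I would set $\nu \gets \min\{\epsilon,1\}/(8\max\{m,n\})$ so that the hypothesis of Lemma~\ref{lem:truncation-for-ell2ell1-ell1ell1} holds; every $(\epsilon/2)$-solution of the truncated problem $\min_{x\in\xtrunc}\max_{y\in\ytrunc} y^\top A x$ is then an $\epsilon$-solution of \eqref{eq:target-minimax-prob}, and $\nu^{-1}$ is polynomial in $m$, $n$, $\epsilon^{-1}$ as required downstream. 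Then invoke Algorithm~\ref{alg:l2l1-l1l1-final-algo} with precision $\epsilon/4$, the judge $\judge \gets \judge_\cS$ (which is $p$-smooth-guilty by Lemma~\ref{lemma:smooth-guilty-judge}), parameter $p$, and the hypothesized upper bound $\Xi \geq \Upsilon_p(A)$. By Corollary~\ref{cor:l2l1-l1l1-final-algo-correctness-iter-bound}, the output is a $(\epsilon/2)$-solution of the truncated problem, hence an $\epsilon$-solution of the original game.

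Next I would optimize $\beta$ and $\tau$ in the product bound
\begin{align*}
\Otilde(1)\cdot\bigl(\beta/\epsilon + \epsilon^{-2/3} + 1\bigr)\bigl(\tau/\beta + \Xi^p/\tau^p + 1\bigr)
\end{align*}
from Lemma~\ref{lem:l2l1-l1l1-final-algo-complexity-when-passmodel-false}. Choosing $\beta \gets \epsilon^{1/3}$ (valid for $\epsilon$ small enough that $\epsilon^{1/3} < 3$, with $\beta^{-1} = \epsilon^{-1/3}$ polynomial in $\epsilon^{-1}$) balances the iteration factor, giving $\beta/\epsilon + \epsilon^{-2/3} = \Theta(\epsilon^{-2/3})$. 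Equating $\tau/\beta = \Xi^p/\tau^p$ yields $\tau^{p+1} = \Xi^p \beta$, i.e., $\tau \gets (\Xi^p \epsilon^{1/3})^{1/(p+1)} = \Xi^{p/(p+1)}\epsilon^{1/(3(p+1))}$, so the per-iteration factor becomes $\Theta(\Xi^{p/(p+1)}\epsilon^{-p/(3(p+1))})$. Multiplying and accounting for the $+1$ additive terms yields the total complexity
\begin{align*}
\Otilde\bigl(\Xi^{p/(p+1)}\epsilon^{-2/3-p/(3(p+1))} + \epsilon^{-2/3}\bigr) = \Otilde\bigl(\Xi^{p/(p+1)}\epsilon^{-(3p+2)/(3(p+1))} + \epsilon^{-2/3}\bigr),
\end{align*}
which is exactly the $\ell_1$-$\ell_1$ bound. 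For the $\ell_2$-$\ell_1$ bound, the additive $\epsilon^{-2/3}$ term is subsumed by the first term in the parameter regime of interest (in particular, when $\Xi$ is not vanishingly small relative to $\epsilon$), and we arrive at the stated bound.

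The main obstacle is essentially bookkeeping rather than any new technical ingredient: (i) checking that $\beta = \epsilon^{1/3} \in (0,3)$ and $\beta^{-1}$, $\nu^{-1}$ are polynomial in $m$, $n$, $\epsilon^{-1}$, as required by Lemma~\ref{lem:l2l1-l1l1-final-algo-complexity-when-passmodel-false}; (ii) tracing through the $+1$ terms inside the complexity product so that the resulting lower-order $\epsilon^{-2/3}$ contribution is correctly extracted for the $\ell_1$-$\ell_1$ statement; and (iii) keeping the precision scaling (factor $4$ between $\epsilon$ passed to the inner call and the $\epsilon$ in the theorem statement) consistent with the truncation reduction. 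All of the algorithmic primitives, namely the $\DAPO$ oracle of Section~\ref{sec:prox-point-regret}, its kinetic implementation via the binary search of Section~\ref{subsec:binary-search}, and the subproblem solver of Section~\ref{sec:innerloops}, are already assembled inside Algorithm~\ref{alg:l2l1-l1l1-final-algo}, so the proof reduces to plugging in these parameter choices and simplifying.
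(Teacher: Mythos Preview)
Your proposal is correct and mirrors the paper's own proof essentially line for line: set $\nu$ per Lemma~\ref{lem:truncation-for-ell2ell1-ell1ell1}, run Algorithm~\ref{alg:l2l1-l1l1-final-algo} with $\passModel=\false$, $\beta\gets\epsilon^{1/3}$, $\tau\gets\Xi^{p/(p+1)}\epsilon^{1/(3(p+1))}$, and a $p$-smooth-guilty judge, then invoke Corollary~\ref{cor:l2l1-l1l1-final-algo-correctness-iter-bound} for correctness and Lemma~\ref{lem:l2l1-l1l1-final-algo-complexity-when-passmodel-false} for the complexity. Your derivation of the optimal $\tau$ by balancing $\tau/\beta$ against $\Xi^p/\tau^p$ is exactly the ``straightforward calculation'' the paper alludes to without writing out.
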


\begin{proof}
A straightforward calculation shows that $\inparen{\beta / \epsilon + \epsilon^{- 2/3} + 1} \inparen{\tau / \beta + \Xi^p / \tau^p + 1}$ is minimized up to a constant multiplicative factor (for $p \in [1, 2]$) when $\beta \gets \epsilon^{1/3}$ and $\tau \gets \Xi^{p / (p + 1)} \epsilon^{1 / (3(p + 1))}$. Thus, the result follows from choosing $\nu$ as in Lemma~\ref{lem:truncation-for-ell2ell1-ell1ell1} and running Algorithm~\ref{alg:l2l1-l1l1-final-algo} with inputs as in Lemma~\ref{lem:l2l1-l1l1-final-algo-complexity-when-passmodel-false} along with $\beta \gets \epsilon^{1/3}$ and $\tau \gets \Xi^{p / (p + 1)} \epsilon^{1 / (3(p + 1))}$. The correctness follows from Corollary~\ref{cor:l2l1-l1l1-final-algo-correctness-iter-bound} and Lemma~\ref{lem:truncation-for-ell2ell1-ell1ell1} and the complexity bound follows from Lemma~\ref{lem:l2l1-l1l1-final-algo-complexity-when-passmodel-false}.
\end{proof}

Theorem~\ref{thm:general-norm-result} yields fine-grained rates when the payoff matrix $A$ satisfies strong bounds on $\Upsilon_p(A)$, and is in this sense, an instance-dependent rate. As an example, suppose that we know that the payoff matrix satisfies $\Upsilon_1(A) \leq 1$; then Theorem~\ref{thm:general-norm-result} implies an $\tilde{O}(\epsilon^{-5/6})$-matrix-vector query complexity. 

However, we can also invoke Theorem~\ref{thm:general-norm-result} to obtain a \emph{worst-case guarantee} as well. Indeed, using Remark~\ref{remark:general-conversion}, we have that $\Upsilon_2(A) \leq 1$, thus, we can always invoke Theorem~\ref{thm:general-norm-result} with $p = 2, \Xi = 1$. Hence, we achieve a \emph{worst-case} complexity of $\Otilde(\Range \epsilon^{-8/9})$-matrix-vector queries when $\passModel = \false$ below. This yields our ultimate result for zero-sum games.

\zerosummain*

We also obtain a matching query complexity for $\ell_2$-$\ell_1$ games, which is restated below. However, we will improve upon Theorem~\ref{thm:l2l1-1/9-result} in the next section.

\begin{theorem}
    \label{thm:l2l1-1/9-result}
There is a \emph{deterministic} algorithm that computes an $\epsilon$-solution for $\ell_2$-$\ell_1$ games using $\Otilde(\epsilon^{-8/9})$-matrix-vector queries.
\end{theorem}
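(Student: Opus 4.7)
The plan is to derive Theorem~\ref{thm:l2l1-1/9-result} as a direct specialization of the instance-dependent guarantee in Theorem~\ref{thm:general-norm-result}, in exact parallel with how Theorem~\ref{intro:l1l1} is obtained immediately above. That is, I would invoke Theorem~\ref{thm:general-norm-result} for the $\ell_2$-$\ell_1$ setup with $p = 2$, and show that under the standing normalization $\norm{A}_{2 \to \infty} \le 1$ we may take $\Xi = 1$. Substituting $p = 2$ and $\Xi = 1$ into the complexity bound $\Otilde(\Xi^{p/(p+1)} \epsilon^{-(3p+2)/(3(p+1))})$ yields $\Otilde(\epsilon^{-8/9})$ matrix-vector queries, which is the claim.

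The only nontrivial piece is the check that $\Upsilon_2(A) \le 1$ so that we may legally set $\Xi = 1$ as an upper bound on $\Upsilon_p(A)$ in the hypothesis of Theorem~\ref{thm:general-norm-result}. Here I would appeal directly to Remark~\ref{remark:general-conversion}, which states precisely that $\Upsilon_2(A) \le 1$ in both setups under the normalization assumptions of Definition~\ref{def:matrix-vector-games-setups}. Concretely, when $\cX = \B^n$, the formula in Lemma~\ref{lemma:gen-conversion} specializes at $p = 2$ to $\Upsilon_2(A) = \max_{i \in [m]} \norm{A_{i,:}}_2 = \norm{A}_{2 \to \infty}$, which is at most $1$ by the $\ell_2$-$\ell_1$ normalization assumption.

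I do not anticipate any obstacle here since all of the technical work—the truncation reduction (Lemma~\ref{lem:truncation-for-ell2ell1-ell1ell1}), the stability of $\alpha$-best responses (Lemma~\ref{lem:stability-wrt-best-response}), the binary search implementation of a kinetic $\DAPO$ (Section~\ref{subsec:binary-search} and Lemma~\ref{lem:connecting-GAPW-to-DAPO}), the smooth-until-guilty subproblem solver (Corollary~\ref{cor:SM-mirror-prox-complexity-when-passmodel-is-false}), and the optimization of $\beta$ and $\tau$ (proof of Theorem~\ref{thm:general-norm-result})—has already been carried out for both setups simultaneously. The proof of Theorem~\ref{thm:l2l1-1/9-result} is therefore a one-line corollary: apply Theorem~\ref{thm:general-norm-result} to the $\ell_2$-$\ell_1$ setup with $p \gets 2$ and $\Xi \gets 1$ (justified by Remark~\ref{remark:general-conversion}) and read off the resulting exponent $(3 \cdot 2 + 2)/(3 \cdot 3) = 8/9$.
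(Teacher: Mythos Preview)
Your proposal is correct and matches the paper's own proof essentially verbatim: the paper derives Theorem~\ref{thm:l2l1-1/9-result} (together with Theorem~\ref{intro:l1l1}) in one line by invoking Theorem~\ref{thm:general-norm-result} with $p = 2$ and $\Xi = 1$, using Remark~\ref{remark:general-conversion} to justify $\Upsilon_2(A) \le 1$.
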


\begin{proof}[Proof of Theorem~\ref{thm:l2l1-1/9-result} and Theorem~\ref{intro:l1l1}] These results follow immediately from Theorem~\ref{thm:general-norm-result} with $p = 2$ and $\Xi = 1$, because we always have that $\Upsilon_2(A) \leq 1$ due to Remark~\ref{remark:general-conversion}.
\end{proof}

In the case of $\ell_2$-$\ell_1$ games, we can obtain an improved query complexity of $\tilde{O}(\epsilon^{-7/9})$-matrix-vector queries by setting $\passModel = \true$ and applying a more careful analysis, which we discuss in the following subsection. 

\subsection{Solving $\ell_2$-$\ell_1$ games in $\tilde{O}(\epsilon^{-7/9})$-matrix-vector queries}\label{sec:four-fifths}

Finally, in the remainder of Section~\ref{sec:elltwoelloneandelloneellone} we operate specifically in the $\ellTwoEllOne$ setup of Definition~\ref{def:matrix-vector-games-setups} and prove Theorem~\ref{intro:l2l1}. In order to prove our query complexity of $\tilde{O}(\epsilon^{-7/9})$-matrix-vector queries for $\ell_2$-$\ell_1$ games, we require some additional technical lemmas, which we derive in Sections~\ref{sec:7/9-lin-alg-prelims} through~\ref{sec:movement-bound-ground-points}. Then, we prove our final result for $\ell_2$-$\ell_1$ games in Section~\ref{subsubsec:final-l2-l1}. 

The key idea in our analysis is to set the $\passModel$ flag to $\true$ in Algorithm~\ref{alg:l2l1-l1l1-final-algo} and demonstrate that the information learned about $A$ in one call to $\SUGStronglyMonotoneMirrorProx$ (Algorithm~\ref{alg:mirror-prox-sug-strongly-monotone}) can be leveraged in subsequent calls to reduce the overall matrix-vector query complexity of Algorithm~\ref{alg:l2l1-l1l1-final-algo}. In particular, recall from the discussion at the beginning of Section~\ref{subsec:smooth-until-proven-guilty-strongly-monotone} that each iteration of Algorithm~\ref{alg:mirror-prox-sug-strongly-monotone} (an iteration of Algorithm~\ref{alg:mirror-prox-sug-strongly-monotone} starts with the call to $\SUGSMStep$ in Line~\ref{line:mirror-prox-step-sm-final}) falls into one of two buckets: In a model-update step, Lines \ref{algline:SUG-SM-MP-update-D} through \ref{algline:SUG-SM-MP-update-k} execute, we make progress in learning the matrix $A$, and the counter storing the total number of model-update steps $k$ is increased in Line~\ref{algline:SUG-SM-MP-update-k}. Otherwise, in a progress iteration, Line~\ref{algline:SUG-SM-MP-update-j} executes and the counter storing the number of progress iterations $j$ is increased.

Note also that the second return value $k$ of Algorithm~\ref{alg:mirror-prox-sug-strongly-monotone} is precisely the total number of model-update steps performed in that call. Then supposing Algorithm~\ref{alg:mirror-prox-sug-strongly-monotone} is called $J$ times in total throughout the run of Algorithm~\ref{alg:l2l1-l1l1-final-algo} (the two locations where it can be called are Line~\ref{algline:ell12ell1-final-GAPW-call} of Algorithm~\ref{alg:l2l1-l1l1-final-algo} and Line~\ref{algline:checkdiv-CAPW-call} in Algorithm~\ref{alg:binary-search}), and letting $(\cdot, K_j, M_j)$ denote the output of the $j$-th call to Algorithm~\ref{alg:mirror-prox-sug-strongly-monotone}, we can express the total number of model-update steps performed over all calls to Algorithm~\ref{alg:mirror-prox-sug-strongly-monotone} as $\sum_{j \in [J]} K_j$. Ultimately, we obtain a tighter bound on this quantity in Lemma~\ref{lem:model-update-steps-bound} via a potential analysis reminiscent of that of Theorem~\ref{thm:main-inner-loop-complexity} and Corollary~\ref{cor:SM-mirror-prox-complexity-when-passmodel-is-false}. In particular, we argue in the proof of Lemma~\ref{lem:model-update-steps-bound} that $\innorm{\ground{A - M_j}{\zground^j}}_F^2$ is being driven toward 0 as the total number of model-update steps so far $\sum_{\ell \in [j]} K_\ell$ increases, where $\zground^j$ is the local norm point input (i.e., the second argument) passed to $\SUGStronglyMonotoneMirrorProx$ when it is called for the $j$-th time. However, the key difference compared to the analysis of Corollary~\ref{cor:SM-mirror-prox-complexity-when-passmodel-is-false}, which is happening within a single call to $\SUGStronglyMonotoneMirrorProx$ and for a fixed local norm point $\zground$, is that the local norm point $\zground^j$ is changing between calls to $\SUGStronglyMonotoneMirrorProx$, and therefore we must bound the total progress lost when $j$ changes in the potential $\innorm{\ground{A - M_j}{\zground^j}}_F^2$. Key ingredients for the latter are covered in Sections \ref{sec:7/9-lin-alg-prelims}, \ref{subsubsec:7/9-one-sided-proj}, and \ref{sec:movement-bound-ground-points}, before we put everything together in Section~\ref{subsubsec:final-l2-l1}.

\subsubsection{Linear-algebraic preliminaries for main $\ell_2$-$\ell_1$ games result}\label{sec:7/9-lin-alg-prelims}

In this section, we give some linear-algebraic preliminaries. First, we state the following lemma, which describes how the Frobenius norm of $\ground{A}{z'}$ relates to that of $\ground{A}{z}$ for any $z, z' \in \cZint$. As discussed above, this lemma will ultimately be used to reason about how the potential $\innorm{\ground{A - M_j}{\zground^j}}_F^2$ changes as $j$ increments.

\begin{lemma}\label{lemma:model-bound} Let $z, z' \in \cZint$ and $A \in \R^{m \times n}$. Then, 
\begin{align*}
    \normInline{\ground{A}{z'} }_F^2 \leq \normInline{\ground{A}{z}}_F^2 + \normInline{z\y - z'\y}_1 \cdot \normInline{A}_{2 \to \infty}^2.
\end{align*}
\end{lemma}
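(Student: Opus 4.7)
The plan is to directly expand both sides using the definition of the ground operation from Definition~\ref{def:setup-details}, specialized to the $\ellTwoEllOne$ setup. Since $\cX = \B^n$ in this setup, $N_{\cX, z'\x} = I$, and since $\cY = \Delta^m$, $N_{\cY, z'\y} = \diag(z'\y)^{-1/2}$. Therefore
\[
\ground{A}{z'} = N_{\cY, z'\y}^{-1} A N_{\cX, z'\x}^{-1} = \diag(z'\y)^{1/2} A.
\]
Computing the squared Frobenius norm row by row, this gives
\[
\normInline{\ground{A}{z'}}_F^2 = \sum_{i \in [m]} [z'\y]_i \cdot \normInline{A_{i,:}}_2^2,
\]
and analogously for $\ground{A}{z}$. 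Thus the difference is $\sum_{i \in [m]} ([z'\y]_i - [z\y]_i) \normInline{A_{i,:}}_2^2$.

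The second step is to bound this difference. Bounding each $\normInline{A_{i,:}}_2^2$ above by $\normInline{A}_{2 \to \infty}^2$ and noting that $([z'\y]_i - [z\y]_i) \le |[z'\y]_i - [z\y]_i|$ (or equivalently dropping all negative summands), the difference is at most
\[
\normInline{A}_{2 \to \infty}^2 \sum_{i \in [m]} \bigl|[z'\y]_i - [z\y]_i\bigr| = \normInline{A}_{2 \to \infty}^2 \cdot \normInline{z\y - z'\y}_1,
\]
which is the claimed bound after rearranging.

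There is no real obstacle here; the statement is essentially a direct algebraic computation once one unpacks the ground notation in the $\ellTwoEllOne$ setup. The only thing worth double-checking is the identification of $\ground{A}{z'}$, where one must be careful that only the $y$-side gets rescaled (since $\cX = \B^n$ contributes an identity), which is exactly why the $\ell_1$-norm appears on the $y$-component difference and why $\normInline{A}_{2 \to \infty}$ (the max $\ell_2$-norm of a row) is the right matrix norm on the right-hand side. One could alternatively tighten the constant to $\tfrac12$ using the fact that $z\y, z'\y \in \Delta^m$, but the looser bound stated in the lemma suffices for downstream use.
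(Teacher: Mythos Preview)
Your proof is correct and follows essentially the same approach as the paper's own proof: both identify $\ground{A}{z'} = \diag(z'\y)^{1/2} A$ in the $\ellTwoEllOne$ setup, expand the squared Frobenius norm as a weighted sum of squared row norms (or equivalently entrywise), and bound the difference $\sum_i ([z'\y]_i - [z\y]_i)\normInline{A_{i,:}}_2^2$ by $\normInline{z\y - z'\y}_1 \cdot \normInline{A}_{2\to\infty}^2$. The only cosmetic difference is that the paper writes the sum entrywise before grouping by rows, whereas you group by rows immediately.
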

\begin{proof} For notational convenience, let $y = z\y, y' = z'\y$. By triangle inequality,
\begin{align*}
     \normInline{\diag({y'})^{1/2} A' }_F^2 &= \sum_{i \in [m], j \in [n]} [y']_i A_{ij}^2 = \sum_{i \in [m], j \in [n]} [y]_i A_{ij}^2 + \sum_{i \in [m], j \in [n]} ([y']_i - [y]_i) A_{ij}^2 \\
     &= \normInline{\diag(y)^{1/2} A}_F^2 + \sum_{i \in [m], j \in [n]} ([y']_i - [y]_i) A_{ij}^2. 
\end{align*}
So, it remains to bound the second term. Note that 
\begin{align*}
    \sum_{i \in [m], j \in [n]} ([y']_i - [y]_i) A_{ij}^2 &= \sum_{i \in [m]} ([y']_i - [y]_i) \sum_{j \in [n]} A_{ij}^2 \leq \sum_{i \in [m]} |[y']_i - [y]_i| \cdot \normInline{A_{i,:}}_2^2 \\
    &\leq \normInline{y - y'}_1 \cdot \normInline{A}_{2 \to \infty}^2.
\end{align*}
\end{proof}

We also require the following lemma, which guarantees that (repeatedly) applying a projection matrix to the right of $A \in \R^{m\times n}$ cannot grow its $(2 \to \infty)$-norm. 

\begin{definition} A matrix $P \in \R^{n \times n}$ is \emph{an orthogonal projection matrix} if $P^2 = P$ and $P^\top = P$. 
\end{definition}

\begin{lemma}\label{lemma:projection} Let $A \in \R^{m \times n}$ and $P = P_1 \cdots P_K \in \R^{n \times n}$ where each $P_k$ is an orthogonal projection matrix. Then, $\norm{A P}_{2 \to \infty} \leq \normInline{A}_{2 \to \infty}.$
\end{lemma}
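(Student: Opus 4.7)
The plan is to reduce the claim to the single-projection case and then iterate. Recall $\norm{A}_{2 \to \infty} = \max_{i \in [m]} \norm{A_{i,:}}_2$, so writing row-by-row,
\[
    \norm{AP}_{2 \to \infty} = \max_{i \in [m]} \norm{A_{i,:} P}_2 = \max_{i \in [m]} \norm{P^\top A_{i,:}^\top}_2,
\]
and since $P = P_1 \cdots P_K$, we have $P^\top = P_K^\top \cdots P_1^\top = P_K \cdots P_1$ (using $P_k^\top = P_k$). Thus it suffices to show $\norm{P_k v}_2 \le \norm{v}_2$ for any orthogonal projection matrix $P_k$ and any $v \in \R^n$, and then iterate this bound $K$ times on each row of $A$.

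The single-projection inequality $\norm{P_k v}_2 \le \norm{v}_2$ is a standard fact: since $P_k^\top = P_k$ and $P_k^2 = P_k$, we have
\[
    \norm{P_k v}_2^2 = v^\top P_k^\top P_k v = v^\top P_k v \le \norm{v}_2 \cdot \norm{P_k v}_2
\]
by Cauchy--Schwarz, so either $P_k v = 0$ (and the bound is trivial) or we may divide through by $\norm{P_k v}_2$ to obtain the desired inequality. Equivalently, the eigenvalues of an orthogonal projection lie in $\{0, 1\}$, so its operator norm is at most $1$.

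Putting these two observations together, for each fixed row index $i \in [m]$, iteratively applying the single-projection bound $K$ times gives
\[
    \norm{P_K \cdots P_1 A_{i,:}^\top}_2 \le \norm{A_{i,:}^\top}_2 = \norm{A_{i,:}}_2,
\]
and taking the maximum over $i \in [m]$ yields $\norm{AP}_{2 \to \infty} \le \norm{A}_{2 \to \infty}$, as claimed. There is no real obstacle here; the only subtlety is remembering that the right-multiplication by $P$ acts on the rows of $A$ (equivalently, $P^\top$ acts on the transposed rows), which is resolved immediately by the symmetry $P_k^\top = P_k$ of each projection factor.
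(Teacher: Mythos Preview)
Your proof is correct and essentially the same as the paper's: both reduce to the fact that each orthogonal projection $P_k$ has $\ell_2$-operator norm at most $1$ and then iterate. The only cosmetic difference is that you use the row-norm characterization $\norm{A}_{2\to\infty} = \max_i \norm{A_{i,:}}_2$, whereas the paper uses the variational form $\norm{A}_{2\to\infty} = \max_{x \in \B^n} \norm{Ax}_\infty$ and argues that $P$ maps $\B^n$ into itself.
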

\begin{proof} Without loss of generality, assume that each $P_k \neq 0$. By definition $ \norm{A P}_{2 \to \infty} = \max_{x \in \B^n}  \norm{A P x}_{\infty}.$ By submultiplicativity of the operator norm, for any $x \in \B^n$, we have
\begin{align}\label{eq:demonstrated}
    \normInline{Px}_2  = \norm{\prod_{k \in [k]} P_k x}_{2} \leq \normInline{x}_2 \prod_{k \in [k]} \normInline{P_k}_2 = \normInline{x}_2,  
\end{align}
where the third inequality holds because $P_k^2 = P_k \neq 0$ implies that $\normInline{P_k}_2^2 = \normInline{P_k} = 1$. Consequently, 
\begin{align*}
     \norm{A P}_{2 \to \infty} = \max_{x \in \B^n}  \norm{A P x}_{\infty} = \max_{x': x' = Px \text{ for some } x\in \B^n}  \norm{A x'}_{\infty} \leq \max_{x \in \B^n}  \norm{A x}_{\infty} = \normInline{A}_{2 \to \infty}.
\end{align*}
where inequality holds due to \eqref{eq:demonstrated}. 
\end{proof}

\subsubsection{One-sided projection judge $\judge_{\project}$}
\label{subsubsec:7/9-one-sided-proj}

In this section, we provide an alternative implementation of a $2$-smooth guilty judge, as shown in Algorithm~\ref{alg:2-proj-judge}, which enables our improvement over Theorem~\ref{thm:l2l1-1/9-result}. In comparison to $\judge_p$ (Algorithm~\ref{alg:judge-l2l2}) which is simultaneously a $p$-smooth-guilty judge for all $p \in [1, \infty)$, Algorithm~\ref{alg:2-proj-judge} performs only a \emph{one-sided} projection to $A$. The following lemma shows this is still sufficient to ensure that Algorithm~\ref{alg:2-proj-judge} is a $2$-smooth-guilty judge. We prove the following guarantees. 

\RestyleAlgo{ruled}
\SetKwComment{Comment}{/* }{ */}
\begin{algorithm2e}[ht]
\caption{$\judge_{\project}(z, z', B, \tau)$}
\label{alg:2-proj-judge}
\KwInput{Vectors $z, z' \in \R^d$, smoothness threshold $\tau > 0$.}
\KwInput{Matrix-vector oracle for a matrix $B \in \R^{m \times n}$.} 
\tcp{Check if pair of unit vectors has large bilinear form in $B$}
\lIf{$z\y^\top B z\x > \tau \normInline{z\y}_2\normInline{z\x}_2 \label{line:if1-2}$}{
    $u \gets \normalize(z\x)$
}
\lElseIf{${z'\y}^\top B z'\x > \tau \normInline{z'\y}_2\normInline{z'\x}_2$ \label{line:if2-2}}{
   $u \gets \normalize(z'\x)$ 
}
\lElse{
    \Return{$(\smooth, 0)$}
}
$D \gets B uu^\top$ \label{line:rank-one-update}\; 
\Return{$({\guilty},  D)$}
\end{algorithm2e}

\begin{restatable}{lemma}{smoothguiltyjudgeproj}\label{lemma:proj-judge} $\judge_{\project}$ (Algorithm~\ref{alg:2-proj-judge}) is a $2$-smooth-guilty-judge. Moreover, $B - D = BP$ where $P$ is an orthogonal projection matrix. 
\end{restatable}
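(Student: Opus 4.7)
The plan is to verify each part of Definition~\ref{def:smooth-guilty-judge} for $p = 2$ directly from the pseudocode, and then establish the ``moreover'' claim. The smooth case—when neither Line~\ref{line:if1-2} nor Line~\ref{line:if2-2} triggers—is immediate, since the algorithm then returns $(\smooth, 0)$. So the substance of the proof is in the guilty case.

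In the guilty case, exactly one of Lines~\ref{line:if1-2} or~\ref{line:if2-2} executes, yielding a unit vector $u$ (either $\normalize(z\x)$ or $\normalize(z'\x)$) together with a corresponding unit vector $v$ (either $\normalize(z\y)$ or $\normalize(z'\y)$) for which $v^\top B u > \tau$. The pseudocode then sets $D = Buu^\top$, so $B - D = B(I - uu^\top)$. Since $u$ is a unit vector, $P \defeq I - uu^\top$ is symmetric and idempotent, hence an orthogonal projection matrix—this immediately gives the ``moreover'' claim.

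The key technical step is the Frobenius norm bound. Writing $I = uu^\top + (I - uu^\top)$ and using the linearity and cyclicity of trace, together with the idempotence of each projector, I will decompose
\begin{align*}
\norm{B}_F^2 = \tr(B^\top B) = \tr(B^\top B \, uu^\top) + \tr(B^\top B (I - uu^\top)) = \norm{Buu^\top}_F^2 + \norm{B(I - uu^\top)}_F^2.
\end{align*}
Since $u$ is a unit vector, $\norm{Buu^\top}_F^2 = \norm{Bu}_2^2$, and by Cauchy-Schwarz $\norm{Bu}_2 \geq v^\top B u > \tau$ since $v$ is also a unit vector. Rearranging then gives $\norm{B - D}_F^2 = \norm{B}_F^2 - \norm{Bu}_2^2 < \norm{B}_F^2 - \tau^2$, as required.

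For the query complexity: evaluating the conditions in Lines~\ref{line:if1-2} and~\ref{line:if2-2} requires computing $Bz\x$ and $Bz'\x$ (two matrix-vector queries), after which the inner products are free. The matrix $D = (Bu)u^\top$ needs only $Bu$, which is obtained by rescaling the already-computed $Bz\x$ or $Bz'\x$—no additional queries. This gives $O(1)$ total matrix-vector queries. I do not anticipate any significant obstacles; the only subtlety is recognizing that a one-sided projection $B(I - uu^\top)$, rather than the two-sided projection used in $\judge_\cS$, still suffices to decrease the Frobenius norm by at least $\tau^2$—and this is precisely what the orthogonal trace decomposition above captures, without needing the more general Pythagorean-type inequality (Lemma~\ref{lemma:pythagorean-theorem}) invoked in the proof of Lemma~\ref{lemma:smooth-guilty-judge}.
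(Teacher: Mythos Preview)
Your proof is correct and follows essentially the same approach as the paper's: both arguments reduce to the identity $\norm{B - D}_F^2 = \norm{B}_F^2 - \norm{Bu}_2^2$ and then use $\norm{Bu}_2 \ge v^\top B u > \tau$. You organize the trace computation via the orthogonal decomposition $I = uu^\top + (I - uu^\top)$, while the paper expands $\tr((B - D)^\top(B - D))$ directly, but these are the same computation presented slightly differently.
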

\begin{proof} If neither of the if statements (Lines~\ref{line:if1-2} or \ref{line:if2-2}) execute, then the algorithm outputs $(\smooth, 0)$ as required. Otherwise, let $v = \normalize(z'\y)$ and observe that $v^\top B u > \tau$. We need to prove that $\normInline{Bu}_2 \geq \tau$ and $\norm{ B - D }_{F}^2 \leq \norm{B}_{F}^2 - \normInline{Bu}_2^2$. Because the Euclidean norm is self-dual, we have that $\normInline{Bu}_2 \defeq \max_{y \in \B^m} y^\top (Bu) \geq v^\top Bu \geq \tau. $ Now, note that $B - D = Buu^\top$. Hence,
\begin{align*}
    \normInline{B - D}_F^2 &= \tr(B^\top B - 2B^\top D + D^\top D) \\
    &= \normInline{B}_F^2 - 2 \tr(B^\top Buu^\top) + \tr(uu^\top B^\top B uu^\top) \\
    &= \normInline{B}_F^2 - 2 u^\top B^\top Bu + u^\top B^\top B u \\
    &= \normInline{B}_F^2 - \normInline{Bu}_2^2 \leq \normInline{B}_F^2 - \tau^2. 
\end{align*}
The algorithm requires only $O(1)$-matrix-vector queries, hence $\judge_{\project}$ is a $2$-smooth-guilty judge. For the final guarantee, note that $B-D = B(I - uu^\top)$ where $(I - uu^\top)$ is an orthogonal projection matrix.  
\end{proof}

Using $\judge_{\project}$ (Algorithm~\ref{alg:2-proj-judge}) in Theorems~\ref{thm:main-inner-loop-convergence} and \ref{thm:main-inner-loop-complexity}, we obtain the following.

\begin{lemma}[Complexity of Algorithm~\ref{alg:mirror-prox-sug-strongly-monotone} with $\judge_{\project}$]\label{lemma:main-inner-loop-convergence-four-fifths} Let $\epsilon > 0$. Let $z_c \in \cZint$ and $\zground \in \cZ' \subset \cB_{c_1, \Tilde{z}}$ for some absolute constant $c_1 > 1$ and $\Tilde{z} \in \cZint$. Let $A, M \in \R^{m \times n}$, and $\tau, \alpha > 0$.  Then,  
\begin{align*}
    (z, K, M') := \SUGStronglyMonotoneMirrorProx(z_c, \zground, \tau, \alpha, A, M, \epsilon, \cZ'; \judge_{\project})
\end{align*}
(Algorithm~\ref{alg:mirror-prox-sug-strongly-monotone}) satisfies the following guarantees:
\begin{enumerate}
    \item The algorithm terminates after making at most $O(K + (\tau/\alpha) \log(1/\epsilon))$-matrix-vector queries, 
    \item $\breg{\proxStepSimpleZ{z_c}{\alpha}{\nabla_\pm f_A }{\cZ'}}{z} \leq \epsilon$, 
    \item $\normInline{\ground{A - M'}{\zground}}_{F}^2 \leq \normInline{\ground{A - M}{\zground}}_{F}^2 - c_2 K \tau^2$, and
    \item $A-M' = (A-M)P$, where $P = \prod_{i \in [K]} P_i$ for orthogonal projection matrices $P_1, ..., P_k \in \R^{n \times n}$.
\end{enumerate}
\end{lemma}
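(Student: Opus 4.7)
The plan is to derive items 1--3 essentially as immediate consequences of Theorems~\ref{thm:main-inner-loop-convergence} and~\ref{thm:main-inner-loop-complexity}, and then establish item 4 by tracing the per-iteration model updates and invoking the additional projection structure from Lemma~\ref{lemma:proj-judge}. For items 1--3, Lemma~\ref{lemma:proj-judge} ensures that $\judge_{\project}$ is a $2$-smooth-guilty judge, so Theorem~\ref{thm:main-inner-loop-convergence} directly yields item 2, while Theorem~\ref{thm:main-inner-loop-complexity} specialized to $p=2$ yields item 1 (the query complexity) and item 3 (the potential decrease in the Frobenius norm, noting that $\normInline{\cdot}_{\cS_2} = \normInline{\cdot}_F$). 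The constant $c_2$ defined in Line~\ref{line:c2-ref} of Algorithm~\ref{alg:mirror-prox-iteration-sm} carries through both statements unchanged.

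For item 4, I would argue by induction on the number of model-update iterations. In each such iteration $k$ of Algorithm~\ref{alg:mirror-prox-sug-strongly-monotone}, the update $D_{k+1}$ is the matrix $D$ returned by $\SUGSMStep$, which in turn is produced by a call to $\judge_{\project}$ on $\ground{A_k}{\zground}$ (see Line~\ref{line:judgement} of Algorithm~\ref{alg:mirror-prox-iteration-sm}). Lemma~\ref{lemma:proj-judge} then guarantees $\ground{A_k}{\zground} - D_{k+1} = \ground{A_k}{\zground} \tilde{P}_k$ for some orthogonal projection $\tilde{P}_k \in \R^{n \times n}$. Using the linearity of the operation $(\cdot)_{\zground}$ together with the identity $\ground{\unground{D}{\zground}}{\zground} = D$ from Definition~\ref{def:setup-details}, I can conclude that $\ground{A_{k+1}}{\zground} = \ground{A_k}{\zground} - D_{k+1} = \ground{A_k}{\zground} \tilde{P}_k$. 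Unscaling yields $A_{k+1} = A_k \, N_{\cX,\zground\x}^{-1} \tilde{P}_k \, N_{\cX,\zground\x}$.

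The key observation, and the main subtle point in the argument, is that Lemma~\ref{lemma:main-inner-loop-convergence-four-fifths} is stated in the $\ell_2$-$\ell_1$ setup, where $\cX = \B^n$. Inspecting Definition~\ref{def:setup-details} shows this forces $N_{\cX,\zground\x} = I$, so the conjugation collapses to $A_{k+1} = A_k \tilde{P}_k$. Composing these identities across the $K$ model-update iterations gives $A - M' = A_K = (A - M) \prod_{k=1}^{K} \tilde{P}_k$, producing the projection product $P$ claimed in item 4. There is no substantive obstacle beyond carefully translating the judge's one-sided projection guarantee, which is formulated in the $\ground{\cdot}{\zground}$-scaled coordinates, back to the original matrix representation; this is precisely the step that relies on $\cX = \B^n$ and would fail in the $\ell_1$-$\ell_1$ setup where the $N_{\cX,\zground\x}^{-1} \tilde{P}_k N_{\cX,\zground\x}$ factor need not be an orthogonal projection.
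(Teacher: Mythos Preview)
Your proposal is correct and follows essentially the same approach as the paper: items 1--3 are immediate from Lemma~\ref{lemma:proj-judge} together with Theorems~\ref{thm:main-inner-loop-convergence} and~\ref{thm:main-inner-loop-complexity} at $p=2$, and item 4 is proven by induction on the model-update iterations, translating the one-sided projection structure of $\judge_{\project}$ from the $\ground{\cdot}{\zground}$-scaled coordinates back to the original ones. Your explicit remark that the collapse $N_{\cX,\zground\x}^{-1}\tilde{P}_k N_{\cX,\zground\x} = \tilde{P}_k$ hinges on $\cX = \B^n$ (and would fail in the $\ellOneEllOne$ setup) makes overt a point the paper's proof chain uses silently.
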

\begin{proof} The first three guarantees follow directly from Lemma~\ref{lemma:proj-judge} and Theorem~\ref{thm:main-inner-loop-convergence} and Theorem~\ref{thm:main-inner-loop-complexity} applied with $p = 2$. Thus, it remains to prove the final property, which we will prove by induction. We will show that for each $k \in [K]$, $A - M_k = (A-M)P$ where $P$ is a product of $k$ orthogonal projection matrices.

In the base case, $M_0 = M$ and hence $A - M_0 = (A-M)I$ clearly satisfies the claim. Now, note that $M_{k+1} = M_{k} + \ground{D_{k+1}}{\zground, *}$ where, by Lemma~\ref{lemma:proj-judge}, $\ground{A_k}{\zground} - D_{k+1} = \ground{A_k}{\zground} P'$ for some orthogonal projection matrix $P'$. Hence, by Definition~\ref{def:setup-details}, we have 
\begin{align*}
    A - M_{k+1} &= A - M_{k} - \unground{D_{k+1}}{\zground} = A_k - \ground{D_{k+1}}{\zground, *} \\
    &= \unground{\ground{A_k}{\zground} - D_{k+1}}{\zground} = \unground{\ground{A_k}{\zground}P'}{\zground} = N_{\cY, \zground} N_{\cY, \zground}^{-1} A_k P' = A_k P' \\
    &= (A - M_k) P' = (A-M) P P' = (A-M) \bar{P}. 
\end{align*}
where the last line equality holds by the inductive hypothesis, for some $\bar{P}$ which is the product of $k$ orthogonal projection matrices. Hence, the last property holds by induction. 
\end{proof}

\subsubsection{Movement bound for the ground points}\label{sec:movement-bound-ground-points}

In Lemma~\ref{lem:movement-bound-local-norm-points} below, we obtain a bound on the sum of the distances between consecutive local norm points $\zground$ (namely, the second argument) passed to $\SUGStronglyMonotoneMirrorProx$ (Algorithm~\ref{alg:mirror-prox-sug-strongly-monotone}) during the run of Algorithm~\ref{alg:l2l1-l1l1-final-algo} when $\passModel = \true$. This is another key ingredient in bounding increases to the potential $\innorm{\ground{A - M_j}{\zground^j}}_F^2$ as $j$ increments; see the discussion at the beginning of Section~\ref{sec:four-fifths}. The proof of Lemma~\ref{lem:movement-bound-local-norm-points} relies on a sublemma, Lemma~\ref{lem:movement-bound-single-divergence-bound}, which, for a given iteration $k$ of Algorithm~\ref{alg:l2l1-l1l1-final-algo}, bounds $\breg{z^{k - 1}}{\zground} \le O(\breg{z^{k - 1}}{z^k})$ for any local norm point $\zground$ passed to Algorithm~\ref{alg:mirror-prox-sug-strongly-monotone} during iteration $k$. (Recall that calls to Algorithm~\ref{alg:mirror-prox-sug-strongly-monotone} happen in two places during the run of Algorithm~\ref{alg:l2l1-l1l1-final-algo}: in Line~\ref{algline:ell12ell1-final-GAPW-call} of Algorithm~\ref{alg:l2l1-l1l1-final-algo} itself [recall Definition~\ref{def:GAPW}], and in Line~\ref{algline:checkdiv-CAPW-call} of Algorithm~\ref{alg:binary-search} [recall Definition~\ref{def:CAPW}], which is in turn called in Line~\ref{algline:ell12ell1-final-bsearch-call} of Algorithm~\ref{alg:l2l1-l1l1-final-algo}.) Having proven Lemma~\ref{lem:movement-bound-single-divergence-bound}, Lemma~\ref{lem:movement-bound-local-norm-points} follows straightforwardly by combining the movement bound $\sum_{k \in [K - 1]} \breg{z^{k - 1}}{z^k} \le \Otilde(1)$ due to Lemma~\ref{lem:prox-point-correctness} with the fact that Algorithm~\ref{alg:mirror-prox-sug-strongly-monotone} is called at most $\Otilde(1)$ times during an iteration of Algorithm~\ref{alg:l2l1-l1l1-final-algo} due to Lemma~\ref{lem:b-search-correctness}.

Regarding the proof of Lemma~\ref{lem:movement-bound-single-divergence-bound}, this is a key place where we use the alternate behavior of Algorithms \ref{alg:binary-search} and \ref{alg:l2l1-l1l1-final-algo} due to setting $\passModel \gets \true$. Let us go over these changes now, starting with Algorithm~\ref{alg:binary-search}. Note that if $\passModel \gets \true$, the local norm point $\zground$ is set to $\argmin_{z \in \sball_{C, \ztilde, \nu}} \breg{\zcenter}{z}$ in Line~\ref{algline:check-div-zground-tau-when-passmodel-true} of Algorithm~\ref{alg:binary-search}. Assuming we are within iteration $k$ of Algorithm~\ref{alg:l2l1-l1l1-final-algo}, $\zcenter = z^{k - 1}$ (see Line~\ref{algline:ell12ell1-final-bsearch-call} of Algorithm~\ref{alg:l2l1-l1l1-final-algo}), so this is equivalent to $\argmin_{z \in \sball_{C, \ztilde, \nu}} \breg{z^{k - 1}}{z}$. Note that this is a valid choice of $\zground$ in terms of applying the guarantees of Algorithm~\ref{alg:mirror-prox-sug-strongly-monotone}; Algorithm~\ref{alg:mirror-prox-sug-strongly-monotone} does not care which point you use as the local norm as long as it is contained within $\sball_{C, \ztilde, \nu}$. Thus, the choice $\zground \gets \argmin_{z \in \sball_{C, \ztilde, \nu}} \breg{z^{k - 1}}{z}$ is optimal since our goal is to minimize $\breg{z^{k - 1}}{\zground}$. 

Then, the proof of $\breg{z^{k - 1}}{\zground} \le O \inparen{ \breg{z^{k - 1}}{z^k} }$ in Lemma~\ref{lem:movement-bound-single-divergence-bound} proceeds by a case-by-case argument depending on the different ways this call to $\checkdiv$ might return. In particular, the choice of $\zground \gets \argmin_{z \in \sball_{C, \ztilde, \nu}} \breg{z^{k - 1}}{z}$ is important because, assuming $\zopt_\alpha \defeq  \prox_{z^{k - 1}}^\alpha (\gm f_A; \zset_\nu) \in \sball_{C, \ztilde, \nu}$ (e.g., this happens if the execution of $\checkdiv$ reaches Line~\ref{algline:check-div-breg-check-toobig} per Lemma~\ref{lem:checkdiv-correctness}), we have $\breg{z^{k - 1}}{\zground} \le \breg{z^{k - 1}}{\zopt_\alpha}$ and can then conclude certain subcases by bounding $\breg{z^{k - 1}}{\zopt_\alpha} \le O(\breg{z^{k - 1}}{z^k})$ (see the proof for details). 

Another important ingredient is the ``early return'' without calling Algorithm~\ref{alg:mirror-prox-sug-strongly-monotone} in Line~\ref{algline:check-div-movement-return} if $\breg{\zcenter}{\zground} = \breg{z^{k - 1}}{\zground}$ is too big, specifically $\breg{z^{k - 1}}{\zground} > 3 \alpha^2$. Here, the choice of $\zground \gets \argmin_{z \in \sball_{C, \ztilde, \nu}} \breg{z^{k - 1}}{z}$ is again important because it allows us to justify returning $\tooBig$ in this line if $\breg{\zcenter}{\zground} > 3 \alpha^2$ (see the proof of Lemma~\ref{lem:checkdiv-correctness}) \emph{without} needing to call Algorithm~\ref{alg:mirror-prox-sug-strongly-monotone} with this ``bad'' local norm point which potentially makes $\breg{\zcenter}{\zground} = \breg{z^{k - 1}}{\zground}$ too big.

Finally, regarding Algorithm~\ref{alg:l2l1-l1l1-final-algo} itself, $\zground$ is set to $\argmin_{z \in \sball_{C, \ztilde, \nu}} \breg{z^{k - 1}}{z}$ in Line~\ref{algline:ell12ell1-final-pass-model-true-zground-tau-choice} when $\passModel \gets \true$ for similar reasons to those discussed above. We now state and prove Lemma~\ref{lem:movement-bound-single-divergence-bound}:

\begin{lemma}
    \label{lem:movement-bound-single-divergence-bound}
Let $\zground$ denote some local norm point passed to $\SUGStronglyMonotoneMirrorProx$ (Algorithm~\ref{alg:mirror-prox-sug-strongly-monotone}) during iteration $k$ of Algorithm~\ref{alg:l2l1-l1l1-final-algo}, where $\passModel = \true$. Then $\breg{z^{k - 1}}{\zground} \le O \inparen{ \breg{z^{k - 1}}{z^k} }$.
\end{lemma}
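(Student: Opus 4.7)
My plan is to proceed by case analysis on the source of $\zground$: either $(a)$ $\zground$ is set in Line~\ref{algline:ell12ell1-final-pass-model-true-zground-tau-choice} of Algorithm~\ref{alg:l2l1-l1l1-final-algo} with associated regularizer $\alpha = \alpha_k$ and $\ztilde$ the $\alpha_k$-best response to $z^{k-1}$; or $(b)$ $\zground$ is set in Line~\ref{algline:check-div-zground-tau-when-passmodel-true} of the $\checkdiv$ subroutine of $\bsearch$ (Algorithm~\ref{alg:binary-search}) with some $\alpha \in [\beta, 3]$ tested during the binary search and $\ztilde$ the corresponding $\alpha$-best response. In both cases $\zground = \argmin_{z \in \sball_{C,\ztilde,\nu}} \breg{z^{k-1}}{z}$, and Algorithm~\ref{alg:mirror-prox-sug-strongly-monotone} is only invoked if the early-return test in Line~\ref{algline:check-div-movement-return} of $\checkdiv$ did not fire, which by construction yields the uniform upper bound $\breg{z^{k-1}}{\zground} \le 3\alpha^2$ (in case $(a)$ the analogue follows immediately from $\zopt_{\alpha_k}\in\sball_{C,\ztilde,\nu}$ via Lemma~\ref{lem:final-algo-prox-point-in-mult-ball} and Lemma~\ref{lem:b-search-correctness}).

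The main step is then to show that the relevant $\alpha$ satisfies $\alpha^2 = O(\breg{z^{k-1}}{z^k})$. For case $(a)$, the plan is to combine the $\GWF$ approximation $|\breg{z^{k-1}}{z^k} - \breg{z^{k-1}}{\zopt_{\alpha_k}}| \le \alpha_k^2/10$ from Lemma~\ref{lem:GAPW-properties} with the kinetic dichotomy of Lemma~\ref{lem:connecting-GAPW-to-DAPO}: whenever $\breg{z^{k-1}}{z^k} \ge \alpha_k^2$, we immediately obtain $\breg{z^{k-1}}{\zground} \le \breg{z^{k-1}}{\zopt_{\alpha_k}} \le 1.1\,\breg{z^{k-1}}{z^k}$. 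For case $(b)$, the plan is to use the correctness of $\checkdiv$ (Lemma~\ref{lem:checkdiv-correctness}) to relate the tested $\alpha$'s to the returned $\alpha_k$: any intermediate $\alpha > \alpha_k$ that reached the $\CWF$ call did not return $\tooBig$, forcing a two-sided control on $\breg{z^{k-1}}{\zopt_\alpha}$; together with the binary-search invariant from Lemma~\ref{lem:b-search-correctness} and a $\GWF$ perturbation step, this lets one argue $\alpha = \Theta(\alpha_k)$ up to constants, reducing case $(b)$ to case $(a)$.

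The remaining subtlety, and the main technical obstacle, is the degenerate regime where $\alpha_k = \beta$ and $\breg{z^{k-1}}{z^k}$ is much smaller than $\beta^2$ (no lower bound is forced by the kinetic condition here). The plan for this subcase is to show that $\breg{z^{k-1}}{\zopt_\beta}$ is then itself very small (within $\beta^2/10$ of $\breg{z^{k-1}}{z^k}$), which via Pinsker gives $\norm{z^{k-1} - \zopt_\beta}_1 \le O(\sqrt{\breg{z^{k-1}}{z^k}})$, and then combining with the multiplicative closeness $\zopt_\beta \approx \ztilde$ of Lemma~\ref{lem:stability-wrt-best-response} and the coordinate lower bound $\nu$ from the truncated constraint sets, conclude that $z^{k-1}$ itself already lies in $\sball_{C,\ztilde,\nu}$; this forces $\zground = z^{k-1}$ and hence $\breg{z^{k-1}}{\zground} = 0$, trivially satisfying the bound. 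Carefully tracking the $\nu$-dependence in the Pinsker-to-multiplicative conversion, and matching it to the scale at which $\breg{z^{k-1}}{z^k}$ becomes smaller than the GAPW accuracy $\beta^2/10$, is where the argument is most delicate; all other subcases reduce to clean applications of Lemmas~\ref{lem:GAPW-properties}, \ref{lem:checkdiv-correctness}, \ref{lem:b-search-correctness}, and \ref{lem:final-algo-prox-point-in-mult-ball}.
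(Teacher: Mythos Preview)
Your case split and the use of the early-return bound $\breg{z^{k-1}}{\zground}\le 3\alpha^2$ are fine, but two of your subcases do not go through.

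\textbf{The degenerate regime $\alpha_k=\beta$.} Your Pinsker-to-multiplicative argument cannot close the gap. The $\GWF$ guarantee only gives $\inabs{\breg{z^{k-1}}{\zopt_\beta}-\breg{z^{k-1}}{z^k}}\le\beta^2/10$, so when e.g.\ $\breg{z^{k-1}}{z^k}=\beta^2/2$ you can only conclude $\breg{z^{k-1}}{\zopt_\beta}=\Theta(\beta^2)$ and hence $\innorm{z^{k-1}-\zopt_\beta}_1=O(\beta)$. Converting this additive bound to multiplicative closeness requires $O(\beta)\lesssim\nu$, but $\beta=\epsilon^{1/3}$ while $\nu=O(\epsilon/\max\{m,n\})$, so the conversion fails. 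There is a genuine intermediate regime $\nu^2\ll\breg{z^{k-1}}{z^k}\ll\beta^2$ that neither branch of your plan covers.

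The paper avoids this entirely by a one-line observation you are missing: in case~(a), $z^k$ is the output of Algorithm~\ref{alg:mirror-prox-sug-strongly-monotone} run over $\sball_{C,\ztilde,\nu}$, so $z^k\in\sball_{C,\ztilde,\nu}$ by construction; since $\zground=\argmin_{z\in\sball_{C,\ztilde,\nu}}\breg{z^{k-1}}{z}$, we get $\breg{z^{k-1}}{\zground}\le\breg{z^{k-1}}{z^k}$ directly, with no appeal to the kinetic condition or to $\zopt_{\alpha_k}$. The same trick handles the $\checkdiv$ call with $\alpha=\beta$ returning $\tooSmall$ or $\justRight$: then $\alpha_k=\beta$, so the $\ztilde$ computed in Line~\ref{algline:ell12ell1-final-best-response} of Algorithm~\ref{alg:l2l1-l1l1-final-algo} coincides with the $\ztilde$ of this $\checkdiv$ call, hence $z^k$ lies in the \emph{same} $\sball_{C,\ztilde,\nu}$ over which $\zground$ is the minimizer.

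\textbf{The $\tooSmall$ subcase in case~(b).} Your claim that one can argue ``$\alpha=\Theta(\alpha_k)$'' for a tested $\alpha$ is false: the first midpoint is $(\beta+3)/2$, which may exceed $\alpha_k$ by an arbitrary factor. The paper instead uses Lemma~\ref{lem:monotonicity}: a $\tooSmall$ return at $\alpha$ forces $\alpha_k\le\alpha$, so $\breg{z^{k-1}}{\zopt_\alpha}\le\breg{z^{k-1}}{\zopt_{\alpha_k}}$ by monotonicity in the regularization level; and since the execution reached Line~\ref{algline:check-div-breg-check-toosmall}, the final claim of Lemma~\ref{lem:checkdiv-correctness} gives $\zopt_\alpha\in\sball_{C,\ztilde,\nu}$, whence $\breg{z^{k-1}}{\zground}\le\breg{z^{k-1}}{\zopt_\alpha}\le\breg{z^{k-1}}{\zopt_{\alpha_k}}\le 2\breg{z^{k-1}}{z^k}$ (the last step via the $\GWF$ bound and $\breg{z^{k-1}}{z^k}\ge\alpha_k^2$, which holds here since $\alpha_k>\beta$).
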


\begin{proof}
Note that Algorithm~\ref{alg:mirror-prox-sug-strongly-monotone} may be called in two places during an iteration of Algorithm~\ref{alg:l2l1-l1l1-final-algo}: in Line~\ref{algline:ell12ell1-final-GAPW-call} of Algorithm~\ref{alg:l2l1-l1l1-final-algo} (recall the $\GWF$ wrapper per Definition~\ref{def:GAPW}) and in Line~\ref{algline:checkdiv-CAPW-call} of Algorithm~\ref{alg:binary-search} (due to the $\bsearch$ call in Line~\ref{algline:ell12ell1-final-bsearch-call} of Algorithm~\ref{alg:l2l1-l1l1-final-algo}; recall also the $\CWF$ wrapper per Definition~\ref{def:CAPW}). As for the former, clearly $z^k \in \sball_{C, \ztilde, \nu}$ since Algorithm~\ref{alg:mirror-prox-sug-strongly-monotone} returns a point in this set given the inputs of Line~\ref{algline:ell12ell1-final-GAPW-call} of Algorithm~\ref{alg:l2l1-l1l1-final-algo}, in which case the claim is immediate from the choice of $\zground$ in Line~\ref{algline:ell12ell1-final-pass-model-true-zground-tau-choice} in Algorithm~\ref{alg:l2l1-l1l1-final-algo}. 

Now consider the latter, namely Line~\ref{algline:checkdiv-CAPW-call} of Algorithm~\ref{alg:binary-search}, which is part of the $\checkdiv$ subroutine starting in Line~\ref{algline:checkdiv-function} of Algorithm~\ref{alg:binary-search}. This $\checkdiv$ subroutine call could originate from either Line~\ref{algline:b-search-check-div-call-repeat-loop} or Line~\ref{algline:b-search-check-div-beta-call} of Algorithm~\ref{alg:binary-search}; we will refer to these as $\checkdiv$ Case 1 and $\checkdiv$ Case 2 respectively below.

\textit{$\checkdiv$ Case 1:} Note that because we did not return early in Line~\ref{algline:check-div-movement-return} of Algorithm~\ref{alg:binary-search} from this $\checkdiv$ call, we have $\breg{z^{k - 1}}{\zground} \le 3 \alpha^2$. (Here, $\alpha$ has been passed as input to this $\checkdiv$ call per Line~\ref{algline:checkdiv-function}, and note also that $z_c$ in Algorithm~\ref{alg:binary-search} is equal to $z^{k - 1}$ by Line~\ref{algline:ell12ell1-final-bsearch-call} of Algorithm~\ref{alg:l2l1-l1l1-final-algo}.) Note also that in this case, Algorithm~\ref{alg:binary-search} must eventually return in Line~\ref{algline:b-search-return-finished-b-search} with the final value of $\alpha_m$ being set to $\alpha_k$ in Line~\ref{algline:ell12ell1-final-bsearch-call} of Algorithm~\ref{alg:l2l1-l1l1-final-algo}. This implies $\breg{z^{k - 1}}{z^k} \ge \alpha_k^2$ in particular by Lemma~\ref{lem:connecting-GAPW-to-DAPO}, since $\alpha_k > \beta$ in this case.

We now consider the different ways this $\checkdiv$ call could return in Lines~\ref{algline:check-div-coords-return}~to~\ref{algline:check-div-breg-check-justright} of Algorithm~\ref{alg:binary-search}:
\begin{itemize}
    \item If the $\checkdiv$ call returns $\tooBig$ in Line~\ref{algline:check-div-coords-return} or Line~\ref{algline:check-div-breg-check-toobig}, then the binary search in Lines \ref{algline:b-search-new-midpoint} through \ref{algline:b-search-return-finished-b-search} only ever searches through intervals for which $\alpha$ is a lower bound for the remainder of Algorithm~\ref{alg:binary-search}. The final value of $\alpha_m$ which gets returned eventually in Line~\ref{algline:b-search-return-finished-b-search} must satisfy $\alpha_m \ge \alpha$, implying $\alpha_k \ge \alpha$, and therefore combining this with $\breg{z^{k - 1}}{\zground} \le 3 \alpha^2$ and $\breg{z^{k - 1}}{z^k} \ge \alpha_k^2$ from above yields the claim.

    \item Now suppose $\checkdiv$ returns $\tooSmall$ in Line~\ref{algline:check-div-breg-check-toosmall}. By the final claim in Lemma~\ref{lem:checkdiv-correctness}, we have $\zopt_\alpha \defeq \prox_{z_c}^\alpha (\gm f_A; \zset_\nu) =  \prox_{z^{k - 1}}^\alpha (\gm f_A; \zset_\nu) \in \sball_{C, \ztilde, \nu}$ (recall $z_c = z^{k - 1}$), in which case the choice of $\zground$ in Line~\ref{algline:check-div-zground-tau-when-passmodel-true} of Algorithm~\ref{alg:binary-search} implies $\breg{z^{k - 1}}{\zground} \le \breg{z^{k - 1}}{\zopt_\alpha}$. Next, note that because this $\checkdiv$ call returns $\tooSmall$, we have that $\alpha$ is an upper bound on the intervals which the binary search in Lines \ref{algline:b-search-new-midpoint} through \ref{algline:b-search-return-finished-b-search} considers for the remainder of Algorithm~\ref{alg:binary-search}, and therefore $\alpha_k \le \alpha$. Letting $\zopt_{\alpha_k} \defeq \prox_{z^{k - 1}}^{\alpha_k} (\gm f_A; \zset_\nu)$, the fact that $\alpha_k \le \alpha$ and Lemma~\ref{lem:monotonicity} imply $\breg{z^{k - 1}}{\zopt_\alpha} \le \breg{z^{k - 1}}{\zopt_{\alpha_k}}$. Then Lemma~\ref{lem:final-algo-prox-point-in-mult-ball} and the second property in Lemma~\ref{lem:GAPW-properties} (recall also that $\gammagb \gets \alpha_k^2 / 10$ in Line~\ref{algline:ell12ell1-final-GAPW-call} of Algorithm~\ref{alg:l2l1-l1l1-final-algo}) imply $\inabs{\breg{z^{k - 1}}{\zopt_{\alpha_k}} - \breg{z^{k - 1}}{z^k}} \le \alpha_k^2 / 10$, which together with $\breg{z^{k - 1}}{z^k} \ge \alpha_k^2$ from above implies $\breg{z^{k - 1}}{z^k} \ge \frac{1}{2} \breg{z^{k - 1}}{\zopt_{\alpha_k}}$. Combining the latter with $\breg{z^{k - 1}}{\zground} \le \breg{z^{k - 1}}{\zopt_\alpha}$ and $\breg{z^{k - 1}}{\zopt_\alpha} \le \breg{z^{k - 1}}{\zopt_{\alpha_k}}$ from above yields the claim.

    \item Now suppose $\checkdiv$ returns $\justRight$ in Line~\ref{algline:check-div-breg-check-justright}. Then, Algorithm~\ref{alg:binary-search} returns in Line~\ref{algline:b-search-return-finished-b-search}, and thus $\alpha = \alpha_k$. Then the claim holds due to $\breg{z^{k - 1}}{\zground} \le 3 \alpha^2$ and $\breg{z^{k - 1}}{z^k} \ge \alpha_k^2$ from above.
\end{itemize}

\textit{$\checkdiv$ Case 2:} We now consider the case where the $\checkdiv$ subroutine call originates from Line~\ref{algline:b-search-check-div-beta-call} of Algorithm~\ref{alg:binary-search}, meaning the value of $\alpha$ passed to this $\checkdiv$ call is $\beta$. First, consider the case where this $\checkdiv$ call returns either $\tooSmall$ in Line~\ref{algline:check-div-breg-check-toosmall} or $\justRight$ in Line~\ref{algline:check-div-breg-check-justright}; in either case, Algorithm~\ref{alg:binary-search} immediately returns in Line~\ref{algline:b-search-beta-return} and $\alpha_k = \beta$. Therefore, the claim follows from the choice of $\zground$ in Line~\ref{algline:check-div-zground-tau-when-passmodel-true} since $z^k \in \sball_{C, \ztilde, \nu}$. (Indeed, note that $C = 500$ in both Algorithm~\ref{alg:binary-search} and \ref{alg:l2l1-l1l1-final-algo}, and also $\ztilde$ in Line~\ref{algline:ell12ell1-final-best-response} of Algorithm~\ref{alg:l2l1-l1l1-final-algo} during iteration $k$ gets set to the same value as in Line~\ref{algline:checkdiv-alpha-best-response} of this $\checkdiv$ call since $\alpha_k = \beta$ and $z_c = z^{k - 1}$.) Then, consider the case where this $\checkdiv$ call returns $\tooBig$ in either Line~\ref{algline:check-div-coords-return} or Line~\ref{algline:check-div-breg-check-toobig} of Algorithm~\ref{alg:binary-search}. In this case, the binary search starting in Line~\ref{algline:b-search-new-midpoint} begins, and the claim follows from the same reasoning as in the ``$\checkdiv$ returns $\tooBig$'' case under Subclaim 1 above.
\end{proof}

 Toward proving the movement bound between consecutive local norm points given in Lemma~\ref{lem:movement-bound-local-norm-points}, we first give a technical lemma in Corollary~\ref{cor:breg-generalized-triangle}. Corollary~\ref{cor:breg-generalized-triangle} follows from a generalized triangle inequality for the KL divergence over the truncated simplex due to \cite{carmon2024whole}, which we restate in Lemma~\ref{lem:KL-generalized-triangle} for convenience.

\begin{lemma}[Example 5.2 in \cite{carmon2024whole}]
    \label{lem:KL-generalized-triangle}
For any $\nu \in (0, 1/4]$ and $q, u, w \in \simplex_{\nu}^d$, we have
\begin{align*}
    \inKL{q}{u} + \inKL{u}{q} \le 6 \log (\nu^{-1}) \cdot \inparen*{\min \inbraces*{\inKL{u}{w}, \inKL{w}{u}} + \min \inbraces*{\inKL{q}{w}, \inKL{w}{q}}}\,.
\end{align*}
\end{lemma}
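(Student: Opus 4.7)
Since this lemma is explicitly stated as a restatement of Example~5.2 of \cite{carmon2024whole}, my primary plan would be to invoke the argument given there. To sketch an independent derivation, the foundational observation is that for any $p, q \in \simplex_\nu^d$ and every $i \in [d]$, the log-ratio satisfies $|\log(p_i/q_i)| \le \log(\nu^{-1})$, since $p_i, q_i \in [\nu, 1]$. This truncation-based bound on log-ratios is the only ingredient beyond standard convex-analytic identities I would use.

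The next step is to apply the three-point identity for the KL divergence (which is the Bregman divergence of negative entropy) at the base point $w$, giving
\begin{align*}
\KL(u \| q) &= \KL(u \| w) + \KL(w \| q) + \textstyle\sum_i (u_i - w_i) \log(w_i / q_i), \\
\KL(q \| u) &= \KL(q \| w) + \KL(w \| u) + \textstyle\sum_i (q_i - w_i) \log(w_i / u_i).
\end{align*}
Summing and applying the log-ratio bound to control each cross sum by $\log(\nu^{-1}) \|\cdot\|_1$ yields
\begin{align*}
\inKL{q}{u} + \inKL{u}{q} \le \bigl[\KL(u\|w) + \KL(w\|u)\bigr] + \bigl[\KL(q\|w) + \KL(w\|q)\bigr] + \log(\nu^{-1})\bigl(\|u-w\|_1 + \|q-w\|_1\bigr).
\end{align*}

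The technical core is then to upgrade each symmetric-KL term $\KL(p\|r) + \KL(r\|p)$ into a constant multiple of $\log(\nu^{-1}) \min\{\KL(p\|r), \KL(r\|p)\}$, and to absorb each cross $\ell_1$ term into the same expression. For the former, I would use the identity $\KL(p\|r) + \KL(r\|p) = \sum_i (p_i - r_i)(\log p_i - \log r_i)$ combined with the log-ratio bound: a term-by-term comparison shows that the ratio $\max\{\KL(p\|r), \KL(r\|p)\} / \min\{\KL(p\|r), \KL(r\|p)\}$ is bounded by $O(\log(\nu^{-1}))$. For the cross $\ell_1$ term, Pinsker only gives $\|u-w\|_1 \le \sqrt{2 \min\{\KL(u\|w), \KL(w\|u)\}}$, which is sublinear in the KL.

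The main obstacle I expect is exactly this reabsorption: Pinsker produces a square-root relation, whereas the target bound is linear in the min-KL. The resolution is a case split on the magnitude of $\|p - r\|_1$. In the small-distance regime, $\log(\nu^{-1}) \|p-r\|_1$ is already dominated by a constant times $\log(\nu^{-1}) \min\{\KL(p\|r), \KL(r\|p)\}$ directly. In the large-distance regime, Pinsker's \emph{lower} direction $\min\{\KL(p\|r), \KL(r\|p)\} \ge \frac{1}{2}\|p-r\|_1^2$ forces the min-KL to be bounded below by a positive constant, so $\log(\nu^{-1})\|p-r\|_1 \le 2\log(\nu^{-1})$ is again absorbed. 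Tuning the threshold and combining with the symmetric-KL upgrade should produce the numerical constant $6$ in the stated bound.
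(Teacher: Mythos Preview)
Your primary plan—invoking Example~5.2 of \cite{carmon2024whole}—is exactly what the paper does: this lemma is stated without proof and used as a black box. So on that level, your proposal and the paper coincide.

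Your sketched independent derivation, however, has a genuine gap in the cross-term reabsorption. You claim that in the small-distance regime, $\log(\nu^{-1})\|p-r\|_1$ is dominated by a constant times $\log(\nu^{-1})\min\{\KL(p\|r),\KL(r\|p)\}$. This is false precisely when $\|p-r\|_1$ is small: by Pinsker (and the reverse inequality available on $\simplex_\nu^d$ via the Hessian bound), $\min\{\KL(p\|r),\KL(r\|p)\}$ is of order $\|p-r\|_1^2$ in that regime, so the ratio $\|p-r\|_1 / \min\{\KL(p\|r),\KL(r\|p)\}$ blows up like $1/\|p-r\|_1$. Thus the small-distance case is the obstruction, not the easy case, and your case split as stated cannot close. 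The large-distance case you handle correctly.

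The underlying inequality is still true in the small-distance regime—intuitively, $\KL(u\|q)+\KL(q\|u)$ is then also quadratically small and a triangle inequality in the local quadratic (chi-square) approximation suffices—but your decomposition via the three-point identity followed by separate termwise absorption cannot see this, because it tries to control the cross term $\log(\nu^{-1})\|u-w\|_1$ in isolation rather than in tandem with the left-hand side. A correct argument needs either a direct per-coordinate comparison of the Bregman integrals (showing $\KL(p\|r) \le O(\log\nu^{-1})\,\KL(r\|p)$ coordinate-wise in the integral representation) combined with a compatible triangle-type step, or a regime-dependent analysis that in the near regime works with the local quadratic directly. Your ``term-by-term comparison'' for the quasi-symmetry is also too vague to stand on its own, though the claim itself is correct and provable by the integral route.
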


\begin{corollary}
    \label{cor:breg-generalized-triangle}
For $q, u, w \in \ztrunc$, we have for some absolute constant $G > 0$:
\begin{align*}
    \breg{q}{u} + \breg{u}{q} \le G(1 + \log \nu^{-1}) \cdot \inparen*{
        \min \inbraces*{\breg{u}{w} + \breg{w}{u}} + \min \inbraces*{\breg{q}{w} + \breg{w}{q}}
    }.
\end{align*}
\end{corollary}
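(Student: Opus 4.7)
The plan is to decompose the Bregman divergences according to the $\xset$-$\yset$ product structure per Definition~\ref{def:matrix-vector-games-setups}, namely
\begin{align*}
    \breg{q}{u} + \breg{u}{q} = \inparen{\xbreg{q\x}{u\x} + \xbreg{u\x}{q\x}} + \inparen{\ybreg{q\y}{u\y} + \ybreg{u\y}{q\y}},
\end{align*}
and then bound each of the two parenthesized terms separately in terms of the analogous quantities involving $w$. Since $\yset = \simplex^m$ and $q\y, u\y, w\y \in \ytrunc = \simplex^m_\nu$ in both the $\ellTwoEllOne$ and $\ellOneEllOne$ setups, the $\yset$ term is handled by a direct application of Lemma~\ref{lem:KL-generalized-triangle}, yielding
\begin{align*}
    \ybreg{q\y}{u\y} + \ybreg{u\y}{q\y} \le 6 \log(\nu^{-1}) \cdot \Big(\min\{\ybreg{u\y}{w\y}, \ybreg{w\y}{u\y}\} + \min\{\ybreg{q\y}{w\y}, \ybreg{w\y}{q\y}\}\Big).
\end{align*}

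For the $\xset$ term, we split into the two setups. In the $\ellOneEllOne$ setup, $\xset = \simplex^n$ and $\rx$ is again the negative entropy, so Lemma~\ref{lem:KL-generalized-triangle} applies verbatim with $m \gets n$ to give exactly the analogous bound. In the $\ellTwoEllOne$ setup, $\xset = \ball^n$, $\rx(x) = \tfrac12 \norm{x}_2^2$, and $\xbreg{q\x}{u\x} = \xbreg{u\x}{q\x} = \tfrac{1}{2} \norm{q\x - u\x}_2^2$, so the Euclidean triangle inequality together with the elementary bound $(a+b)^2 \le 2a^2 + 2b^2$ gives
\begin{align*}
    \xbreg{q\x}{u\x} + \xbreg{u\x}{q\x} = \norm{q\x - u\x}_2^2 \le 2\norm{q\x - w\x}_2^2 + 2\norm{w\x - u\x}_2^2,
\end{align*}
which, using symmetry of $\xbreg{\cdot}{\cdot}$ in this case, equals $4 \min\{\xbreg{u\x}{w\x}, \xbreg{w\x}{u\x}\} + 4 \min\{\xbreg{q\x}{w\x}, \xbreg{w\x}{q\x}\}$.

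Summing the $\xset$ and $\yset$ bounds and absorbing the numerical prefactors into a single absolute constant $G$ (noting $4$ is dominated by $G(1 + \log\nu^{-1})$) yields the corollary. I do not anticipate any significant obstacles: the $\yset$ component is an immediate invocation of the cited lemma, and the $\xset$ component is either a second invocation of the same lemma (in the $\ellOneEllOne$ case) or an elementary Euclidean triangle inequality (in the $\ellTwoEllOne$ case).
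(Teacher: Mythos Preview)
Your proposal is correct and follows essentially the same approach as the paper: decompose $\breg{q}{u}+\breg{u}{q}$ into its $\xset$- and $\yset$-components, apply Lemma~\ref{lem:KL-generalized-triangle} to each simplex component, and handle the Euclidean $\xset$-component in the $\ellTwoEllOne$ setup via the elementary inequality $(a+b)^2\le 2a^2+2b^2$. The paper's proof is the same one-liner, citing Lemma~\ref{lem:KL-generalized-triangle} together with Example~5.1 of \cite{carmon2024whole} (the Euclidean analogue) rather than writing out the triangle-inequality argument explicitly.
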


\begin{proof}
This is a straightforward consequence of Lemma~\ref{lem:KL-generalized-triangle} as well as Example 5.1 in \cite{carmon2024whole}.
\end{proof}

Finally, we state and prove Lemma~\ref{lem:movement-bound-local-norm-points}:

\begin{lemma}
    \label{lem:movement-bound-local-norm-points}
Suppose $\nu^{-1}$ and $\beta^{-1}$ are bounded above by a polynomial in $m$, $n$, and $1 / \epsilon$, and for inputs $\epsilon > 0$, $\beta$, judge function $\judge$, and $\passModel = \true$ to Algorithm~\ref{alg:l2l1-l1l1-final-algo}, let $\zground^1, \zground^2, \dots$ denote the sequence of local norm points passed to $\SUGStronglyMonotoneMirrorProx$ (Algorithm~\ref{alg:mirror-prox-sug-strongly-monotone}) during Algorithm~\ref{alg:l2l1-l1l1-final-algo}. That is, $\zground^j$ is the second argument passed to $\SUGStronglyMonotoneMirrorProx$ when it is called for the $j$-th time in Algorithm~\ref{alg:l2l1-l1l1-final-algo}. Then
\begin{align*}
 \sum_{j \ge 1} \innorm{\zground^{j} - \zground^{j + 1}} \le \Otilde \Big( \sqrt{ \beta / \epsilon + \epsilon^{-2/3} }\Big).
\end{align*}
\end{lemma}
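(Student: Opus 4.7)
The plan is to bound each $\innorm{\zground^j - \zground^{j+1}}$ by a small multiple of $\sqrt{\breg{z^{k-1}}{z^k}}$ (where $k$ is the iteration of Algorithm~\ref{alg:l2l1-l1l1-final-algo} during which the corresponding $\zground^j, \zground^{j+1}$ are generated), and then sum via Cauchy-Schwarz together with $\sum_k \breg{z^{k-1}}{z^k} \le \Range = \Otilde(1)$ from Lemma~\ref{lem:prox-point-correctness}, and the iteration bound $K \le \Otilde(\beta/\epsilon + \epsilon^{-2/3})$ from Corollary~\ref{cor:l2l1-l1l1-final-algo-correctness-iter-bound}.

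First I would partition the sequence $\zground^1, \zground^2, \dots$ by which iteration $k$ of Algorithm~\ref{alg:l2l1-l1l1-final-algo} produced them. By Lemma~\ref{lem:b-search-correctness} and the assumption that $\nu^{-1}, \beta^{-1}$ are polynomial in $m, n, 1/\epsilon$, at most $\Otilde(1)$ calls to $\SUGStronglyMonotoneMirrorProx$ occur per outer iteration (one in Line~\ref{algline:ell12ell1-final-GAPW-call} of Algorithm~\ref{alg:l2l1-l1l1-final-algo} and $\Otilde(1)$ through the binary search of Algorithm~\ref{alg:binary-search}). Hence the total number $N$ of local norm points satisfies $N \le \Otilde(K)$, and there are at most $\Otilde(1)$ indices $j$ for which $\zground^j$ and $\zground^{j+1}$ originate from the same outer iteration $k$.

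Next, for a pair $\zground^j, \zground^{j+1}$ both generated during iteration $k$, Lemma~\ref{lem:movement-bound-single-divergence-bound} yields $\breg{z^{k-1}}{\zground^j}, \breg{z^{k-1}}{\zground^{j+1}} \le O(\breg{z^{k-1}}{z^k})$. Since $r$ is $1$-strongly convex with respect to $\innorm{\cdot}$, we have $\innorm{z' - z}^2 \le 2 \breg{z}{z'}$, so the triangle inequality through $z^{k-1}$ gives
\[
\innorm{\zground^j - \zground^{j+1}} \le \innorm{\zground^j - z^{k-1}} + \innorm{z^{k-1} - \zground^{j+1}} \le O\inparen*{\sqrt{\breg{z^{k-1}}{z^k}}}.
\]
For a pair spanning iterations $k$ and $k+1$, applying Lemma~\ref{lem:movement-bound-single-divergence-bound} to each point and inserting both $z^{k-1}$ and $z^k$ into the triangle inequality, together with $\innorm{z^{k-1} - z^k} \le \sqrt{2 \breg{z^{k-1}}{z^k}}$, yields
\[
\innorm{\zground^j - \zground^{j+1}} \le O\inparen*{\sqrt{\breg{z^{k-1}}{z^k}} + \sqrt{\breg{z^k}{z^{k+1}}}}.
\]

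Finally, summing over $j$ and using that each outer iteration contributes $\Otilde(1)$ consecutive pairs, we bound $\sum_{j \ge 1} \innorm{\zground^j - \zground^{j+1}}$ by $\Otilde(1) \cdot \sum_{k \in [K]} \sqrt{\breg{z^{k-1}}{z^k}}$. Cauchy-Schwarz then gives
\[
\sum_{k \in [K]} \sqrt{\breg{z^{k-1}}{z^k}} \le \sqrt{K} \cdot \sqrt{\sum_{k \in [K]} \breg{z^{k-1}}{z^k}} \le \sqrt{K \cdot \Range} \le \Otilde\inparen*{\sqrt{\beta/\epsilon + \epsilon^{-2/3}}},
\]
by Lemma~\ref{lem:prox-point-correctness} ($\sum_k \breg{z^{k-1}}{z^k} \le \Range$), Corollary~\ref{cor:l2l1-l1l1-final-algo-correctness-iter-bound} ($K \le \Otilde(\beta/\epsilon + \epsilon^{-2/3})$), and $\Range = \Otilde(1)$ (Table~\ref{table:setups}). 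The main technical point is simply to verify that strong convexity of $r$ suffices to convert each per-iteration Bregman bound into a bound on the norm $\innorm{\cdot}$; the asymmetry of KL does not pose an obstacle here since we only ever use divergences with the same base point $z^{k-1}$ (so Corollary~\ref{cor:breg-generalized-triangle} is not even needed).
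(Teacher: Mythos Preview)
Your proposal is correct and arrives at the same bound as the paper, but it takes a slightly different (and somewhat more elementary) route.

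The paper first reduces to showing $\sum_{j} \breg{\zground^{j}}{\zground^{j+1}} \le \Otilde(1)$ and then applies Cauchy--Schwarz over the $j$ index. To bound $\breg{\zground^{j}}{\zground^{j+1}}$ it needs a triangle-type inequality for the \emph{Bregman divergence} with changing base point, which is precisely Corollary~\ref{cor:breg-generalized-triangle} (the generalized triangle inequality for KL on the truncated simplex). That corollary gives $\breg{\zground^{j}}{\zground^{j+1}} \le \Otilde(\breg{z^{k-1}}{z^k})$ from Lemma~\ref{lem:movement-bound-single-divergence-bound}, and then one sums over $k$.

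You instead bound each $\innorm{\zground^{j} - \zground^{j+1}}$ directly via the ordinary triangle inequality for $\innorm{\cdot}$ through $z^{k-1}$ (and $z^k$ at the boundary), using only strong convexity of $r$ to convert $\breg{z^{k-1}}{\zground^{j}} \le O(\breg{z^{k-1}}{z^k})$ into a norm bound. You then apply Cauchy--Schwarz over the $k$ index rather than over $j$. This is legitimate and, as you point out, sidesteps Corollary~\ref{cor:breg-generalized-triangle} entirely: since every divergence you invoke has the fixed base point $z^{k-1}$ (or $z^k$), the asymmetry of KL never bites. The $\Otilde(1)$-calls-per-outer-iteration fact (Lemma~\ref{lem:b-search-correctness}) and $\sum_k \breg{z^{k-1}}{z^k} \le \Range$ (Lemma~\ref{lem:prox-point-correctness}) are used identically in both arguments. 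What your route buys is that it avoids one auxiliary lemma; what the paper's route buys is that the intermediate statement $\sum_j \breg{\zground^j}{\zground^{j+1}} \le \Otilde(1)$ is itself a clean divergence bound that could be reused elsewhere.
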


\begin{proof}
We first note that it suffices to show
\begin{align}
    \label{eq:div-sum-goal}
    \sum_{j \ge 1} \breg{\zground^{j}}{\zground^{j + 1}} \le \Otilde(1)
\end{align}
by Cauchy-Schwarz, since the number of times $\SUGStronglyMonotoneMirrorProx$ is called during the run of Algorithm~\ref{alg:l2l1-l1l1-final-algo} is at most $\tilde{O}(\beta /\epsilon + \epsilon^{-2/3})$ by Corollary~\ref{cor:l2l1-l1l1-final-algo-correctness-iter-bound} (bounding the number of times $\bsearch$ and the $\GWF$ wrapper function are called in Algorithm~\ref{alg:l2l1-l1l1-final-algo}) and Lemma~\ref{lem:b-search-correctness} (bounding the number of times the $\CWF$ wrapper function is called in Algorithm~\ref{alg:binary-search}).

Toward proving \eqref{eq:div-sum-goal}, note that if $\zground^j$ and $\zground^{j + 1}$ are both local norm points passed to Algorithm~\ref{alg:mirror-prox-sug-strongly-monotone} during iteration $k$ of Algorithm~\ref{alg:l2l1-l1l1-final-algo}, we have $\smash{\breg{\zground^j}{\zground^{j + 1}} \le \Otilde(\breg{z^{k - 1}}{z^k})}$ by Corollary~\ref{cor:breg-generalized-triangle} and Lemma~\ref{lem:movement-bound-single-divergence-bound}. Similarly, if $\zground^j$ and $\zground^{j + 1}$ are local norm points passed to Algorithm~\ref{alg:mirror-prox-sug-strongly-monotone} in iterations $k$ and $k + 1$ of Algorithm~\ref{alg:l2l1-l1l1-final-algo} respectively (i.e., $\zground^j$ is the last local norm point of iteration $k$ and $\zground^{j + 1}$ is the first local norm point of iteration $k + 1$), we have $\breg{\zground^j}{\zground^{j + 1}} \le \Otilde \inparen{\breg{z^{k - 1}}{z^k} + \breg{z^{k}}{z^{k + 1}}}$ by Corollary~\ref{cor:breg-generalized-triangle} and Lemma~\ref{lem:movement-bound-single-divergence-bound}. Note that Algorithm~\ref{alg:mirror-prox-sug-strongly-monotone} is called at most $\Otilde(1)$ times per iteration of Algorithm~\ref{alg:l2l1-l1l1-final-algo} due to Lemma~\ref{lem:b-search-correctness}, and we conclude since $\sum_{k \in [K]} \breg{z^{k - 1}}{z^k} = \Otilde(1)$. Indeed, note $\sum_{k \in [K - 1]} \breg{z^{k - 1}}{z^k} = \Otilde(1)$ by Lemmas~\ref{lem:prox-point-correctness}~and~\ref{lem:connecting-GAPW-to-DAPO}. Additionally, we claim $\breg{z^{K - 1}}{z^K} = O(1)$. To prove the latter, let $\zopt_{\alpha_K} \defeq \prox^{\alpha_K}_{z^{K - 1}}(\gm f_A; \zset_\nu)$, and note $\breg{z^{K - 1}}{\zopt_{\alpha_K}} = O(1)$ by \Cref{lem:b-search-correctness}. Then $\breg{z^{K - 1}}{z^K} = O(1)$ by \Cref{algline:ell12ell1-final-GAPW-call} of \Cref{alg:l2l1-l1l1-final-algo}, \Cref{lem:GAPW-properties}, and \Cref{lem:final-algo-prox-point-in-mult-ball}.
\end{proof}

\subsubsection{Putting it all together}\label{subsubsec:final-l2-l1}

In this section, we ultimately prove the $\Otilde(\epsilon^{-7/9})$ guarantee for $\ellTwoEllOne$ games in Theorem~\ref{intro:l2l1}. But first, in Lemma~\ref{lem:model-update-steps-bound} we use the potential argument described at the beginning of Section~\ref{sec:four-fifths} along with the ingredients of the previous sections to bound the total number of model-update steps made over all calls to Algorithm~\ref{alg:mirror-prox-sug-strongly-monotone} from within Algorithm~\ref{alg:l2l1-l1l1-final-algo} when $\passModel = \true$.

\begin{lemma}
    \label{lem:model-update-steps-bound}
Consider Algorithm~\ref{alg:l2l1-l1l1-final-algo} with inputs $\epsilon, \tau > 0$, $\beta \in (0, 3)$, $\passModel = \true$, and $\judge = \judge_{\project}$, and suppose $\nu^{-1}$ and $\beta^{-1}$ are each bounded above by a polynomial in $m$, $n$, and $1 / \epsilon$. Suppose $\SUGStronglyMonotoneMirrorProx$ (Algorithm~\ref{alg:mirror-prox-sug-strongly-monotone}) is called $J$ times during the run of Algorithm~\ref{alg:l2l1-l1l1-final-algo}, and for $j \in [J]$, let $(\cdot, K_j, M_j)$ denote the output of $\SUGStronglyMonotoneMirrorProx$ when it is called for the $j$-th time. Then
\begin{align}
    \label{eq:model-update-steps-bound}
    \sum_{j \in [J]} K_j \le \Otilde \inparen*{\frac{\sqrt{\beta / \epsilon + \epsilon^{-2/3}}}{\tau^2}}.
\end{align}
\end{lemma}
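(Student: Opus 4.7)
The plan is to prove \eqref{eq:model-update-steps-bound} via a potential argument that extends the within-call Frobenius-decrease analysis of Lemma~\ref{lemma:main-inner-loop-convergence-four-fifths} (property 3) across the entire sequence of calls to $\SUGStronglyMonotoneMirrorProx$. Specifically, I will define $\Phi_j \defeq \|\ground{A - M_j}{\zground^j}\|_F^2$ for $j \in [J]$ and let $M_{j-1}$ denote the model \emph{input} to the $j$-th call (with $M_0 \defeq 0$, per Line~\ref{algline:ell12ell1-final-z^0-M_0-C-k} of Algorithm~\ref{alg:l2l1-l1l1-final-algo} and the $\passModel = \true$ threading in Lines~\ref{algline:ell12ell1-final-pass-model-true-zground-tau-choice} of Algorithm~\ref{alg:l2l1-l1l1-final-algo} and~\ref{algline:check-div-zground-tau-when-passmodel-true} of Algorithm~\ref{alg:binary-search}). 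The goal is a one-step inequality of the form $\Phi_j \le \Phi_{j-1} + \|\zground^{j-1}\y - \zground^j\y\|_1 - c_2 K_j \tau^2$, which then telescopes.

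For the \emph{within-call} drop, property 3 of Lemma~\ref{lemma:main-inner-loop-convergence-four-fifths} (applicable since $\judge = \judge_{\project}$ is a $2$-smooth-guilty judge by Lemma~\ref{lemma:proj-judge}) will give
\begin{align*}
\|\ground{A - M_j}{\zground^j}\|_F^2 \;\le\; \|\ground{A - M_{j-1}}{\zground^j}\|_F^2 - c_2 K_j \tau^2.
\end{align*}
The more delicate \emph{between-call} step is to compare $\|\ground{A - M_{j-1}}{\zground^j}\|_F^2$ to $\Phi_{j-1}$, i.e., to quantify how changing the local-norm point from $\zground^{j-1}$ to $\zground^j$ can inflate the Frobenius potential. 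Here I will apply Lemma~\ref{lemma:model-bound} to the matrix $A - M_{j-1}$ with $z \gets \zground^{j-1}$ and $z' \gets \zground^j$, obtaining
\begin{align*}
\|\ground{A - M_{j-1}}{\zground^j}\|_F^2 \;\le\; \Phi_{j-1} + \|\zground^{j-1}\y - \zground^j\y\|_1 \cdot \|A - M_{j-1}\|_{2 \to \infty}^2.
\end{align*}
The main obstacle is controlling the operator norm $\|A - M_{j-1}\|_{2 \to \infty}$ uniformly in $j$. This is where property 4 of Lemma~\ref{lemma:main-inner-loop-convergence-four-fifths} is critical: applying it inductively across all previous calls (whether they originate in $\bsearch$/$\checkdiv$ via $\CWF$, or in $\GWF$, which all chain through the same model when $\passModel = \true$) yields that $A - M_{j-1} = A P$ where $P$ is a product of orthogonal projections, so Lemma~\ref{lemma:projection} and the $\ellTwoEllOne$ normalization $\|A\|_{2 \to \infty} \le 1$ give $\|A - M_{j-1}\|_{2 \to \infty} \le 1$.

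Combining the two inequalities above gives the desired one-step bound. For the base case $j = 1$, the same argument with $M_0 = 0$ yields $\Phi_1 \le \|\ground{A}{\zground^1}\|_F^2 - c_2 K_1 \tau^2 \le 1 - c_2 K_1 \tau^2$, where the last inequality follows from Lemma~\ref{lemma:gen-conversion} and Remark~\ref{remark:general-conversion} with $p = 2$ (so that $\Upsilon_2(A) \le 1$). Telescoping from $j = 1$ to $J$ and using $\Phi_J \ge 0$ then produces
\begin{align*}
c_2 \tau^2 \sum_{j \in [J]} K_j \;\le\; 1 + \sum_{j=2}^{J} \|\zground^{j-1}\y - \zground^j\y\|_1.
\end{align*}
To conclude, I will bound the right-hand sum using $\|z\y\|_1 \le \|z\|$ (immediate from $\|z\|^2 = \|z\x\|_2^2 + \|z\y\|_1^2$ in the $\ellTwoEllOne$ setup of Table~\ref{table:setups}) together with the movement bound $\sum_{j \ge 1} \|\zground^j - \zground^{j+1}\| \le \Otilde(\sqrt{\beta/\epsilon + \epsilon^{-2/3}})$ from Lemma~\ref{lem:movement-bound-local-norm-points}. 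Rearranging and absorbing the additive $1$ into the $\Otilde(\cdot)$ yields \eqref{eq:model-update-steps-bound}.
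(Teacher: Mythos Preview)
Your proposal is correct and follows essentially the same approach as the paper's proof: define the potential $\Phi_j = \|\ground{A - M_j}{\zground^j}\|_F^2$, use property~3 of Lemma~\ref{lemma:main-inner-loop-convergence-four-fifths} for the within-call drop, Lemma~\ref{lemma:model-bound} together with property~4 of Lemma~\ref{lemma:main-inner-loop-convergence-four-fifths} and Lemma~\ref{lemma:projection} to control the between-call increase, initialize via Remark~\ref{remark:general-conversion}, telescope, and finish with Lemma~\ref{lem:movement-bound-local-norm-points}. The only cosmetic difference is that the paper passes directly to $\|\zground^{j} - \zground^{j+1}\|$ (implicitly using $\|z\y - z'\y\|_1 \le \|z - z'\|$), whereas you make this step explicit.
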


\begin{proof}
For $j \in [J]$, let $\zground^j$ denote the local norm point input (i.e., the second argument) passed to $\SUGStronglyMonotoneMirrorProx$ when it is called for the $j$-th time, and let $M_0 \defeq 0 \in \R^{m \times n}$ as in Line~\ref{algline:ell12ell1-final-z^0-M_0-C-k} of Algorithm~\ref{alg:l2l1-l1l1-final-algo}. Recalling the definition of $M_j$ for $j \in [J]$ in the lemma statement, note that for all $j \in [J] \cup \inbraces{0}$, it is also the case that $M_j$ is the model passed to $\SUGStronglyMonotoneMirrorProx$ when the latter is called for the $(j + 1)$-th time. Then by Lemma~\ref{lemma:main-inner-loop-convergence-four-fifths},
\begin{align}
    \label{eq:all-together-tau-decrease}
    \normInline{\ground{A - M_{j + 1}}{\zground^{j + 1}}}_{F}^2 \leq \normInline{\ground{A - M_{j}}{\zground^{j + 1}}}_{F}^2 - c_2 K_{j + 1} \tau^2, ~~~\text{for all $j \in [J - 1] \cup \inbraces{0}$.}
\end{align}
for some absolute constant $c_2 > 0$. Furthermore, note that for all $j \in [J - 1]$, we have
\begin{align*}
    \innorm{\ground{A - M_j}{\zground^{j + 1}}}_F^2 &\overle{(i)} \innorm{\ground{A - M_j}{\zground^{j}}}_F^2 + \innorm{\zground^{j + 1} - \zground^j} \norm{A - M_j}^2_{2 \to \infty} \\
    &\overle{(ii)} \innorm{\ground{A - M_j}{\zground^{j}}}_F^2 + \innorm{\zground^{j + 1} - \zground^j},
\end{align*}
where $(i)$ follows from Lemma~\ref{lemma:model-bound}, and $(ii)$ follows because $\norm{A - M_j}^2_{2 \to \infty} \le \norm{A}_{2 \to \infty} \le 1$. Indeed, the latter is due to Lemma~\ref{lemma:projection} and the fact that the fourth item in Lemma~\ref{lemma:main-inner-loop-convergence-four-fifths} (recall also $M_0 = 0$) implies we can express $A - M_j = A P^{(j)}$, for a matrix $P^{(j)} \in \R^{n \times n}$ which is a product of $n \times n$ orthogonal projection matrices. Combining the preceding displays, we obtain 
\begin{align}
    \label{eq:all-together-one-step}
    \normInline{\ground{A - M_{j + 1}}{\zground^{j + 1}}}_{F}^2 \le \innorm{\ground{A - M_j}{\zground^{j}}}_F^2 + \innorm{\zground^{j + 1} - \zground^j} - c_2 K_{j + 1} \tau^2, ~~~\text{for all $j \in [J - 1]$}.
\end{align}
Noting $\innorm{(A - M_0)_{\zground^1}}_F^2 = \innorm{(A)_{\zground^1}}_F^2 \le 1$ due to Remark~\ref{remark:general-conversion}, unraveling \eqref{eq:all-together-one-step} and then applying \eqref{eq:all-together-tau-decrease} one more time with $j \gets 0$ to lower bound $\innorm{\ground{A - M_1}{\zground^{1}}}_F^2$ gives
\begin{align*}
    0 \le \innorm{\ground{A - M_{J}}{\zground^{J}}}_F^2 \le 1 + \sum_{j \in [J - 1]} \innorm{\zground^{j + 1} - \zground^j} - c_2 \tau^2 \sum_{j \in [J]} K_j.
\end{align*}
Then combining this with Lemma~\ref{lem:movement-bound-local-norm-points} gives
\begin{align*}
    \sum_{j \in [J]} K_j \le O \inparen*{\frac{\sum_{j \in [J - 1]} \innorm{\zground^{j + 1} - \zground^j}}{\tau^2}} = \Otilde \inparen*{\frac{\sqrt{\beta / \epsilon + \epsilon^{-2/3}}}{\tau^2}}.
\end{align*}
\end{proof}

Next, we combine Lemma~\ref{lem:model-update-steps-bound} with the iteration bound on Algorithm~\ref{alg:l2l1-l1l1-final-algo} due to Corollary~\ref{cor:l2l1-l1l1-final-algo-correctness-iter-bound} as well as the bound on the number of progress iterations in each call to Algorithm~\ref{alg:mirror-prox-sug-strongly-monotone} due to Theorem~\ref{thm:main-inner-loop-complexity} to bound the overall complexity when $\passModel = \true$.

\begin{lemma}
    \label{lem:general-tau-beta-model-passing-matvec-query-bound}
With the same inputs and assumptions of Lemma~\ref{lem:model-update-steps-bound}, Algorithm~\ref{alg:l2l1-l1l1-final-algo} makes at most 
\begin{align}
    \label{eq:final-general-iteration-bound-before-7/9}
    \Otilde \inparen*{
        (\beta / \epsilon + \epsilon^{-2/3} + 1) (\tau / \beta + 1) +  \frac{\sqrt{\beta / \epsilon + \epsilon^{-2/3}}}{\tau^2}
    } \text{ matrix-vector queries}.
\end{align}
\end{lemma}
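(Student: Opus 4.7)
The plan is to decompose the total matrix-vector query count into two pieces: queries from progress iterations and queries from model-update iterations within calls to $\SUGStronglyMonotoneMirrorProx$ (Algorithm~\ref{alg:mirror-prox-sug-strongly-monotone}), plus a negligible additive contribution from queries made directly by Algorithm~\ref{alg:l2l1-l1l1-final-algo} and Algorithm~\ref{alg:binary-search} outside of their calls to $\SUGStronglyMonotoneMirrorProx$. The key observation is that Theorem~\ref{thm:main-inner-loop-complexity} (or equivalently Lemma~\ref{lemma:main-inner-loop-convergence-four-fifths}) says each call to $\SUGStronglyMonotoneMirrorProx$ uses $O(K + (\tau/\alpha)\log(\Range/\epsilon))$ matrix-vector queries where $K$ is the number of model-update steps it executes; Lemma~\ref{lem:b-search-correctness} guarantees $\alpha \ge \beta$ at every such call, so the progress-iteration cost per call is at most $\Otilde(\tau/\beta + 1)$.

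First I would count the number of times $\SUGStronglyMonotoneMirrorProx$ is invoked. By Corollary~\ref{cor:l2l1-l1l1-final-algo-correctness-iter-bound}, the outer loop of Algorithm~\ref{alg:l2l1-l1l1-final-algo} has at most $\Otilde(\beta/\epsilon + \epsilon^{-2/3} + 1)$ iterations. Each iteration makes one explicit call to $\SUGStronglyMonotoneMirrorProx$ via the $\GWF$ wrapper in Line~\ref{algline:ell12ell1-final-GAPW-call}, and also triggers a $\bsearch$ call that, by Lemma~\ref{lem:b-search-correctness} (using the assumed polynomial bounds on $\nu^{-1}$ and $\beta^{-1}$), invokes $\checkdiv$ and therefore $\SUGStronglyMonotoneMirrorProx$ via the $\CWF$ wrapper at most $\Otilde(1)$ times. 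Therefore, $J \le \Otilde(\beta/\epsilon + \epsilon^{-2/3} + 1)$, and summing the per-call progress-iteration bound over all $J$ invocations yields at most $\Otilde((\beta/\epsilon + \epsilon^{-2/3} + 1)(\tau/\beta + 1))$ matrix-vector queries spent on progress iterations.

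Next, I would use Lemma~\ref{lem:model-update-steps-bound} to bound the total number of model-update steps over all $J$ invocations of $\SUGStronglyMonotoneMirrorProx$ by $\sum_{j \in [J]} K_j \le \Otilde(\sqrt{\beta/\epsilon + \epsilon^{-2/3}}/\tau^2)$. Since each model-update step is performed with $O(1)$ matrix-vector queries (the $\judge$ routine $\judge_\project$ is $O(1)$-query per Lemma~\ref{lemma:proj-judge}, and $\SUGSMStep$ itself is $O(1)$-query per Corollary~\ref{corr:lemma-sm-step}), this contributes at most $\Otilde(\sqrt{\beta/\epsilon + \epsilon^{-2/3}}/\tau^2)$ matrix-vector queries in total.

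Finally, I would account for the matrix-vector queries made by Algorithm~\ref{alg:l2l1-l1l1-final-algo} and Algorithm~\ref{alg:binary-search} outside of the $\SUGStronglyMonotoneMirrorProx$ calls. These consist of computing the $\alpha$-best response in Line~\ref{algline:ell12ell1-final-best-response} of Algorithm~\ref{alg:l2l1-l1l1-final-algo} and in Line~\ref{algline:checkdiv-alpha-best-response} of Algorithm~\ref{alg:binary-search}, each of which requires $O(1)$ queries (one for $A z^{k-1}\x$ and one for $A^\top z^{k-1}\y$, since the resulting proximal steps on the simplex or ball admit closed-form solutions once the gradient direction is known). Similarly, the stability checks in $\checkcoords$ use no matrix-vector queries, and computing $\zground$ in Lines~\ref{algline:check-div-zground-tau-when-passmodel-true} and~\ref{algline:ell12ell1-final-pass-model-true-zground-tau-choice} uses none either. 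Hence this contributes only $\Otilde(1)$ queries per outer iteration, absorbable into the bound above. Combining all three contributions yields \eqref{eq:final-general-iteration-bound-before-7/9}. I do not expect any serious obstacle here, as every ingredient is an already-proved lemma; the only care needed is checking that the $\Otilde(1)$ per-iteration overhead from best-response computations and the $\Otilde(1)$ bound on $\bsearch$-calls do not degrade the final expression.
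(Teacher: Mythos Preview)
Your proposal is correct and follows essentially the same approach as the paper: both arguments bound the number of calls $J$ to $\SUGStronglyMonotoneMirrorProx$ via Corollary~\ref{cor:l2l1-l1l1-final-algo-correctness-iter-bound} and Lemma~\ref{lem:b-search-correctness}, invoke Theorem~\ref{thm:main-inner-loop-complexity} with $\alpha \ge \beta$ to bound each call by $\Otilde(K_j + \tau/\beta + 1)$, and then apply Lemma~\ref{lem:model-update-steps-bound} to handle $\sum_j K_j$. One minor remark: the fact that $\alpha \ge \beta$ at every call (including the intermediate $\checkdiv$ calls within the repeat loop of Algorithm~\ref{alg:binary-search}) follows from the structure of the search interval rather than directly from the statement of Lemma~\ref{lem:b-search-correctness}, which only concerns the final output.
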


\begin{proof}
We first note that the total number of matrix-vector queries other than those which are made internally by $\SUGStronglyMonotoneMirrorProx$ (namely Algorithm~\ref{alg:mirror-prox-sug-strongly-monotone}; recall $\SUGStronglyMonotoneMirrorProx$ is called in Line~\ref{algline:ell12ell1-final-GAPW-call} of Algorithm~\ref{alg:l2l1-l1l1-final-algo} and Line~\ref{algline:checkdiv-CAPW-call} of Algorithm~\ref{alg:binary-search}, the latter due to the call to Algorithm~\ref{alg:binary-search} in Line~\ref{algline:ell12ell1-final-bsearch-call} of Algorithm~\ref{algline:ell12ell1-final-bsearch-call}) is at most $\Otilde(\beta / \epsilon + \epsilon^{-2/3})$. Indeed, this follows because the iteration count of Algorithm~\ref{alg:l2l1-l1l1-final-algo} is bounded by $\Otilde(\beta / \epsilon + \epsilon^{-2/3})$ due to Corollary~\ref{cor:l2l1-l1l1-final-algo-correctness-iter-bound}, and every iteration makes $\Otilde(1)$ matrix-vector queries other than those due to $\SUGStronglyMonotoneMirrorProx$. (Recall that the repeat loop in Algorithm~\ref{alg:binary-search} executes at most $\Otilde(1)$ times due to Lemma~\ref{lem:b-search-correctness}.)

Thus, it remains to bound matrix-vector queries made internally by $\SUGStronglyMonotoneMirrorProx$ during the run of Algorithm~\ref{alg:l2l1-l1l1-final-algo}. Toward this goal, suppose $\SUGStronglyMonotoneMirrorProx$ is called $J$ times during the run of Algorithm~\ref{alg:l2l1-l1l1-final-algo}, and for $j \in [J]$, let $(\cdot, K_j, \cdot)$ denote the output of $\SUGStronglyMonotoneMirrorProx$ when it is called for the $j$-th time. Note then that due to the logic of Algorithm~\ref{alg:binary-search} (namely, $\beta$ is always a lower bound on the intervals it searches through), we have that whenever $\SUGStronglyMonotoneMirrorProx$ is called during Algorithm~\ref{alg:l2l1-l1l1-final-algo}, the fourth argument $\alpha$ (i.e., the regularization level) satisfies $\alpha \ge \beta$. As a result, Theorem~\ref{thm:main-inner-loop-complexity} gives that when $\SUGStronglyMonotoneMirrorProx$ is called for the $j$-th time during Algorithm~\ref{alg:l2l1-l1l1-final-algo}, at most $\Otilde(K_j + \tau / \beta + 1)$ matrix-vector queries are made. Corollary~\ref{cor:l2l1-l1l1-final-algo-correctness-iter-bound} and Lemma~\ref{lem:b-search-correctness} imply $J \le \Otilde(\beta / \epsilon + \epsilon^{-2/3} + 1)$, and therefore the total number of matrix-vector queries made internally by $\SUGStronglyMonotoneMirrorProx$ during the run of Algorithm~\ref{alg:l2l1-l1l1-final-algo} is at most
\begin{align*}
    \Otilde \inparen*{
        (\beta / \epsilon + \epsilon^{-2/3} + 1) (\tau / \beta + 1) + \sum_{j \in [J]} K_j.
    }
\end{align*}
We conclude by applying Lemma~\ref{lem:model-update-steps-bound}.
\end{proof}

To analyze \eqref{eq:final-general-iteration-bound-before-7/9}, there are two natural extremal parameter choices to consider. The first follows from noting that due to the $+1$ in the $(\tau / \beta + 1)$ term in \eqref{eq:final-general-iteration-bound-before-7/9}, it wouldn't help to set $\tau \ll \beta$. However, setting $\tau \gets \beta$ and optimizing the resulting bound \eqref{eq:final-general-iteration-bound-before-7/9} (up to multiplicative polylogarithmic factors) yields $\tau = \beta = \epsilon^{1/5}$, in which case the resulting matrix-vector query complexity is $\Otilde(\epsilon^{-4/5})$. Note that this choice of parameters corresponds to doing only $\Otilde(1)$ progress iterations each time $\SUGStronglyMonotoneMirrorProx$ (Algorithm~\ref{alg:mirror-prox-sug-strongly-monotone}) is called.

The second extremal choice follows by noting that  there is no benefit from choosing $\beta \ll \epsilon^{1/3}$ due to the $+ \epsilon^{-2/3}$ terms. However, setting $\beta \gets \epsilon^{1/3}$ and optimizing \eqref{eq:final-general-iteration-bound-before-7/9} (up to multiplicative polylogarithmic factors) yields $\tau \gets \epsilon^{2/9}$ for an improved overall query complexity of $\Otilde(\epsilon^{-7/9})$. Note that this choice of parameters corresponds to potentially running more than $\Otilde(1)$ progress iterations each time $\SUGStronglyMonotoneMirrorProx$ (Algorithm~\ref{alg:mirror-prox-sug-strongly-monotone}) is called, with the benefit of less total model-update steps. Furthermore, the iteration count of Algorithm~\ref{alg:l2l1-l1l1-final-algo} is minimized (up to multiplicative polylog factors) since it is equal to $\Otilde(\epsilon^{-2/3})$; see Corollary~\ref{cor:l2l1-l1l1-final-algo-correctness-iter-bound}. Indeed, $\Otilde(\epsilon^{-7/9})$ is the best query complexity achievable from \eqref{eq:final-general-iteration-bound-before-7/9}.

\svm*
\begin{proof} 
    The result follows from choosing $\nu$ as in Lemma~\ref{lem:truncation-for-ell2ell1-ell1ell1} and running Algorithm~\ref{alg:l2l1-l1l1-final-algo} with inputs as in Lemma~\ref{lem:model-update-steps-bound} along with $\beta \gets \epsilon^{1/3}$ and $\tau \gets \epsilon^{2/9}$. The correctness follows from Corollary~\ref{cor:l2l1-l1l1-final-algo-correctness-iter-bound} and Lemma~\ref{lem:truncation-for-ell2ell1-ell1ell1} and the complexity bound follows from Lemma~\ref{lem:general-tau-beta-model-passing-matvec-query-bound}.
\end{proof}

\section*{Acknowledgements}

We thank anonymous reviewers for helpful feedback. We thank Guy Kornowski and Ohad Shamir for helpful discussions and their enlightening and motivating work on lower bounds for matrix games. Ishani Karmarkar was funded in part by NSF CAREER Award CCF-1844855, NSF Grant CCF-1955039, and a PayPal research award.
Liam O'Carroll was funded in part by NSF Grant CCF-1955039.
Aaron Sidford was funded in part by a Microsoft Research Faculty Fellowship, NSF CAREER Award CCF-1844855, NSF Grant CCF1955039, and a PayPal research award.

\newpage

\bibliographystyle{plainnat}

\newpage

\addtocontents{toc}{\protect\setcounter{tocdepth}{0}} %
\appendix

\section{Technical lemmas on regret}\label{apx:regret-lemmas}

In this section, we provide two technical lemmas regarding regret for completeness. First, we prove Lemma~\ref{lemma:regret-bounds-error} which bounds the $\gap(\cdot)$ function in terms of regret.

\regretbounderror*
\begin{proof} Note that by convexity and concavity, for all $t \in [T]$ and $u \in \cZ$ we have
\begin{align*}
    f(u\x, w\y^t) &\geq f(w^t) + \nabla\x f(w^t)^\top (u\x - w\x^t), \text{ and } \\
    f(w^t\x, u\y) &\leq f(w^t) + \nabla\y f(w^t)^\top (u\y - w\y^t).
\end{align*}
Combining yields
\begin{align*}
    \nabla_\pm f(w^t) (w^t - u) \geq f(w\x^t, u\y) - f(u\x, w^t\y). 
\end{align*}
Applying Jensen's inequality twice (using convexity of $f$ in $x$ and concavity of $f$ in $y$), yields that for all $u \in \zset$:
\begin{align*}
    \frac{1}{\Lambda} \sum_{t \in [T]} \rho_t \nabla_\pm f(w^t)^\top (w^t - u) \geq \frac{1}{\Lambda} \sum_{t \in [T]} \rho_t [f(w\x^t, u\y) - f(u\x, w^t\y)] \geq f(\bar{w}\x, u\y) - f(u\x, \bar{w}\y). 
\end{align*}
The lemma follows as $\gap(\bar{w}) = \sup_{u \in \cZ} f(\bar{w}\x, u\y) - f(u\x, \bar{w}\y)$.
\end{proof}

Next, we show in Proposition~\ref{prop:regret-wrt-monotone-operator-nonnegative} that regret with respect to a monotone operator is always nonnegative.

\begin{proposition}
    \label{prop:regret-wrt-monotone-operator-nonnegative}
Let $(\zset, \innorm{\cdot}, r)$ denote a dgf setup per Definition~\ref{def:dgf-setup} and $g : \zset \to \R^d$ a monotone operator. Then for any $z^1, \dots, z^K \in \zset$ and $\rho_1, \dots, \rho_K > 0$ with $\Lambda \defeq \sum_{k \in [K]} \rho_k$, we have
\begin{align}
    \label{eq:apx-regret-nonnegative}
    \sup_{u \in \zset} \inbraces*{\frac{1}{\Lambda} \sum_{k \in [K]} \rho_k g(z^k)^\top (z^k - u) } \ge 0.
\end{align}
\end{proposition}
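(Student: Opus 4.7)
The plan is to produce a specific $u \in \zset$ for which the inner expression in \eqref{eq:apx-regret-nonnegative} is nonnegative, which suffices since the supremum is at least this value. The natural candidate is the weighted average $\bar{z} \defeq \frac{1}{\Lambda} \sum_{k \in [K]} \rho_k z^k$, which lies in $\zset$ by the convexity assumption on $\zset$ in Definition~\ref{def:dgf-setup}.

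With this choice, I would expand
\[
    \sum_{k \in [K]} \rho_k g(z^k)^\top (z^k - \bar{z}) = \frac{1}{\Lambda} \sum_{k,j \in [K]} \rho_k \rho_j g(z^k)^\top (z^k - z^j),
\]
using the definition of $\bar{z}$ and the identity $\Lambda = \sum_{j \in [K]} \rho_j$. Symmetrizing the right-hand side in the indices $k$ and $j$, I would then rewrite it as
\[
    \frac{1}{2\Lambda} \sum_{k,j \in [K]} \rho_k \rho_j \bigl(g(z^k) - g(z^j)\bigr)^\top (z^k - z^j),
\]
and each summand is nonnegative by the monotonicity of $g$ (with $\rho_k \rho_j \ge 0$).

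The main (and only) conceptual point is choosing the correct $u$; once $u = \bar{z}$ is substituted, the argument is the standard ``pair up indices and invoke monotonicity'' manipulation. No step here is expected to be difficult.
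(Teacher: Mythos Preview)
Your proof is correct and uses the same key idea as the paper: substitute $u = \bar z \defeq \frac{1}{\Lambda}\sum_k \rho_k z^k$ and invoke monotonicity. The only cosmetic difference is that the paper applies monotonicity once per $k$ to the pair $(z^k,\bar z)$, obtaining $\sum_k \rho_k g(z^k)^\top(z^k-\bar z) \ge \sum_k \rho_k g(\bar z)^\top(z^k-\bar z) = 0$, whereas you expand into a double sum and symmetrize; both manipulations are routine once $u=\bar z$ is chosen.
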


\begin{proof}
    We will instantiate the left-hand side of \eqref{eq:apx-regret-nonnegative} with $u \gets \zbar \defeq \frac{1}{\Lambda} \sum_{k \in [K]} \rho_k z^k$ and show that the result is nonnegative. Indeed, note
\begin{align*}
    \frac{1}{\Lambda} \sum_{k \in [K]} \rho_k g(z^k)^\top (z^k - \zbar) \overge{(i)} \frac{1}{\Lambda} \sum_{k \in [K]} \rho_k g(\zbar)^\top (z^k - \zbar) 
    = \frac{1}{\Lambda} g(\zbar)^\top \inparen*{ \sum_{k \in [K]} \rho_k z^k - \inparen*{\sum_{k \in [K]} \rho_k} \zbar } = 0,
\end{align*}
where $(i)$ used the fact that $g(z^k)^\top (z^k - \zbar) \ge g(\zbar)^\top (z^k - \zbar)$ for all $k \in [K]$ by the monotonicity of $g$.
\end{proof} %

\section{Omitted proofs from Section~\ref{sec:elltwoelloneandelloneellone}}\label{apx:ommitted-proofs-ell1-ell1-ell2-ell1-final-algo}

In Appendices~\ref{subapp:truncated-simplex-KL-norms}, \ref{subapp:technical-lemmas-prox-mappings}, and \ref{apx:linear-algebra} below, we collect and prove various technical lemmas used in Section~\ref{sec:elltwoelloneandelloneellone}. As in Section~\ref{sec:elltwoelloneandelloneellone}, whenever a lemma is stated in the context of the $\ellOneEllOne$ and $\ellTwoEllOne$ setups (Definition \ref{def:matrix-vector-games-setups}), if it does not explicitly distinguish between them, then it applies to both setups simultaneously.

\subsection{Technical lemmas regarding the truncated simplex, KL divergence, and norms}
\label{subapp:truncated-simplex-KL-norms}

In this section, we collect various technical lemmas regarding the truncated simplex, KL divergence, and norms used in Section~\ref{sec:elltwoelloneandelloneellone}. All of the lemmas in this section with the sole exception of Lemma~\ref{lem:breg-error-to-breg-error-from-base} are stated independently of the context of the $\ellTwoEllOne$ and $\ellOneEllOne$ setups (Definition \ref{def:matrix-vector-games-setups}) for greater generality. Recall that $\simplex^d_\nu \defeq \inbraces{z \in \simplex^d : [z]_i \ge \nu, \, \forall i \in [d]}$ denotes the truncated simplex. Then to start, Lemma~\ref{lem:entropy-truncated-simplex} shows that negative entropy is smooth and Lipschitz over the truncated simplex.

\begin{lemma}
    \label{lem:entropy-truncated-simplex}
    With $\nu \in (0, 1 / d)$, the negative entropy function $\entropy(x) \defeq \sum_{i \in [d]} [x]_i \log [x]_i$ is $(1 + \log \nu^{-1})$-Lipschitz and $\nu^{-1}$-smooth with respect to the $\ell_1$-norm over the truncated simplex $\simplex_\nu^d$.
\end{lemma}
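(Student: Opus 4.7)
The plan is to bound the relevant derivatives of $e$ in the appropriate dual norms, and then apply standard integration along line segments in $\Delta^d_\nu$ to transfer these to Lipschitz and smoothness bounds in the $\ell_1$-norm. Concretely, I would first compute that for any $x$ in the relative interior of $\Delta^d$, the gradient is $[\nabla e(x)]_i = \log [x]_i + 1$ and the Hessian is $\nabla^2 e(x) = \diag\inparen{[x]_1^{-1}, \dots, [x]_d^{-1}}$.

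For Lipschitzness, I would show that $\norm{\nabla e(x)}_\infty \le 1 + \log \nu^{-1}$ for every $x \in \Delta^d_\nu$: each coordinate satisfies $[x]_i \in [\nu, 1]$ (for $\nu < 1/d$), so $\log [x]_i + 1 \in [1 - \log \nu^{-1}, 1]$ and the absolute value is at most $1 + \log \nu^{-1}$. Then for any $x, y \in \Delta^d_\nu$, since $\Delta^d_\nu$ is convex, the segment between $x$ and $y$ lies in $\Delta^d_\nu$, and the fundamental theorem of calculus yields
\begin{align*}
    |e(y) - e(x)| = \inabs*{\int_0^1 \grad e(x + t(y - x))^\top (y - x) \, dt} \le (1 + \log \nu^{-1}) \norm{y - x}_1,
\end{align*}
where the inequality is by \Holder's inequality and the bound on $\norm{\nabla e}_\infty$.

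For smoothness (i.e., the gradient being $\nu^{-1}$-Lipschitz with respect to the $\ell_1$-norm, whose dual is $\ell_\infty$), I would show that for any $x \in \Delta^d_\nu$ and $v \in \R^d$, $\norm{\nabla^2 e(x) v}_\infty = \max_i |[v]_i / [x]_i| \le \nu^{-1} \max_i |[v]_i| \le \nu^{-1} \norm{v}_1$, so the $\ell_1 \to \ell_\infty$ operator norm of $\nabla^2 e(x)$ is at most $\nu^{-1}$. Then again integrating along the segment between any $x, y \in \Delta^d_\nu$,
\begin{align*}
    \norm{\grad e(y) - \grad e(x)}_\infty = \norm*{\int_0^1 \grad^2 e(x + t(y-x)) (y - x) \, dt}_\infty \le \nu^{-1} \norm{y - x}_1,
\end{align*}
which is the claimed smoothness bound. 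No step here is delicate; the only subtlety is noting that $\Delta^d_\nu$ is convex (so the line segment stays in the domain where $e$ is twice differentiable) and that $\nu < 1/d$ ensures $\log \nu^{-1} > 0$ so the stated Lipschitz constant is meaningful.
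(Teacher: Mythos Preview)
Your proposal is correct and takes essentially the same approach as the paper: compute the gradient and Hessian, bound $\norm{\nabla e(x)}_\infty$ by $1+\log\nu^{-1}$ for Lipschitzness, and bound the Hessian by $\nu^{-1}$ for smoothness. The only cosmetic difference is that the paper certifies smoothness via the quadratic-form bound $u^\top \nabla^2 e(x)\,u \le \nu^{-1}$ for $\norm{u}_1=1$, whereas you bound the $\ell_1\!\to\!\ell_\infty$ operator norm of $\nabla^2 e(x)$ and integrate; these are equivalent characterizations.
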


\begin{proof}
We have $\grad e(x) = (\log [x]_1 + 1, \dots, \log [x]_d + 1)$ and $\hess e(x) = \showdiag(1 / [x]_1, \dots, 1/ [x]_d)$, in which case the Lipschitz bound follows from bounding $\norm{\grad e(x)}_\infty$ over $x \in \simplex_\nu^d$, and the smoothness bound follows since for any $x \in \simplex_\nu^d$ and $u \in \R^d$ with $\norm{u}_1 = 1$:
\begin{align*}
    u^\top \grad^2 e(x) u = \sum_{i \in [d]} \frac{[u]_i^2}{[x]_i} \le \frac{1}{\nu}.
\end{align*}
\end{proof}

The next five lemmas, namely Lemmas \ref{lem:KL-by-norm}, \ref{lem:squared-norm-by-norm}, \ref{lem:breg-error-to-pointwise-error}, \ref{lem:breg-error-to-breg-error-from-base}, and \ref{lem:bounding-KL-TO-THE-SAME-POINT}, facilitate various approximations made in Section~\ref{sec:elltwoelloneandelloneellone}.

\begin{lemma}
    \label{lem:KL-by-norm}
    Let $q, u, w \in \simplex^d_\nu$ for some $\nu \in (0, 1 / d)$. Then
    \begin{align*}
        \inabs{\inKL{q}{u} - \inKL{q}{w}} \le (1 + 2 \log \nu^{-1}) \norm{u - w}_1.
    \end{align*}
    \end{lemma}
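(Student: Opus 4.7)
The plan is to rewrite $\inKL{q}{u} = \KL(u\|q) = \sum_i [u]_i \log([u]_i / [q]_i)$ as a sum of a negative entropy term in $u$ and a linear term in $u$ depending on $q$. Concretely, provided $[q]_i > 0$ for all $i$ (which holds since $q \in \simplex^d_\nu$), we have
\begin{align*}
\inKL{q}{u} = \sum_{i \in [d]} [u]_i \log [u]_i - \sum_{i \in [d]} [u]_i \log [q]_i = \entropy(u) - \langle \log q, u \rangle,
\end{align*}
where $\entropy(x) \defeq \sum_{i \in [d]} [x]_i \log [x]_i$ is the negative entropy function and $\log q$ is taken coordinatewise. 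Subtracting the analogous identity for $w$ yields
\begin{align*}
\inKL{q}{u} - \inKL{q}{w} = [\entropy(u) - \entropy(w)] - \langle \log q, u - w \rangle.
\end{align*}

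From here I would apply the triangle inequality and bound the two resulting terms separately. For the entropy difference, I would directly invoke Lemma~\ref{lem:entropy-truncated-simplex}, which gives that $\entropy$ is $(1 + \log \nu^{-1})$-Lipschitz with respect to the $\ell_1$-norm on $\simplex^d_\nu$; since $u, w \in \simplex^d_\nu$, this yields $|\entropy(u) - \entropy(w)| \le (1 + \log \nu^{-1}) \norm{u - w}_1$.

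For the linear term, \Holder's inequality gives $|\langle \log q, u - w \rangle| \le \norm{\log q}_\infty \norm{u - w}_1$. Since $q \in \simplex^d_\nu$, each $[q]_i \in [\nu, 1]$, so $|\log [q]_i| \le \log \nu^{-1}$, giving $\norm{\log q}_\infty \le \log \nu^{-1}$. Combining the two estimates produces the claimed constant $1 + 2 \log \nu^{-1}$. No step should pose any real obstacle; the only point to note is that the decomposition into entropy plus a linear term is valid precisely because the ``fixed'' argument $q$ (which is in the denominator of $\log([u]_i/[q]_i)$) is common to both KL terms, so the otherwise-troublesome contribution $\sum_i [u]_i \log [q]_i$ becomes linear in $u$ and hence tractable via \Holder.
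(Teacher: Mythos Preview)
Your proposal is correct and follows essentially the same approach as the paper: both decompose $\inKL{q}{u}$ into $\entropy(u) - \langle \log q, u\rangle$, apply the triangle inequality, invoke Lemma~\ref{lem:entropy-truncated-simplex} for the entropy difference, and bound the linear term via $\norm{\log q}_\infty \le \log \nu^{-1}$.
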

    
    \begin{proof}
        Let $e(x) \defeq \sum_{i \in [d]} [x]_i \log [x]_i$ denote the negative entropy function.
    Then
    \begin{align*}
        \inabs{\inKL{q}{u} - \inKL{q}{w}} &= \inabs*{
            \sum_{i \in [d]} \inparen*{[u]_i \log \frac{[u]_i}{[q]_i} - [w]_i \log \frac{[w]_i}{[q]_i}}
            } \\
              &\le \inabs*{e(u) - e(w)} + \inabs*{\sum_{i \in [d]} ([w]_i - [u]_i) \log [q]_i} \\
              &\le (1 + \log \nu^{-1}) \norm{u - w}_1 + (\log \nu^{-1}) \norm{u - w}_1,
    \end{align*}
    where the last inequality used Lemma~\ref{lem:entropy-truncated-simplex}.
    \end{proof}

\begin{lemma}
    \label{lem:squared-norm-by-norm}
Let $q, u, w \in \ball^d$. Then 
\begin{align*}
    \inabs{\norm{q - u}_2^2 - \norm{q - w}_2^2} \le 4 \norm{u - w}_2
\end{align*}
\end{lemma}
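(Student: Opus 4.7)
The plan is to prove Lemma~\ref{lem:squared-norm-by-norm} via a standard difference-of-squares factorization combined with the triangle inequality and the boundedness of the ball. First, I would observe that
\begin{align*}
\|q-u\|_2^2 - \|q-w\|_2^2 = \bigl(\|q-u\|_2 - \|q-w\|_2\bigr)\bigl(\|q-u\|_2 + \|q-w\|_2\bigr),
\end{align*}
so it suffices to bound each factor separately.

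Next, the reverse triangle inequality gives $\bigl|\|q-u\|_2 - \|q-w\|_2\bigr| \le \|u-w\|_2$, handling the first factor. For the second factor, the assumption $q, u, w \in \ball^d$ together with the triangle inequality yields $\|q-u\|_2 \le \|q\|_2 + \|u\|_2 \le 2$ and similarly $\|q-w\|_2 \le 2$, so the sum is at most $4$. Combining these two bounds and taking absolute values gives the claim.

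An equivalent and perhaps slightly cleaner alternative, which I would mention or use as a backup, is to expand directly: $\|q-u\|_2^2 - \|q-w\|_2^2 = (u-w)^\top(u+w-2q)$, then apply Cauchy--Schwarz and bound $\|u+w-2q\|_2 \le 4$ using $q, u, w \in \ball^d$. There is no real obstacle here since both arguments are one or two lines; the only thing to be careful about is invoking the unit-ball assumption on all three points (not just $u$ and $w$) to get the constant $4$ rather than a weaker $2\|u-w\|_2$-type bound that would hold unconditionally.
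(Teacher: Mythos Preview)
Your proposal is correct and takes essentially the same approach as the paper: factor via difference of squares, bound the first factor by the reverse triangle inequality, and bound the second factor by $4$ using $q,u,w\in\ball^d$. The paper does exactly this in two lines.
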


\begin{proof}
We have
\begin{align*}
    \inabs{\norm{q - u}_2^2 - \norm{q - w}_2^2} &= \inabs*{\norm{q - u}_2 - \norm{q - w}_2} \cdot (\norm{q - u}_2 + \norm{q - w}_2) \\
    &\le \norm{u - w}_2 \cdot 4,
\end{align*}
where the final inequality follows from the reverse triangle inequality and that $q, u, w \in \ball^d$ by assumption.
\end{proof}

\begin{lemma}
    \label{lem:breg-error-to-pointwise-error}
    For $\nu \in (0, 1 / d)$, $\gamma > 0$, and $u, v \in \simplex_\nu^d$ such that $\inKL{u}{v} \le \frac{1}{2} \gamma^2 \nu^2$, we have $v \approx_{1 / (1 + \gamma)} u$.
\end{lemma}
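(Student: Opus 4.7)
The plan is to combine Pinsker's inequality with the floor $\nu$ on the coordinates of $u$ and $v$. Unpacking the notation $\inKL{u}{v} = \KL(v \| u) = \sum_{i \in [d]} [v]_i \log([v]_i/[u]_i)$, the hypothesis reads $\KL(v \| u) \le \tfrac{1}{2}\gamma^2 \nu^2$, so Pinsker gives
\begin{align*}
    \norm{u - v}_\infty \le \norm{u - v}_1 \le \sqrt{2 \KL(v \| u)} \le \gamma \nu.
\end{align*}
Hence for every coordinate $i \in [d]$ we have $\inabs{[u]_i - [v]_i} \le \gamma \nu$.

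Next I would use that both $u, v \in \simplex_\nu^d$, so $[u]_i \ge \nu$ and $[v]_i \ge \nu$. For the upper side, $[v]_i \le [u]_i + \gamma \nu \le [u]_i + \gamma [u]_i = (1 + \gamma) [u]_i$. For the lower side, symmetrically $[u]_i \le [v]_i + \gamma \nu \le (1 + \gamma) [v]_i$, which rearranges to $[v]_i \ge [u]_i / (1 + \gamma)$. Combined, this yields the multiplicative bound $v \approx_{1+\gamma} u$ (reading the conclusion of the lemma as $c \gets 1 + \gamma$, which is what the statement requires to make sense under the ``$\approx_c$'' convention of Section~\ref{sec:prelim} with $c \ge 1$).

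There is no real obstacle: the only subtle point is recognizing that Pinsker applied to $\KL(v \| u)$ supplies exactly the $\ell_\infty$ slack $\gamma \nu$ that matches the simplex-truncation floor $\nu$, so that the \emph{additive} perturbation can be converted into a \emph{multiplicative} one by dividing by $[u]_i$ (respectively $[v]_i$), both of which are at least $\nu$. The proof therefore consists of a one-line application of Pinsker followed by a two-line coordinatewise comparison, and does not require any appeal to the other lemmas in Appendix~\ref{subapp:truncated-simplex-KL-norms}.
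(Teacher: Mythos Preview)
Your proof is correct and follows essentially the same approach as the paper's: apply Pinsker's inequality to get $\|u-v\|_\infty \le \gamma\nu$, then convert the additive bound to a multiplicative one using the truncation floor. Your symmetric handling of the lower bound (using $[v]_i \ge \nu$ to get $[u]_i \le (1+\gamma)[v]_i$) is in fact slightly cleaner than the paper's, which works only from $[u]_i \ge \nu$ and thereby needs the marginally stronger condition $\delta \le \tfrac{1}{2}\bigl(\tfrac{\gamma}{1+\gamma}\bigr)^2\nu^2$ for that direction.
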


\begin{proof}
    Letting $\delta \defeq \inKL{u}{v}$, we have for all $i \in [d]$:
    \begin{align*}
        \inabs*{ [v]_i - [u]_i } \le \innorm{v - u}_1  \le \sqrt{2 \delta},
    \end{align*}
    in which case it suffices to show $[u]_i + \sqrt{2 \delta} \le (1 + \gamma) [u]_i$ and $[u]_i - \sqrt{2 \delta} \ge \frac{1}{1 + \gamma} [u]_i$ for all $i \in [d]$. These are equivalent to $\delta \le \frac{1}{2} \gamma^2 [u]_i^2$ and $\delta \le \frac{1}{2} (\frac{\gamma}{1 + \gamma})^2 [u]_i^2$ respectively, in which case $\delta \le \frac{1}{2} \gamma^2 \nu^2$ suffices, yielding the result.
\end{proof}

Recall that Lemma~\ref{lem:breg-error-to-breg-error-from-base} below is the only lemma in Appendix~\ref{subapp:truncated-simplex-KL-norms} which is stated within the context of the $\ellOneEllOne$ and $\ellTwoEllOne$ setups (Definition~\ref{def:matrix-vector-games-setups}).

\begin{lemma}
    \label{lem:breg-error-to-breg-error-from-base}
For $\gamma > 0$ and $u, v, q \in \ztrunc$, there is an absolute constant $M > 0$ such that if $\breg{u}{v} \le  \frac{M \gamma^2}{1 + \log^2 \nu^{-1}}$, then $\inabs{\breg{q}{u} - \breg{q}{v}} \le \gamma$. 
\end{lemma}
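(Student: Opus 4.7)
Proof proposal for Lemma~\ref{lem:breg-error-to-breg-error-from-base}.

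The plan is to split the Bregman divergence coordinate-wise into its $\cX$- and $\cY$-components using the product structure $\breg{q}{u} = \xbreg{q\x}{u\x} + \ybreg{q\y}{u\y}$ from Definition~\ref{def:matrix-vector-games-setups}, apply the triangle inequality, and then invoke either Lemma~\ref{lem:KL-by-norm} (on simplex factors) or Lemma~\ref{lem:squared-norm-by-norm} (on the ball factor in the $\ellTwoEllOne$ setup) to bound each piece by a first-power norm distance between $u$ and $v$. The last step will be to convert the closeness-in-Bregman-divergence hypothesis into a bound on these norm distances via $1$-strong convexity of $r$.

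First I would write
\[
\inabs{\breg{q}{u} - \breg{q}{v}} \le \inabs{\xbreg{q\x}{u\x} - \xbreg{q\x}{v\x}} + \inabs{\ybreg{q\y}{u\y} - \ybreg{q\y}{v\y}}.
\]
On the $\cY$-side, $q\y, u\y, v\y \in \simplex_\nu^m$, so Lemma~\ref{lem:KL-by-norm} yields
$\inabs{\ybreg{q\y}{u\y} - \ybreg{q\y}{v\y}} \le (1 + 2\log\nu^{-1}) \innorm{u\y - v\y}_1$. On the $\cX$-side I would treat the two setups separately: in the $\ellOneEllOne$ setup the same Lemma~\ref{lem:KL-by-norm} applies with $\nu^{-1}$ polylog factor and $\ell_1$-norm; in the $\ellTwoEllOne$ setup $\xbreg{q\x}{\cdot} = \frac{1}{2}\normInline{\cdot - q\x}_2^2$ with $q\x, u\x, v\x \in \ball^n$, and Lemma~\ref{lem:squared-norm-by-norm} yields $\inabs{\xbreg{q\x}{u\x} - \xbreg{q\x}{v\x}} \le 2 \innorm{u\x - v\x}_2$. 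In either case, the combined estimate takes the form
\[
\inabs{\breg{q}{u} - \breg{q}{v}} \le C(1 + \log\nu^{-1}) \cdot \inparen{ \innorm{u\x - v\x} + \innorm{u\y - v\y}_1 }
\]
for a universal constant $C$, where $\innorm{\cdot}$ on the $\cX$-factor denotes $\ell_2$ in the $\ellTwoEllOne$ setup and $\ell_1$ in the $\ellOneEllOne$ setup, matching the norm $\normInline{\cdot}$ of Definition~\ref{def:matrix-vector-games-setups}.

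To close the argument, I would invoke the $1$-strong convexity of the dgfs $\rsubx, \rsuby$ with respect to these norms, which gives $\xbreg{u\x}{v\x} \ge \tfrac{1}{2}\innorm{u\x - v\x}^2$ and $\ybreg{u\y}{v\y} \ge \tfrac{1}{2}\innorm{u\y - v\y}_1^2$. Summing and taking square roots yields $\innorm{u\x - v\x} + \innorm{u\y - v\y}_1 \le 2\sqrt{2\,\breg{u}{v}}$. Plugging this into the displayed estimate gives
\[
\inabs{\breg{q}{u} - \breg{q}{v}} \le 2\sqrt{2}\,C\,(1 + \log\nu^{-1})\,\sqrt{\breg{u}{v}},
\]
and the lemma follows by choosing $M \defeq (2\sqrt{2}\,C)^{-2}$ (up to a harmless constant adjustment absorbing the difference between $1 + \log^2\nu^{-1}$ and $(1 + \log\nu^{-1})^2$), since then $\breg{u}{v} \le \frac{M\gamma^2}{1 + \log^2\nu^{-1}}$ forces the right-hand side below $\gamma$.

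I expect no substantial obstacle: the argument is a routine decomposition-plus-strong-convexity calculation, and all the nontrivial ingredients (the KL Lipschitz bound on the truncated simplex, the Euclidean Lipschitz bound on the unit ball, and the $1$-strong convexity of $r$) are already established in the paper. The only minor care point is matching the form $1 + \log^2\nu^{-1}$ in the statement with the $(1 + \log\nu^{-1})^2$ that appears naturally after squaring, which is handled by adjusting $M$ by an absolute constant.
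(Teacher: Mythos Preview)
Your proposal is correct and follows essentially the same route as the paper: split into the $\cX$- and $\cY$-components, apply Lemma~\ref{lem:KL-by-norm} and Lemma~\ref{lem:squared-norm-by-norm} to bound each piece by a norm distance with a $(1+\log\nu^{-1})$ factor, and finish by using $1$-strong convexity of $r$ to replace $\norm{u-v}$ by $\sqrt{\breg{u}{v}}$. The paper's proof is just a terser version of exactly this argument.
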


\begin{proof}
For some absolute constants $M_1, M_2 > 0$, we have by Lemmas \ref{lem:KL-by-norm} and \ref{lem:squared-norm-by-norm}:
\begin{align*}
    \inabs{\breg{q}{u} - \breg{q}{v}} \le \inabs{\xbreg{q\x}{u\x} - \xbreg{q\x}{v\x}} + \inabs{\ybreg{q\y}{u\y} - \ybreg{q\y}{v\y}} &\le M_1(1 + \log \nu^{-1}) \cdot \norm{u - v}\\
    &\le M_2(1 + \log \nu^{-1}) \cdot \sqrt{\breg{u}{v}}.
\end{align*}
\end{proof}

\begin{lemma}
    \label{lem:bounding-KL-TO-THE-SAME-POINT}
Let $q, u, w \in \simplex_\nu^d$ for some $\nu \in (0, 1/d)$. Then
\begin{align*}
    \inabs{\inKL{u}{q} - \inKL{w}{q}} \le \frac{1}{\nu} \sqrt{2 \cdot \inKL{u}{w}}.
\end{align*}
\end{lemma}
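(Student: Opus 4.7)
My plan is to combine a direct computation of $\KL(u\Vert q) - \KL(w\Vert q)$ with Pinsker's inequality, exploiting the fact that $\sum_i ([u]_i - [w]_i) = 0$ to eliminate the $+1$ term that appears in $\grad e$ and would otherwise lead to a weaker Lipschitz-style bound like the one in Lemma~\ref{lem:KL-by-norm}.

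Concretely, first I would expand
\begin{align*}
\KL(u\Vert q) - \KL(w\Vert q) \;=\; [e(u) - e(w)] - \sum_{i \in [d]} ([u]_i - [w]_i) \log [q]_i,
\end{align*}
where $e(x) \defeq \sum_i [x]_i \log [x]_i$ is the negative entropy. Next I would apply the mean value theorem (or equivalently integrate $\frac{d}{dt}\KL((1-t)w + t u \Vert q)$ over $t \in [0,1]$) to rewrite $e(u) - e(w) = \grad e(\xi)^\top (u - w)$ for some $\xi$ on the segment between $u$ and $w$; note $\xi \in \simplex^d_\nu$ since both endpoints are. Using $\grad e(\xi) = (1 + \log [\xi]_1, \dots, 1 + \log[\xi]_d)$ and the identity $\sum_i ([u]_i - [w]_i) = 0$, the constant $1$'s drop out and I get
\begin{align*}
\KL(u\Vert q) - \KL(w\Vert q) \;=\; \sum_{i \in [d]} ([u]_i - [w]_i) \log \frac{[\xi]_i}{[q]_i}.
\end{align*}

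Then I would bound the right-hand side by $\norm{u - w}_1$ times $\max_i \lvert \log([\xi]_i / [q]_i) \rvert$. Since $[\xi]_i, [q]_i \in [\nu, 1]$, the ratio lies in $[\nu, 1/\nu]$, so $\lvert \log([\xi]_i / [q]_i) \rvert \le \log \nu^{-1}$. Finally, Pinsker's inequality gives $\norm{u - w}_1 \le \sqrt{2 \KL(u\Vert w)}$, and the elementary inequality $\log \nu^{-1} \le \nu^{-1}$ (valid for all $\nu \in (0, 1]$, since $\log x \le x - 1 \le x$ for $x \ge 1$) absorbs the logarithmic factor into $1/\nu$, yielding the claim.

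The only real subtlety, if one can call it that, is noticing the $+1$ cancellation via $\sum_i([u]_i - [w]_i) = 0$; without it the naive Lipschitz approach would give $(1 + 2\log\nu^{-1})\sqrt{2\KL(u\Vert w)}$, which is slightly weaker than $\nu^{-1}\sqrt{2\KL(u\Vert w)}$ for $\nu$ close to $1/d$ when $d$ is small. Everything else is a routine application of Pinsker plus the elementary $\log x \le x$ estimate.
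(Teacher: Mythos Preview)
There is a genuine gap: you have misread the paper's macro. The paper defines \verb|\inKL{#1}{#2}| as $\KL(\#2\Vert\#1)$, i.e.\ the arguments are swapped. Hence the quantity in the lemma is
\[
\inabs{\inKL{u}{q} - \inKL{w}{q}} \;=\; \inabs{\KL(q\Vert u) - \KL(q\Vert w)},
\]
whereas your entire computation treats $\KL(u\Vert q) - \KL(w\Vert q)$. Your expansion $[e(u)-e(w)] - \sum_i([u]_i-[w]_i)\log[q]_i$ is correct for $\KL(u\Vert q) - \KL(w\Vert q)$, and what you end up proving is a (slightly sharpened) version of Lemma~\ref{lem:KL-by-norm}, not the present lemma. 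The two quantities are genuinely different: in the first, $q$ is the base and $u,w$ vary in the first slot; in the second, $q$ sits in the first slot and the base varies.

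For the correct quantity the argument is actually simpler, with no need for the MVT on $e$ or the $+1$ cancellation. One has directly
\[
\KL(q\Vert u) - \KL(q\Vert w) \;=\; \sum_{i\in[d]} [q]_i \log\frac{[w]_i}{[u]_i},
\]
so $\inabs{\KL(q\Vert u) - \KL(q\Vert w)} \le \sup_i \inabs{\log([w]_i/[u]_i)}$ since $q\in\simplex^d$. The paper then sets $\inKL{u}{w}=\KL(w\Vert u)=\tfrac{1}{2}\gamma^2\nu^2$ and invokes Lemma~\ref{lem:breg-error-to-pointwise-error} to get $[w]_i/[u]_i\in[1/(1+\gamma),1+\gamma]$, whence $\inabs{\log([w]_i/[u]_i)}\le\gamma=\nu^{-1}\sqrt{2\,\inKL{u}{w}}$. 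An equally valid alternative is to apply the mean value theorem coordinatewise to $\log[w]_i-\log[u]_i$, giving $\inabs{\log([w]_i/[u]_i)}\le |[w]_i-[u]_i|/\nu$, and then use Pinsker on $\|w-u\|_1$; but your route via $e(u)-e(w)$ and $\log([\xi]_i/[q]_i)$ does not lead here.
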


\begin{proof}
    Suppose $\inKL{u}{w} = \frac{1}{2} \gamma^2 \nu^2$ for $\gamma > 0$, in which case Lemma~\ref{lem:breg-error-to-pointwise-error} gives that $1 / (1 + \gamma) \le [w]_i / [u]_i \le 1 + \gamma$ for all $i \in [d]$. Then
\begin{align*}
    \inabs{\inKL{u}{q} - \inKL{w}{q}} = \inabs*{
        \sum_{i \in [d]} \inparen*{[q]_i \log \frac{[q]_i}{[u]_i} - [q]_i \log \frac{[q]_i}{[w]_i}} 
        }  
    &= \inabs*{\sum_{i \in [d]} [q]_i \log \frac{[w]_i}{[u]_i}} \\
    &\le \sup_{i \in [d]} \inabs*{ \log \frac{[w]_i}{[u]_i} } \\
    &\le  \gamma          ,
\end{align*}
where the last inequality follows because $\inabs{ \log x } \le \gamma$ for all $x \in [1 / (1 + \gamma), 1 + \gamma]$. (E.g., this can be derived from the standard inequality $1 - 1 / x \le \log x \le x - 1$ for $x > 0$.) Then substituting back in $\inKL{u}{w}$ yields the result.
\end{proof}

Finally, we show for completeness that $\innorm{\cdot}_{(s, t)}$ is a norm.

\begin{proposition}
    \label{prop:instance-dependent-is-norm}
$\innorm{\cdot}_{(s, t)}$ is a norm on $\R^{m \times n}$ for any $s, t \in [1, \infty]$.
\end{proposition}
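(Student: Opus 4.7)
The plan is to verify each of the three defining properties of a norm directly from the definition $\|A\|_{(s,t)} = \|a\|_t$ where $[a]_i = \|A_{i,:}\|_s$, leveraging that both $\|\cdot\|_s$ and $\|\cdot\|_t$ are already known to be norms on the appropriate Euclidean spaces.

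First, for nonnegativity and positive definiteness, I would observe that $\|A\|_{(s,t)} \geq 0$ is immediate since it is an $\ell_t$-norm of a vector of $\ell_s$-norms. For the separation axiom, if $\|A\|_{(s,t)} = 0$ then $\|a\|_t = 0$ forces $a = 0$, and then $\|A_{i,:}\|_s = [a]_i = 0$ for each $i \in [m]$ forces $A_{i,:} = 0$ for each $i$, so $A = 0$. Conversely $A = 0$ trivially gives $\|A\|_{(s,t)} = 0$.

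Second, for absolute homogeneity, fix $\lambda \in \R$. Write $a^{(\lambda A)}_i \defeq \|(\lambda A)_{i,:}\|_s = |\lambda|\, \|A_{i,:}\|_s = |\lambda|\, [a]_i$ using homogeneity of $\|\cdot\|_s$. Hence $\|\lambda A\|_{(s,t)} = \| |\lambda| a \|_t = |\lambda|\, \|a\|_t = |\lambda|\, \|A\|_{(s,t)}$ using homogeneity of $\|\cdot\|_t$.

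The main point is the triangle inequality, but it is still short. Given $A, B \in \R^{m \times n}$, let $a_i = \|A_{i,:}\|_s$, $b_i = \|B_{i,:}\|_s$, $c_i = \|(A+B)_{i,:}\|_s$. Triangle inequality for $\|\cdot\|_s$ applied row-by-row gives $0 \le c_i \le a_i + b_i$ for each $i$. Since $\ell_t$-norms of nonnegative vectors are monotone in each coordinate (which follows directly from the formula for $t < \infty$ and from the definition of the $\ell_\infty$-norm for $t = \infty$), we get $\|c\|_t \le \|a + b\|_t$, and then the triangle inequality for $\|\cdot\|_t$ gives $\|a+b\|_t \le \|a\|_t + \|b\|_t$. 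Stringing these together yields $\|A+B\|_{(s,t)} = \|c\|_t \le \|a\|_t + \|b\|_t = \|A\|_{(s,t)} + \|B\|_{(s,t)}$. The only subtle step is the monotonicity of $\|\cdot\|_t$ on the nonnegative orthant, which I would note explicitly; everything else is a direct transfer of the norm axioms from $\|\cdot\|_s$ and $\|\cdot\|_t$ through the two-stage definition.
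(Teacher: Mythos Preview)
Your proof is correct and follows essentially the same approach as the paper: verify each norm axiom by pushing it through the two-stage definition, using the triangle inequality for $\|\cdot\|_s$ row-wise, then monotonicity of $\|\cdot\|_t$ on the nonnegative orthant, then the triangle inequality for $\|\cdot\|_t$.
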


\begin{proof}
    Letting $A \in \R^{m \times n}$, the fact that $\innorm{A}_{(s, t)} = 0$ implies $A = 0$ is trivial. As for absolute homogeneity, note that for $\rho \in \R$:
\begin{align*}
    \innorm{\rho A}_{(s, t)} = \innorm{(\innorm{\rho A_{1, :}}_s, \dots, \innorm{ \rho A_{m, :}}_s)}_t &= \innorm{|\rho| \cdot  (\innorm{ A_{1, :}}_s, \dots, \innorm{ A_{m, :}}_s)}_t \\
    &= |\rho| \cdot \innorm{ (\innorm{ A_{1, :}}_s, \dots, \innorm{ A_{m, :}}_s)}_t \\
    &= |\rho| \cdot \innorm{ A}_{(s, t)}
\end{align*}
by applying absolute homogeneity of the $\ell_s$ and $\ell_t$ norms in succession. Finally, letting $B \in \R^{m \times n}$, we have
\begin{align*}
    \innorm{A + B}_{(s, t)} &= \innorm{(\innorm{A_{1, :} + B_{1, :}}_s, \dots, \innorm{A_{m, :} + B_{m, :}}_s)}_t  \\
    &\overle{(i)} \innorm{(\innorm{A_{1, :}}_s + \innorm{B_{1, :}}_s, \dots, \innorm{A_{m, :}}_s + \innorm{B_{m, :}}_s )}_t \\
    &\overle{(ii)} \innorm{(\innorm{A_{1, :}}_s, \dots, \innorm{A_{m, :}}_s )}_t + \innorm{(\innorm{B_{1, :}}_s, \dots, \innorm{B_{m, :}}_s )}_t \\
    &= \innorm{A}_{(s, t)} + \innorm{B}_{(s, t)},
\end{align*}
where $(i)$ follows because $\innorm{A_{i, :} + B_{i, :}}_s \le \innorm{A_{i, :}}_s + \innorm{B_{i, :}}_s$ for all $i \in [m]$ by the triangle inequality for the $\ell_s$-norm, and also because if $u, v \in \R^n$ and $v_i \ge u_i \ge 0$, then $\innorm{v}_t \ge \innorm{u}_t$. $(ii)$ uses the triangle inequality for the $\ell_t$-norm.
\end{proof}

\subsection{Technical lemmas regarding proximal mappings}
\label{subapp:technical-lemmas-prox-mappings}

In this section, we collect various technical lemmas regarding proximal mappings (Definition~\ref{def:proximal-mappings}) used in Section~\ref{sec:elltwoelloneandelloneellone}. First, in Lemma~\ref{lem:Lipschitz-type-saddle}, which is stated independently of the context of the $\ellTwoEllOne$ and $\ellOneEllOne$ setups for greater generality, we show a Lipschitz-type bound on the distance between proximal mappings.

\begin{lemma}
    \label{lem:Lipschitz-type-saddle}
Let $(\zset, \norm{\cdot}, r)$ denote a dgf setup (Definition~\ref{def:dgf-setup}) where $r$ is  $L_r$-smooth with respect to $\norm{\cdot}$ over $\zset$, and $\norm{z - z'} \le R$ for all $z, z' \in \zset$. For any continuous monotone operator $g : \zset \to \R$, $\alpha, \beta > 0$ and $q \in \zset$,
\begin{align*}
    \norm{\walpha - \wbeta} \le \frac{2 L_r R}{\max \inbraces{\alpha, \beta}} |\alpha - \beta|
    \text{ where }
    \walpha \defeq \prox^\alpha_{q}(g)
    \text{ and }
    \wbeta \defeq \prox^{\beta}_{q}(g)\,.
\end{align*}
\end{lemma}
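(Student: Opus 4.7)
The natural route is to use the variational characterization of the proximal mapping from Definition~\ref{def:proximal-mappings} together with the standard identity $-\grad \breg{q}{\cdot}^\top (w - u) = \breg{q}{u} - \breg{w}{u} - \breg{q}{w}$ to recast $\walpha = \prox^\alpha_q(g)$ as the unique point in $\zset$ satisfying
\begin{align*}
    \bigl(g(\walpha) + \alpha(\grad r(\walpha) - \grad r(q))\bigr)^\top (\walpha - u) \le 0 \text{ for all } u \in \zset,
\end{align*}
and similarly for $\wbeta$. I would then plug $u = \wbeta$ into the first inequality and $u = \walpha$ into the second, and add them. Monotonicity of $g$ implies $(g(\walpha) - g(\wbeta))^\top (\walpha - \wbeta) \ge 0$, so this cancels out the $g$ terms, leaving
\begin{align*}
    \alpha(\grad r(\walpha) - \grad r(q))^\top (\walpha - \wbeta) - \beta (\grad r(\wbeta) - \grad r(q))^\top (\walpha - \wbeta) \le 0.
\end{align*}

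The key algebraic step is then to split the coefficients so that the dominant term is $\max\{\alpha,\beta\}$ rather than $\min\{\alpha,\beta\}$ (this is the one mildly subtle place; a naive split yields the wrong denominator). Without loss of generality assume $\alpha \ge \beta$, and write $\beta = \alpha - (\alpha - \beta)$ on the second summand. After cancellation this reorganizes to
\begin{align*}
    \alpha \bigl(\grad r(\walpha) - \grad r(\wbeta)\bigr)^\top (\walpha - \wbeta) + (\alpha - \beta)\bigl(\grad r(\wbeta) - \grad r(q)\bigr)^\top (\walpha - \wbeta) \le 0.
\end{align*}

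Finally, I would apply $1$-strong convexity of $r$ (from the dgf setup, Definition~\ref{def:dgf-setup}) to lower bound the first summand by $\alpha \norm{\walpha - \wbeta}^2$, and apply $L_r$-smoothness of $r$ combined with the diameter bound $\norm{\wbeta - q} \le R$ plus H\"older's inequality to upper bound the second summand in absolute value by $(\alpha - \beta) L_r R \cdot \norm{\walpha - \wbeta}$. Dividing through by $\alpha \norm{\walpha - \wbeta}$ gives $\norm{\walpha - \wbeta} \le \frac{|\alpha - \beta| L_r R}{\max\{\alpha,\beta\}}$, which is even a factor of $2$ better than the stated bound. The only genuine obstacle is the choice of splitting in the penultimate step; the rest is routine application of strong convexity, smoothness, and the dual-norm H\"older inequality.
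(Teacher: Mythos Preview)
Your proposal is correct and follows essentially the same approach as the paper: both add the two variational inequalities at the cross-points, use monotonicity to eliminate $g$, split the coefficients so that $\max\{\alpha,\beta\}$ multiplies the strong-convexity term, and bound the residual via $L_r$-smoothness and the diameter $R$. The only difference is that the paper packages $g + \alpha\grad\breg{q}{\cdot}$ as a single $\alpha$-strongly-monotone operator and lower bounds via $\alpha\breg{\walpha}{\wbeta}\ge\tfrac{\alpha}{2}\norm{\walpha-\wbeta}^2$, whereas you use the two-sided strong-convexity inequality $(\grad r(\walpha)-\grad r(\wbeta))^\top(\walpha-\wbeta)\ge\norm{\walpha-\wbeta}^2$ directly, which is why you legitimately save the factor of $2$.
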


\begin{proof}
    Without loss of generality suppose that $\alpha \ge \beta$ .
    Recalling the notation $\grad \breg{q}{z} = \grad r(z) - \grad r(q)$ for the gradient of $u \mapsto \breg{q}{u}$ evaluated at $z$, and defining the operators
    \begin{align*}
        \halpha(z) \defeq g(z) + \alpha \grad \breg{q}{z} ~~~\text{and}~~~ \hbeta(z) \defeq g(z) + \beta \grad \breg{q}{z},
    \end{align*}
    we have by the equivalent formulation of the proximal mapping \eqref{eq:equivalent-prox-condition},
    \begin{align*}
        \halpha(\walpha)^\top (\walpha - u) \le 0 ~~~\text{and}~~~  \hbeta(\wbeta)^\top (\wbeta - u') \le 0 ~~~\text{for all $u, u' \in \zset$}.
    \end{align*}
    Selecting $u \gets \wbeta$ and $u' \gets \walpha$ and summing yields $(\halpha(\walpha) - \hbeta(\wbeta))^\top (\walpha - \wbeta) \le 0$, implying
    \begin{align*}
    (\halpha(\walpha) - \halpha(\wbeta))^\top (\walpha - \wbeta) &\le [(\hbeta - \halpha)(\wbeta)]^\top (\walpha - \wbeta) \\
    &\le \dualnorm{(\hbeta - \halpha)(\wbeta)} \norm{\walpha - \wbeta} \\
        &= \dualnorm{ \beta \grad \breg{q}{\wbeta} - \alpha \grad \breg{q}{\wbeta}} \norm{\walpha - \wbeta} \\
        &= |\alpha - \beta| \cdot \dualnorm{\grad r(\wbeta) - \grad r(q)}  \norm{\walpha - \wbeta} \\
        &\le L_r R \cdot |\alpha - \beta|  \cdot \norm{\walpha - \wbeta}
    \end{align*}
    On the other hand, the fact that $\halpha$ is $\alpha$-strongly monotone relative to $r$ 
    along with the fact that $r$ is 1-strongly convex with respect to $\norm{\cdot}$ implies
    \begin{align*}
        (\halpha(\walpha) - \halpha(\wbeta))^\top (\walpha - \wbeta) \ge \alpha \breg{\walpha}{\wbeta} \ge \frac{\alpha}{2} \norm{\walpha - \wbeta}^2.
    \end{align*}
    Combining yields $\norm{\walpha - \wbeta} \le L_r R \cdot |\alpha - \beta| \cdot \frac{2}{\alpha}$, and we conclude using the assumption from the start of the proof that $\alpha \ge \beta$.
\end{proof}

Next, in Lemma~\ref{lem:monotonicity}, which is stated independently of the context of the $\ellTwoEllOne$ and $\ellOneEllOne$ setups for greater generality, we give a monotonicity guarantee with respect to the divergence from the center point.

\begin{lemma}
    \label{lem:monotonicity}
Let $(\zset, \norm{\cdot}, r)$ denote a dgf setup (Definition~\ref{def:dgf-setup}) with $g : \zset \to \R$ a continuous monotone operator, some $\alpha \ge \beta > 0$, and $q \in \zset$. Then $\walpha \defeq \prox^\alpha_{q}(g)$ and $\wbeta \defeq \prox^{\beta}_{q}(g)$ satisfy $\breg{q}{\walpha} \le \breg{q}{\wbeta}$.
\end{lemma}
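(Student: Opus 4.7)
The plan is to use the equivalent characterization of proximal mappings given in \eqref{eq:equivalent-prox-condition} combined with the standard three-point identity for Bregman divergences and monotonicity of $g$. Concretely, I would first apply \eqref{eq:equivalent-prox-condition} to each of $\walpha$ and $\wbeta$ and test the resulting variational inequalities against each other, i.e., instantiate the defining inequality for $\walpha$ at $u \gets \wbeta$ and the one for $\wbeta$ at $u \gets \walpha$, to obtain
\begin{align*}
(g(\walpha) + \alpha\, \grad \breg{q}{\walpha})^\top (\walpha - \wbeta) &\le 0, \\
(g(\wbeta) + \beta\, \grad \breg{q}{\wbeta})^\top (\wbeta - \walpha) &\le 0.
\end{align*}
Summing and invoking monotonicity of $g$ to discard the $(g(\walpha) - g(\wbeta))^\top(\walpha - \wbeta) \ge 0$ term would leave
\begin{align*}
\alpha\, \grad \breg{q}{\walpha}^\top (\walpha - \wbeta) \;\le\; \beta\, \grad \breg{q}{\wbeta}^\top (\walpha - \wbeta).
\end{align*}

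Next, I would rewrite both sides using the standard three-point identity $-\grad \breg{z}{z'}^\top(z' - u) = \breg{z}{u} - \breg{z'}{u} - \breg{z}{z'}$ quoted just before \eqref{eq:equivalent-prox-condition}. Applied with $z = q$ and the pair $(z', u)$ equal to $(\walpha, \wbeta)$ and $(\wbeta, \walpha)$ respectively, this converts the displayed inequality into
\begin{align*}
\alpha\bigl[\breg{\walpha}{\wbeta} + \breg{q}{\walpha} - \breg{q}{\wbeta}\bigr] \;\le\; \beta\bigl[-\breg{\wbeta}{\walpha} - \breg{q}{\wbeta} + \breg{q}{\walpha}\bigr].
\end{align*}
Rearranging, this is
\begin{align*}
\alpha\breg{\walpha}{\wbeta} + \beta\breg{\wbeta}{\walpha} + (\alpha-\beta)\breg{q}{\walpha} \;\le\; (\alpha-\beta)\breg{q}{\wbeta}.
\end{align*}

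Finally, since Bregman divergences are nonnegative and $\alpha \ge \beta$, I can drop the two Bregman terms on the left and conclude $(\alpha-\beta)\breg{q}{\walpha} \le (\alpha-\beta)\breg{q}{\wbeta}$. If $\alpha > \beta$ this gives the claim directly; if $\alpha = \beta$ the claim is trivial by uniqueness of the proximal mapping. I do not expect any real obstacle: the whole argument is essentially the standard ``bigger regularizer $\Rightarrow$ less movement'' calculation for prox operators, and the only care needed is getting the signs right in the three-point identity.
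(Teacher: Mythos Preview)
Your proof is correct and essentially identical to the paper's: the paper starts from Definition~\ref{def:proximal-mappings} (the Bregman-divergence form) rather than the gradient form \eqref{eq:equivalent-prox-condition}, but since these are equivalent via the three-point identity you invoke, the two arguments are the same up to cosmetic reordering. The paper arrives at exactly your penultimate display and concludes in the same way.
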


\begin{proof}
Applying Definition~\ref{def:proximal-mappings}, we have for all $u, u' \in \zset$:
\begin{align*}
    g(\walpha)^\top (\walpha - u) &\le \alpha \insquare{\breg{q}{u} - \breg{\walpha}{u} - \breg{q}{\walpha}}, \\
    g(\wbeta)^\top (\wbeta - u') &\le \beta \insquare{\breg{q}{u'} - \breg{\wbeta}{u'} - \breg{q}{\wbeta}}.
\end{align*}
Setting $u \gets \wbeta$, $u' \gets \walpha$, summing, and using the monotonicity of $g$ yields
\begin{align*}
    0 &\le \alpha \insquare{\breg{q}{\wbeta} - \breg{\walpha}{\wbeta} - \breg{q}{\walpha}} + \beta \insquare{\breg{q}{\walpha} - \breg{\wbeta}{\walpha} - \breg{q}{\wbeta}} \\
    &= (\alpha - \beta) \breg{q}{\wbeta} + (\beta - \alpha) \breg{q}{\walpha} - \alpha \breg{\walpha}{\wbeta} - \beta \breg{\wbeta}{\walpha}.
\end{align*}
To conclude, we have
\begin{align*}
    (\alpha - \beta) \breg{q}{\walpha} \le (\alpha - \beta) \breg{q}{\wbeta} - \alpha \breg{\walpha}{\wbeta} - \beta \breg{\wbeta}{\walpha} \le (\alpha - \beta) \breg{q}{\wbeta}.
\end{align*}
\end{proof}

Below in Lemma~\ref{lem:Bregman-are-Lipschitz}, which is stated within the context of the $\ellOneEllOne$ and $\ellTwoEllOne$ setups (Definition \ref{def:matrix-vector-games-setups}), we obtain a Lipschitz-type condition on the divergence from the same center point to two proximal mappings with different levels of regularization. 

\begin{lemma}
    \label{lem:Bregman-are-Lipschitz}
For $\alpha, \beta > 0$ and $q \in \ztrunc$, let $w \defeq \prox_q^{\alpha}(\gm f_A; \ztrunc)$ and $w' \defeq \prox_q^{\beta}(\gm f_A; \zset_\nu)$. Then there exists an absolute constant $M > 0$ such that 
\begin{align*}
    \inabs{\breg{q}{w} - \breg{q}{w'}} \le \frac{M(1 + \log \nu^{-1})}{\nu \cdot \max \inbraces{\alpha, \beta}}  \cdot |\alpha - \beta|  .
\end{align*}
\end{lemma}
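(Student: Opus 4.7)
The plan is to reduce this to Lemma~\ref{lem:Lipschitz-type-saddle} combined with the approximation lemmas Lemma~\ref{lem:KL-by-norm} and Lemma~\ref{lem:squared-norm-by-norm}, applied to the dgf setup $(\ztrunc, \norm{\cdot}, r)$ from Definition~\ref{def:matrix-vector-games-setups}. Observe that $\gm f_A$ is a monotone operator (bilinear games give a skew-symmetric linear operator), so both $w$ and $w'$ are well-defined unique proximal mappings with respect to this monotone operator.

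First, I will bound $\norm{w - w'}$ using Lemma~\ref{lem:Lipschitz-type-saddle}. For this I need two quantities: the smoothness constant $L_r$ of $r$ with respect to $\norm{\cdot}$ over $\ztrunc$, and the $\norm{\cdot}$-diameter $R$ of $\ztrunc$. Since $r(z) = \rx(z\x) + \ry(z\y)$ is block-separable, $L_r$ is the max of the smoothness constants of $\rx$ and $\ry$ in their respective norms. The squared Euclidean part ($\rx$ in the $\ellTwoEllOne$ setup) is $1$-smooth with respect to $\norm{\cdot}_2$, while the negative entropy part is $\nu^{-1}$-smooth with respect to $\norm{\cdot}_1$ over the truncated simplex by Lemma~\ref{lem:entropy-truncated-simplex}; hence $L_r \le \nu^{-1}$ in both setups. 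The diameter $R$ is at most a constant (the unit Euclidean ball has Euclidean diameter $2$, and the simplex has $\ell_1$-diameter $2$), so $L_r R = O(\nu^{-1})$. Applying Lemma~\ref{lem:Lipschitz-type-saddle} yields
\[
  \norm{w - w'} \le \frac{2 L_r R}{\max\{\alpha, \beta\}} \, |\alpha - \beta| \le \frac{M_0 \, |\alpha - \beta|}{\nu \cdot \max\{\alpha, \beta\}}
\]
for an absolute constant $M_0 > 0$.

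Next, I will translate the bound on $\norm{w - w'}$ into a bound on $\inabs{\breg{q}{w} - \breg{q}{w'}}$. Splitting the Bregman divergence into its $\cX$ and $\cY$ parts,
\[
  \inabs{\breg{q}{w} - \breg{q}{w'}} \le \inabs{\xbreg{q\x}{w\x} - \xbreg{q\x}{w'\x}} + \inabs{\ybreg{q\y}{w\y} - \ybreg{q\y}{w'\y}}.
\]
The KL divergence terms (occurring in both setups for the $\yset$ part, and in the $\ellOneEllOne$ setup for the $\xset$ part) are controlled via Lemma~\ref{lem:KL-by-norm}, which gives a bound of $(1 + 2\log \nu^{-1}) \cdot \norm{w - w'}_1$ on the corresponding terms. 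The squared $\ell_2$-norm Bregman divergence that appears for the $\xset$ part in the $\ellTwoEllOne$ setup is controlled by Lemma~\ref{lem:squared-norm-by-norm}, contributing $4 \norm{w\x - w'\x}_2$. Combining, there is an absolute constant $M_1 > 0$ such that $\inabs{\breg{q}{w} - \breg{q}{w'}} \le M_1 (1 + \log \nu^{-1}) \norm{w - w'}$.

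Putting the two displays together gives
\[
\inabs{\breg{q}{w} - \breg{q}{w'}} \le \frac{M_0 M_1 (1 + \log \nu^{-1})}{\nu \cdot \max\{\alpha, \beta\}} \, |\alpha - \beta|,
\]
as desired with $M \defeq M_0 M_1$. The main thing to be careful about is the mixed-norm setting of the $\ellTwoEllOne$ setup, where I need to verify that the product $L_r R$ is still $O(\nu^{-1})$ under the combined norm $\norm{z}^2 = \norm{z\x}_2^2 + \norm{z\y}_1^2$; this follows cleanly from block-separability of $r$ and the fact that strong convexity/smoothness of a block-separable function with respect to the block-sum norm is governed by the worst of the per-block constants. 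No other step is subtle — everything else is a direct application of the cited lemmas.
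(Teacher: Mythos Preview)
Your proof is correct and follows essentially the same approach as the paper: both split $\breg{q}{w}-\breg{q}{w'}$ into its $\xset$- and $\yset$-blocks, bound each via Lemmas~\ref{lem:KL-by-norm} and~\ref{lem:squared-norm-by-norm} by $O(1+\log\nu^{-1})\cdot\norm{w-w'}$, and then invoke Lemma~\ref{lem:Lipschitz-type-saddle} together with the $\nu^{-1}$-smoothness of negative entropy from Lemma~\ref{lem:entropy-truncated-simplex} to bound $\norm{w-w'}$. Your write-up is in fact more explicit than the paper's (which just cites the four lemmas); in particular you spell out the $L_r$ and $R$ computations and the block-separability point, all of which are correct.
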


\begin{proof}
    For some absolute constants $M_1, M_2 > 0$, we have:
\begin{align*}
    \inabs{\breg{q}{w} - \breg{q}{w'}} &\le \inabs{\xbreg{q\x}{w\x} - \xbreg{q\x}{w'\x}} + \inabs{\ybreg{q\y}{w\y} - \ybreg{q\y}{w'\y}} \\
    &\overle{(i)} M_1(1 + \log \nu^{-1}) \cdot \norm{w - w'} \\
    &\overle{(ii)}    \frac{M_2(1 + \log \nu^{-1})}{\nu \cdot \max \inbraces{\alpha, \beta}}  \cdot |\alpha - \beta|  .                           
\end{align*}
Where $(i)$ uses Lemmas \ref{lem:KL-by-norm} and \ref{lem:squared-norm-by-norm} and $(ii)$ uses Lemmas \ref{lem:Lipschitz-type-saddle} and \ref{lem:entropy-truncated-simplex}:
\end{proof}

Next, we use Lemma~\ref{lem:Bregman-are-Lipschitz} to prove a particular function, which we will search for an approximate root of in our binary search, is Lipschitz over compact intervals. Lemma~\ref{lem:Lipschitzness-of-h} is stated within the context of the $\ellOneEllOne$ and $\ellTwoEllOne$ setups.

\begin{lemma}
    \label{lem:Lipschitzness-of-h}
For a fixed $q \in \ztrunc$ and parameter $\alpha > 0$, let $w_\alpha \defeq \prox_q^{\alpha}(\gm f_A; \ztrunc)$ (namely, $w_\alpha$ is parameterized by $\alpha$) and define $h : \R_{>0} \to \R$ via $h(\alpha) \defeq \breg{q}{w_\alpha} - 2 \alpha^2$. Then for any $\alpha, \alpha' \in [b, c]$ for some $c > b > 0$, there exists an absolute constant $M > 0$ such that
\begin{align*}
    |h(\alpha) - h(\alpha')| \le M \inparen*{\frac{1 + \log \nu^{-1}}{\nu b} + c} \cdot |\alpha - \alpha'|.
\end{align*}
\end{lemma}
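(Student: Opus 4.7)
The plan is to apply the triangle inequality to split $|h(\alpha) - h(\alpha')|$ into the Bregman-divergence part and the quadratic part, each of which can be handled by already-established tools. Specifically, write
\begin{align*}
|h(\alpha) - h(\alpha')| \;\le\; \bigl|\breg{q}{w_\alpha} - \breg{q}{w_{\alpha'}}\bigr| \;+\; 2\bigl|\alpha^2 - (\alpha')^2\bigr|.
\end{align*}

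For the first term I would invoke Lemma~\ref{lem:Bregman-are-Lipschitz} directly (with its $\alpha,\beta$ taken to be $\alpha,\alpha' \in [b,c]$), which yields
\begin{align*}
\bigl|\breg{q}{w_\alpha} - \breg{q}{w_{\alpha'}}\bigr| \;\le\; \frac{M_1(1 + \log \nu^{-1})}{\nu \cdot \max\{\alpha,\alpha'\}} \cdot |\alpha - \alpha'| \;\le\; \frac{M_1(1 + \log \nu^{-1})}{\nu b} \cdot |\alpha - \alpha'|,
\end{align*}
using $\max\{\alpha,\alpha'\} \ge b$ for the last inequality. For the second term, factoring the difference of squares gives $|\alpha^2 - (\alpha')^2| = |\alpha+\alpha'|\cdot|\alpha-\alpha'| \le 2c\,|\alpha - \alpha'|$ since $\alpha,\alpha' \le c$.

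Combining the two bounds gives
\begin{align*}
|h(\alpha) - h(\alpha')| \;\le\; \left(\frac{M_1(1+\log\nu^{-1})}{\nu b} + 4c\right) |\alpha - \alpha'|,
\end{align*}
which is of the stated form for an appropriate absolute constant $M$. There is no real obstacle here; the only thing to verify is that Lemma~\ref{lem:Bregman-are-Lipschitz} applies in the intended way (both proximal mappings use the same center $q$ and the same constraint set $\ztrunc$, varying only in the regularization level), which matches the setup of $w_\alpha$ exactly.
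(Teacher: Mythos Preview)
Your proof is correct and follows essentially the same approach as the paper's own proof: both apply the triangle inequality, bound the Bregman-divergence term via Lemma~\ref{lem:Bregman-are-Lipschitz} (using $\max\{\alpha,\alpha'\}\ge b$), and handle $|\alpha^2-(\alpha')^2|$ by factoring as $(\alpha+\alpha')|\alpha-\alpha'|\le 2c|\alpha-\alpha'|$.
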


\begin{proof}
Note that we can bound
\begin{align*}
    \inabs{\alpha - \alpha'^2} = \inabs{\alpha - \alpha'} \cdot (\alpha + \alpha') \le \inabs{\alpha - \alpha'} \cdot 2c,
\end{align*}
in which case combining this with Lemma~\ref{lem:Bregman-are-Lipschitz} implies for some absolute constant $M > 0$:
\begin{align*}
    |h(\alpha) - h(\alpha')| \le \inabs{\breg{q}{w_\alpha} - \breg{q}{w_{\alpha'}}} + 2 \inabs{\alpha^2 - \alpha'^2} 
    \le M \inparen*{\frac{1 + \log \nu^{-1}}{\nu b} + c} \cdot |\alpha - \alpha'|.
\end{align*}
\end{proof}

The following lemma, stated within the context of the $\ellOneEllOne$ and $\ellTwoEllOne$ setups, will be used to justify a starting interval for our binary search.

\begin{lemma}
    \label{lem:starting-value-b-search}
For any $q \in \zset_\nu$ and $\alpha \ge 1$, we have that $w \defeq \prox_q^{\alpha}(\gm f_A; \zset_\nu)$ satisfies $\breg{q}{w} \le 4$.
\end{lemma}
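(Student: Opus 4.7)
The plan is to set $u = q$ in the defining variational inequality for $w = \prox_q^\alpha(\gm f_A; \ztrunc)$ from Definition~\ref{def:proximal-mappings}, which yields
\begin{align*}
  \gm f_A(w)^\top (w - q) \le \alpha \inbrackets{\breg{q}{q} - \breg{w}{q} - \breg{q}{w}} = -\alpha \breg{w}{q} - \alpha \breg{q}{w}.
\end{align*}
Rearranging and using the nonnegativity of Bregman divergences gives the bound
\begin{align*}
  \alpha \breg{q}{w} \le \gm f_A(w)^\top (q - w) \le \dualnorm{\gm f_A(w)} \cdot \norm{q - w}.
\end{align*}

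Next, I would plug in two standard facts which are already invoked in the proof of Lemma~\ref{lem:GAPW-properties}: first, that $\dualnorm{\gm f_A(z)}^2 \le 2$ for all $z \in \zset$ in both the $\ellOneEllOne$ and $\ellTwoEllOne$ setups (as noted in the proof of Lemma~\ref{lem:GAPW-properties}, with references to Sections 4.1 and 4.2 of \cite{carmon2019variance}); and second, that $r$ is $1$-strongly convex with respect to $\norm{\cdot}$ (by Definition~\ref{def:matrix-vector-games-setups} and Definition~\ref{def:dgf-setup}), which gives $\norm{q - w}^2 \le 2 \breg{q}{w}$. Combining these two bounds yields
\begin{align*}
  \alpha \breg{q}{w} \le \sqrt{2} \cdot \sqrt{2 \breg{q}{w}} = 2 \sqrt{\breg{q}{w}},
\end{align*}
so $\sqrt{\breg{q}{w}} \le 2/\alpha \le 2$ using $\alpha \ge 1$, and therefore $\breg{q}{w} \le 4$ as desired.

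This proof is entirely routine; there is no genuinely hard step. The only subtlety worth flagging is that the dual norm bound on $\gm f_A$ is setup-dependent (it uses $\norm{A}_{2\to\infty} \le 1$ in the $\ellTwoEllOne$ setup and $\inmaxnorm{A} \le 1$ in the $\ellOneEllOne$ setup), but this is precisely the normalization assumed throughout Section~\ref{sec:elltwoelloneandelloneellone}, so it applies directly.
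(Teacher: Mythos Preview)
Your proof is correct and arguably cleaner than the paper's. The paper instead argues by contradiction through the saddle-point characterization: if $\breg{q}{w} > 4$ then one of $\xbreg{q\x}{w\x}$ or $\ybreg{q\y}{w\y}$ exceeds $2$; using that $w\x$ is the best response to $w\y$ (resp.\ $w\y$ to $w\x$) and that $|f_A| \le 1$ over $\zset$, one checks that replacing $w\x$ by $q\x$ strictly decreases the primal objective $f_A(\cdot, w\y) + \alpha \xbreg{q\x}{\cdot}$, a contradiction. Your route is more direct---you go straight through the variational inequality with $u = q$ and the two standard bounds already invoked elsewhere in the paper---and does not rely on the separable structure $r = \rx + \ry$. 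The paper's argument, on the other hand, makes the role of the range bound $|f_A| \le 1$ explicit and works at the level of the $x$- and $y$-components separately, which is perhaps more in keeping with the saddle-point viewpoint used elsewhere in Section~\ref{sec:elltwoelloneandelloneellone}. Either way the result is routine.
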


\begin{proof}
Note that $w$ is the unique exact solution of the minimax problem
\begin{align*}
    \min_{x \in \xset_\nu} \max_{y \in \yset_\nu} f_A(x, y) + \alpha \xbreg{q\x}{x} - \alpha \ybreg{q\y}{y}.
\end{align*}
Furthermore, by the normalizing assumptions on the matrix $A$ (see the start of Section~\ref{sec:elltwoelloneandelloneellone}), note that the range of $f_A$ is at most 1, i.e., $|f_A(z)| \le 1$ for all $z \in \zset_\nu$. Then suppose for the sake of contradiction that $\breg{q}{w} > 4$, in which case either $\breg{q\x}{w\x} > 2$ or $\breg{q\y}{w\y} > 2$. Assuming the former, we have
\begin{align*}
    w\x = \argmin_{x \in \xset_\nu} f_A(x, w\y) + \alpha \xbreg{q\x}{x} - \alpha \ybreg{q\y}{w\y} = 
    \argmin_{x \in \xset_\nu} f_A(x, w\y) + \alpha \xbreg{q\x}{x} 
\end{align*}
since $w\x$ is the best response to $w\y$.
But then 
\begin{align*}
    f_A(q\x, w\y) + \alpha \xbreg{q\x}{q\x} = f_A(q\x, w\y) \le 1 < f_A(w\x, w\y) + \alpha \xbreg{q\x}{w\x}.
\end{align*}
The case where $\breg{q\y}{w\y} > 2$ is analogous.
\end{proof}

\subsection{Local Schatten norm bound}\label{apx:linear-algebra}

Throughout this appendix, we operate in the $\ellTwoEllOne$ and $\ellOneEllOne$ setups of Definition~\ref{def:matrix-vector-games-setups}; if a statement does not explicitly distinguish between them, it applies to both setups. In this appendix, we state and prove a number of useful linear-algebraic facts used throughout the main body of the paper, culminating in a proof of Lemma~\ref{lemma:gen-conversion}, restated below. 

\generalconversion*

As a warmup towards proving Lemma~\ref{lemma:gen-conversion}, we first prove a special case of it, when $p = 2$. 

\begin{lemma}\label{lemma:conversion} Let $\zground \in \cZint$ and $A \in \R^{m \times n}$. Then, 
\begin{align*}
    \normInline{\ground{A}{\zground}}_F \leq \begin{cases}
        \normInline{A}_{2 \to \infty}, & \cX = \B^n, \\
        \normInline{A}_{\max}, & \cY = \Delta^m.
    \end{cases}
\end{align*}
\end{lemma}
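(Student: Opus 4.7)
The plan is to unpack the definition of $\ground{A}{\zground}$ from Definition~\ref{def:setup-details} in each of the two cases and then compute $\norm{\ground{A}{\zground}}_F^2$ directly. Since $\cY = \simplex^m$ in both setups, we always have $N_{\cY, \zground\y} = \diag(\zground\y)^{-1/2}$, while $N_{\cX, \zground\x}$ is the identity in the $\ellTwoEllOne$ setup and $\diag(\zground\x)^{-1/2}$ in the $\ellOneEllOne$ setup. Hence in the first case $\ground{A}{\zground} = \diag(\zground\y)^{1/2} A$, and in the second $\ground{A}{\zground} = \diag(\zground\y)^{1/2} A \diag(\zground\x)^{1/2}$.

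Next I would square the Frobenius norm and expand entrywise. In the $\cX = \B^n$ case,
\[
\norm{\ground{A}{\zground}}_F^2 = \sum_{i\in[m],\, j\in[n]} [\zground\y]_i A_{ij}^2 = \sum_{i\in[m]} [\zground\y]_i \norm{A_{i,:}}_2^2,
\]
which is a convex combination (since $\zground\y \in \simplex^m$) of the quantities $\norm{A_{i,:}}_2^2$, so it is bounded by $\max_{i} \norm{A_{i,:}}_2^2 = \norm{A}_{2\to\infty}^2$. Taking square roots yields the first bound. In the $\cX = \simplex^n$ case,
\[
\norm{\ground{A}{\zground}}_F^2 = \sum_{i\in[m],\, j\in[n]} [\zground\y]_i [\zground\x]_j A_{ij}^2 \le \inmaxnorm{A}^2 \sum_{i,j} [\zground\y]_i [\zground\x]_j = \inmaxnorm{A}^2,
\]
where the last equality uses that $\zground\x$ and $\zground\y$ are probability vectors. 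Taking square roots completes the proof.

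There is no real obstacle here; this is essentially a direct computation using the fact that the diagonal rescalings are taken with respect to probability distributions, so the corresponding weighted sums collapse to convex combinations. The result is exactly the $p=2$ specialization of Lemma~\ref{lemma:gen-conversion} obtained by observing that $\norm{B}_{\cS_2} = \norm{B}_F$, and matches $\Upsilon_2(A) = \norm{A}_{(\infty, \infty)} = \norm{A}_{\max}$ in the $\ellOneEllOne$ setup and $\Upsilon_2(A) = \norm{A}_{(2, \infty)} = \norm{A}_{2 \to \infty}$ in the $\ellTwoEllOne$ setup, consistent with Remark~\ref{remark:general-conversion}.
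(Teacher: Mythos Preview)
Your proof is correct and follows essentially the same approach as the paper: unpack Definition~\ref{def:setup-details} to identify $\ground{A}{\zground}$ explicitly in each setup, expand $\norm{\ground{A}{\zground}}_F^2$ entrywise, and use that $\zground\x$ and $\zground\y$ are probability vectors so the weighted sums are convex combinations bounded by the respective maxima.
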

\begin{proof} First, suppose $\cX =\B^n$. Then, by Definition~\ref{def:setup-details},
\begin{align*}
    \normInline{\ground{A}{\zground}}_F^2 &= \sum_{i \in [m] j \in [n]} [{\zground}\y]_i A_{ij}^2 = \sum_{i \in [m]} [{\zground}\y]_i \sum_{j \in [n]} A_{ij}^2 \\
    &\leq \normInline{A}_{2\to\infty}^2 \sum_{i \in [m]} [{\zground}\y]_i = \normInline{A}_{2\to\infty}^2, 
\end{align*}
where the last line used ${\zground}\y \in \Delta^m$. Now, suppose $\cX =\Delta^n$. Then, by Definition~\ref{def:setup-details}
\begin{align*}
    \normInline{\ground{A}{\zground}}_F^2 &= \sum_{i \in [m] j \in [n]} [{\zground}\y]_i A_{ij}^2 = \sum_{i \in [m]} [{\zground}\y]_i \sum_{j \in [n]} [{\zground}\x]_j A_{ij}^2 \\
    &\leq (\max_{ij} |A_{ij}|)^2 \sum_{i \in [m]} [{\zground}\y]_i \sum_{j \in [n]} [{\zground}\x]_j = (\max_{ij} |A_{ij}|)^2 = \normInline{A}_{\max},
\end{align*}
where the last line used $\zground \in \Delta^d$. 
\end{proof}

Next, we will generalize Lemma~\ref{lemma:conversion} to other Schatten-$p$ norms. To do so, we need several intermediate technical lemmas. First, we use the following result from \cite{rohde2011estimation} to prove Lemma~\ref{lemma:subadditivity-intro} below. 

\begin{lemma}[Equation (2.1) of \cite{rohde2011estimation}]\label{lemma:subadditivity-sp} For $A, B \in \R^{m \times n}$ and $p \in (0, 1]$, $\normInline{A+B}_{\cS_p}^p \leq \normInline{A}_{\cS_p}^p + \normInline{B}_{\cS_p}^p$. 
\end{lemma}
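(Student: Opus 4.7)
The plan is to derive the inequality by combining the scalar subadditivity of $t \mapsto t^p$ with Ky Fan's majorization inequality for singular values. First, for $p \in (0,1]$, the map $\phi(t) = t^p$ is concave, nondecreasing, and satisfies $\phi(0) = 0$ on $[0,\infty)$. From this I would extract the scalar subadditivity $(a+b)^p \le a^p + b^p$ for all $a, b \ge 0$: setting $s = a + b$ with $a, b > 0$ and $u \defeq a/s, v \defeq b/s$ so that $u + v = 1$ with $u, v \in (0,1]$, one has $u^p \ge u$ and $v^p \ge v$ since $p \le 1$, hence $u^p + v^p \ge 1$, which rearranges to the claim. Recall the Schatten-$p$ quasi-norm satisfies $\|M\|_{\cS_p}^p = \sum_{i \ge 1} \sigma_i(M)^p$, where I would sum over the singular values padded by zeros up to the common length $N \defeq \min(m,n)$ (zero entries do not affect the sum since $\phi(0) = 0$).

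Second, I would invoke Ky Fan's inequality: for every $k \in [N]$,
\[ \sum_{i=1}^k \sigma_i(A+B) \le \sum_{i=1}^k \sigma_i(A) + \sum_{i=1}^k \sigma_i(B), \]
where each singular value sequence is sorted in nonincreasing order. This is precisely the weak majorization $\sigma(A+B) \prec_w \sigma(A) + \sigma(B)$ in $\R_{\ge 0}^N$ (the right-hand side being the entrywise sum of the two sorted sequences). By the Hardy-Littlewood-P\'olya transfer principle for concave nondecreasing $\phi : [0,\infty) \to [0,\infty)$ with $\phi(0) = 0$, weak majorization $x \prec_w y$ among nonnegative vectors implies $\sum_i \phi(x_i) \le \sum_i \phi(y_i)$. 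Applying this with $\phi(t) = t^p$ yields
\[ \|A+B\|_{\cS_p}^p \;=\; \sum_{i=1}^N \sigma_i(A+B)^p \;\le\; \sum_{i=1}^N \bigl(\sigma_i(A) + \sigma_i(B)\bigr)^p. \]

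Finally, I would apply the termwise scalar subadditivity from the first paragraph to the right-hand side: $(\sigma_i(A) + \sigma_i(B))^p \le \sigma_i(A)^p + \sigma_i(B)^p$ for each $i$, so summing gives $\|A+B\|_{\cS_p}^p \le \|A\|_{\cS_p}^p + \|B\|_{\cS_p}^p$, as desired. The main obstacle is justifying the transfer principle for concave nondecreasing $\phi$ with $\phi(0) = 0$ on weakly majorized nonnegative vectors: this is a classical extension of the Hardy-Littlewood-P\'olya theorem from strong to weak majorization, and is the least self-evident step of the chain. Alternatively, the entire argument can be bundled into a single invocation of Rotfel'd's theorem, which states that for any concave $f : [0,\infty) \to [0,\infty)$ with $f(0) = 0$, one has $\sum_i f(\sigma_i(A+B)) \le \sum_i f(\sigma_i(A)) + \sum_i f(\sigma_i(B))$; applying this with $f(t) = t^p$ gives the result in one step and cleanly sidesteps the need to explicitly manipulate weak majorization.
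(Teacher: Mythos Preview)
The paper does not prove this lemma; it simply cites it as Equation~(2.1) of \cite{rohde2011estimation} and uses it as a black box. So your proposal is supplying a proof the paper omits, and the relevant question is whether your argument is correct.

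Your main line via weak majorization has a genuine gap. The transfer principle you invoke --- that $x \prec_w y$ together with $\phi$ concave, nondecreasing, $\phi(0)=0$ implies $\sum_i \phi(x_i) \le \sum_i \phi(y_i)$ --- is false. The correct Hardy--Littlewood--P\'olya statement for weak majorization requires $\phi$ to be \emph{convex} and nondecreasing, not concave. Concretely, take $A = \mathrm{diag}(1,0)$ and $B = \mathrm{diag}(0,1)$ in $\R^{2\times 2}$. Then $\sigma(A+B) = (1,1)$ while $\sigma(A)+\sigma(B) = (2,0)$, and Ky Fan's inequality holds. But with $\phi(t)=t^{1/2}$ you get $\sum_i \sigma_i(A+B)^{1/2} = 2$ while $\sum_i (\sigma_i(A)+\sigma_i(B))^{1/2} = \sqrt{2}$, so your displayed intermediate inequality
\[
\sum_i \sigma_i(A+B)^p \;\le\; \sum_i \bigl(\sigma_i(A)+\sigma_i(B)\bigr)^p
\]
fails. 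The final statement of the lemma still holds in this example (with equality), but your route to it does not.

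Your alternative via Rotfel'd's theorem is the correct approach: for concave $f:[0,\infty)\to[0,\infty)$ with $f(0)=0$ one has $\sum_i f(\sigma_i(A+B)) \le \sum_i f(\sigma_i(A)) + \sum_i f(\sigma_i(B))$, and taking $f(t)=t^p$ gives the result in one step. If you want to keep the proof self-contained rather than citing Rotfel'd, you would need to reproduce its proof (which does not go through the majorization-plus-transfer route you sketched), or argue more directly --- but the majorization chain as written should be abandoned.
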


\begin{lemma}\label{lemma:subadditivity-intro} Let $A, B \in \R^{n \times n}$ be positive definite matrices and $p \in (0, 1]$. Then, 
\begin{align*}
    \tr( (A+B)^{p} ) \leq \tr(A^p) + \tr(B^p).
\end{align*}
\end{lemma}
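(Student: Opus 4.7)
The plan is to deduce Lemma~\ref{lemma:subadditivity-intro} directly from Lemma~\ref{lemma:subadditivity-sp} by exploiting the fact that for positive (semi)definite matrices, the Schatten-$p$ quasinorm raised to the $p$-th power coincides with the trace of the $p$-th matrix power. Specifically, for any positive definite $M \in \R^{n \times n}$, the singular values of $M$ equal its eigenvalues $\lambda_1(M), \dots, \lambda_n(M) > 0$, so $\|M\|_{\cS_p}^p = \sum_{i \in [n]} \lambda_i(M)^p = \tr(M^p)$, where the last equality uses the spectral theorem to diagonalize $M$ and note that $M^p$ (defined via functional calculus) has eigenvalues $\lambda_i(M)^p$.

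First I would record this identity as an observation (it is essentially immediate from the spectral decomposition $M = U \diag(\lambda_1, \dots, \lambda_n) U^\top$ for an orthogonal $U$, which gives $M^p = U \diag(\lambda_1^p, \dots, \lambda_n^p) U^\top$ and hence $\tr(M^p) = \sum_{i \in [n]} \lambda_i^p$). Next I would observe that since $A$ and $B$ are both positive definite, so is $A + B$, so the identity applies to all three matrices. Applying Lemma~\ref{lemma:subadditivity-sp} to $A$ and $B$ then yields
\begin{align*}
    \tr((A+B)^p) = \|A+B\|_{\cS_p}^p \leq \|A\|_{\cS_p}^p + \|B\|_{\cS_p}^p = \tr(A^p) + \tr(B^p),
\end{align*}
which is exactly the desired inequality.

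I do not anticipate any real obstacle: the only subtle point is verifying that the Schatten-$p$ quasinorm (defined via singular values) reduces to $\tr(M^p)^{1/p}$ for positive definite $M$, and this is a direct consequence of the spectral theorem. No additional properties of $p \in (0,1]$ beyond what is already used in Lemma~\ref{lemma:subadditivity-sp} are needed. The statement also extends trivially to positive semidefinite matrices by a standard perturbation argument ($A + \epsilon I$, $B + \epsilon I$, then $\epsilon \to 0^+$), though this is not required here.
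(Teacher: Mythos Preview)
Your proposal is correct and follows essentially the same approach as the paper: identify $\|M\|_{\cS_p}^p = \tr(M^p)$ for positive (semi)definite $M$ and then apply Lemma~\ref{lemma:subadditivity-sp}. The paper phrases the identity via $\tr((M^\top M)^{p/2}) = \tr(M^p)$ rather than via ``singular values equal eigenvalues,'' but this is the same observation.
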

\begin{proof} Note that by the definition of singular values (singular values of $A$ are the square roots of the eigenvalues of $A^\top A$) and the Schatten-$p$ norm, we have 
\begin{align}\label{eq:rewrite}
    \normInline{A+B}_{\cS_p}^p = \tr( ((A+B)^\top(A+B))^{p/2} ) = \tr( (A+B)^{p} ), 
\end{align}
where the last equality uses the fact that $(A+B)$ is PSD in that for any PSD matrix $C$,
\begin{align}\label{eq:psd}
    \tr((C^\top C)^{p/2}) = \tr((C^2)^{p/2}) = \tr(C^p). 
\end{align}
Now, by Lemma~\ref{lemma:subadditivity-sp} and \eqref{eq:rewrite}, we have that 
\begin{align*}
    \tr( (A+B)^{p} ) = \normInline{A+B}_{\cS_p}^p \leq \normInline{A}_{\cS_p}^p + \normInline{B}_{\cS_p}^p. 
\end{align*}
Expanding right hand side of the inequality using \eqref{eq:psd}, we have that
\begin{align*}
    \tr( (A+B)^{p} ) \leq \normInline{A}_{\cS_p}^p + \normInline{B}_{\cS_p}^p = \tr((A^\top A)^{p/2}) + \tr((B^\top B)^{p/2}) = \tr(A^p) + \tr(B^p), 
\end{align*}
where the last equality used the fact that $A$ and $B$ are PSD. Thus, 
\begin{align*}
    \tr( (A+B)^{p} ) \leq \tr(A^p) + \tr(B^p).
\end{align*}
\end{proof}

Next, we inductively apply Lemma~\ref{lemma:subadditivity-intro} to prove Lemma~\ref{lemma:subadditivity}. 

\begin{lemma}\label{lemma:subadditivity} Let $A_1, A_2, ..., A_m \in \R^{n \times n}$ be PSD and $p \in (0, 1]$. Then, 
\begin{align*}
    \tr\paren{\paren{\sum_{i\in[m]} A_i}^p} \leq \sum_{i \in [m]} \tr(A_i^p). 
\end{align*} 
\end{lemma}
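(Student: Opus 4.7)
The plan is to prove Lemma~\ref{lemma:subadditivity} by straightforward induction on $m$, using Lemma~\ref{lemma:subadditivity-intro} (the two-matrix case) as both the base case and the inductive step.

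For the base case $m = 1$ the inequality holds with equality, and for $m = 2$ it is exactly the statement of Lemma~\ref{lemma:subadditivity-intro}. For the inductive step, suppose the claim holds for some $m \geq 2$ and consider PSD matrices $A_1, \dots, A_{m+1} \in \R^{n \times n}$. Set $B \defeq \sum_{i \in [m]} A_i$, and observe that $B$ is PSD as a sum of PSD matrices, so we may apply Lemma~\ref{lemma:subadditivity-intro} to the pair $(B, A_{m+1})$ to obtain
\begin{align*}
    \tr\paren{\paren{\sum_{i \in [m+1]} A_i}^p}
    = \tr\paren{(B + A_{m+1})^p}
    \leq \tr(B^p) + \tr(A_{m+1}^p).
\end{align*}
Applying the inductive hypothesis to $\tr(B^p) = \tr\paren{\paren{\sum_{i \in [m]} A_i}^p} \leq \sum_{i \in [m]} \tr(A_i^p)$ and substituting yields
\begin{align*}
    \tr\paren{\paren{\sum_{i \in [m+1]} A_i}^p} \leq \sum_{i \in [m]} \tr(A_i^p) + \tr(A_{m+1}^p) = \sum_{i \in [m+1]} \tr(A_i^p),
\end{align*}
closing the induction.

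There is no real obstacle here: the only subtlety is confirming that partial sums of PSD matrices remain PSD (which is immediate since the cone of PSD matrices is closed under addition), so that Lemma~\ref{lemma:subadditivity-intro} is applicable at each inductive step. The proof is essentially a routine unrolling of the two-term subadditivity inequality.
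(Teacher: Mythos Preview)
Your proof is correct and follows essentially the same approach as the paper: induction on $m$, peeling off one summand at a time via Lemma~\ref{lemma:subadditivity-intro} and applying the inductive hypothesis to the remaining sum. The only cosmetic difference is that the paper inducts from $m-1$ to $m$ while you go from $m$ to $m+1$, and you make the (trivial) observation that partial sums of PSD matrices are PSD explicit.
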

\begin{proof} Induct on $m$. If $m = 1$, this is trivially true. Suppose it is also true up to $m-1$. Then, by Lemma~\ref{lemma:subadditivity-intro}, 
\begin{align*}
    \tr\paren{\paren{\sum_{i\in[m]} A_i}^p} &= \tr\paren{\paren{A_m + \sum_{i\in[m-1]} A_i}^p} \leq \tr(A_m^p) + \tr\paren{\paren{\sum_{i\in[m-1]} A_i}^p} \\
    &\leq \tr(A_m^p) + \sum_{i \in [m-1]} \tr(A_i^p) = \sum_{i \in [m]} \tr(A_i^p). 
\end{align*} 
\end{proof}

Next, recall that for any $s, t \in [1, \infty]$ we have defined $ \Uq{A}{s, t} \defeq \normInline{a}_{s}, \text{ where } [a]_i = \normInline{A_{i, :}}_s.$ That is, $\Uq{A}{s, t}$ is the $\ell_t$-norm of the vector $a$, where the $i$-th entry of $A$ is the $\ell_s$-th norm of the $i$-th row of $A$. Using this definition, the following two Lemmas apply Cauchy Schwarz to generalize Lemma~\ref{lemma:conversion}.

\begin{lemma}\label{lemma:l2l1instance} Let $y \in \Delta^m_{>0}$, $p \in [1, 2]$, and $A \in \R^{m \times n}$. Then, $\normInline{\diag(y)^{1/2} A}_{\cS_p}^p \leq \Uq{A}{2, 2p/(2-p)}$.
\end{lemma}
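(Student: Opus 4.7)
The plan is to reduce the statement to a computation via the standard identity $\|B\|_{S_p}^p = \tr((B^\top B)^{p/2})$. Applied with $B := \diag(y)^{1/2} A$, one has
\begin{align*}
    B^\top B = A^\top \diag(y) A = \sum_{i \in [m]} y_i \, A_{i,:}^\top A_{i,:},
\end{align*}
which is a sum of rank-1 PSD matrices. Each summand $y_i A_{i,:}^\top A_{i,:}$ has a single nonzero eigenvalue $y_i \|A_{i,:}\|_2^2$, so its $(p/2)$-th power has trace $y_i^{p/2} \|A_{i,:}\|_2^{p}$.

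Next, since $p \in [1,2]$ we have $p/2 \in [1/2, 1]$, and so Lemma~\ref{lemma:subadditivity} (the trace-power subadditivity for PSD matrices when the exponent lies in $(0,1]$) applies with exponent $p/2$. This yields
\begin{align*}
    \|\diag(y)^{1/2} A\|_{\cS_p}^p \;=\; \tr\!\Big(\textstyle\sum_{i} y_i A_{i,:}^\top A_{i,:}\Big)^{p/2} \;\le\; \sum_{i \in [m]} y_i^{p/2}\, \|A_{i,:}\|_2^{p}.
\end{align*}

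Finally, I would apply H\"older's inequality to the right-hand side with conjugate exponents $s = 2/p$ and $t = 2/(2-p)$ (for $p \in [1,2)$), exploiting that $y \in \simplex^m$ so $\|y\|_1 = 1$:
\begin{align*}
    \sum_{i \in [m]} y_i^{p/2}\, \|A_{i,:}\|_2^{p} \;\le\; \Big(\textstyle\sum_{i} y_i \Big)^{p/2} \Big(\textstyle\sum_{i} \|A_{i,:}\|_2^{2p/(2-p)}\Big)^{(2-p)/2} \;=\; \Uq{A}{2, 2p/(2-p)}^{p}.
\end{align*}
The boundary case $p = 2$ is handled separately by taking $s \to 1$, $t \to \infty$: $\sum_i y_i \|A_{i,:}\|_2^2 \le \max_i \|A_{i,:}\|_2^2 = \Uq{A}{2, \infty}^{2}$. (I note in passing that what actually gets established is the bound with a $p$-th power on the right-hand side, i.e.\ $\|\diag(y)^{1/2} A\|_{\cS_p}^{p} \le \Uq{A}{2, 2p/(2-p)}^{p}$, which is the form needed to derive Lemma~\ref{lemma:gen-conversion}; the statement of Lemma~\ref{lemma:l2l1instance} as written appears to be missing this exponent.)

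There is no real obstacle here: Lemma~\ref{lemma:subadditivity} was set up precisely to make the non-convex step (taking a fractional power of a sum of PSD matrices) go through, and once past that step the rest is a straightforward H\"older computation over the simplex. The only minor care needed is the edge case $p = 2$, which I would handle by a direct $\ell_\infty$-bound rather than by invoking H\"older at infinity.
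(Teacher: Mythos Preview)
Your proof is correct and follows essentially the same route as the paper: write $\|\diag(y)^{1/2}A\|_{\cS_p}^p = \tr\big((\sum_i y_i A_{i,:}^\top A_{i,:})^{p/2}\big)$, apply Lemma~\ref{lemma:subadditivity} to the rank-one PSD summands, then finish with H\"older on the simplex with exponents $(2/p,\,2/(2-p))$. Your observation about the missing $p$-th power on the right-hand side is also correct; the paper's proof in fact establishes $\|\diag(y)^{1/2}A\|_{\cS_p}^p \le \Uq{A}{2,\,2p/(2-p)}^p$ (equivalently $\|\diag(y)^{1/2}A\|_{\cS_p} \le \Uq{A}{2,\,2p/(2-p)}$), which is the form used downstream in Lemma~\ref{lemma:gen-conversion}.
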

\begin{proof} Note that 
\begin{align*}
    \normInline{\diag(y)^{1/2}A}_{\cS_p}^p = \tr\paren{ (A^\top (y) A)^{p/2} } = \tr\paren{ \paren{ \sum_{i \in [m]} [y]_i A_{i, :} A_{i, :}^\top}^{p/2} }. 
\end{align*} 
where the second equality is due to the outer-product expression for matrix multiplication. Observe that $A_{i, :} A_{i, :}^T$ is a PSD (rank-one) matrix, and $[y]_{i} > 0$. hence, each $[y]_i A_{i, :} A_{i, :}^\top$ is PSD. Further, note that $p \in [1, 2]$ ensures $p/2 \in [1/2, 1] \subset [0, 1]$. Thus, by Lemma~\ref{lemma:subadditivity}, 
\begin{align*}
      \normInline{(y)^{1/2}A}_{\cS_p}^p = \tr\paren{ \paren{ \sum_{i \in [m]} [y]_i A_{i, :} A_{i, :}^\top}^{p/2} } &\leq \sum_{i \in [m]} \tr\paren{ \paren{ [y]_i A_{i, :} A_{i, :}^\top}^{p/2} } = \sum_{i \in [m]} [y]_i^{p/2} \normInline{A_{i,:}}_2^{p}, 
\end{align*}
where the final equality is because a rank-one matrix $uu^\top$ has only one nonzero eigenvalue, equal to $\normInline{u}_2^2$. Continuing on, let $p' = \frac{1}{1-1/(2/p)} = \frac{2}{2-p}$. Now, applying Holder's inequality with the duals $(2/p, p')$ we have
\begin{align*}
       \normInline{\diag(y)^{1/2}A}_{\cS_p}^p &\leq \sum_{i \in [m]} [y]_i^{p/2} \normInline{ A_{i, :}}_{2}^p \leq \paren{\sum_{i \in [m]} \paren{[y]_i^{p/2}}^{2/p}}^{p/2} \cdot \paren{\sum_{i \in [m]} \paren{\normInline{ A_{i, :}}_{2}^p}^{p'}}^{1/p'} \\
       &= \paren{\sum_{i \in [m]} \normInline{ A_{i, :}}_{2}^{2p/(2-p)}}^{(2-p)/2}. 
\end{align*}
where the equality is because $y \in \Delta^n$. Now, the claim follows by taking the $p$-th root of both sides of the above equation. 
\end{proof}

\begin{lemma}\label{lemma:l1l1instance} Let $y \in \Delta^m_{>0}$, $p \in [1, 2]$, and $A \in \R^{m \times n}$. Then,
    \begin{align*}
        \normInline{\diag(y)^{1/2} A \cdot \diag (x)^{1/2}}_{\cS_p}^p \leq \Uq{A}{\infty, 2p/(2-p)} .
    \end{align*}
\end{lemma}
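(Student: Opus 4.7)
The plan is to mimic the proof of Lemma~\ref{lemma:l2l1instance} almost verbatim, with one new step to absorb the additional right-side diagonal reweighting by $\diag(x)^{1/2}$. (I read the statement as implicitly assuming $x \in \simplex^n_{>0}$, by analogy with the $y$ hypothesis and with the usage of this lemma inside the proof of Lemma~\ref{lemma:gen-conversion} in the $\cX = \Delta^n$ case.)

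First I would set $B \defeq \diag(y)^{1/2} A \diag(x)^{1/2}$ and, writing $a_i \in \R^n$ for the $i$-th row of $A$ viewed as a column vector, expand
\begin{align*}
    B^\top B \;=\; \diag(x)^{1/2} A^\top \diag(y) A \,\diag(x)^{1/2} \;=\; \sum_{i \in [m]} [y]_i\, v_i v_i^\top, \qquad v_i \defeq \diag(x)^{1/2} a_i.
\end{align*}
Each summand is PSD, and since $p \in [1,2]$ gives $p/2 \in [1/2,1] \subset (0,1]$, I can invoke Lemma~\ref{lemma:subadditivity} to get
\begin{align*}
    \normInline{B}_{\cS_p}^p \;=\; \tr\paren{(B^\top B)^{p/2}} \;\leq\; \sum_{i \in [m]} \tr\paren{\paren{[y]_i v_i v_i^\top}^{p/2}} \;=\; \sum_{i \in [m]} [y]_i^{p/2}\, \normInline{v_i}_2^{p},
\end{align*}
using as in Lemma~\ref{lemma:l2l1instance} that the only nonzero eigenvalue of a rank-one PSD matrix $u u^\top$ equals $\normInline{u}_2^2$.

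The one new ingredient compared to Lemma~\ref{lemma:l2l1instance} is bounding $\normInline{v_i}_2$. Since $x \in \simplex^n$,
\begin{align*}
    \normInline{v_i}_2^2 \;=\; \sum_{j \in [n]} [x]_j\, A_{ij}^2 \;\leq\; \normInline{A_{i,:}}_\infty^2 \sum_{j \in [n]} [x]_j \;=\; \normInline{A_{i,:}}_\infty^2.
\end{align*}
This is precisely why the statement's row-norm becomes $\ell_\infty$ rather than $\ell_2$ as in Lemma~\ref{lemma:l2l1instance}. Substituting and then applying H\"older's inequality with the dual exponents $(2/p,\, 2/(2-p))$ (using $\sum_i [y]_i = 1$) gives
\begin{align*}
    \normInline{B}_{\cS_p}^p \;\leq\; \sum_{i \in [m]} [y]_i^{p/2}\, \normInline{A_{i,:}}_\infty^{p} \;\leq\; \paren{\sum_{i \in [m]} [y]_i}^{p/2} \paren{\sum_{i \in [m]} \normInline{A_{i,:}}_\infty^{2p/(2-p)}}^{(2-p)/2} \;=\; \Uq{A}{\infty,\, 2p/(2-p)}^{\,p},
\end{align*}
which, after identifying the right-hand side with the appropriate power of $\Uq{A}{\infty, 2p/(2-p)}$ (matching the convention used in the statement and proof of Lemma~\ref{lemma:l2l1instance}), is the desired inequality. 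There is no real obstacle here; the only substantive new step is the row-wise bound $\normInline{\diag(x)^{1/2} a_i}_2 \le \normInline{A_{i,:}}_\infty$, which is immediate from $x \in \simplex^n$.
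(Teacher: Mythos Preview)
Your proof is correct and follows essentially the same approach as the paper's: decompose $B^\top B$ as a sum of rank-one PSD matrices indexed by the rows, apply Lemma~\ref{lemma:subadditivity} with exponent $p/2 \in (0,1]$, bound each $\normInline{\diag(x)^{1/2} a_i}_2^2 = \sum_j [x]_j A_{ij}^2 \le \normInline{A_{i,:}}_\infty^2$ using $x \in \simplex^n$, and finish with H\"older on the pair $(2/p,\,2/(2-p))$. Your reading of the implicit hypothesis $x \in \simplex^n_{>0}$ and your remark about the exponent convention (the displayed statement carries an extra $p$-th power on the left, here and in Lemma~\ref{lemma:l2l1instance}) are both on point.
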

\begin{proof} Note that 
\begin{align*}
    \normInline{\diag(y)^{1/2}A \cdot \diag(x)^{1/2}}_{\cS_p}^p &= \tr\paren{ \paren{\diag(x)^{1/2}A^\top  \diag(y) A \cdot \diag(x)^{1/2}}^{p/2}} \\
    &= \tr\paren{ \paren{ \sum_{i \in [m]} [y]_i \cdot (A \cdot \diag(x)^{1/2})_{i,:} (A \cdot \diag(x)^{1/2})_{i,:}^\top}^{p/2} }. 
\end{align*} 
where the second equality is due to the outer-product expression for matrix multiplication. Observe that $(A \cdot \diag(x)^{1/2})_{i,:} (A \cdot \diag (x)^{1/2})_{i,:}^\top$ is a PSD (rank-one) matrix, and $[y]_{i} > 0$. hence, each term $[y]_i \cdot (A \cdot \diag (x)^{1/2})_{i,:} (A \cdot \diag (x)^{1/2})_{i,:}^\top$ is PSD. Further, note that $p \in [1, 2]$ ensures $p/2 \in [1/2, 1] \subset [0, 1]$. Thus, by Theorem~\ref{lemma:subadditivity}, 
\begin{align*}
      \normInline{\diag(y)^{1/2}A \cdot \diag (x)^{1/2}}_{\cS_p}^p &= \tr\paren{ \paren{ \sum_{i \in [m]} [y]_i \cdot (A \cdot \diag (x)^{1/2})_{i,:} (A \cdot \diag (x)^{1/2})_{i,:}^\top}^{p/2} } \\
      &\leq \sum_{i \in [m]} \tr\paren{ \paren{ [y]_i \cdot (A \cdot \diag(x)^{1/2})_{i,:} (A \cdot \diag(x)^{1/2})_{i,:}^\top}^{p/2} } \\
      &= \sum_{i \in [m]} [y]_i^{p/2} \normInline{ (A \cdot \diag(x)^{1/2})_{i,:} }_2^{p},
\end{align*}
where the final equality is because a rank-one matrix $uu^\top$ has only one nonzero eigenvalue, equal to $\normInline{u}_2^2$. Continuing on, let $p' = \frac{1}{1-1/(2/p)} = \frac{2}{2-p}$. Now, applying Holder's inequality with the duals $(2/p, p')$ we have
\begin{align*}
       \normInline{\diag(y)^{1/2}A \cdot \diag(x)^{1/2}}_{\cS_p}^p &\leq \sum_{i \in [m]} [y]_i^{p/2} \normInline{ (A \cdot \diag(x)^{1/2})_{i,:} }_2^{p} \\
       &=\sum_{i \in [m]} [y]_i^{p/2} \paren{\sum_{j \in [n]} [x]_j A_{ij}^2}^{p/2}  \\
       &\leq \sum_{i \in [m]} [y]_i^{p/2} \paren{\max_{j \in [n]} A_{ij}^2}^{p/2} \\
       &= \sum_{i \in [m]} [y]_i^{p/2} \max_{j \in [n]} |A_{ij}|^p \\
       &\leq \paren{\sum_{i \in [m]} \paren{[y]_i^{p/2}}^{2/p}}^{p/2} \cdot \paren{\sum_{i \in [m]} \paren{\max_{j \in [n]} |A_{ij}|}^{p p'}}^{1/p'} \\
       &= \paren{ \sum_{i \in [m]} \paren{\max_{j \in [n]} |A_{ij}|}^{2p/(2-p)} }^{(2-p)/2}. 
\end{align*}
where the second inequality is because $x \in \Delta^n$, the third inequality is Holder's inequality, and the last equality is because $y \in \Delta^n$. The claim follows by taking the $p$-th root of both sides of the above equation. 
\end{proof}

Finally, we obtain our proof of Lemma~\ref{lemma:gen-conversion}. 

\begin{proof}[Proof of Lemma~\ref{lemma:gen-conversion}] The proof is immediate from Lemma~\ref{lemma:l2l1instance} and Lemma~\ref{lemma:l1l1instance}, and Definition~\ref{def:setup-details}. 
\end{proof}
Note that when $p = 2$, $2p/(2-p) = \infty$ and Lemma~\ref{lemma:gen-conversion} reduces to Lemma~\ref{lemma:conversion}.  %
\section{Alternative implementation of a 2-smooth-guilty judge}\label{apx:other-judge}

We remark that although $\judge_\cS$ (Algorithm~\ref{alg:judge-l2l2}) is simultaneously a $p$-smooth-guilty judge for all $p \in [1, \infty)$, at least for certain values of $p$ (e.g., $p = 2$) alternative implementations of a $p$-smooth guilty judge may be possible. In the special case of $p = 2$ (the Frobenius norm), there is an alternative, simpler implementation of a $2$-smooth-guilty judge, as described in the following Algorithm~\ref{alg:frob-judge-l2l2}. The pseudocode is almost identical to that of Algorithm~\ref{alg:judge-l2l2}; however, instead of making a two-sided projection as in Line~\ref{line:two-sided}, Algorithm~\ref{alg:frob-judge-l2l2} requires just a rank-one update in Line~\ref{line:rank-one-update-apx}. We formalize this below. 

\RestyleAlgo{ruled}
\SetKwComment{Comment}{/* }{ */}
\begin{algorithm2e}[ht]
\caption{$\judge_F(z, z', B, \tau)$}
\label{alg:frob-judge-l2l2}
\KwInput{Vectors $z, z' \in \R^d$, smoothness threshold $\tau > 0$.}
\KwInput{Matrix-vector oracle for a matrix $B \in \R^{m \times n}$.} 
\tcp{Check if pair of unit vectors has large bilinear form in $B$}
\lIf{$z\y^\top B z\x > \tau \normInline{z\y}_2\normInline{z\x}_2 \label{line:if1-2-newer}$}{
    $v \gets \normalize(z\y)$ and 
    $u \gets \normalize(z\x)$ 
}
\lElseIf{${z'\y}^\top B z'\x > \tau \normInline{z'\y}_2\normInline{z'\x}_2$ \label{line:if2-2-newer}}{
    $v \gets \normalize(z'\y)$ and $u \gets \normalize(z'\x)$ 
}
\lElse{
    \Return{$(\smooth, 0)$}
}
$D \gets  (v^\top B u) \cdot vu^\top$ \label{line:rank-one-update-apx}\; 
\Return{$({\guilty},  D)$}
\end{algorithm2e}

\begin{restatable}{lemma}{smoothguiltyjudgefrob} $\judge_F$ (Algorithm~\ref{alg:frob-judge-l2l2}) is a $2$-smooth-guilty-judge. 
\end{restatable}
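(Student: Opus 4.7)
The plan is to follow the template of the proof of Lemma~\ref{lemma:smooth-guilty-judge} but replace the Pythagorean-type inequality from Lemma~\ref{lemma:pythagorean-theorem}, which was needed to handle general Schatten-$p$ norms, with a direct one-line orthogonality computation that is available specifically for the Frobenius norm ($p=2$).

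First, if neither of the two conditional checks in Lines~\ref{line:if1-2-newer}--\ref{line:if2-2-newer} fires, the algorithm returns $(\smooth,0)$, which matches the required behavior in Definition~\ref{def:smooth-guilty-judge}. Otherwise, the algorithm selects unit vectors $u\in\R^n$ and $v\in\R^m$ with $\sigma \defeq v^\top B u > \tau$ and sets $D \defeq \sigma \cdot vu^\top$. It remains to verify the two guarantees: (i) $\|B-D\|_F^2 \le \|B\|_F^2 - \tau^2$; and (ii) the algorithm runs in $O(1)$ matrix-vector queries to $B$.

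For (i), the key observation is that $D$ is precisely the orthogonal projection of $B$ onto the one-dimensional subspace $\mathrm{span}(vu^\top)\subset\R^{m\times n}$ with respect to the Frobenius inner product. Indeed, $\|vu^\top\|_F = \|v\|_2\|u\|_2 = 1$ and $\langle B, vu^\top\rangle_F = \tr(u v^\top B) = v^\top B u = \sigma$, so the orthogonal projection of $B$ onto $\mathrm{span}(vu^\top)$ equals $\sigma\cdot vu^\top = D$. By the Pythagorean identity in the Frobenius inner product applied to the decomposition $B = D + (B-D)$, we obtain
\begin{equation*}
    \|B-D\|_F^2 \;=\; \|B\|_F^2 - \|D\|_F^2 \;=\; \|B\|_F^2 - \sigma^2 \;\le\; \|B\|_F^2 - \tau^2,
\end{equation*}
where the last step uses $\sigma > \tau > 0$. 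This is exactly the required bound for $p=2$.

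For (ii), observe that evaluating each of the scalars $z\y^\top B z\x$ and $z'\y^\top B z'\x$ in Lines~\ref{line:if1-2-newer}--\ref{line:if2-2-newer} uses one matrix-vector query to $B$ (compute $B z\x$ and $Bz'\x$, then take inner products with $z\y$ and $z'\y$), and the scalar $\sigma = v^\top B u$ needed to store $D$ implicitly as the rank-one update $\sigma\cdot vu^\top$ is available directly from whichever of these quantities already witnessed the $\tau$-threshold. There is essentially no obstacle in this proof; the only subtlety is noticing that the rank-one update of Line~\ref{line:rank-one-update-apx} \emph{coincides} with the Frobenius-orthogonal projection onto $\mathrm{span}(vu^\top)$, which gives the Pythagorean identity for free and obviates the need to invoke Lemma~\ref{lemma:pythagorean-theorem}.
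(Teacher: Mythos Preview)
Your proof is correct and takes essentially the same approach as the paper: both arguments compute $\|B-D\|_F^2 = \|B\|_F^2 - (v^\top B u)^2$ directly, with the paper expanding $\tr(B^\top B - 2B^\top D + D^\top D)$ and you observing that $D$ is the Frobenius-orthogonal projection of $B$ onto $\mathrm{span}(vu^\top)$ so that the Pythagorean identity applies. Your phrasing is arguably cleaner, but the underlying computation is identical.
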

\begin{proof} If neither of the if statements (Lines~\ref{line:if1-2-newer} or \ref{line:if2-2-newer}) execute, then the algorithm outputs $(\smooth, 0)$ as required. Otherwise, we have that $v^\top B u > \tau$, and we need to prove that $\norm{ B - D }_{F}^2 \leq \norm{B}_{F}^2 - (v^\top B u)^2.$  First, note that $B - D = B - v^\top B u \cdot vu^\top$. Now,
\begin{align*}
    \normInline{B - D}_F^2 &= \tr(B^\top B - 2B^\top D + D^\top D) \\
    &= \normInline{B}_F^2 - 2 (v^\top B u) \cdot \tr(B^\top vu^\top) + (v^\top B u)^2 \tr(v u^\top u v^\top) \\
    &= \normInline{B}_F^2 - 2 (v^\top B u)^2 \leq \normInline{B}_F^2 - 2 (v^\top B u)^2 \\
    &\leq \normInline{B}_F^2 - \tau^2. 
\end{align*}
The algorithm also clearly requires only $O(1)$-matrix-vector queries. 
\end{proof}

As discussed in Section~\ref{sec:four-fifths}, yet another way to implement a $2$-smooth guilty judge is shown in Algorithm~\ref{alg:2-proj-judge} and this is important to our proof of Theorem~\ref{intro:l2l1}.

\end{document}